\theoremstyle{plain}
\newtheorem{thm}{Theorem}[subsection]
\newtheorem{cor}[thm]{Corollary}
\newtheorem{lem}[thm]{Lemma}
\newtheorem{prop}[thm]{Proposition}
\theoremstyle{definition}
\newtheorem{dfn}[thm]{Definition}
\newtheorem*{claim-nonum}{Claim}
\theoremstyle{remark}
\newtheorem{remark}[thm]{Remark}
\newtheorem{rem}[thm]{Remark}
\newtheorem*{remnonum}{Remark}
\newtheorem{ex}[thm]{Example}
\theoremstyle{plain}
\newtheorem{lemma}[thm]{Lemma}
\def\ep{\epsilon}
\def\Hom{{\rm{Hom}}}
\def\C{\mathcal{C}}
\def\A{\mathcal{A}}
\def\I{\mathbb{I}}
\def\K{\mathcal K}
\def\P{\mathcal P}
\def\D{\mathcal{D}}
\def\k{{\bf k}}
\def\F{\mathcal F}
\def\T{\mathcal T}
\def\G{\mathcal G}
\def\U{\mathbb U}
\def\V{\mathbb V}
\def\W{\mathbb W}
\def\I{{\mathds{1}}}
\def\Mor{{\rm Mor}}
\DeclarePairedDelimiter\ceil{\lceil}{\rceil}
\newcommand{\Qed}{\hfill \qedsymbol \medskip}
\newcommand{\R}{\mathbb{R}}
\newcommand{\Z}{\mathbb{Z}}
\newcommand{\Q}{\mathbb{Q}}
\newcommand{\N}{\mathbb{N}}
\newcommand{\fuk}{\mathcal{F}uk}
\renewcommand{\k}{\mathbf{k}}
\newcommand{\pbnote}[1]{#1}
\newcommand{\jznote}[1]{#1}
\newcommand{\pbaddress}{biran@math.ethz.ch}
\newcommand{\ocaddress}{cornea@dms.umontreal.ca}
\begin{document}

\title{Triangulation and Persistence: Algebra 101}

\date{\today}

\thanks{The second author was supported by an individual NSERC
  Discovery grant. }

\author{Paul Biran, Octav Cornea and Jun Zhang}

\address{Paul Biran, Department of Mathematics, ETH-Z\"{u}rich,
  R\"{a}mistrasse 101, 8092 Z\"{u}rich, Switzerland}
\email{\pbaddress}
 
\address{Octav Cornea, Department of Mathematics and Statistics,
  University of Montreal, C.P. 6128 Succ.  Centre-Ville Montreal, QC
  H3C 3J7, Canada} \email{\ocaddress}

\address{Jun Zhang, Centre de Recherches Math\'ematiques, University of Montreal, C.P. 6128 Succ.  Centre-Ville Montreal, QC
  H3C 3J7, Canada} \email{jun.zhang.3@umontreal.ca}

\keywords{Triangulated categories, Persistence modules, Tamarkin category,
    Lagrangian cobordism.}
\subjclass[2010]{55N31 53D12 (Primary); 35A27 (Secondary)}



%

%

\begin{abstract}
  This paper lays the foundations of triangulated persistence
  categories (TPC), which brings together persistence modules with the
  theory of triangulated categories. As a result we introduce several
  measurements and metrics on the set of objects of some triangulated
  categories. We also \jznote{provide}  examples of TPC's coming from
  algebra, algebraic topology, \jznote{microlocal sheaf theory} and symplectic topology.
\end{abstract}

\maketitle

%
%

\tableofcontents 

\section{Introduction} \label{sec-intro}

Persistence theory, invented for the first time in
Zomorodian-Carlsson's pioneering work \cite{ZC05}, is an abstract
framework that emerged from investigations in parts of machine
learning as well as algebraic and differential topology formalizing
the structure and properties of a class of phenomena that are most
easily seen in the homology of a filtered chain complex over a field
$\k$,
$\ldots \subset (C^{\leq \alpha}, d) \subset (C^{\leq \beta}, d)
\subset \ldots \subset (C, d)$ (for
$\alpha < \beta, \alpha,\beta\in \R$).  The homology of such a complex
forms a family $\{H(C^{\leq \alpha})\}_{\alpha\in \R}$ related by maps
$i_{\alpha,\beta}: H(C^{\leq \alpha}) \to H(C^{\leq \beta})$,
$\alpha\leq \beta$ subject to obvious compatibilities and called a
persistence module.  Given two filtered complexes $(C,d)$ and $(D,d)$
that are quasi-isomorphic, it is possible to compare them by the
so-called bottleneck distance.  Its definition is based on the fact
that the linear maps $v: C\to D$ are themselves filtered by their
``shift'': $v$ is of shift $\leq r$ if
$v(C^{\leq \alpha})\subset D^{\leq \alpha +r}$, for all
$\alpha \in \R$.  Using this, given two chain maps $\phi: C\to D$,
$\psi : D\to C$ such that $\psi \circ \phi \simeq id_{C}$, there is a
natural measurement for how far the composition $\psi \circ \phi$ is
from the identity, namely the infimum of the ``shifts'' of chain
homotopies $h: C \to C$ such that $dh+hd=\psi\circ\phi -id$.  The
machinery of persistence modules is much more developed than the few
elements mentioned here. For a survey on this topic and its
applications in various mathematical branches, see research monographs
from Edelsbrunner \cite{Ede14}, Chazal-de Silva-Glisse-Oudot
\cite{CdeSGO16}, and Polterovich-Rosen-Samvelyan-Zhang
\cite{PRSZ20}. In particular, there is a beautiful interpretation of
the bottleneck distance in terms of so-called barcodes (\cite{BL15},
\cite{UZ16}), but we can already formulate the question that we
address in this paper.
 
\
 
{\em How can one use a persistence type structure on the morphisms of
  some (small) category to compare not only (quasi)-isomorphic objects
  but rather define a pseudo-metric on the set of all objects ?}

\

We provide here a solution to this question based on mixing
persistence with triangulation.  Triangulated categories are basic
algebraic structures with applications in algebraic geometry,
algebraic and symplectic topology, mathematical physics and other
fields. They were introduced independently by Puppe \cite{Pup62} and
Verdier \cite{Ver77} in the early '60's. A triangulated category
$\mathcal{D}$ is a category endowed with a \pbnote{translation}
endomorphism $T$ and a class of so called ``distinguished (or exact)
triangles'' - meaning a triple of objects and morphisms
$A \to B \to C \to TA$ - subject to four axioms. These axioms mimic
abstractly the properties of cone attachments in topology, where the
object $C$ is obtained by attaching a cone over a continuous map
$A\to B$. For instance, one axiom claims that any morphism can be
included in a distinguished triangle (corresponding to the fact that
one can attach a cone over any continuous map) and another, the
octahedral axiom, can be viewed as a form of associativity for cone
attachments.

\

Given a triangulated category $\mathcal{D}$ there is a simple notion
of {\em triangular weight} $w$ on $\mathcal{D}$ that we introduce in
\S \ref{subsec:triang-gen}. This associates to each distinguished
triangle $\Delta$ a non-negative number $w(\Delta)$ satisfying a
couple of properties.  The most relevant of them is a weighted variant
of the octahedral axiom (we will give a more precise definition
later).  A basic example of a triangular weight is the flat one: it
associates to each distinguished triangle the value $1$.  The interest
of triangular weights is that they naturally lead to {\em
  fragmentation pseudo-metrics} on $\mathrm{Obj}(\mathcal{D})$ (we
assume here that $\mathcal{D}$ is small) defined roughly as follows
(see \S\ref{subsec:triang-gen} for details).  Such a pseudo-metric
depends on a family of objects $\mathcal{F}$ of $\mathcal{D}$.  With
$\mathcal{F}$ fixed, and up to a certain normalization, the
pseudo-distance $d^{\mathcal{F}}(X,Y)$ between $X$,
$Y\in \mathrm{Obj}(\mathcal{D}))$ is (the symmetrization of) the
infimum of the total weight of distinguished triangles needed to
construct iteratively $X$ out of $Y$ by only attaching cones over
morphisms with domain in $\mathcal{F}$.  The weighted octahedral axiom
implies that this $d^{\mathcal{F}}$ satisfies the \pbnote{triangle}
inequality.  Using such \jznote{pseudo-metrics} one can analyze rigidity properties of
various categories by exploring the induced topology on
$\mathrm{Obj}(\mathcal{D})$, in particular, by identifying cases when
relevant fragmentation pseudo-metrics are non-degenerate.  One can
also study the \jznote{dynamical properties of} endofunctors $\phi$ of $\mathcal{D}$ by estimating
$d^{\mathcal{F}}(X, \phi^{k}(X))$ for $X\in \mathrm{Obj}(\mathcal{D})$
as well as various associated asymptotic invariants.

 \
 
Even the fragmentation pseudo-metrics associated to the flat weight are of interest.  Many qualitative questions concerned with numerical lower bounds  for the complexity of certain 
geometric objects  - classical examples are the Morse inequalities, the Lusternik-Schnirelmann inequality as well as, in symplectic topology, the inequalities predicted by the Arnold conjectures - can be understood by means of inequalities involving such fragmentation pseudo-metrics. Remarkable results based on 
measurements using this flat weight and applied to the study of  endofunctors have appeared recently in work of Orlov \cite{Or} as well as Dimitrov-Haiden-Katzarkov-Kontsevich \cite{Kon} and Fan-Filip \cite{Fa-Si}. On  the other hand, a discrete weight, such as the flat one, is not ideal in a variety of geometric contexts because the associated fragmentation pseudo-metrics vanish on too many pairs of objects. 
 
 \
 
The main aim of the paper is to  show how to use persistence machinery to produce ``exotic'' (non-flat) triangular weights and associated fragmentation pseudo-metrics.  The main tool is a type of refinement of triangulated categories, called {\em Triangulated Persistence Categories} (TPC).   A triangulated persistence category,  $\mathcal{C}$, has two main properties. First, it is a persistence category, a natural notion we introduce in \S\ref{subsec:pers-cat}. This is  a category  $\mathcal{C}$ whose morphisms  $\Mor_{\mathcal{C}}(A,B)$ are persistence modules, \jznote{$\{ \{\Mor^{r}_{\mathcal{C}}(A,B)\}_{r\in \R}\}, \{i_{r,s}\}_{r\leq s}\}$} (\jznote{see \cite{PRSZ20} for a general introduction of the persistence module theory}), and that satisfies obvious relations relative to composition. The second main structural property  of TPC's
is that the objects of $\mathcal{C}$ together with the ``shift'' $0$ morphisms  $\Mor^{0}_{\mathcal{C}}(A,B)$ have the structure of a triangulated category. 
The formal definition of TPC's is given in \S\ref{subsec:TPC}.  The mixture of the persistence and triangulation structures gives rise to a rich algebraic framework. In particular, we will show how to endow such a 
$\mathcal{C}$ with a class of weighted triangles. 
The main result of the paper, in Theorem \ref{thm:main-alg}, shows that the properties of these weighted triangles  lead to a  triangulated structure on the limit category $\mathcal{C}_{\infty}$ - that has the same objects as $\C$
and has as morphisms the $\infty$-limits of the morphisms in $\C$ - as well as to a triangular weight
on $\C_{\infty}$. 

\

Therefore, if a triangulated category $\mathcal{D}$ admits
a TPC refinement - that is a TPC, $\C$, such that $\C_{\infty}=\mathcal{D}$ (as triangulated
categories), then $\mathcal{D}$ carries an exotic triangular weight induced from the persistence structure of 
$\C$.  As a result, this construction provides a technique to build non-discrete  fragmentation pseudo-metrics
on the objects of $\mathcal{D}$. The construction of TPC's is inspired
by recent constructions in symplectic topology and, in particular, by the shadow pseudo-metrics introduced in
\cite{Bi-Co-Sh:LagrSh} and \cite{Bi-Co:LagPict} in the study of Lagrangian cobordism. Remarkably, in that case, rigidity in the sense of 
the non-degeneracy of the fragmentation metrics is implied by symplectic rigidity in the sense of unobstructedness and, conversely, flexibility in the sense of Gromov's h-principle, implies that the relevant fragmentation pseudo-metrics vanish. However, the general set-up of TPC's is independent of symplectic
topology considerations and is potentially of use in a variety of other contexts where both persistence and triangulation are available.

\

This paper only contains the basic algebraic machinery that emerges from
the mixture of triangulation and persistence. Triangulated persistence categories  are natural algebraic structures that appear in a wide variety of mathematical fields. In \S\ref{sec-example} we discuss four classes of examples: algebraic, topological, sheaf-theoretic and symplectic. 

\

\noindent {{\bf Acknowledgement}. The third author is grateful to Mike Usher for useful discussions. We thank Leonid Polterovich for mentioning to us the work of Fan-Filip \cite{Fa-Si}. This work was completed while the third author held a CRM-ISM Postdoctoral Research Fellowship at the {\em Centre de recherches math\'ematiques} in Montr\'eal. He thanks this Institute for its warm hospitality.



\section{Triangular weights} \label{subsec:triang-gen}
In this subsection we introduce triangular weights associated to a
triangulated category $\mathcal{D}$.  Using such a triangular weight
$w$ on $\mathcal{D}$ we define a class of, so called fragmentation,
pseudo-metrics $d^{\mathcal{F}}_{w}$ on
$\mathrm{Obj}(\mathcal{D})$. All categories used in this paper
($\mathcal{D}$ in particular) are assumed to be small unless otherwise
indicated.


\begin{dfn}\label{def:triang-cat-w} Let $\mathcal{D}$ be a (small)
  triangulated category and denote by $\mathcal{T}_{\mathcal{D}}$ its
  class of exact triangles. A {\em triangular weight} $w$ on
  $\mathcal{D}$ is a function
  $$w:\mathcal{T}_{\mathcal{D}}\to [0,\infty)$$
  that satisfies properties (i) and (ii) below:

  (i) [Weighted octahedral axiom] Assume that the triangles
  $\Delta_{1}: A\to B\to C\to TA$ and $\Delta_{2}: C\to D\to E\to TC$
  are both exact. There are exact triangles:
  $\Delta_{3}: B\to D\to F\to TB$ and
  $\Delta_{4}: TA\to F\to E\to T^{2}A$ making the diagram below
  commute, except for the right-most bottom square that anti-commutes,
  \[ \xymatrix{
      A \ar[r] \ar[d] & 0 \ar[r] \ar[d] & TA \ar[d]\ar[r]& TA\ar[d] \\
      B \ar[r] \ar[d] & D \ar[r] \ar[d] & F \ar[d]\ar[r]& TB \ar[d]\\
      C \ar[r] \ar[d] & D \ar[r]\ar[d] & E\ar[r]\ar[d] & TC\ar[d] \\
      TA\ar[r] & 0 \ar[r]  & T^{2}A \ar[r]& T^{2} A }
  \]
  and such that 
  \begin{equation}\label{eq:weight:ineq}
    w(\Delta_{3})+w(\Delta_{4})\leq w(\Delta_{1})+w(\Delta_{2})~.~
  \end{equation}

  (ii) [Normalization] There is some $w_{0}\in [0,\infty)$ such that
  \pbnote{$w(\Delta)\geq w_{0}$} for all
  \pbnote{$\Delta\in \mathcal{T}_{\mathcal{D}}$} and
  \pbnote{$w(\Delta')=w_{0}$} for all triangles \pbnote{$\Delta'$} of
  the form $0\to X\xrightarrow{\mathds{1}_{X}}X\to 0$,
  $X\in\mathrm{Obj}(\mathcal{D})$, and their rotations.  Moreover, in
  the diagram at (i) if $B=0$, we may take $\Delta_{3}$ to be
  \begin{equation}\label{eq:simpl-d3}
    \Delta_{3}: 0 \to D\to D\to 0~.~
  \end{equation}
  
\end{dfn}

\begin{remark}\label{rem:gen-weights} (a) Neglecting the weights
  constraints, given the triangles $\Delta_{1}$, $\Delta_{2}$,
  $\Delta_{3}$ as at point (i), the octahedral axiom is easily seen to
  imply the existence of $\Delta_{4}$ making the diagram commutative,
  as in the definition.

(b) The condition  at point (ii), above equation (\ref{eq:simpl-d3}), can be reformulated as a replacement property for exact triangles 
in the following sense: if $\Delta_{2}:C\to D\to E\to TC$ is exact and $C$ is isomorphic to 
$A'\ (=TA)$, then there is an exact triangle $A'\to D\to E\to TA'$ of weight at most $w(\Delta_{2})+w(\Delta_{1})-w_{0}$
where $\Delta_{1}$ is the exact triangle $T^{-1}A'\to 0 \to C\to A'$.
\end{remark}

Given an exact triangle $\Delta : A\to B\xrightarrow{f} C\to TA$ in $\mathcal{D}$ and any $X\in \mathrm{Obj}(\mathcal{D})$
there is an associated exact triangle $ X\oplus\Delta :A\to X\oplus B\xrightarrow{\mathds{1}_{X}\oplus f} X\oplus C\to TA$ and a similar one, $\Delta \oplus X$.
We say that a triangular weight $w$ on $\mathcal{D}$ 
is {\em subadditive} if for any exact triangle $\Delta \in \mathcal{T}_{\mathcal{D}}$ and any object $X$ of $\mathcal{D}$ we have $$w(X\oplus\Delta ) \leq w(\Delta )$$
and similarly for $\Delta \oplus X$.

\

The simplest example of a triangular weight on a triangulated category
$\mathcal{D}$ is the flat one, \pbnote{$w_{fl}(\Delta)=1$,} for all
triangles \pbnote{$\Delta\in \mathcal{T}_{\mathcal{D}}$.} This weight
is obviously sub-additive. A weight that is not proportional to the
flat one is called {\em exotic}.

\

The interest of triangular weight comes from the next definition that provides a measure for the complexity 
of cone-decompositions in $\mathcal{D}$ and this leads in turn to the definition of corresponding pseudo-metrics on the set $\mathrm{Obj}(\mathcal{D})$.

\begin{dfn}\label{def:iterated-coneD-tr} Fix a triangulated category
  $\mathcal{D}$ together with a triangular weight $w$ on
  $\mathcal{D}$.  Let $X$ be an object of $\mathcal{D}$. An {\em
    iterated cone decomposition} $D$ of $X$ with {\em linearization}
  $\ell(D) = (X_1,X_{2}, ..., X_n)$ consists of a family of exact
  triangles in $\mathcal{D}$:
\[ 
\left\{ 
\begin{array}{ll}
\Delta_{1}: \, \, & X_{1}\to 0\to Y_{1}\to  TX_{1}\\
\Delta_2: \,\, & X_2 \to Y_1 \to Y_2 \to  TX_2\\
\Delta_3: \,\, & X_3 \to Y_2 \to Y_3 \to  TX_3\\
&\,\,\,\,\,\,\,\,\,\,\,\,\,\,\,\,\,\,\,\,\vdots\\
\Delta_n: \,\, & X_n \to Y_{n-1} \to X \to  TX_n
\end{array} \right.\]
To accommodate the case $n=1$ we set $Y_0=0$.
The weight of such a cone decomposition is defined by:
\begin{equation}\label{eq:weight-cone} 
w(D)=\sum_{i=1}^{n}w(\Delta_{i})-w_{0}~.~
\end{equation}
\end{dfn}

This weight of cone-decompositions easily leads to a class of
pseudo-metrics on the objects of $\mathcal{D}$, as follows.

\

Let $\mathcal F \subset {\rm Obj}(\mathcal{D})$.
For two objects $X, X'$ of $\mathcal{D}$, define 
\begin{equation} \label{frag-met-0} \delta^{\mathcal F} (X, X') =
  \inf\left\{ w(D) \, \Bigg| \,
    \begin{array}{ll} \mbox{$D$ is an
      iterated cone decomposition} \\
      \mbox{of $X$ with linearization}
      \ \mbox{$(F_1, ..., T^{-1}X', ..., F_k)$}\\
      \mbox { where $F_i \in \mathcal
      F$, $k \geq 0$}
    \end{array}
  \right\}.
\end{equation}
\pbnote{Note that we allow here $k=0$, i.e.~the linearization of $D$
  is allowed to consist of only one element, $T^{-1}X'$, without using
  any elements $F_i$ from the family $\mathcal{F}$.} Fragmentation pseudo-metrics are obtained by symmetrizing
$\delta^{\mathcal{F}}$, as below.  Recall that a topological space is
called an {\it H-space} if there exists a continuous map
$\mu: X \times X \to X$ with an identity element $e$ such that
$\mu(e,x) = \mu(x,e) = x$ for any $x \in X$.

\begin{prop}\label{prop:tr-weights-gen}
  Let $\mathcal{D}$ be a triangulated category and let $w$ be a
  triangular weight on $\mathcal{D}$.  Fix $\F \subset {\rm Obj}(\C)$
  and define
  $$d^{\F}: {\rm Obj}(\C)\times {\rm Obj}(\C)\to [0,\infty)\cup \{ +\infty\}$$ 
  by:
  \[ d^{\F}(X, X') = \max\{\delta^{\F}(X, X'), \delta^{\F}(X',
    X)\}. \]
  \begin{itemize}
  \item[(i)] The \pbnote{map} $d^{\mathcal{F}}$ is a pseudo-metric
    called {\rm the fragmentation pseudo-metric} associated to $w$
    and $\mathcal{F}$.
  \item[(ii)] If $w$ is subadditive, then
    \begin{equation}\label{eq:subad}
      d^{\mathcal{F}}(A\oplus B, A'\oplus B')\leq d^{\mathcal{F}}(A,A') +
      d^{\mathcal{F}}(B,B') + w_0.
    \end{equation} 
    In particular, if $w_{0}=0$, then $\mathrm{Obj}(\mathcal{D})$
    with the operation given by $\oplus$ and the topology induced by
    $d^{\mathcal{F}}$ is an H-space.
  \end{itemize}
\end{prop}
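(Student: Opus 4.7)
\medskip
\noindent\emph{Proof proposal.} For part (i), I verify the three pseudo-metric axioms separately. Symmetry of $d^{\mathcal{F}}$ is immediate from its definition as a maximum. For the reflexivity $d^{\mathcal{F}}(X,X) = 0$, I exhibit the single-triangle cone decomposition of $X$ with linearization $(T^{-1}X)$ (i.e.\ $k=0$), namely the triangle $T^{-1}X \to 0 \to X \xrightarrow{\mathds{1}_X} X$ obtained as a backward rotation of $0 \to X \xrightarrow{\mathds{1}_X} X \to 0$; by the normalization axiom this has weight $w_0$, so $\delta^{\mathcal{F}}(X,X) \leq w_0 - w_0 = 0$.

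For the triangle inequality, by symmetry it suffices to establish $\delta^{\mathcal{F}}(X,X'') \leq \delta^{\mathcal{F}}(X,X') + \delta^{\mathcal{F}}(X',X'')$. Given cone decompositions $D_1$ of $X$ via $X'$, with $T^{-1}X'$ at position $j$ appearing in the triangle $\Delta_j: T^{-1}X' \to Y_{j-1} \to Y_j \to X'$, and $D_2$ of $X'$ via $X''$, the plan is to substitute $D_2$ into $D_1$ at position $j$: replace $\Delta_j$ by a sequence of triangles that construct $Y_j$ out of $Y_{j-1}$ via cones on the same objects that appear in $D_2$. This sequence is produced inductively via the weighted octahedral axiom, pairing the current interface triangle with each successive triangle of $D_2$; telescoping the octahedral inequality, together with the normalization bound $w(\cdot) \geq w_0$ and the simplification (\ref{eq:simpl-d3}) at the initial $B=0$ step, gives $w(D) \leq w(D_1) + w(D_2)$ for the resulting decomposition $D$.

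For part (ii), given $D_A$ for $A$ via $A'$ (with $T^{-1}A' = X_p^A$) and $D_B$ for $B$ via $B'$ (with $T^{-1}B' = X_q^B$), I construct a cone decomposition of $A \oplus B$ via $A' \oplus B'$ of length $n_A + n_B - 1$ in five stages: (1) run $D_A$ up to $Y_{p-1}^A$; (2) extend to $Y_{p-1}^A \oplus Y_{q-1}^B$ by inserting the $Y_{p-1}^A$-fattened triangles $(Y_{p-1}^A) \oplus \Delta_i^B$ for $i < q$; (3) perform the combined cone attachment using the direct-sum triangle $\Delta_p^A \oplus \Delta_q^B$, whose first object is precisely $T^{-1}(A' \oplus B')$; (4) reach $A \oplus Y_q^B$ via the triangles $\Delta_i^A \oplus (Y_q^B)$ for $i > p$; (5) reach $A \oplus B$ via $A \oplus \Delta_i^B$ for $i > q$. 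Subadditivity of $w$ bounds the weights of the fattened triangles in stages (2), (4), (5) directly by the weights of $\Delta_i^B$ or $\Delta_i^A$. For stage (3), I apply the weighted octahedral axiom to the pair $Y_{q-1}^B \oplus \Delta_p^A$ and $Y_p^A \oplus \Delta_q^B$; the composition cone is $A' \oplus B'$, and the auxiliary fourth triangle is $\Delta_4: A' \to A' \oplus B' \to B' \to TA'$, which I recognize as the $B'$-fattening of the rotated trivial triangle $A' \to A' \to 0 \to TA'$, so that $w(\Delta_4) = w_0$ by subadditivity combined with the normalization lower bound; the octahedral inequality then gives $w(\Delta_p^A \oplus \Delta_q^B) \leq w(\Delta_p^A) + w(\Delta_q^B) - w_0$. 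Summing all triangle weights per (\ref{eq:weight-cone}) yields the bound (\ref{eq:subad}).

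The main obstacle is stage (3) of part (ii): correctly identifying the cone $F$ produced by the octahedron as $A' \oplus B'$, using that the relevant direct-summed connecting map splits, and then pinning down the weight of $\Delta_4$ as exactly $w_0$ via its description as a fattening of a rotated identity triangle. Once (\ref{eq:subad}) is established, if $w_0 = 0$ then the direct sum $\oplus$ is $1$-Lipschitz with respect to $d^{\mathcal{F}}$ on the product, hence continuous; taking the zero object as the identity exhibits $(\mathrm{Obj}(\mathcal{D}), \oplus)$ as an H-space with respect to the topology induced by $d^{\mathcal{F}}$.
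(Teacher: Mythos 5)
Your argument for part (i) is correct and takes essentially the same route as the paper: reflexivity via the single triangle $T^{-1}X\to 0\to X\xrightarrow{\mathds{1}_X} X$ of weight $w_0$, and the triangle inequality via the same iterated-octahedron refinement that appears in Proposition~\ref{prop-cone-ref}, with the $B=0$ normalization~\eqref{eq:simpl-d3} controlling the last step (the paper invokes Remark~\ref{rem:gen-weights}(b) for this, which is the same thing).

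For part (ii) there is a genuine gap in stage (3). Your interleaved construction is actually more careful than the paper's sketch (the paper points to equation~\eqref{eq:sum-dec}, which as written produces a linearization containing both $T^{-1}A'$ and $T^{-1}B'$ as separate entries rather than the single required entry $T^{-1}(A'\oplus B')$), and your decision to merge the two ``special'' cone attachments into a single triangle $\Delta_p^A\oplus\Delta_q^B$ is exactly the right fix. However, your argument for the weight bound on that combined step does not go through. You claim to apply the weighted octahedral axiom to the pair $Y_{q-1}^B\oplus\Delta_p^A$ and $Y_p^A\oplus\Delta_q^B$, but the axiom requires the third object of the first triangle to coincide with the first object of the second: here the first triangle ends at $Y_{q-1}^B\oplus Y_p^A$ while the second begins at $T^{-1}B'$, so the octahedron simply cannot be formed from this pair. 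If one instead sets up the octahedron correctly on the factorization $Y_{p-1}^A\oplus Y_{q-1}^B\to Y_p^A\oplus Y_{q-1}^B\to Y_p^A\oplus Y_q^B$, the first input is indeed $\Delta_p^A\oplus Y_{q-1}^B$ (controlled by subadditivity), but the second input is a \emph{rotation} of $Y_p^A\oplus\Delta_q^B$, and the output $\Delta_3$ is a rotation of $\Delta_p^A\oplus\Delta_q^B$; since the axioms say nothing about how $w$ behaves under rotation, neither of these rotated weights is controlled. More fundamentally, what is needed is an inequality of the type $w(\Delta_1\oplus\Delta_2)\leq w(\Delta_1)+w(\Delta_2)$ for direct sums of \emph{two arbitrary} exact triangles, and this does not appear to be a consequence of the stated subadditivity (which only covers fattening the middle two terms of a triangle by a fixed object, i.e.~direct sums with the specific trivial triangle $0\to X\to X\to 0$) together with the normalization and the weighted octahedral axiom. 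That bound does hold in the intended examples --- trivially for the flat weight, and in the TPC case via Lemma~\ref{claim-frag-sum} --- but it is effectively an additional hypothesis; as stated, both your stage (3) and the paper's own proof (including the TPC analogue, Proposition~\ref{prop-frag-sum}) leave this step unjustified, so you should either supply a proof of the direct-sum bound from the axioms or flag it as a supplementary assumption on $w$.
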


The proof of Proposition \ref{prop:tr-weights-gen} is based on simple
 manipulations with exact triangles.  We will prove a similar
 statement in \S\ref{subsubsec:frag1} in a more complicated setting
 and we will then briefly discuss in \S\ref{subsubsec:proof-prop} how
 the arguments given in that case also imply
 \ref{prop:tr-weights-gen}.

\begin{rem}\label{rem:finite-metr} (a) In case $\mathcal{F}$ is invariant by translation in the sense that $T\mathcal{F}\subset \mathcal{F}$ and, moreover, $\mathcal{F}$ is a family of triangular generators for $\mathcal{C}$, then
 the metrics $d^{\mathcal{F}}$ are finite.  This is not difficult to show by first proving 
 that $\delta^{\mathcal{F}}(0, X)$ is finite for all $X\in \mathrm{Obj}(\C)$. 
 
 (b) It is sometimes useful to view an iterated cone-decomposition as in Definition \ref{def:iterated-coneD-tr}
 as a sequence $\eta$ of spaces and maps forming the successive triangles $\Delta_{i}$ below
 
\begin{equation}\label{eq:iterated-tr}\xymatrixcolsep{1pc}  \xymatrix{
 Y_{0} \ar[rr] &  &  Y_{1}\ar@{-->}[ldd]  \ar[r] &\ldots  \ar[r]& Y_{i} \ar[rr] &  &  Y_{i+1}\ar@{-->}[ldd]  \ar[r] &\ldots \ar[r]&Y_{n-1} \ar[rr] &   &Y_{n} \ar@{-->}[ldd]  &\\
 &         \Delta_{1}                  &  & & &  \Delta_{i+1}                          & &  &  &    \Delta_{n}             \\
  & X_{1}\ar[luu] &  & & &X_{i+1}\ar[luu] &  &  & &X_{n}\ar[luu] }
  \end{equation}
 \end{rem}
where the dotted arrows represent maps $Y_{i} \to TX_{i}$ and, in \ref{def:iterated-coneD-tr}, we have $Y_{0}=0$, $Y_{n}=X$. 

(c) The definition of fragmentation pseudo-metrics is quite flexible and there are a number of possible variants.
One of them will be useful later. Instead of $\delta^{\mathcal{F}}$ as given in (\ref{frag-met-0})
we may use:
\begin{equation}\label{eq:frag-simpl}
\underline{\delta}^{\mathcal{F}}(X,X') =\inf\left\{ \sum_{i=1}^{n}w(\Delta_{i}) \, \Bigg| \, \begin{array}{ll} \mbox{$\Delta_{i}$ are successive exact triangles as in (\ref{eq:iterated-tr})} \\ \mbox{with $Y_{0}=X'$,
$X=Y_{n}$  and  $X_i \in \mathcal{F}$,   $n \in \N$} \end{array} \right\}~.~
\end{equation}
For this to be coherent we need to assume here $0\in \mathcal{F}$.
Comparing with the definition of $\delta^{\mathcal{F}}$ in (\ref{frag-met-0}), $\underline{\delta}^{\mathcal{F}}$ corresponds to only taking into account cone decompositions with linearization $(T^{-1}X', F_{1},\ldots , F_{n})$
and with the first triangle $\Delta_{1}: T^{-1}X_{1}\to 0 \to X_{1}$. There are two advantages of this
expression: the first is that it is trivial to see in this case that $\underline{\delta}^{\mathcal{F}}$ satisfies the 
triangle inequality, this does not even require the weighted octahedral axiom. The other advantage is that one starts the sequence of triangles from $X'$ and thus
the negative translate $T^{-1}X'$ is not needed to define $\underline{\delta}^{\mathcal{F}}$.
There is an associated fragmentation pseudo-metric $\underline{d}^{\mathcal{F}}$ obtained by symmetrizing $\underline{\delta}^{\mathcal{F}}$ and this satisfies a formula similar to (\ref{eq:subad}). Of course, the disadvantage of this fragmentation pseudo-metric is that it is larger than $d^{\mathcal{F}}$ and thus more often infinite.

\section{Persistence categories} \label{subsec:pers-cat}
We introduce in this section the notion of persistence category - a
category whose morphisms are persistence modules and such that
composition respects the persistence structure - and then pursue with
a number of related structures and immediate properties.

\subsection{Basic definitions}\label{subsubsec:pers-cat1}

View the real axis $\R$ as a category with
${\rm Obj}(\R) = \{x \,| \, x \in \R\}$ and for any
$x, y \in {\rm Obj}(\R)$, the hom-set
\[ {\rm Hom}_{\R}(x,y) = \left\{ \begin{array}{lcl} i_{x,y} &
      \mbox{if} & x \leq y \\ \emptyset & \mbox{if} & x>y \end{array}
  \right..\] By definition, for any $x \leq y \leq z$ in $\R$,
$i_{y,z} \circ i_{x,y} = i_{x,z}$. We denote this category by
$(\R, \leq)$. It admits an additive structure. Explicitly, consider
the bifunctor $\oplus: (\R, \leq) \times (\R, \leq) \to (\R, \leq)$
defined by $\oplus(r,s) := r+s$, where $0 \in \R$ is the \pbnote{zero}
object and for any two pairs
$(r,s), (r',s') \in {\rm Obj}((\R, \leq) \times (\R, \leq))$,
\[ {\rm Hom}_{(\R, \leq) \times (\R, \leq)}((r,s), (r',s')) =
  \left\{ \begin{array}{lcl} (i_{r,r'}, i_{s,s'}) & \mbox{if} & r \leq
      r' \,\,\mbox{and} \,\,s \leq s' \\ \emptyset & \mbox{if} &
      \mbox{otherwise} \end{array} \right..\] and further
$\oplus(i_{r,r'}, i_{s,s'}) : = i_{r+s, r'+s'} \in {\rm Hom}_{(\R,
  \leq)}(r+s, r'+s')$.  Fix a ground field $\k$ and denote by
${\rm Vect}_{\k}$ the category of $\k$-vector spaces.

\begin{dfn} \label{dfn-pc} A category $\C$ is called a {\em
    persistence category} if for any $A, B \in {\rm Obj}(\C)$, there
  exists a functor $E_{A,B}: (\R, \leq) \to {\rm Vect}_{\k}$ such that
  the following two conditions are satisfied:
\begin{itemize}
\item[(i)] The hom-set in $\C$ is
  ${\rm Hom}_{\C}(A,B) = \{(f,r) \,| \, f \in E_{A,B}(r)\}$. We denote
  ${\rm Mor}_{\C}^r(A,B): = E_{A,B}(r)$, \jznote{or simply ${\rm Mor}^r(A,B)$ when the ambient category $\C$ is not emphasized}. 
\item[(ii)] The composition
  $\circ: {\rm Mor}_{\C}^r(A,B) \times {\rm Mor}_{\C}^s(B,C) \to {\rm
    Mor}_{\C}^{r+s}(A,C)$ in $\C$ is a natural transformation from
  $E_{A,B} \times E_{B,C}$ to $E_{A,C} \circ \oplus$ (with $\oplus$
  the product $(\R, \leq) \times (\R, \leq)\to (\R,\leq)$).
\end{itemize}
\end{dfn}

\begin{remark} Item (i) means that each hom-set ${\rm Hom}_{\C}(A,B)$
  is a persistence $\k$-module with persistence structure morphisms
  $E_{A,B}(i_{r,s})$ for any $r \leq s$ in $\R$. \jznote{Here, we use the weakest possible definition of a persistence $\k$-module in the sense that no regularities, such as the finiteness of the dimension of ${\rm Mor}_{\C}^r(A,B)$ or the semi-continuity when changing the parameter $r$, are required  (see subsection 1.1 in \cite{PRSZ20})}. Item (ii) implies
  that the following diagram
  \begin{equation} \label{comp-nt} \xymatrix{ \Mor_{\C}^r(A,B) \times
      \Mor_{\C}^s(B,C) \ar[r]^-{\circ_{(r,s)}} \ar[d]_-{E_{A,B}(i_{r,r'})
        \times E_{B,C}(i_{s,s'})} & \Mor_{\C}^{r+s}(A,C)
      \ar[d]^-{E_{A,C}(i_{r+s, r'+s'})} \\
      \Mor_{\C}^{r'}(A,B) \times \Mor_{\C}^{s'}(B,C) \ar[r]^-{\circ_{(r',s')}} &
      \Mor_{\C}^{r'+s'}(A,C)}
  \end{equation}
  commutes.
\end{remark}

We will often denote an element in ${\rm Hom}_{\C}(A,B)$, by a single
symbol $\bar{f}$ instead of a pair $(f,r)$. We will use the notation
$\ceil*{\bar{f}~} = r$ to denote the real \pbnote{number} $r$ and
refer to this number as the {\em shift} of $\bar{f}$. For each
$A \in {\rm Obj}(\C)$ the identity
$\bar{\mathds{1}}_A := (\mathds{1}_A, 0) \in \Mor_{\C}^0(A,A)$ is of shift
$0$. For brevity, we will denote from now on the structural morphisms
$E_{A,B}(i_{r,s})$ by $i_{r,s}$.

\

A persistence structure allows us to consider morphism that are identified up to $r$-shift and, similarly, objects that are negligible up to a shift by $r$. 

\begin{dfn} \label{def:acyclics} Fix a persistence category $\C$.
\begin{itemize}
\item[(i)]  For $f,g \in \Mor^{\alpha}_{\C}(A,B)$, we say that $f$ and $g$ are {\em  $r$-equivalent} for some $r \geq0$ if $$i_{\alpha, \alpha+r}(f-g)=0~.~$$ We write $f \simeq_r g$ if $f$ and $g$ are $r$-equivalent. 
\item[(ii)] Two morphisms, $f \in \Mor_{\C}^\alpha(A,B)$ and $g \in \Mor_{\C}^{\beta}(A,B)$,  are {\em $\infty$-equivalent}, written $f\simeq_{\infty} g$,  if there exist $r, r' \geq 0$ with $\alpha+r = \beta+r'$ such that 
$i_{\alpha, \alpha+r}(f) = i_{\beta, \beta+r'}(g)$.
 \item[(iii)] An object $K \in {\rm Obj}(\C)$ is called {\em  $r$-acyclic} for some $r \geq 0$ 
if  $\mathds{1}_K\in \Mor_{\C}^{0}(K,K)$ has the property that $\mathds{1}_K\simeq_{r} 0$.
\end{itemize}
\end{dfn}
Obviously, if $f\simeq_{r} g$ then $f\simeq_{s} g$  for all $s\geq r$. Notice also that $\simeq_{r}$
is indeed an equivalence relation. Indeed, for $r\not=\infty$ this follows immediately from the fact that $i_{\alpha,\beta}: \Mor_{\C}^{\alpha}(A,B)\to \Mor_{\C}^{\beta}(A,B)$ is a linear map and it is an easy exercise for $r=\infty$ .

\

\begin{dfn} \label{dfn-c0-cinf} Given a persistence category $\C$,
  there are two categories naturally associated to it as follows:
\begin{itemize}
\item[(i)] the $0$-level of $\C$, denoted $\C_0$, which is the category with the same objects as $\C$ and, for any $A, B \in {\rm Obj}(\C)$, with $\Mor_{\C_0}(A,B) := \Mor_{\C}^0(A,B)$. 
\item[(ii)] the limit category (or $\infty$-level) of $\C$, denoted
  $\C_{\infty}$, that again has the same objects as $\C$ but for any
  $A,B \in {\rm Obj}(\C)$,
  $\Hom_{\C_{\infty}}(A,B) := \varinjlim_{\alpha \to \infty}
  \Mor^\alpha_{\C}(A,B)$, where the direct limit is taken with respect
  to the morphisms
  $i_{\alpha,\beta}: \Mor_{\C}^\alpha(A,B) \to \Mor_{\C}^\beta(A,B)$ for any
  $\alpha \leq \beta$.
\end{itemize}
\end{dfn}


\begin{rem}\label{rem:gen-pers} (a) In general, a persistence category is not pre-additive as the hom-sets ${\rm Hom}_{\C}(A,B)$ are not always abelian groups. However, it is easy to see that both $\mathcal{C}_{0}$   and   $\C_{\infty}$ are pre-additive
(the proof is immediate in the first case and a simple exercise in the second).

(b) The limit category $\C_{\infty}$ can be equivalently defined as a quotient category $\C/ \simeq_{\infty}$ which is defined by ${\rm Obj}(\C/ \simeq_{\infty}) = {\rm Obj}(\C)$ and ${\rm Hom}_{\C/ \simeq_{\infty}}(A,B)  ={\rm Hom}_{\C}(A,B)/ \simeq_{\infty}$. 
\end{rem}
 
Two objects $A, B \in {\rm Obj}(\C)$ are  said {\em  $0$-isomorphic}, we write $ A \equiv B$, if they
are isomorphic in the category $\C_{0}$. This is obviously an equivalence relation and it preserves
$r$-acyclics in the sense that if $K \simeq_r 0$ and $K \equiv K'$, then $K' \simeq_r 0$.

\subsection{Persistence functors}
Persistence categories come associated notions of persistence functors
and natural transformations relating them, as described below.

\begin{dfn} \label{dfn-per-functor} Given two persistence categories
  $\C$ and $\C'$, a {\em persistence functor $\F: \C \to \C'$} is a
  functor which is compatible with the persistence structures.
  \pbnote{More explicitly, the action of $\mathcal{F}$ on morphisms
    restricts to maps
    $(\mathcal{F}_{A,B})_r: \Mor_{\mathcal C}^{r}(A,B) \to
    \Mor_{\mathcal C'}^{r}(\F(A),\F(B))$ defined for any
    $A, B \in {\rm Obj}(\C)$ and $r \in \mathbb{R}$. Moreover, for
    every $r \leq s$ we have the following commutative diagram:}
  \begin{equation} \label{functor-com} \xymatrixcolsep{4pc} \xymatrix{
      \Mor_{\mathcal C}^{r}(A,B) \ar[r]^-{(\F_{A,B})_r}
      \ar[d]_-{i^{\C}_{r,s}}
      & \Mor_{\mathcal C'}^{r}(\F(A),\F(B)) \ar[d]^-{i^{\C'}_{r, s}}\\
      \Mor_{\mathcal C}^{s}(A,B) \ar[r]^-{(\F_{A,B})_s} &
      \Mor_{\mathcal C'}^s(\F(A),\F(B))}
  \end{equation}
  where $i_{r, s}^{\mathcal C}$ and $i_{r, s}^{\mathcal C'}$ are
  persistence structure maps in $\mathcal C$ and $\mathcal C'$,
  respectively. In particular, for each
  $\bar{f} \in {\rm Hom}_{\C}(A,B)$ with $\ceil*{\bar{f}~} = r$, we
  have $\ceil*{\F_{A,B}(\bar{f}~)} = r$.
\end{dfn}



For any functor $E: (\R, \leq) \to {\rm Vect}_{\k}$ and
$\alpha \in \R$, we denote by
$\Sigma^{\alpha} E: (\R, \leq) \to {\rm Vect}_{\k}$ the $\alpha$-shift
of $E$ defined by $\Sigma^{\alpha}E(r) = E(r+ \alpha)$ and
$\Sigma^{\alpha}E(i_{r,s}) = E(i_{r + \alpha, s+\alpha})$ for any
$i_{r,s}: r \to s$, $r \leq s$.

\begin{dfn} \label{dfn-nt} Given two persistence functors between two
  persistence categories $\F, \G: \C \to \C'$, a {\em persistence
    natural transformation} $\eta: \F \to \G$ is a natural
  transformation for which there exists $r \in \mathbb{R}$ such that
  for any $A \in {\rm Obj}(\C)$, the morphism
  $\eta_A: \F(A) \to \G(A)$ belongs to $\Mor_{\C'}^r(\F(A), \G(A))$. We say
  that $\eta$ is a natural transformation of shift $r$.
%
%
\end{dfn}

\begin{remark} \label{rmk-pf} (a) \pbnote{The morphisms
    $\eta_A: \F(A) \to \G(A)$, $A \in \text{Obj}(\C)$, give rise to
    the following commutative diagrams for all
    $X \in \text{Obj}(\C)$:}
  \begin{equation} \label{nt-dia-2} \xymatrixcolsep{3pc} \xymatrix{
      {\rm Mor}_{\C'}^{\alpha}(X, \F(A)) \ar[r]^-{\eta_A \circ}
      \ar[d]_-{i_{\alpha, \beta}} & {\rm Mor}_{\C'}^{\alpha+r}(X,
      \G(A))
      \ar[d]^-{i_{\alpha+r, \beta+r}} \\
      {\rm Mor}_{\C'}^{\beta}(X, \F(A)) \ar[r]^-{\eta_A \circ} & {\rm
        Mor}_{\C'}^{\beta+r}(X, \G(A))}
  \end{equation}
  and
  \begin{equation} \label{nt-dia-3} \xymatrixcolsep{3pc} \xymatrix{
      {\rm Mor}_{\C'}^{\alpha}(\G(A), X) \ar[r]^-{\circ \eta_A}
      \ar[d]_-{i_{\alpha, \beta}}
      & {\rm Mor}_{\C'}^{\alpha+r}(\F(A), X) \ar[d]^-{i_{\alpha+r, \beta+r}} \\
      {\rm Mor}_{\C'}^{\beta}(\G(A), X) \ar[r]^-{\circ \eta_A} & {\rm
        Mor}_{\C'}^{\beta+r}(\F(A), X)}
  \end{equation}
  commute for any $\alpha \leq \beta \in \R$.

  (b) Given two persistence categories $\C, \C'$, the persistence
  functors themselves form a persistence category denoted by
  $\mathcal{P}{\rm Fun}(\C, \C')$, where
  \[ {\rm Hom}_{\mathcal{P}{\rm Fun}(\C, \C')}(\F, \G) =
    \left\{\left(\eta, \ceil*{\eta}\right) \,\bigg|
      \, \begin{array}{l} \mbox{$\eta$ is a natural transformation} \\
           \mbox{from $\F$ to $\G$ of shift $\ceil*{\eta}$}\end{array}
  \right\}.\] When $\C = \C'$, simply denote
  $\mathcal{P}{\rm Fun}(\C, \C')$ by $\mathcal{P}{\rm End}(\C)$. It is
  easy to verify that $\mathcal{P}{\rm End}(\C)$ admits a strict
  monoidal structure.
\end{remark}

\subsection{Shift functors}
The role of shift functors, to be introduced below, is to allow
morphisms of arbitrary shift (as well as $r$-equivalences) to be
represented as morphisms of shift $0$ with the price of ``shifting''
the domain (or the target) - see Remark \ref{rem:c0repl}.  This turns
out to be very helpful in the study of triangulation for persistence
categories.

\

View the real axis $\R$ as a strict monoidal category $(\R, +)$
induced by the additive group structure of $\R$.  In other words,
${\rm Obj}(\R) = \{x \,| \, x \in \R\}$ and for any
$x, y \in {\rm Obj}(\R)$, ${\rm Hom}_{\R}(x,y) = \{\eta_{x,y}\}$ such
that $\eta_{x,x} = \mathds{1}_x$ and, for any $x, y, z \in \R$,
$\eta_{y,z} \circ \eta_{x,y} = \eta_{x,z}$. In particular,
$\eta_{x,y} \circ \eta_{y,x} = \eta_{y,y}$ and
$\eta_{y,x} \circ \eta_{x,y} = \eta_{x,x}$.  Both $\eta_{x,x}$ and
$\eta_{y,y}$ are the respective identities and therefore each morphism
$\eta_{x,y}$ is an isomorphism.  The monoidal structure is defined by
$\oplus (x,y) := x+y$ on objects and for any two morphisms
$(\eta_{r,r'}, \eta_{s, s'})$ we have
$\oplus (\eta_{r,r'}, \eta_{s, s'}) := \eta_{r+s, r'+s'}$.

\begin{dfn} \label{dfn-shift-functor} Let $\C$ be a persistence
  category. A {\em shift functor} on $\C$ is a strict monoidal functor
  $\Sigma: (\R, +) \to \mathcal{P}{\rm End}(\C)$ such that
  $\Sigma(\eta_{x,y}): \Sigma(x) \to \Sigma(y)$ is a natural
  transformation of shift $\ceil*{\Sigma(\eta_{x,y})} = y-x$ for any
  $x, y \in \R$ and $\eta_{x,y} \in {\rm Mor}_{\R}(x,y)$.
\end{dfn}

For later use, denote
$\Sigma^r := \Sigma(r) \in \mathcal{P}{\rm End}(\C)$ and, for brevity,
we denote $\Sigma(\eta_{r,s})$ by $\eta_{r,s}$ for $r, s\in \R$ and we
let $(\eta_{r,s})_{A}$ be the respective morphism
$\Sigma^rA \to \Sigma^{s}A$.

\begin{remark} \label{rem:shift} Since $\Sigma$ is a strict monoidal
  functor, it preserves the product structure. Therefore,
  $\Sigma^s \circ \Sigma^r = \Sigma^{r+s}$ and
  $\Sigma^{0} = \mathds{1}$. Moreover, since each $\eta_{r,s}$
  \pbnote{is an isomorphism in $(\mathbb{R}, +)$,} the corresponding
  natural transformation $\eta_{r,s}$ is a natural isomorphism. We
  also have $\Sigma^{r}(\eta_{s,s'})_{A}=(\eta_{s+r,s'+r})_{A}$ for
  each object $A$ in $\C$ and all $r,s,s'\in \R$.

  In particular, this implies that for any $Y, A \in {\rm Obj}(\C)$
  and $\alpha \in \R$, we have an isomorphism,
  \begin{equation} \label{nt-iso} \Mor^{\alpha}_{\C}(Y, A)
    \xrightarrow{(\eta_{0,r})_A\circ} \Mor^{\alpha + r}_{\C}(Y,
    \Sigma^r A).
\end{equation}
Similarly, for any $A, X \in {\rm Obj}(\C)$ and $\alpha \in \R$, we
have an isomorphism
\begin{equation} \label{nt-iso-2} \Mor^{\alpha-r}_{\C}(\Sigma^r A, X)
  \xrightarrow{\circ(\eta_{0,r})_A} \Mor^{\alpha}_{\C}(A, X).
\end{equation}
Further, for any $A, B \in {\rm Obj}(\C)$, the isomorphisms
(\ref{nt-iso}) and (\ref{nt-iso-2}) imply the existence of an
isomorphism
\begin{equation} \label{nt-iso-3} \Mor^{\alpha+s-r}_{\C}(\Sigma^r A,
  \Sigma^sB) \simeq \Mor^{\alpha}_{\C}(A, B).
\end{equation}
In particular, when $r=s$, we get a canonical isomorphism:
\begin{equation} \label{sigma-mor} \Sigma^r: \Mor^{\alpha}_{\C}(A, B)
  \to \Mor^{\alpha}_{\C}(\Sigma^rA, \Sigma^rB).
\end{equation}
Finally, diagrams (\ref{nt-dia-2}) and (\ref{nt-dia-3}) imply that the
following diagrams obtained by setting $\F = \Sigma^0$ and
$\G = \Sigma^r$
\begin{equation} \label{nt-dia-4} \xymatrixcolsep{3pc} \xymatrix{ {\rm
      Mor}_{\C}^{\alpha}(X, A) \ar[r]^-{(\eta_{0,r})_A \circ}
    \ar[d]_-{i_{\alpha, \beta}} & {\rm Mor}_{\C}^{\alpha+r}(X,
    \Sigma^r A)
    \ar[d]^-{i_{\alpha+r, \beta+r}} \\
    {\rm Mor}_{\C}^{\beta}(X, A) \ar[r]^-{(\eta_{0,r})_A\circ} & {\rm
      Mor}_{\C}^{\beta+r}(X, \Sigma^r A)}
\end{equation}
and
\begin{equation} \label{nt-dia-5} \xymatrixcolsep{3pc} \xymatrix{ {\rm
      Mor}_{\C}^{\alpha}(\Sigma^r A, X) \ar[r]^-{\circ (\eta_{0,r})_A}
    \ar[d]_-{i_{\alpha, \beta}} & {\rm Mor}_{\C}^{\alpha+r}(A, X)
    \ar[d]^-{i_{\alpha+r, \beta+r}} \\
    {\rm Mor}_{\C}^{\beta}(\Sigma^r A, X) \ar[r]^-{\circ
      (\eta_{0,r})_A} & {\rm Mor}_{\C}^{\beta+r}(A, X)}
\end{equation}
are commutative for any $\alpha \leq \beta$. All the horizontal
morphisms in (\ref{nt-dia-4}) and (\ref{nt-dia-5}) are isomorphisms
but the vertical morphisms (which are the persistence structure
morphisms) are not necessarily so. \end{remark}



Assume that $\C$ is a persistence category (with persistence structure
morphisms denoted by $i_{r,s}$) endowed with a shift functor $\Sigma$.
To simplify notation, for $A \in {\rm Obj}(\C)$, \pbnote{$r \geq 0$,}
we consider $(\eta_{r,0})_A \in \Mor_{\C}^{-r}(\Sigma^rA,A)$ and
$(\eta_{0,-r})_A \in \Mor_{\C}^{-r}(A, \Sigma^{-r}A)$ and we will denote
below by $\eta^{A}_{r}$ the \jznote{following maps 
\begin{equation} \label{dfn-eta-map}
\eta^A_r = i_{-r,0}((\eta_{r,0})_A) \,\,\,\mbox{or} \,\,\,  \eta^A_r = i_{-r,0}((\eta_{0,-r})_A).
\end{equation} }
Thus $\eta^{A}_{r}\in \Mor_{\C}^{0}(\Sigma^{r} A, A)$ or
$\eta^{A}_{r}\in\Mor_{\C}^{0}(A,\Sigma^{-r}A)$, depending on the context. \jznote{Note that there is no ambiguity of the notation $\eta_r^A$ due to the canonical identification via $\Sigma^r$ in (\ref{sigma-mor}).} 
The notions discussed before, $r$-acyclicity, $r$-equivalence and so
forth, can be reformulated in terms of compositions with appropriate
shift morphisms $\eta^{A}_{r}$.

The next lemma is a characterization of $r$-equivalence that follows
easily from the diagrams (\ref{nt-dia-4}) and (\ref{nt-dia-5}).
\begin{lemma} \label{lemma-comp} Suppose that
  $f \in \Mor^{\alpha}(A,B)$. Then $i_{\alpha, \alpha + r}(f) =0$ for
  some $r \geq 0$ if and only if $f \circ \eta^A_{r}=0$ in
  $\Mor^{\alpha}(\Sigma^r A, B)$ and (equivalently) if and only if
  $\eta^B_{r} \circ f =0$ in $\Mor^{\alpha}(A,
  \Sigma^{-r}B)$.
\end{lemma}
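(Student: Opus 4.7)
The strategy is to observe that precomposition with $\eta^A_r$ (resp.\ postcomposition with $\eta^B_r$) implements, at the level of morphism spaces, the persistence structure map $i_{\alpha,\alpha+r}$ after transport via the natural isomorphisms supplied by the shift functor $\Sigma$. Since the natural transformations $(\eta_{r,0})$ and $(\eta_{0,-r})$ are in fact natural isomorphisms (Remark~\ref{rem:shift}), composition with their components is bijective on hom-sets, and the equivalence of the three vanishing conditions becomes essentially tautological.

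More concretely, I would apply diagram~(\ref{nt-dia-5}) to the natural transformation $(\eta_{r,0}) \colon \Sigma^r \to \Sigma^0$ of shift $-r$, at the object $A$, with target $X = B$, and at the persistence levels $\alpha$ and $\alpha+r$. Commutativity of the resulting square reads
\[
i_{\alpha-r,\alpha}\bigl( f \circ (\eta_{r,0})_A \bigr) \;=\; i_{\alpha,\alpha+r}(f) \circ (\eta_{r,0})_A .
\]
The naturality of composition with respect to the persistence parameter (Definition~\ref{dfn-pc}(ii), applied to $f$ fixed and to $(\eta_{r,0})_A$ lifted from level $-r$ to level $0$) rewrites the left-hand side as $f \circ i_{-r,0}\bigl((\eta_{r,0})_A\bigr) = f \circ \eta^A_r$. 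Since $(\eta_{r,0})_A$ is an isomorphism in $\C$, precomposition with it gives a bijection $\Mor^{\alpha+r}(A,B) \xrightarrow{\sim} \Mor^\alpha(\Sigma^r A, B)$, and therefore $f \circ \eta^A_r = 0$ if and only if $i_{\alpha,\alpha+r}(f) = 0$.

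The remaining equivalence is entirely dual. One applies diagram~(\ref{nt-dia-4}) to the natural transformation $(\eta_{0,-r}) \colon \Sigma^0 \to \Sigma^{-r}$ of shift $-r$, at the object $B$, with source $X = A$, at levels $\alpha$ and $\alpha+r$. The same manipulation, combined with naturality of composition in the first factor, yields $\eta^B_r \circ f = (\eta_{0,-r})_B \circ i_{\alpha,\alpha+r}(f)$, and invertibility of $(\eta_{0,-r})_B$ closes the argument.

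There is no genuine obstacle here; the only point requiring a bit of care is the bookkeeping of signs and directions: the relevant natural transformations have negative shift $-r$, and the diagrams of Remark~\ref{rem:shift} must be specialized at the indices $(\alpha,\alpha+r)$ rather than $(0,r)$ so that the vertical arrow in the commutative square is precisely the persistence morphism $i_{\alpha,\alpha+r}$ one wants to detect.
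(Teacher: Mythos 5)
Your proof is correct and fills in exactly the details the paper intends: the paper says the lemma ``follows easily from the diagrams (\ref{nt-dia-4}) and (\ref{nt-dia-5})'', and your argument --- transporting the vanishing of $i_{\alpha,\alpha+r}(f)$ across the horizontal isomorphisms in those squares and then using the naturality of composition with respect to the persistence parameter (diagram (\ref{comp-nt})) to identify $\eta^A_r = i_{-r,0}\bigl((\eta_{r,0})_A\bigr)$ --- is precisely that. One small bookkeeping remark: since you invoke the natural transformation $(\eta_{r,0})$ of negative shift $-r$, you are strictly speaking using the general square (\ref{nt-dia-3}) rather than the specialization (\ref{nt-dia-5}) as printed (which is stated for $(\eta_{0,r})$ of positive shift), but you flag the sign/direction issue yourself and the content is unchanged.
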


In particular, we easily see that for two morphisms
$f,g\in \Mor^{\alpha}(A,B)$, $f\simeq_{r} g$ if and only if
$f\circ \eta^{A}_{r}=g\circ \eta^{A}_{r}$. Moreover, $r$-equivalence
is preserved under shifts.  Further, it is immediate to check that
$f \in \Mor^\alpha(A,B)$ and $g \in \Mor^{\beta}(A,B)$ are
$\infty$-equivalent if and only if there exist $r, r' \geq 0$ with
$\alpha+r = \beta+r'$ such that
\[ f \circ \eta_r^A = g \circ \eta_{r'}^{A}\,\,\,\,\mbox{in} \,\,
  \Mor^{\alpha+r}(A,B) \] where we identify both
$\Mor^{\alpha}(\Sigma^r A, B) $ and $\Mor^{\beta}(\Sigma^{r'} A, B)$
with $\Mor^{\alpha+r}(A, B)$ through the canonical isomorphisms in
Remark \ref{rem:shift}.

Here is a similar characterization of $r$-acyclicity. 

\begin{lemma} \label{lemma-acyclic} $K \simeq_r 0$ is equivalent to
  each of the following:
  \begin{itemize}
  \item[(i)] \pbnote{$\eta_{r}^{K} = 0$}.
  \item[(ii)]
    $i_{\alpha, \alpha+r}: \Mor^{\alpha}(A,K) \to
    \Mor^{\alpha+r}(A,K)$ vanishes for any $\alpha \in \R$ and $A$.
  \item[(iii)]
    $i_{\alpha, \alpha+r}: \Mor^{\alpha}(K,A) \to \Mor^{\alpha+r}(K,
    A)$ vanishes for any $\alpha \in \R$ and $A$.
\end{itemize}
\end{lemma}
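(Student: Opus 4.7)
\emph{Proof plan.} The key observation is that the condition $K \simeq_r 0$ is, by definition, the single vanishing statement $i_{0,r}(\mathds{1}_K) = 0$ in $\Mor^r(K,K)$, so each of the three equivalent statements should be traceable back to this one identity.

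First, I would handle equivalence with (i) by directly invoking Lemma \ref{lemma-comp} with the choice $f = \mathds{1}_K \in \Mor^0(K,K)$, $A = B = K$, $\alpha = 0$. The lemma then reads: $i_{0,r}(\mathds{1}_K) = 0$ if and only if $\mathds{1}_K \circ \eta_r^K = 0$, i.e.\ $\eta_r^K = 0$ in $\Mor^0(\Sigma^r K, K)$ (or equivalently in $\Mor^0(K, \Sigma^{-r} K)$ via the canonical identification~(\ref{sigma-mor})).

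Next, for (ii) and (iii), I would exploit the naturality of composition expressed by diagram~(\ref{comp-nt}). Concretely, given any $f \in \Mor^\alpha(A, K)$, writing $f = \mathds{1}_K \circ f$ and applying the commutative square from~(\ref{comp-nt}) (with the second factor fixed and the first factor shifted from $0$ to $r$) yields
\[ i_{\alpha, \alpha+r}(f) \;=\; i_{\alpha, \alpha+r}(\mathds{1}_K \circ f) \;=\; i_{0,r}(\mathds{1}_K) \circ f. \]
Thus $K \simeq_r 0$ forces all such $i_{\alpha,\alpha+r}$ to vanish, giving (ii). Conversely, specializing (ii) to $A = K$, $\alpha = 0$, and $f = \mathds{1}_K$ recovers $i_{0,r}(\mathds{1}_K) = 0$, i.e.\ $K \simeq_r 0$. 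Case (iii) is entirely symmetric: for $g \in \Mor^\alpha(K, A)$, writing $g = g \circ \mathds{1}_K$ and applying~(\ref{comp-nt}) with the first factor fixed and the second factor shifted yields $i_{\alpha, \alpha+r}(g) = g \circ i_{0,r}(\mathds{1}_K)$, and the converse is obtained the same way by specializing to $g = \mathds{1}_K$.

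There is no real obstacle here; the proof is essentially bookkeeping with the two naturality squares already on record. The only subtlety to flag is that statement (i) implicitly uses the canonical identification~(\ref{sigma-mor}) to treat the two displayed versions of $\eta_r^K$ interchangeably, which is exactly the point noted right after~(\ref{dfn-eta-map}).
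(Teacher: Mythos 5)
Your proof is correct. For part (i) you follow the same route as the paper, namely specializing Lemma~\ref{lemma-comp} to $f = \mathds{1}_K$. For parts (ii) and (iii), however, you take a genuinely different (and somewhat more elementary) path. You invoke the bi-naturality of composition, diagram~(\ref{comp-nt}), with indices $(r,s) = (\alpha,0) \to (r',s') = (\alpha, r)$ (resp.\ $(0,\alpha) \to (r,\alpha)$), to obtain the identity $i_{\alpha,\alpha+r}(f) = i_{0,r}(\mathds{1}_K)\circ f$ (resp.\ $i_{\alpha,\alpha+r}(g) = g \circ i_{0,r}(\mathds{1}_K)$), from which the equivalences follow immediately. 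The paper instead factors $i_{\alpha,\alpha+r}$ through $\Mor^{\alpha}(A,\Sigma^{-r}K)$ using diagram~(\ref{nt-dia-4}), observes that one of the two factor maps is an isomorphism, and only then uses $f = \mathds{1}_K \circ f$. The two arguments are morally the same trick ($f = \mathds{1}_K\circ f$ plus naturality), but yours sidesteps the shift functor entirely for parts (ii) and (iii) and makes the key identity $i_{\alpha,\alpha+r}(f)=i_{0,r}(\mathds{1}_K)\circ f$ explicit, which is a small gain in transparency; the paper's factorization through $\Sigma^{-r}K$ is somewhat more in keeping with the way $r$-acyclicity and $\eta_r^K$ are used elsewhere in the text. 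Your flag about the identification~(\ref{sigma-mor}) for the two forms of $\eta_r^K$ is exactly right and matches the remark after~(\ref{dfn-eta-map}).
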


\begin{proof} Point (i) is an immediate consequence of the definition of $r$-acylics in Definition \ref{def:acyclics} and of Lemma \ref{lemma-comp} applied for $A,B=K$, 
$f=\mathds{1}_{K}$. 
We now prove (ii). The proof of (iii) is similar and will be omitted.
It is obvious that (ii) implies $K\simeq_{r} 0$ by specializing to $A=K$, $\alpha=0$
 and applying $i_{0,r}$ to $\mathds{1}_K$. 
To prove the converse, we first use diagram (\ref{nt-dia-4}) to deduce that 
the map $i_{\alpha,\alpha+r}$ factors as below: 
\begin{equation} \label{com-da}  \xymatrixcolsep{5pc} \xymatrix{ 
\Mor^{\alpha}(A, K) \ar[r]^-{(i_{-r,0}(\eta_{0,-r})_K) \circ} \ar@/_2pc/[rr]^-{i_{\alpha, \alpha+r}} & \Mor^{\alpha}(A, \Sigma^{-r}K) \ar[r]^-{(\eta_{-r,0})_K\circ} & \Mor^{\alpha+r}(A, K)} ~.~
\end{equation}
Therefore, since $(\eta_{-r,0})_K\circ$ is an isomorphism, for any $f \in \Mor^{\alpha}(A, K)$
we have $i_{\alpha, \alpha+r}(f) = 0$ if and only if $i_{-r,0}(\eta_{0,-r})_K \circ f =0$. From point 
(i) we know that this relation is true for $f=\mathds{1}_{K}$. 
Now, for any $f\in \Mor^{\alpha}(A,K)$, we write $f = \mathds{1}_K \circ f$ and conclude 
 $i_{-r,0}(\eta_{0,-r})_K \circ f =i_{-r,0}(\eta_{0,-r} \circ \mathds{1}_K)\circ f=0$. \end{proof}


In particular, we see that $K$ is $r$-acyclic if and only if any of
its shifts $\Sigma^{\alpha} K$ is so.

\begin{rem} \label{rem:c0repl} (a) Assume that $\C$ is a persistence
  category endowed with a shift functor $\Sigma$ and that
  $f_{1},f_{2}\in \Mor_{\C}^{\alpha}(A,B)$, then, for all practical
  purposes, we may replace $f_{i}$ with the $\C_{0}$ morphisms
  $\tilde{f}_{i}\in \Mor_{\C}^{0}(\Sigma^{\alpha}A,B)$ where
  $\tilde{f}_{i}=f_{i}\circ\eta_{0,-\alpha}$. The property
  $f_{1}\simeq _{r} f_{2}$ is equivalent to
  $$\tilde{f}_{1}\circ \eta_{r}^{\Sigma^{\alpha}A} \simeq_{0}  \tilde{f}_{2}
  \circ \eta_{r}^{\Sigma^{\alpha}A}$$ which is a relation in $\C_{0}$.

  (b) For a given persistence category $\C$, the existence of a shift
  functor $\Sigma$ on $\C$ \jznote{sometimes} is a non-trivial additional structure. Such
  a structure often happens to be available in geometric examples. At
  the same time, there is an obvious way to formally complete any
  persistence category $\C$ to a larger persistence category
  $\tilde{\C}$ that is endowed with a canonical shift functor. This is
  achieved by formally adding objects $\Sigma^{r}X$ for each $r\in \R$
  and $X\in \mathrm{Obj}(\C)$ and defining morphisms such that the
  relations in Remark \ref{rem:shift} are satisfied.
\end{rem}

\subsection{An example of a persistence category} \label{ssec-pm}

We give an example of a persistence category that is constructed from persistence $\k$-modules. To some extent, this is the motivation of the definition of a persistence category. Recall that for a persistence $\k$-module $\V$, the notation $\V[r]$ denotes another persistence $\k$-module which comes from an $r$-shift from $\V$ in the sense that 
\[ \V[r]_t = \V_{r+t} \,\,\,\,\,\mbox{and}\,\,\,\,\, \iota^{\V[r]}_{s,t} = \iota^{\V}_{s+r, t+r}. \]
A persistence morphism $f: \V \to \W$ is an $\R$-family of morphisms $f = \{f_t\}$ such that it commutes with the persistence structure maps of $\V$ and $\W$, i.e., $f_{t} \circ \iota^{\V}_{s,t} = \iota^{\W}_{s,t} \circ f_s$. Similarly, one can define $r$-shifted persistence morphism $f[r]$ where $(f[r])_t = f_{r+t}$. 

\begin{remark} There are cases of $\V$ and $\W$ such that the only {\it persistence} morphism from $\V$ to $\W$ is the zero morphism. For instance, $\V = \I_{[0, \infty)}$ and $\W =\I_{[1, \infty)}$. Note that $\W = \V[-1]$. On the other hand, there always exists a well-defined persistence morphism from $\V[r]$ to $\V[s]$ for $r \leq s$, which is just $\iota^{\V}_{\cdot +r, \cdot +s}$. \end{remark}

Let $\mathcal P{\rm Mod}_{\k}$ be the category of persistence $\k$-modules, then we claim that $\mathcal P{\rm Mod}_{\k}$ can be enriched to be a persistence category $\C^{\mathcal P{\rm Mod}_{\k}}$. Indeed, let ${\rm Obj}(\C^{\mathcal P{\rm Mod}_{\k}}) = {\rm Obj}(\mathcal P{\rm Mod}_{\k})$, and for objects $\V, \W$ in $\mathcal P{\rm Mod}_{\k}$, define 
\begin{equation} \label{per-mor}
{\rm Mor}_{\C^{\mathcal P{\rm Mod}_{\k}}}(\V, \W) : = \left\{\{{\rm Hom}_{\mathcal P{\rm Mod}_{\k}}(\V, \W[r])\}_{r\in \R}; \{i_{r,s}\}_{r \leq s \in \R} \right\}.
\end{equation}
Here, ${\rm Mor}_{\C^{\mathcal P{\rm Mod}_{\k}}}^r(\V, \W) = {\rm Hom}_{\mathcal P{\rm Mod}_{\k}}(\V, \W[r])$, and ${\rm Hom}_{\mathcal P{\rm Mod}_{\k}}(\cdot, \cdot)$ consists of persistence morphisms. For any $r \leq s$, the well-defined persistence morphism $\iota^{\W}_{\cdot+r,\cdot +s}: \W[r] \to \W[s]$ induces structure maps $i_{r,s} :=  \iota^{\W}_{\cdot+r,\cdot +s} \circ$  in (\ref{per-mor}). Moreover, the composition $\circ_{(r,s)}: \Mor_{\C^{\mathcal P{\rm Mod}_{\k}}}^{r}(\U, \V) \times \Mor_{\C^{\mathcal P{\rm Mod}_{\k}}}^s(\V, \W) \to \Mor_{\C^{\mathcal P{\rm Mod}_{\k}}}^{r+s}(\U, \W)$ is defined by 
\[ (f, g) \mapsto g[r] \circ f \]
where we use the identification ${\rm Hom}_{\mathcal P{\rm Mod}_{\k}}(\V, \W)[r] ={\rm Hom}_{\mathcal P{\rm Mod}_{\k}}(\V[r], \W[r])$ for any $r \in \R$. Moreover, for the following diagram where $r \leq r'$ and $s \leq s'$, 
\begin{equation} \label{ex-per-dia}
\xymatrix{
{\rm Hom}_{\mathcal P{\rm Mod}_{\k}}(\U, \V[r]) \times {\rm Hom}_{\mathcal P{\rm Mod}_{\k}}(\V,\W[s]) \ar[r]^-{\circ_{(r,s)}} \ar[d]_-{i_{r,r'} \times i_{s,s'}} & {\rm Hom}_{\P}(\U,\W[r+s]) \ar[d]^-{i_{r+s, r'+s'}} \\
{\rm Hom}_{\mathcal P{\rm Mod}_{\k}}(\U, \V[r']) \times {\rm Hom}_{\mathcal P{\rm Mod}_{\k}}(\V,\W[s']) \ar[r]^-{\circ_{(r',s')}} & {\rm Hom}_{\P}(\U,\W[r'+s'])}
\end{equation}
we have 
\begin{align*}
(\circ_{(r',s')} \circ (i_{r,r'} \times i_{s,s'}))(f,g) & = \circ_{(r',s')}(i_{r,r'}(f), i_{s,s'}(g)) \\
& = i_{s,s'}(g)[r'] \circ i_{r,r'}(f) \\
& = (\iota^{\W}_{\cdot +s, \cdot + s'} \circ g)[r'] \circ (\iota^{\W}_{\cdot + r, \cdot + r'} \circ f) \\
& = \iota^{\W}_{\cdot + r'+s, \cdot + r' + s'} \circ g[r'] \circ \iota^{\V}_{\cdot + r, \cdot + r'} \circ f \\
& = \iota^{\W}_{\cdot + r' + s, \cdot + r'+s'} \circ \iota^{\W}_{\cdot + r + s, \cdot + r' + s} \circ g[r] \circ f\\
& = \iota^{\W}_{\cdot + r+s, \cdot + r'+s'} \circ (g[r] \circ f) = (\iota_{r+s, r'+s'} \circ \circ_{(s,t)})(f,g)
\end{align*}
where the fifth equality is due to the fact that $g$ is a persistence morphism (so, in particular, it commutes with the persistence structure maps). Therefore, the diagram (\ref{ex-per-dia}) is commutative and $\C^{\mathcal P{\rm Mod}_{\k}}$ is a persistence category by Definition \ref{dfn-pc}. 

\medskip

Since ${\rm Mor}_{\C^{\mathcal P{\rm Mod}_{\k}}}^0(\V, \W) = {\rm Hom}_{\mathcal P{\rm Mod}_{\k}}(\V, \W)$, we know $\C^{\mathcal P{\rm Mod}_{\k}}_0 = \mathcal P{\rm Mod}_{\k}$.  An example of a persistence endofunctor on $\C^{\mathcal P{\rm Mod}_{\k}}$, denoted by $\Sigma^{\alpha}: \C^{\mathcal P{\rm Mod}_{\k}} \to \C^{\mathcal P{\rm Mod}_{\k}}$, is defined by 
\begin{equation} \label{ex-per-fun} 
\Sigma^{\alpha}(\V) : = \V[-{\alpha}] \,\,\,\,\mbox{and}\,\,\,\, \Sigma^{\alpha}(f) = f[-\alpha]
\end{equation}
for any $\alpha \in \R$. By Definition \ref{dfn-per-functor}, we need to confirm the commutativity of the following diagram where $r\leq s$ in $\R$, 
\begin{equation*} 
\xymatrixcolsep{4pc} \xymatrix{
\Mor_{\mathcal C^{\mathcal P{\rm Mod}_{\k}}}^{r}(\V,\W)  \ar[r]^-{\Sigma^{\alpha}} \ar[d]_-{i_{r,s}} & \Mor_{\mathcal C^{\mathcal P{\rm Mod}_{\k}}}^{r}(\Sigma^{\alpha} \V, \Sigma^{\alpha} \W) \ar[d]^-{i_{r,s}}\\
\Mor_{\mathcal C^{\mathcal P{\rm Mod}_{\k}}}^{s}(\V,\W) \ar[r]^-{\Sigma^{\alpha}} & \Mor_{\mathcal C^{\mathcal P{\rm Mod}_{\k}}}^s(\Sigma^{\alpha} \V,\Sigma^{\alpha} \W)} 
\end{equation*}
Indeed, for any $f \in \Mor_{\mathcal C^{\mathcal P{\rm Mod}_{\k}}}^{r}(\V,\W)( = {\rm Hom}_{\mathcal P{\rm Mod}_{\k}}(\V, \W[r])$), we have 
\begin{align*}
(\Sigma^{\alpha} \circ i_{r,s})(f) & = \Sigma^{\alpha}(\iota^{\W}_{\cdot +r, \cdot +s} \circ f) \\
& = (\iota^{\W}_{\cdot +r, \cdot +s} \circ f)[- \alpha] \\
& = (\iota^{\W}_{\cdot + r-\alpha, \cdot + s-\alpha}) \circ f[-\alpha] = (i_{r,s} \circ \Sigma^{\alpha})(f) \in \Mor_{\mathcal C^{\mathcal P{\rm Mod}_{\k}}}^s(\Sigma^{\alpha} \V,\Sigma^{\alpha} \W).
\end{align*}
Therefore, $\Sigma^{\alpha}$ is a persistence endofunctor on $\C^{\P{\rm Mod}_{\k}}$ for any $\alpha \in \R$. 
\medskip

Consider a canonical shift functor on $\C^{\mathcal P{\rm Mod}_{\k}}$, denoted by $\Sigma: (\R, +) \to \mathcal P{\rm Fun}(\C^{\mathcal P{\rm Mod}_{\k}})$, defined by 
\[ \Sigma(\alpha) : = \Sigma^{\alpha} \,\,\mbox{defined in (\ref{ex-per-fun})} \,\,\,\,\,\,\,\mbox{and} \,\,\,\,\,\,\, \eta_{\alpha,\beta} = \mathds{1}_{\, \cdot[-\alpha]} \]
for any $\alpha, \beta \in \R$. Indeed, evaluate $\eta_{\alpha,\beta}$ on any object $\V$, 
\begin{align*}
(\eta_{\alpha,\beta})_{\V} = \mathds{1}_{\V[-\alpha]} & \in {\rm Hom}_{\mathcal P{\rm Mod}_{\k}}(\V[-\alpha], \V[-\alpha])\\
& = {\rm Hom}_{\mathcal P{\rm Mod}_{\k}}(\V[-\alpha], \V[-\beta + \beta-\alpha])\\
& = {\rm Mor}_{\C^{\mathcal P{\rm Mod}_{\k}}}^{\beta-\alpha}(\V[-\alpha], \V[-\beta])\\
& = {\rm Mor}_{\C^{\mathcal P{\rm Mod}_{\k}}}^{\beta-\alpha}(\Sigma^\alpha \V, \Sigma^\beta \V). 
\end{align*}
In other words, $\eta_{\alpha, \beta}$ is a persistence natural transformation of shift $\beta - \alpha$ by Definition \ref{dfn-nt}. Therefore, $\Sigma$ defines a shift functor on $\C^{\mathcal P{\rm Mod}_{\k}}$.

\medskip

For $r \geq 0$, recall that the notation $\eta_r^\V$ in (\ref{dfn-eta-map}) denotes the composition $i_{-r,0} \circ (\eta_{r,0})_\V$. In particular, by definition above, $\eta_r^{\V} \in \Mor_{\C^{\mathcal P{\rm Mod}_{\k}}}^0(\Sigma^{r} \V, \V) = {\rm Hom}_{\mathcal P{\rm Mod}_{\k}}(\V[-r], \V)$, and it equals the following composition 
\[ \V[-r] \xrightarrow{\mathds{1}_{\V[-r]}} \V[-r] \xrightarrow{\iota^{\V}_{\cdot+r,\cdot } \circ} \V\]
which is just $\iota^{\V}_{\cdot-r, \cdot}$, the persistence structure maps of $\V$. Assume that objects in $\mathcal P{\rm Mod}_{\k}$ admit sufficient regularities so that they can be equivalently described via barcodes (see \cite{C-B15}), then, by  (i) in Lemma \ref{lemma-acyclic}, $r$-acyclic objects in $\C^{\mathcal P{\rm Mod}_{\k}}$ are precisely those persistence $\k$-modules with only finite-length bars in their barcodes, and its boundary depth, the length of the longest finite-length bar (see \cite{Ush11,Ush13} and \cite{UZ16}), is no greater than $r$.

\begin{remark} The way that we enrich the category $\mathcal P{\rm Mod}_{\k}$ to $\C^{\mathcal P{\rm Mod}_{\k}}$ as above, in order to admit more structures, is also investigated in the recent work \cite{BM19}, Section 10. In particular, the morphism set defined in (\ref{per-mor}) coincides with the enriched morphism set in its Proposition 10.2, and $\C^{\mathcal P{\rm Mod}_{\k}}$ is similar to $\underline{\rm Mod}_R^P$ in its Proposition 10.3 (when taking $R = \k$ and $P = \R$). \end{remark}

\section{Triangulated persistence categories} \label{subsec:TPC}
This section contains the main algebraic notion introduced in this
paper. Its purpose is to reflect triangulation properties in the
context of persistence categories. The basic properties of this
structure lead to the definition of a class of weighted triangles and
induced fragmentation pseudo-metrics on the set of objects of such a
category.

\subsection{Main definition}

We will use consistently below the characterization of $r$-equivalence
in Lemma \ref{lemma-comp} as well as that of $r$-acyclics in Lemma
\ref{lemma-acyclic}.

\begin{dfn} \label{dfn-tpc} A {\em triangulated persistence category}
  is a persistence category $\C$ endowed with a shift functor $\Sigma$
  such that the following three conditions are satisfied:
  \begin{itemize}
  \item[(i)] The $0$-level category $\C_0$ is triangulated with a
    translation automorphism denoted by $T$. \pbnote{Note that in
      particular $\C_0$ is additive and we further assume that the
      restriction of the persistence structure of $\C$ to $\C_0$ is
      compatible with the additive structure on $\C_0$ in the obvious
      way. Specifically this means that
      $\Mor_{\C}^r(A \oplus B,C) = \Mor_{\C}^r(A,C) \oplus \Mor_{\C}^r(B,C)$ for all
      $r \leq 0$ and the persistence maps $i_{r,s}$, $r \leq s \leq 0$
      are compatible with this splitting. The same holds also for
      $\Mor_{\C}^r(A, B \oplus C)$.}
  \item[(ii)] The restriction of $\Sigma^{r}$ to
    $\mathrm{End}(\C_{0})$ is a triangulated endofunctor of $\C_{0}$
    for each $r\in \R$. \pbnote{Note that each of the functors
      $\Sigma^r$, being a triangulated functor, is also assumed to be
      additive. We further assume that all the natural transformations
      $\eta_{r,s}: \Sigma^r \to \Sigma^s$, $s, r \in \mathbb{R}$, are
      compatible with the additive structure on $\C_0$.}
  \item[(iii)] For any $r \geq 0$ and any $A \in {\rm Obj}(\C)$, the
    morphism $\eta^A_r: \Sigma^rA \to A$ defined in (\ref{dfn-eta-map}) embeds into an exact triangle
    of $\C_0$
    \[ \Sigma^r A \xrightarrow{\eta^A_r} A \to K \to T\Sigma^r A \] such
    that $K$ is $r$-acyclic.
  \end{itemize}
\end{dfn}

\begin{remark} \label{rem:shifts-T} (a) Given that $\C_0$ is
  triangulated, the functors $\Mor_{\C}^s(X,-)$ and $\Mor_{\C}^s(-, X)$ are
  exact for $s=0$. This property together with the relations in
  Remark~\ref{rem:shift} imply that these functors are exact {\em for
    all} $s\in \R$.

  (b) Condition (ii) \pbnote{requires in particular} that $\Sigma$ and
  $T$ commute. Thus, $T\Sigma^{r} X=\Sigma^{r}TX$ for each object $X$
  and for any $f\in \Mor_{\C}^{0}(A,B)$ we have
  $\Sigma^{r} Tf=T\Sigma^{r}f$. Additionally, each $\Sigma^{r}$
  preserves the additive structure of $\C_{0}$ and it takes each exact
  triangle in $\C_{0}$ to an exact triangle. \pbnote{Moreover, the
    assumptions above imply that we have canonical isomorphisms
    $$\Mor_{\C}^r(A \oplus B, C) \cong \Mor_{\C}^r(A,C) \oplus \Mor_{\C}^r(B,C), \ \; \;
    \forall r \in \mathbb{R},$$} \pbnote{and the persistence maps
    $i_{r,s}$ are compatible with these isomorphisms. The same holds
    also for $\Mor_{\C}^r(A, B \oplus C)$. Finally, the maps $\eta_r^{A}$,
    $A \in \text{Obj}(\C_0)$, $r \geq 0$, are compatible with the
    additive structure on $\C_0$.}

  Notice also that the functor $T$ extends from $\C_{0}$ to a functor
  on $\C$. Indeed $T$ is already defined on all the objects of $\C$ as
  well as on all the morphisms of shift $0$.  For $f\in \Mor^{r}(A,B)$
  we define $Tf= T(f\circ (\eta_{r,0})_{A})\circ
  (\eta_{0,r})_{TA}$. It is easily seen that with this definition $T$
  is indeed a functor and it immediately follows that
  $T((\eta_{r,s})_{A})= (\eta_{r,s})_{TA}$ for all objects $A$ in $\C$
  and $r,s\in \R$. Further, by using the identifications in Remark
  \ref{rem:shift}, it also follows that $T$ is a persistence functor.
  In particular, we have $\eta_{r}^{TA}=T\eta_{r}^{A}$ for each object
  $A$ in $\C$.

  (c) Given that $0$-isomorphism preserves $r$-acyclicity - as noted
  in \S\ref{subsubsec:pers-cat1}, property (iii) does not depend on
  the specific extension of $\eta_{r}^{A}$ to an exact triangle.
 
  (d) Given a triangulated category $\mathcal{D}$ together with an
  appropriate shift functor
  $\Sigma : (\R, +)\to \mathrm{End}(\mathcal{D})$ it is possible to
  define a triangulated persistence category $\widetilde{\mathcal{D}}$
  with the same objects as $\mathcal{D}$, that has $\mathcal{D}$ as
  $0$-level and with morphism endowed with a persistence structure in
  the unique way such that (\ref{nt-dia-4}) and (\ref{nt-dia-5}) are
  satisfied with respect to the given shift functor $\Sigma$. We do
  not give the details here but we will see an example
  in~\S\ref{subsec:Tam}.
\end{remark}

\begin{dfn} \label{dfn-r-iso} Let $\C$ be a triangulated persistence category. A map $f \in \Mor_{\C}^0(A,B)$ is said to be an {\em {\boldsymbol $r$}-isomorphism} (from $A$ to $B$) if it embeds into an exact triangle in $\C_0$ \[ A \xrightarrow{f} B \to K \to TA  \]
such that $K \simeq_r 0$. \end{dfn}

We write  $f: A \simeq_r B$.

\begin{remark}(a) If $f$ is an $r$-isomorphism, then $f$ is an $s$-isomorphism for any $s \geq r$. 
It is not difficult to check, and we will see this explicitly in Remark \ref{rem:various-iso}, that for $r=0$ this definition is equivalent to the notion of $0$-isomorphism introduced before  (namely isomorphism in the category $\C_{0}$). 

(b) The relation $T(\eta_{r}^{K})=\eta_{r}^{TK}$ implies that $TK$ is $r$-acyclic if and only if $K$ is $r$-acyclic and, therefore, $f$ is an $r$-isomorphism if and only if $Tf$ is one. 

(c) From Definition \ref{dfn-tpc} (iii) we see that for any $r \geq 0$ and $A \in {\rm Obj}(\C)$ we have
 $$\eta^A_r: \Sigma^r A \simeq_r A~.~$$ 
\end{remark}

\begin{prop} \label{prop-r-iso} Any triangulated persistence category
  $\C$ has the following properties.
\begin{itemize} 
\item[(i)] If $f: A \to B$ is an $r$-isomorphism, then there exist $\phi \in \Mor_{\C}^0(B, \Sigma^{-r}A)$ and $\psi \in \Mor_{\C}^0(\Sigma^r B, A)$ such that 
\[ \phi \circ f = \eta^A_r  \,\,\mbox{in $\Mor_{\C}^0(A, \Sigma^{-r}A)$} \,\,\,\,\, \mbox{and}\,\,\,\,\, f \circ \psi = \eta^B_r \,\,\mbox{in $\Mor_{\C}^0(\Sigma^r B, B)$}. \]
The map $\psi$ is called a right $r$-inverse of $f$ and  $\phi$ is a left $r$-inverse of $f$. 
They satisfy $\Sigma^r \phi \simeq_r \psi$.
\item[(ii)] If $f$ is an $r$-isomorphism,  then any two left  $r$-inverses $\phi, \phi'$ of $f$ are themselves 
$r$-equivalent and the same conclusion holds for right $r$-inverses. 
\item[(iii)] If $f: A \simeq _r B$ and $g: B \simeq_s C$, then $g \circ f: A \simeq_{r+s} C$. 
\end{itemize}
\end{prop}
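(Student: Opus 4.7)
The plan is to extract all three statements from applying the exact functors $\Mor_{\C}^0(-, Y)$ and $\Mor_{\C}^0(Y, -)$ to the defining triangle $\Delta\!: A \xrightarrow{f} B \xrightarrow{g} K \xrightarrow{h} TA$ (with $K \simeq_r 0$) and combining the resulting long exact sequences with the characterizations of $r$-acyclicity and $r$-equivalence from Lemmas \ref{lemma-comp} and \ref{lemma-acyclic}.

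For part (i), to produce $\phi \in \Mor_{\C}^0(B, \Sigma^{-r}A)$ with $\phi \circ f = \eta^A_r$, I would apply $\Mor_{\C}^0(-, \Sigma^{-r}A)$ to $\Delta$ and argue that the image of $\eta^A_r$ under the connecting map $\Mor_{\C}^0(A, \Sigma^{-r}A) \to \Mor_{\C}^0(T^{-1}K, \Sigma^{-r}A)$ vanishes. The key identity, from naturality of $\eta_{0,-r}: \mathds{1} \to \Sigma^{-r}$ applied to the shift-$0$ morphism $T^{-1}h$ (followed by $i_{-r,0}$), is
\[
\eta^A_r \circ T^{-1}h \;=\; \Sigma^{-r}(T^{-1}h) \circ \eta^{T^{-1}K}_r.
\]
Since $T$ commutes with $\Sigma$ by Definition \ref{dfn-tpc}(ii), $T^{-1}K$ is also $r$-acyclic and Lemma \ref{lemma-acyclic}(i) gives $\eta^{T^{-1}K}_r = 0$, so the composition vanishes. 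Symmetrically, applying $\Mor_{\C}^0(\Sigma^r B, -)$ to $\Delta$ and using naturality of $\eta_{r,0}$ at $g$ yields $g \circ \eta^B_r = \eta^K_r \circ \Sigma^r g = 0$, producing $\psi$.

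For $\Sigma^r \phi \simeq_r \psi$, the idea is the chain of equalities
\[
\eta^A_r \circ \psi \;=\; \phi \circ f \circ \psi \;=\; \phi \circ \eta^B_r \;=\; \eta^A_r \circ \Sigma^r \phi
\]
in $\Mor_{\C}^0(\Sigma^r B, \Sigma^{-r}A)$. The first two equalities are the defining properties of $\phi$ and $\psi$; the last uses naturality of $\eta_{r,0}$ applied to $\phi$ together with the canonical identification $\eta^{\Sigma^{-r}A}_r = \eta^A_r$ arising from $\Sigma^r \Sigma^{-r} = \mathds{1}$. Rewriting as $\eta^A_r \circ (\psi - \Sigma^r\phi) = 0$, Lemma \ref{lemma-comp} immediately yields $i_{0,r}(\psi - \Sigma^r\phi) = 0$, i.e.~$\Sigma^r \phi \simeq_r \psi$.

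For part (ii), any two left $r$-inverses $\phi, \phi'$ of $f$ satisfy $(\phi - \phi') \circ f = 0$, so exactness of $\Mor_{\C}^0(-, \Sigma^{-r}A)$ at $\Mor_{\C}^0(B, \Sigma^{-r}A)$ writes $\phi - \phi' = \theta \circ g$ with $\theta \in \Mor_{\C}^0(K, \Sigma^{-r}A)$; since $K \simeq_r 0$, Lemma \ref{lemma-acyclic}(iii) gives $i_{0,r}(\theta) = 0$, hence $i_{0,r}(\phi - \phi') = 0$. The right $r$-inverse case is identical via Lemma \ref{lemma-acyclic}(ii). For part (iii), letting $K = \mathrm{Cone}(f)$ and $L = \mathrm{Cone}(g)$ (so $r$- and $s$-acyclic respectively), the octahedral axiom produces an exact triangle $K \to M \to L \to TK$ in which $M$ is a cone of $g \circ f$; a two-step diagram chase in the long exact sequence of $\Mor_{\C}^\alpha(X, -)$---first lift $i_{\alpha, \alpha + s}(\xi)$ from $M$ to $K$ using $L \simeq_s 0$, then annihilate by applying $i_{\alpha+s, \alpha+s+r}$ using $K \simeq_r 0$---shows that $i_{\alpha, \alpha + r + s}$ vanishes on $\Mor_{\C}^\alpha(X, M)$, so $M \simeq_{r+s} 0$ by Lemma \ref{lemma-acyclic}(ii). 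The main technical care needed throughout is the strict monoidal identification $\Sigma^r \Sigma^{-r} = \mathds{1}$ and the two presentations of $\eta^A_r$ in (\ref{dfn-eta-map}); no genuine obstacle beyond careful bookkeeping is anticipated.
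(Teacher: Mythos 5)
Your proposal is correct and follows essentially the same line of argument as the paper: the key vanishing $\eta^{TA}_r \circ h = 0$ (from naturality of $\eta$ plus $r$-acyclicity of $K$, the paper's diagram~\eqref{com-shift-fun2}) produces $\phi$ and $\psi$, and part~(iii) is reduced to the additivity of acyclicity along an octahedral triangle, which is exactly the paper's Lemma~\ref{lemma-exact}. Where you invoke the long exact $\Mor^0$-sequence, the paper uses TR3 for part~(i) and the shorter computation $(\phi-\phi')\circ\eta^B_r = (\phi-\phi')\circ f\circ\psi = 0$ for part~(ii), but these are cosmetic variants of the same underlying argument.
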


\begin{proof} (i) We first construct $\phi$. In $\C_0$, the morphism
  $f: A \to B$ embeds into an exact triangle
  $A \xrightarrow{f} B \xrightarrow{g} K \xrightarrow{h} TA$ with
  $K \simeq_r 0$. Using the fact that $\Sigma$ and $T$ commute, the
  following diagram is easily seen to be commutative:
  \begin{equation} \label{com-shift-fun2} \xymatrixcolsep{3pc}
    \xymatrix{
      K \ar[r]^-{h} \ar[d]_-{\eta_r^K} & TA \ar[d]^-{\eta^{TA}_r}\\
      \Sigma^{-r} K \ar[r]^-{\Sigma^{-r}h} & \Sigma^{-r} TA}
  \end{equation}
  Thus $\eta_r^{TA} \circ h = \Sigma^{-r} h \circ \eta_r^K = 0$ since
  $K$ is $r$-acyclic (and so $\eta_r^K =0$). By rotating exact
  triangles in $\C_0$ we obtain a new $\C_{0}$-exact triangle
  \jznote{$K \xrightarrow{h} TA \xrightarrow{Tf} TB \xrightarrow{Tg} TK$} and
  consider the diagram below (in $\C_{0}$):
  \pbnote{
    \[ \xymatrixcolsep{3pc} \xymatrix{ K \ar[d] \ar[r]^{h} & TA
        \ar[d]^-{\eta^{TA}_r} \ar[r]^{Tf}
        & TB \ar@{-->}[d]^-{\tilde{\phi}} \ar[r]^-{Tg} & TK \ar[d] \\
        0 \ar[r] & \Sigma^{-r} TA \ar[r]^-{\mathds{1}} & \Sigma^{-r} TA \ar[r]
        & 0} ~.~\]} The first square on the left commutes, so we
  deduce the existence of a map
  $$\tilde{\phi}\in \Mor_{\C}^{0}(TB,\Sigma^{-r}TA)$$ 
  that makes commutative the middle and right squares. The desired
  left inverse of $f$ is $\phi=T^{-1}\tilde{\phi}$. A similar argument
  leads to the existence of $\psi$.  We postpone the identity
  $\Sigma^{r}\phi\simeq_{r} \psi$ after the proof of (ii).  \medskip

  (ii) If $\phi$ $\phi'$ are two left inverses of $f$ then
  $(\phi - \phi') \circ f =0$. Therefore
  $$(\phi - \phi') \circ \eta^B_r = (\phi - \phi') \circ (f
  \circ \psi) = ((\phi - \phi') \circ f) \circ \psi = 0.$$
%
  Lemma~\ref{lemma-comp} implies that $\phi \simeq_r \phi'$. The same
  argument works for right inverses. We now return to the identity
  $\Sigma^{r}\phi\simeq_{r} \psi$ (with the notation at point (i)).
  We have the following commutative diagram:
  \[ \xymatrixcolsep{5pc} \xymatrix{ \Sigma^r B \ar[r]^-{\psi}
      \ar@/_1.5pc/[rr]^-{\eta^B_r} & A \ar[r]^-{f}
      \ar@/^1.5pc/[rr]^-{\eta^A_r} & B \ar[r]^-{\phi} & \Sigma^{-r} A
    }~.~\] Therefore, $\eta^A_r \circ \psi = \phi \circ
  {\eta^B_r}$. By the naturality properties of $\eta$ we also have
  $\phi \circ {\eta^B_r}=\eta^A_r \circ \Sigma^r \phi$. Thus, by Lemma
  \ref{lemma-comp}, $\Sigma^r \phi \simeq_r \psi$.  \medskip

  (iii) We will make use of the following lemma.
  \begin{lemma} \label{lemma-exact} If $K \to K'' \to K' \to TK$ is an
    exact triangle in $\C_0$, $K\simeq_r 0$ and $K' \simeq_s 0$, then
    $K'' \simeq_{r+s} 0$.
  \end{lemma}

  \begin{proof} [Proof of Lemma \ref{lemma-exact}] 
    We associate the following commutative diagram to the exact
    triangle in the statement:
    \pbnote{
      \[ \xymatrix{ \Mor^{\alpha}(K'', K) \ar[r] \ar[d] &
          \Mor^{\alpha}(K'', K'')
          \ar[r] \ar[d]^-{v} & \Mor^{\alpha}(K'', K') \ar[d]^-{0} \\
          \Mor^{\alpha+s}(K'', K) \ar[r]^-{k} \ar[d]_-{0} &
          \Mor^{\alpha+s}(K'', K'') \ar[r]^{h} \ar[d]^-{t}
          & \Mor^{\alpha+s}(K'', K')  \\
          \Mor^{\alpha+r+s}(K'', K) \ar[r]^-{n} &
          \Mor^{\alpha+r+s}(K'', K'') & } \]} Here, the vertical
    morphisms are the persistence structure maps. \pbnote{The
      rightmost vertical map and the lower leftmost vertical one are
      both $0$}
    due to our hypothesis together with Lemma~\ref{lemma-acyclic}. The
    functor $\Mor^{\alpha}(K'',-)$ is exact which implies that
    $t\circ v=0$ and, again by Lemma \ref{lemma-acyclic}, we deduce
    $K''\simeq_{r+s} 0$.
  \end{proof}

  Returning to the proof of the proposition, point (iii) now follows
  immediately by using the octahedral axiom to construct the following
  commutative diagram in $\C_0$
  \[ \xymatrix{
      A \ar[r]^-{f} \ar[d] & B \ar[r] \ar[d]^-{g} & K \ar[d] \\
      A \ar[r]^{g \circ f} \ar[d] & C \ar[r] \ar[d] & K'' \ar[d]\\
      0 \ar[r] & K' \ar[r] & K' }\] with exact rows and columns and
  applying Lemma~\ref{lemma-exact} to the \pbnote{rightmost column}.
\end{proof}

\begin{remark}\label{rem:various-iso} (a) Points (i) and (ii)
  in Proposition~\ref{prop-r-iso} imply that the notion
  $0$-isomorphism $f: A \to B$, as given by Definition \ref{dfn-r-iso}
  for $r=0$, is equivalent to isomorphism in $\mathcal{C}_{0}$. In
  particular, for $r=0$, $f$ admits a unique inverse \jznote{(up to $0$-isomorphism)} in $\Mor_{\C}^0(B,A)$.

  (b) Point (iii) in Proposition \ref{prop-r-iso} shows that
  $r$-isomorphism can not be expected to be an equivalence relation
  (unless $r=0$).
\end{remark}

Here are several useful corollaries. The first is an immediate consequence of Lemma \ref{lemma-exact} and the octahedral axiom in $\C_{0}$.
\begin{cor} \label{cor-1} If in the following commutative diagram in
  $\C_0$,
  \pbnote{
    \[ \xymatrix{ A \ar[r] \ar[d]_-{u} & B \ar[r] \ar[d]_-{v} & C
        \ar[r] \ar[d]_-{w}
        & TA \ar[d]_-{Tu}\\
        A' \ar[r] & B' \ar[r] & C' \ar[r] & TA'}\]} the two rows are
  exact triangles, $u$ is an $r$-isomorphism and $v$ is an
  $s$-isomorphism, then $w$ is an $(r+s)$-isomorphism.
\end{cor}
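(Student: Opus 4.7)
The plan is to exhibit the cone $K_w$ of $w$ as fitting into an exact triangle of the form $K_u \to K_w \to K_v \to T K_u$, where $K_u$ and $K_v$ are the cones of $u$ and $v$ respectively, and then to invoke Lemma~\ref{lemma-exact}.

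First, I would embed $u, v$ and the given $w$ into exact triangles in $\C_0$ whose third vertices are $K_u, K_v, K_w$; by hypothesis $K_u \simeq_r 0$ and $K_v \simeq_s 0$. Writing the top row of the diagram as $A \xrightarrow{a} B \xrightarrow{b} C \xrightarrow{c} TA$ and the bottom row as $A' \xrightarrow{a'} B' \xrightarrow{b'} C' \xrightarrow{c'} TA'$, commutativity of the leftmost square reads $a' \circ u = v \circ a$. I would then apply the octahedral axiom to each of the two factorizations of this common composition $A \to B'$: factoring through $A'$, using the triangle of $u$ and the bottom row, produces an exact triangle $K_u \to N \to C' \to T K_u$; factoring through $B$, using the top row and the triangle of $v$, produces an exact triangle $C \to N \to K_v \to T C$, with the same cone $N$ of the composition.

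A third application of the octahedral axiom, to the map $C \to C'$ obtained by composing the connecting maps $C \to N$ and $N \to C'$ from the two triangles just constructed, then yields an exact triangle of the desired form $K_u \to K_w \to K_v \to T K_u$. Feeding this triangle into Lemma~\ref{lemma-exact} immediately gives $K_w \simeq_{r+s} 0$, so $w$ is an $(r+s)$-isomorphism as required. The main technical point I expect to need to verify is that the map $C \to N \to C'$ arising from the octahedral construction agrees, up to the freedom in choosing a cone, with the given $w$; this relies on the remaining commutativities $w \circ b = b' \circ v$ and $c' \circ w = (Tu) \circ c$ together with the fact that $0$-isomorphic objects have the same acyclicity properties, so different choices of cone for $w$ do not affect the conclusion.
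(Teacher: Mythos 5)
Your overall plan — three applications of the octahedral axiom followed by Lemma~\ref{lemma-exact} — matches the paper's intended argument; this is the usual derivation of the $3\times 3$ lemma, which the one-line proof in the paper invokes implicitly. A small bookkeeping slip: applying the octahedral axiom to $f\colon C\to N$ (cone $K_v$) and $g\colon N\to C'$ (cone $TK_u$, after rotating $K_u\to N\to C'\to TK_u$) yields the triangle $K_v\to\operatorname{cone}(gf)\to TK_u\to TK_v$ rather than the one you wrote, but either form feeds into Lemma~\ref{lemma-exact} equally well since $K_v\simeq_s 0$ and $TK_u\simeq_r 0$.

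The genuine gap is the ``main technical point'' you flag at the end, and the resolution you sketch does not close it. Write $w':=g\circ f$ for the map your construction produces. From the commutativities one only gets $(w-w')\circ b = 0$ and $c'\circ(w-w') = 0$, so $w-w'$ factors through $c\colon C\to TA$ on one side and through $b'\colon B'\to C'$ on the other; this neither forces $w=w'$ nor forces $\operatorname{cone}(w)$ and $\operatorname{cone}(w')$ to be $0$-isomorphic. Your remark about ``different choices of cone for $w$'' addresses the uniqueness of a cone for a single fixed map, which is not the issue. To prove the statement for the \emph{given} $w$ one must build $w$ into the octahedral construction from the outset — this is the full $3\times 3$ lemma (cf.\ Beilinson--Bernstein--Deligne, Prop.~1.1.11), and its proof is more than comparing $w$ with a post hoc composite. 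In practice the paper only applies Corollary~\ref{cor-1} to maps $w$ supplied by TR3 (Remark~\ref{rem:iso-cone}, Propositions~\ref{prop-rot} and~\ref{prop-w-oct}), for which your argument as written already suffices.
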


\begin{cor} \label{cor-2} If $f: A \to B$ is an $r$-isomorphism, then
  its right inverse $\psi \in \Mor^0(\Sigma^r B, A)$ (given by (i) in
  Proposition \ref{prop-r-iso}) is a $2r$-isomorphism. The same
  conclusion holds for its left inverse.
\end{cor}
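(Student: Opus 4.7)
The plan is to apply the octahedral axiom in $\C_0$ to the factorization $\eta^B_r = f \circ \psi$ and then invoke Lemma \ref{lemma-exact} to upgrade two instances of $r$-acyclicity into a single instance of $2r$-acyclicity.

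Concretely, I would first embed $\psi$ into an exact triangle
\[
\Sigma^r B \xrightarrow{\psi} A \to L \to T\Sigma^r B
\]
in $\C_0$; verifying that the cone $L$ is $2r$-acyclic is exactly the conclusion required. Since $f$ is an $r$-isomorphism, there is an exact triangle $A \xrightarrow{f} B \to K \to TA$ with $K \simeq_r 0$, and Definition \ref{dfn-tpc}(iii) supplies an exact triangle $\Sigma^r B \xrightarrow{\eta^B_r} B \to K' \to T\Sigma^r B$ with $K' \simeq_r 0$. The standard (composition form of the) octahedral axiom, applied to $f \circ \psi = \eta^B_r$, then produces an exact triangle in $\C_0$
\[
L \to K' \to K \to TL.
\]
Rotating once backward gives $T^{-1}K \to L \to K' \to K$. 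Since $T$ preserves $r$-acyclicity (for instance because $\eta^{TA}_r = T\eta^A_r$, cf.\ Remark \ref{rem:shifts-T}(b)), both outer terms $T^{-1}K$ and $K'$ are $r$-acyclic, so Lemma \ref{lemma-exact} forces the middle term to satisfy $L \simeq_{2r} 0$, as desired.

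For the left inverse $\phi$ the same template applies: apply the octahedral axiom to $\phi \circ f = \eta^A_r$, observe that the cones of $f$ and of $\eta^A_r$ are both $r$-acyclic, and rotate the resulting octahedral triangle into the shape used by Lemma \ref{lemma-exact} to conclude that the cone of $\phi$ is $2r$-acyclic. Alternatively, the statement for $\phi$ can be deduced from the statement for $\psi$ via the relation $\Sigma^r \phi \simeq_r \psi$ recorded in Proposition \ref{prop-r-iso}(i), since $r$-isomorphism is preserved under the addition of a morphism that is $r$-equivalent to zero (embedding the difference in the appropriate triangle). I do not expect any genuine obstacle; the only point meriting care is picking the correct rotation of the octahedral triangle so that the two $r$-acyclic terms land in the outer positions required by Lemma \ref{lemma-exact}.
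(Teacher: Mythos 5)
Your proposal is correct and uses essentially the same argument as the paper: apply the octahedral axiom in $\C_0$ to the factorization $\eta^B_r = f \circ \psi$, identify the resulting exact triangle relating the three cones, rotate so that the two $r$-acyclic cones (of $f$ and of $\eta^B_r$) are in the outer positions, and apply Lemma \ref{lemma-exact}. The paper writes the cone of $\psi$ as $K''$ where you write $L$, but the diagram and the logic coincide; your extra remark about deducing the left-inverse case from $\Sigma^r\phi \simeq_r \psi$ is an optional shortcut the paper doesn't spell out, but not a different approach.
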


\begin{proof} By the octahedral axiom (in $\C_0$), we have the
  following commutative diagram,
  \[ \xymatrix{ \Sigma^r B \ar[r] \ar[d]_-{\psi}
      & \Sigma^r B \ar[r] \ar[d]^-{\eta^B_r} & 0 \ar[d] \\
      A \ar[r] \ar[d] & B \ar[r] \ar[d] & K \ar[d]\\
      K'' \ar[r] & K' \ar[r] & K }\] where $K'' \to K' \to K \to TK''$
  is exact. By (iii) in Definition \ref{dfn-tpc}, $K' \simeq_r
  0$. Therefore, by Lemma \ref{lemma-exact}, $K'' \simeq_{2r} 0$ and
  thus $\psi$ is $2r$-isomorphism. A similar argument applies to the
  left inverse of $f$.
\end{proof} 
The next consequence is immediate but useful so we state it apart.
\begin{cor} \label{cor-3} If $f: A \to B$ is an $r$-isomorphism, then
  for any $u,u' \in \Mor^0(B, C)$ with $u \circ f = u' \circ f$, we
  have $u \simeq_r u'$, i.e., $u$ and $u'$ are $r$-equivalent.
  Similarly, if $v,v'\in \Mor^0(D, A)$ and $f\circ v=f\circ v'$, then
  $v\simeq_{r} v'$.
\end{cor}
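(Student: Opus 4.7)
The plan is to derive both statements almost directly from Proposition \ref{prop-r-iso}(i) together with the composition characterization of $r$-equivalence given by Lemma \ref{lemma-comp}. Since $f\colon A\to B$ is an $r$-isomorphism, Proposition \ref{prop-r-iso}(i) supplies both a right $r$-inverse $\psi \in \Mor^0(\Sigma^r B, A)$ with $f\circ\psi = \eta_r^B$ and a left $r$-inverse $\phi \in \Mor^0(B, \Sigma^{-r} A)$ with $\phi\circ f = \eta_r^A$. These will allow us to convert the hypotheses $u\circ f = u'\circ f$ or $f\circ v = f\circ v'$ into composition-with-$\eta_r$ identities, which by Lemma \ref{lemma-comp} is precisely what encodes $r$-equivalence.

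For the first claim, I would post-compose the hypothesis $(u-u')\circ f = 0$ with $\psi$ on the right. Using associativity and $f\circ\psi = \eta_r^B$, this yields
\[
(u-u')\circ \eta_r^B \;=\; \bigl((u-u')\circ f\bigr)\circ \psi \;=\; 0
\]
as an element of $\Mor^0(\Sigma^r B, C)$. By Lemma \ref{lemma-comp} applied to $u-u'\in \Mor^0(B,C)$, this is equivalent to $i_{0,r}(u-u') = 0$, i.e.\ $u\simeq_r u'$.

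For the second claim, I would pre-compose the hypothesis $f\circ(v-v') = 0$ with $\phi$ on the left. Using $\phi\circ f = \eta_r^A$ (this time with $\eta_r^A$ viewed as a morphism $A\to\Sigma^{-r}A$, per the convention in \eqref{dfn-eta-map} and Lemma \ref{lemma-comp}), we get
\[
\eta_r^A\circ(v-v') \;=\; \phi\circ\bigl(f\circ (v-v')\bigr) \;=\; 0
\]
in $\Mor^0(D,\Sigma^{-r}A)$, and Lemma \ref{lemma-comp} again converts this into $v\simeq_r v'$.

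There is no real obstacle to the argument; the whole proof is essentially two compositions once the right and left $r$-inverses have been named. The only point that requires a little care is the notational overloading of $\eta_r^A$ (it denotes either $\Sigma^r A\to A$ or $A\to \Sigma^{-r}A$ depending on context), so one must pick the appropriate version on each side; Lemma \ref{lemma-comp} is stated precisely to cover both sides symmetrically, which is why the proof is fully symmetric in $u,u'$ and $v,v'$.
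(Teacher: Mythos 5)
Your proof is correct and follows exactly the route the paper has in mind: the paper labels this corollary ``immediate'' and its proof of Proposition~\ref{prop-r-iso}(ii) (showing two left $r$-inverses are $r$-equivalent) uses the same pattern — compose the vanishing difference with the appropriate one-sided $r$-inverse to produce the $\eta_r$ factor, then invoke Lemma~\ref{lemma-comp}. You have correctly handled the two directional variants of $\eta_r$ and the symmetry between the two claims.
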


\begin{cor} Assume that the following diagram in $\C_0$,
 \jznote{ \[ \xymatrix{ K \ar[r] \ar[d]_{\mathds 1_{K}} & A \ar[r]^-{\phi} \ar[d]_-{f}
      & A' \ar[r] \ar[d]_-{f'} & TK \ar[d]_{\mathds{1}_{TK}}\\
      K \ar[r] & B \ar[r]^{\psi} & B' \ar[r] & TK}\]} is commutative,
  that the two rows are exact and that $K \simeq_r 0$. Then the
  induced morphism $f'$ is unique up to $r$-equivalence.
\end{cor}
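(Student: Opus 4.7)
The plan is to reduce uniqueness to the statement that any morphism $g \in \Mor_{\C}^0(A', B')$ satisfying $g \circ \phi = 0$ is automatically $r$-equivalent to zero, and then extract such a morphism using exactness. Given two completions $f_1', f_2' \in \Mor_{\C}^0(A', B')$ of the diagram, form their difference $g := f_1' - f_2'$. By commutativity of the middle square for both $f_1'$ and $f_2'$, one has $g \circ \phi = \psi \circ f - \psi \circ f = 0$. It therefore suffices to show $i_{0,r}(g) = 0$.

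Next, I would rotate the top exact triangle to $A \xrightarrow{\phi} A' \xrightarrow{\alpha} TK \to TA$ and apply the cohomological functor $\Mor_{\C}^0(-, B')$, which is exact on $\C_0$ by Remark \ref{rem:shifts-T}(a). The resulting sequence
\[ \Mor_{\C}^0(TK, B') \xrightarrow{\,\cdot\, \circ\, \alpha} \Mor_{\C}^0(A', B') \xrightarrow{\,\cdot\, \circ\, \phi} \Mor_{\C}^0(A, B') \]
is exact, so the vanishing $g \circ \phi = 0$ yields a lift $h \in \Mor_{\C}^0(TK, B')$ with $g = h \circ \alpha$.

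To finish, I would use $r$-acyclicity: since $K \simeq_r 0$, the relation $T\eta_r^K = \eta_r^{TK}$ from Remark \ref{rem:shifts-T}(b) gives $\eta_r^{TK} = 0$, i.e.\ $TK \simeq_r 0$. By Lemma \ref{lemma-acyclic}(ii) applied to $TK$, the persistence map $i_{0,r}\colon \Mor_{\C}^0(TK, B') \to \Mor_{\C}^r(TK, B')$ is zero, hence $i_{0,r}(h) = 0$. Compatibility of the persistence structure with composition (Diagram (\ref{comp-nt})) then yields $i_{0,r}(g) = i_{0,r}(h) \circ \alpha = 0$, which is the claim $f_1' \simeq_r f_2'$.

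The only subtlety (and the place where one must be careful rather than truly difficult) is making sure the lift $h$ lives in shift $0$ rather than some positive shift, and that passing $i_{0,r}$ past the precomposition with $\alpha$ is legitimate; both points follow because the whole setup takes place inside the triangulated category $\C_0$ (so $\alpha \in \Mor_{\C}^0(A', TK)$) and because the persistence category axiom (ii) in Definition \ref{dfn-pc} is exactly the naturality needed to move $i_{0,r}$ through composition.
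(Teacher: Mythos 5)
Your proof is correct, but it follows a genuinely different route from the paper's. The paper observes that $K \simeq_r 0$ makes $\phi$ an $r$-isomorphism and then applies Corollary~\ref{cor-3} directly: since $f_1'\circ\phi=f_2'\circ\phi$, Corollary~\ref{cor-3} (whose proof secretly uses the right $r$-inverse $\psi$ of $\phi$ from Proposition~\ref{prop-r-iso} together with Lemma~\ref{lemma-comp}) gives $f_1'\simeq_r f_2'$ in one line. You instead avoid the right-inverse machinery entirely: you use exactness of the contravariant functor $\Mor^0_{\C}(-,B')$ on the rotated triangle to factor the difference $g=f_1'-f_2'$ through $TK$, and then kill it with $r$-acyclicity of $TK$ and the naturality of the persistence structure maps. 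This is a clean self-contained argument that essentially re-derives what Corollary~\ref{cor-3} packages, trading the inverse-morphism bookkeeping of Proposition~\ref{prop-r-iso} for exactness of the Hom functor (Remark~\ref{rem:shifts-T}(a)). Both routes are standard triangulated-category moves; the paper's is shorter because it leans on results already established.

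One small correction: you cite Lemma~\ref{lemma-acyclic}(ii), but since $h$ lives in $\Mor^0_{\C}(TK,B')$ — morphisms \emph{out of} the acyclic object — the relevant item is Lemma~\ref{lemma-acyclic}(iii). This is just a mislabel and does not affect the argument.
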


\begin{proof} Since $K \simeq_r 0$, by definition, $\phi$ is an
  $r$-isomorphism. For any two induced morphisms
  $f'_1, f'_2 \in \Mor^0(A', B')$, we have
  $f'_1 \circ \phi = f'_2 \circ \phi = \psi\circ f$ and the conclusion
  follows from Corollary \ref{cor-3}.
\end{proof}

\begin{cor} \label{cor5} Let $\phi: A \to A'$ be an
  $r$-isomorphism. Then for any $f \in \Mor_{\C}^0(A,B)$, there exists
  $f' \in \Mor_{\C}^0(A', \Sigma^{-r}B)$ such that the following diagram
  commutes in $\C_0$.
  \[ \xymatrix{
      A \ar[r]^-{\phi} \ar[d]_-{f} & A' \ar[d]^-{f'} \\
      B \ar[r]^-{\eta^B_r} & \Sigma^{-r} B} \]
\end{cor}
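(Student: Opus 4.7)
The plan is to construct $f'$ explicitly using a left $r$-inverse of $\phi$, and then verify commutativity by invoking the naturality of the shift natural transformation $\eta_{0,-r}: \Sigma^0 \to \Sigma^{-r}$. No further obstructions are expected, as all the required ingredients are already available from Proposition \ref{prop-r-iso} and Remark \ref{rem:shift}.

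First, since $\phi: A \to A'$ is an $r$-isomorphism, point (i) of Proposition \ref{prop-r-iso} yields a left $r$-inverse $\tilde{\phi} \in \Mor_{\C}^0(A', \Sigma^{-r}A)$ satisfying
\[ \tilde{\phi} \circ \phi = \eta^A_r \quad \text{in } \Mor_{\C}^0(A, \Sigma^{-r}A). \]
We then define
\[ f' := \Sigma^{-r}(f) \circ \tilde{\phi} \in \Mor_{\C}^0(A', \Sigma^{-r}B), \]
which lies in the stated morphism space because $\Sigma^{-r}$ is a triangulated (in particular, additive) endofunctor of $\C_0$ by Definition \ref{dfn-tpc}(ii).

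Computing the composition:
\[ f' \circ \phi = \Sigma^{-r}(f) \circ \tilde{\phi} \circ \phi = \Sigma^{-r}(f) \circ \eta^A_r. \]
To conclude, I would invoke the naturality of the shift natural transformation $\eta_{0,-r}: \Sigma^0 \to \Sigma^{-r}$, which is part of the data of the shift functor (see Definition \ref{dfn-shift-functor} and Remark \ref{rem:shift}). Naturality applied to $f: A \to B$ gives a commutative square whose content is precisely $\Sigma^{-r}(f) \circ \eta^A_r = \eta^B_r \circ f$ in $\Mor_{\C}^0(A, \Sigma^{-r}B)$. Combining this with the previous display shows $f' \circ \phi = \eta^B_r \circ f$, as required.

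The construction involves no genuine obstacle: the only subtle point is using the appropriate ``variant'' of $\eta^A_r$ (viewed as $A \to \Sigma^{-r}A$, per (\ref{dfn-eta-map})) so that the naturality square closes correctly, and keeping track of where $\Sigma^{-r}$ is applied to $f$. It is worth noting that $f'$ is only defined up to $r$-equivalence: by Proposition \ref{prop-r-iso}(ii), the left $r$-inverse $\tilde{\phi}$ is unique up to $r$-equivalence, so replacing $\tilde{\phi}$ by another left $r$-inverse would alter $f'$ by an $r$-equivalent morphism, but the commutativity of the square in $\C_0$ is preserved.
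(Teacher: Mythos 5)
Your proposal matches the paper's proof exactly: both obtain a left $r$-inverse $\psi$ (your $\tilde{\phi}$) from Proposition~\ref{prop-r-iso}(i), set $f' := \Sigma^{-r}f \circ \psi$, and rely on the naturality of $\eta_{0,-r}$ (together with the compatibility of the persistence maps $i_{-r,0}$ with composition) to close the square. The paper simply leaves the commutativity verification implicit; you have spelled it out correctly.
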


\begin{proof} Since $\phi: A \to A'$ is an $r$-isomorphism, there
  exists an left $r$-inverse denoted by $\psi: A' \to \Sigma^{-r} A$
  such that $\psi \circ \phi = \eta^A_r$. Set
  $f' : = \Sigma^{-r}f \circ \psi \in \Mor_{\C}^0(A', \Sigma^{-r} B)$.
\end{proof}
Similar direct arguments lead to the next consequence.
\begin{cor} \label{cor-6} Consider the following commutative diagram
  in $\C_0$,
  \[ \xymatrix{
      A \ar[r]^{\phi} \ar[d]_-{f} & A' \ar[d]^-{f'} \\
      B \ar[r]^-{\phi'} & B'} \] where $f \in \Mor_{\C}^0(A,B)$,
  $f' \in \Mor_{\C}^0(A'.B')$ and $\phi, \phi'$ are $r$-isomorphisms. Let
  $\psi, \psi'$ be any left inverses of $\phi, \phi'$
  respectively. Then the following diagram is $r$-commutative
  \[ \xymatrix{
      A' \ar[r]^-{\psi} \ar[d]_-{f'} & \Sigma^{-r} A \ar[d]^-{\Sigma^{-r} f} \\
      B' \ar[r]^-{\psi'} & \Sigma^{-r} B} \] in the sense that
  $\Sigma^{-r} f \circ \psi \simeq_r \psi' \circ f'$. A similar
  conclusion holds for right inverses.
\end{cor}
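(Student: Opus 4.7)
The plan is to reduce everything to precomposition with the $r$-isomorphism $\phi$ and then invoke Corollary~\ref{cor-3}. Both maps in question, namely $\Sigma^{-r}f\circ\psi$ and $\psi'\circ f'$, live in $\Mor_{\C}^{0}(A',\Sigma^{-r}B)$, so it suffices to show that after composing with $\phi: A \to A'$ they agree in $\Mor_{\C}^{0}(A,\Sigma^{-r}B)$.

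First I would compute $(\Sigma^{-r}f\circ\psi)\circ\phi$ using the defining property of a left $r$-inverse from Proposition~\ref{prop-r-iso}(i), which gives $\psi\circ\phi=\eta_{r}^{A}$, so this composition equals $\Sigma^{-r}f\circ\eta_{r}^{A}$. Next, I would compute $(\psi'\circ f')\circ\phi=\psi'\circ(f'\circ\phi)$, and use the assumed commutativity of the original square to rewrite $f'\circ\phi=\phi'\circ f$; then the defining property $\psi'\circ\phi'=\eta_{r}^{B}$ yields $(\psi'\circ f')\circ\phi=\eta_{r}^{B}\circ f$.

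The key observation is then the naturality of the persistence natural transformation $\eta_{0,-r}:\Sigma^{0}\Rightarrow\Sigma^{-r}$, viewed at shift $0$. For any $f\in\Mor_{\C}^{0}(A,B)$, naturality gives $(\eta_{0,-r})_{B}\circ f=\Sigma^{-r}f\circ(\eta_{0,-r})_{A}$ as morphisms of shift $-r$; applying the persistence map $i_{-r,0}$ and using the compatibility of composition with the persistence structure (diagram~(\ref{comp-nt})) transforms this into the identity
\[
\eta_{r}^{B}\circ f\;=\;\Sigma^{-r}f\circ\eta_{r}^{A}
\]
in $\Mor_{\C}^{0}(A,\Sigma^{-r}B)$. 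Combining the two computations above therefore yields $(\Sigma^{-r}f\circ\psi)\circ\phi=(\psi'\circ f')\circ\phi$, and since $\phi$ is an $r$-isomorphism, Corollary~\ref{cor-3} delivers $\Sigma^{-r}f\circ\psi\simeq_{r}\psi'\circ f'$, as required.

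For the parallel statement about right inverses, I would run the symmetric argument: given right $r$-inverses $\chi,\chi'$ of $\phi,\phi'$ satisfying $\phi\circ\chi=\eta_{r}^{A'}$ and $\phi'\circ\chi'=\eta_{r}^{B'}$, I would postcompose the two candidate maps with $\phi'$ instead of precomposing with $\phi$, use naturality of $\eta_{r,0}$ in place of $\eta_{0,-r}$, and apply the dual part of Corollary~\ref{cor-3}. No step is really an obstacle here; the only mild subtlety is bookkeeping around which shifted version of $\eta$ is used and verifying that naturality descends cleanly from $\mathrm{shift}=-r$ to $\mathrm{shift}=0$ via the persistence structure maps $i_{-r,0}$.
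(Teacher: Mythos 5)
Your proof is correct and is essentially the "direct argument" the paper alludes to (it gives no explicit proof, only the remark that ``similar direct arguments lead to the next consequence,'' referring to the technique of Corollary~\ref{cor5}). Your reduction to precomposition with $\phi$ followed by Corollary~\ref{cor-3}, together with the naturality identity $\eta_r^B\circ f = \Sigma^{-r}f\circ\eta_r^A$ (obtained by applying $i_{-r,0}$ to the naturality square for $\eta_{0,-r}$ and using diagram~(\ref{comp-nt})), is exactly the intended route; the right-inverse case by postcomposing with $\phi'$ and invoking the dual half of Corollary~\ref{cor-3} is equally sound.
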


\subsection{Weighted exact triangles}\label{subsubsec:weight-tr}
The key feature of a triangulated persistence category $\C$ is that
there is a natural way to associate weights to a class of triangles
larger than the exact triangles in $\C_{0}$.

\begin{dfn} \label{dfn-set} A {\em strict exact triangle $\Delta$ of
    (strict) weight $r\geq 0$ in $\C$} is a diagram
  \begin{equation} \label{dfn-set-1} A \xrightarrow{\bar{u}} B
    \xrightarrow{\bar{v}} C \xrightarrow{\bar{w}} \Sigma^{-r} T A
  \end{equation}
  with $\bar{u}\in \Mor_{\C}^0(A,B)$, $\bar{v} \in \Mor_{\C}^0(B,C)$ and
  $\bar{w} \in \Mor_{\C}^0(C, \Sigma^{-r} TA)$ such that there exists an
  exact triangle
  $A \xrightarrow{u} B \xrightarrow{v} C' \xrightarrow{w} TA$ in
  $\C_0$, an $r$-isomorphism $\phi: C' \to C $ and a right $r$-inverse
  of $\phi$ denoted by $\psi: \Sigma^r C \to C'$ such that in the
  following diagram
  \begin{equation}\label{dfn-set-2} 
    \xymatrix{
      & & \Sigma^r C \ar[d]_-{\psi} \ar[rd]^-{\Sigma^{r} \bar{w}} & \\
      A \ar[r]^-{u} & B \ar[r]^-{v} \ar[rd]_-{\bar{v}}
      & C' \ar[r]^-{w} \ar[d]^-{\phi} & TA\\
      & & C &} 
\end{equation}
we have $\bar{u} = u$, $\bar{v} = \phi \circ v$ and
$\Sigma^r \bar{w} = w \circ \psi$.  The weight of the triangle
$\Delta$ is denoted by $w(\Delta)= r$. \end{dfn}

We will generally only talk about the weight of a strict exact
triangle and not the {\em strict} weight except when the distinction
will be of relevance, later on in the paper.

\begin{remark} \label{rmk- set} (a) Any exact triangle in $\C_0$ is a
  strict exact triangle of weight $0$ \jznote{since $\mathds{1}_A \in {\rm Mor}_{\C}^0(A, A)$ for any $A \in {\rm Obj}(\C)$}.  Conversely, it is a simple
  exercise to see that a strict exact triangle of weight $0$ is exact
  as a triangle in $\C_0$.

  (b) Consider the following diagram
  \[ \xymatrix{
      &\Sigma^{r}B\ar[d]_{\eta_{r}^{B}}\ar[r]^{\Sigma^{r}\bar{v}}
      & \Sigma^r C \ar[d]_-{\psi} \ar[rd]^-{\Sigma^{r} \bar{w}} & \\
      A \ar[r]^-{u} & B \ar[r]^-{v} \ar[rd]_-{\bar{v}}
      & C' \ar[r]^-{w} \ar[d]^-{\phi} & TA \ar[d]^-{\eta^{TA}_r}\\
      & & C \ar[r]^-{\bar{w}} &\Sigma^{-r} TA} \] which is derived
  from the commutative diagram (\ref{dfn-set-2}). The two squares in
  the diagram are not commutative, in general, but they are
  $r$-commutative.  Indeed, since $\phi$ is an $r$-isomorphism let
  $\tilde{\psi}:C\to \Sigma^{-r}C'$ be a left $r$-inverse of $\phi$.
  As $\psi$ is a right \jznote{$r$-inverse} of $\phi$ we deduce from Proposition
  \ref{prop-r-iso} (i) that $\Sigma^{-r}\psi \simeq_{r}
  \tilde{\psi}$. Therefore,
  $\bar{w}\circ \phi=\Sigma^{-r}w\circ \Sigma^{-r}\psi\circ \phi
  \simeq_{r} \Sigma^{-r}w\circ \tilde{\psi}\circ \phi= \Sigma^{-r}w
  \circ \eta_{r}^{C'}=\eta_{r}^{TA}\circ w$.  Using Corollary
  \ref{cor-3}, we also see that
  $\psi \circ \Sigma^{r}\bar{v}\simeq_{r} v\circ \eta_{r}^{B}$ because
  $\phi\circ \psi \circ \Sigma^{r}\bar{v}= \phi \circ v\circ
  \eta_{r}^{B}$.

  (c) Because $T$ commutes with $\Sigma$ and with the natural
  transformations $\eta$, it immediately follows that this functor
  preserves strict exact triangles as well as their weight.
\end{remark}

\begin{ex} \label{ex-w-r-tr} Recall that \pbnote{the map
    $\Sigma^rA \xrightarrow{\eta^A_r} A$ embeds into an exact triangle
    in $\C_0$,}
  $\Sigma^r A \xrightarrow{\eta^A_r} A \xrightarrow{v} K
  \xrightarrow{w} T\Sigma^r A$ \pbnote{, where $K$ is $r$-acyclic.}
  We claim that the diagram
  \[ \Sigma^r A \xrightarrow{\eta^A_r} A \xrightarrow{v} K
    \xrightarrow{0} TA \] is a strict exact triangle of weight
  $r$. Indeed, we have the following commutative diagram,
  \begin{equation*}
    \xymatrix{
      & & \Sigma^r K \ar[d]_-{\eta^K_r} \ar[rd]^-{0} & \\
      \Sigma^r A \ar[r]^-{\eta^A_r} & A \ar[r]^-{v} \ar[rd]_-{v}
      & K \ar[r]^-{w} \ar[d]^-{\mathds 1_K} & T\Sigma^r A\\
      & & K &} 
  \end{equation*}
  where the right upper triangle is commutative since $K$ is
  $r$-acyclic (so $\eta^K_r =0$ \jznote{by Lemma \ref{lemma-acyclic} (i)}). Moreover, $\mathds{1}_K$ is an
  $r$-isomorphism (we recall $T\Sigma^{r} A=\Sigma^{r}TA$).

  \pbnote{Note that also the following diagram
    $$\Sigma^r A  \xrightarrow{\eta^A_r} A \xrightarrow{} 0
    \xrightarrow{} TA$$ is a strict exact triangle of weight $r$.}
\end{ex}

\begin{prop} [Weight invariance] \label{prop-wt-1} Strict exact
  triangles satisfy the following two properties:

  (i) If strict exact triangles $\Delta, \Delta'$ are
  $0$-isomorphic, then $w(\Delta) = r$ if and only if
  $w(\Delta') = r$.

  (ii) If
  $\Delta: A \xrightarrow{\bar{u}} B \xrightarrow{\bar{v}} C
  \xrightarrow{\bar{w}} \Sigma^{-r} TA$ satisfies $w(\Delta) = r$,
  then
  $\Delta' : A \xrightarrow{\bar{u}} B \xrightarrow{\bar{v}} C
  \xrightarrow{\bar{w}'} \Sigma^{-r-s} TA$ satisfies
  $w(\Delta') = r+s$ for $s \geq 0$, where $\bar{w}'$ is the
  composition
  $$C \xrightarrow{\bar{w}} \Sigma^{-r} TA
  \xrightarrow{\eta_{s}^{\Sigma^{-r}TA}} \Sigma^{-r-s}TA~.~$$
\end{prop}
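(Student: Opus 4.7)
The plan is to prove both parts by manipulating the data realizing the weight of the triangle rather than reworking everything from scratch. For (i) we transport the defining data of $\Delta$ through the $0$-isomorphism to $\Delta'$; for (ii) we keep the same underlying $\C_0$-exact triangle but adjust the $r$-isomorphism and its right $r$-inverse to account for the extra shift $s$.

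For (i), fix the $0$-isomorphism from $\Delta$ to $\Delta'$, given by $0$-isos $\alpha: A \to A'$, $\beta: B \to B'$, $\gamma: C \to C'$ commuting with all three pairs of maps. Let the data witnessing $w(\Delta)=r$ be the $\C_0$-exact triangle $A \xrightarrow{u} B \xrightarrow{v} C_0 \xrightarrow{w} TA$, the $r$-iso $\phi: C_0 \to C$, and the right $r$-inverse $\psi: \Sigma^r C \to C_0$. Since $\alpha, \beta$ are isomorphisms in $\C_0$, conjugation yields a new $\C_0$-exact triangle
\[
A' \xrightarrow{\beta u \alpha^{-1}} B' \xrightarrow{v \beta^{-1}} C_0 \xrightarrow{T\alpha \circ w} TA'.
\]
The map $\gamma \circ \phi: C_0 \to C'$ is an $r$-isomorphism by Proposition \ref{prop-r-iso}(iii) (composing the $r$-iso $\phi$ with the $0$-iso $\gamma$). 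I claim $\psi' := \psi \circ \Sigma^r \gamma^{-1}$ is a right $r$-inverse of $\gamma \circ \phi$: using the naturality identity $\gamma \circ \eta_r^C = \eta_r^{C'} \circ \Sigma^r \gamma$ we get $(\gamma \phi)(\psi \circ \Sigma^r \gamma^{-1}) = \gamma \circ \eta_r^C \circ \Sigma^r \gamma^{-1} = \eta_r^{C'}$. Finally, the relations $\bar u' = \beta u \alpha^{-1}$, $\bar v' = \gamma \bar v \beta^{-1} = (\gamma\phi)(v\beta^{-1})$, and $\Sigma^r \bar w' = T\alpha \circ \Sigma^r \bar w \circ \Sigma^r\gamma^{-1} = (T\alpha \circ w) \circ \psi'$ verify the diagram (\ref{dfn-set-2}) for $\Delta'$, so $w(\Delta') = r$. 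The converse follows by applying the same argument to the inverse $0$-isomorphism $(\alpha^{-1}, \beta^{-1}, \gamma^{-1})$.

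For (ii), I reuse the same $\C_0$-exact triangle $A \to B \to C_0 \to TA$ and the same morphism $\phi: C_0 \to C$. Since the acyclic cone $K$ of $\phi$ satisfies $K \simeq_r 0$, it also satisfies $K \simeq_{r+s} 0$, so $\phi$ is also an $(r+s)$-isomorphism. As the right $(r+s)$-inverse take $\psi' := \psi \circ \Sigma^r \eta_s^C: \Sigma^{r+s} C \to C_0$. To check $\phi \circ \psi' = \eta_{r+s}^C$, use $\phi \circ \psi = \eta_r^C$ and the multiplicative identity $\eta_r^C \circ \Sigma^r \eta_s^C = \eta_{r+s}^C$, which follows from the strict monoidality of $\Sigma$ together with the composition relation $\eta_{r+s,0} = \eta_{r,0} \circ \eta_{r+s,r}$ in $(\R, +)$ (see Remark \ref{rem:shift}). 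It remains to verify $\Sigma^{r+s} \bar w' = w \circ \psi'$. By naturality of $\eta_s$ applied to $\bar w: C \to \Sigma^{-r}TA$,
\[
w \circ \psi' = \Sigma^r \bar w \circ \Sigma^r \eta_s^C = \Sigma^r(\bar w \circ \eta_s^C) = \Sigma^r(\eta_s^{\Sigma^{-r}TA} \circ \Sigma^s \bar w) = \eta_s^{TA} \circ \Sigma^{r+s} \bar w,
\]
where the last equality uses $\Sigma^r(\eta_{s,0})_{\Sigma^{-r}TA} = (\eta_{s+r,r})_{\Sigma^{-r}TA} = \eta_s^{TA}$. On the other hand, directly from the definition of $\bar w'$ and the same identification, $\Sigma^{r+s} \bar w' = \Sigma^{r+s} \eta_s^{\Sigma^{-r}TA} \circ \Sigma^{r+s} \bar w = \eta_s^{TA} \circ \Sigma^{r+s} \bar w$, so the two agree.

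The main bookkeeping difficulty throughout will be juggling the two conventions in (\ref{dfn-eta-map}) for $\eta_r^A$ (as an $\Mor^0$-morphism $\Sigma^r A \to A$ versus $A \to \Sigma^{-r} A$) and keeping the shifts of all the objects in the various squares consistent; once the identifications from Remark \ref{rem:shift} and the naturality of $\eta$ are invoked correctly, both parts reduce to short algebraic checks, so no genuinely new input beyond Proposition \ref{prop-r-iso} and the structural identities of $\Sigma$ is needed.
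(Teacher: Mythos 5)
Your proof is correct and follows essentially the same route as the paper's: part (i) is the paper's one-line observation that $0$-isomorphisms in $\C_0$ are invertible, expanded into an explicit transport of the witnessing data $(u,v,C_0,w,\phi,\psi)$ through the triangle isomorphism; and for part (ii) your $\psi' = \psi \circ \Sigma^r\eta_s^C$ coincides (via the canonical identification of Remark \ref{rem:shift}) with the paper's $\psi' = \psi\circ\eta_s^{\Sigma^r C}$, and your $\bar w'$ is exactly the paper's. The only difference is that you carry out the checks — that $\phi$ is an $(r+s)$-isomorphism, that $\psi'$ is a right $(r+s)$-inverse via $\eta_r^C\circ\Sigma^r\eta_s^C=\eta_{r+s}^C$, and that $\Sigma^{r+s}\bar w' = w\circ\psi'$ — in more explicit detail than the paper, which simply displays the modified diagram and asserts the upper triangle commutes.
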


\begin{proof} (i) Two triangles in $\C_{0}$ are $0$-isomorphic if they
  are related through a triangle isomorphism in $\C_{0}$. The property
  claimed here immediately follows from the fact that, within $\C_0$,
  all $0$-isomorphisms admit inverses.  \medskip

  (ii) By definition, there exists a commutative diagram,
  \[ \xymatrix{
      & & \Sigma^r C \ar[d]_-{\psi} \ar[rd]^-{\Sigma^{r} \bar{w}} & \\
      A \ar[r]^-{\bar{u}} & B \ar[r]^-{v} \ar[rd]_-{\bar{v}}
      & C' \ar[r]^-{w} \ar[d]^-{\phi} & TA\\
      & & C &} \] Consider $\psi' \in \Mor^r(\Sigma^{r+s}C, C')$
  defined by $\psi' = \psi \circ \eta^{\Sigma^{r}C}_{s}$. Then $\psi'$
  is an $(r+s)$-right inverse of $\phi$.  Consider the following
  diagram
  \[ \xymatrix{
      & & \Sigma^{r+s} C \ar[d]_-{\psi'} \ar[rd]^-{\Sigma^{r+s} \bar{w}'} & \\
      A \ar[r]^-{\bar{u}} & B \ar[r]^-{v} \ar[rd]_-{\bar{v}}
      & C' \ar[r]^-{w} \ar[d]^-{\phi} & TA\\
      & & C &} \] where $\bar{w}': C\to \Sigma^{-r-s}TA$ is defined by
  $\bar{w}'=\eta^{\Sigma^{-r}TA}_{s}\circ \bar{w}$ and notice that the
  right upper triangle is commutative.
\end{proof}

\begin{prop}[Weighted rotation property] \label{prop-rot} Given a
  strict exact triangle
  $$\Delta: A \xrightarrow{\bar{u}} B \xrightarrow{\bar{v}} C
  \xrightarrow{\bar{w}} \Sigma^{-r} TA$$ satisfying $w(\Delta) = r$,
  there exists a triangle
  \begin{equation} \label{rot-ext-tr} R(\Delta): B
    \xrightarrow{\bar{v}} C \xrightarrow{\bar{w}'} \Sigma^{-r} TA
    \xrightarrow{-\bar{u}'} \Sigma^{-2r} TB
  \end{equation} 
  satisfying \jznote{$w(R(\Delta)) = 2r$}, where $\bar{w}' \simeq_r \bar{w}$
  and $\bar{u}'$ is the composition
  $$\bar{u}':\Sigma^{-r}TA \xrightarrow{\Sigma^{-r} T\bar{u}} \Sigma^{-r} TB 
  \xrightarrow{\eta_{r}^{\Sigma^{-r}TB}} \Sigma^{-2r}TB~.~$$
\end{prop}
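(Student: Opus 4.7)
The plan is to produce $R(\Delta)$ by running the octahedral axiom in $\C_0$ on the composable pair $v\co B\to C'$ and $\phi\co C'\to C$ coming from the defining data of $\Delta$. Since $\bar{v}=\phi\circ v$, the octahedral yields two exact triangles in $\C_0$: a cofiber triangle $\Delta_3\co B\xrightarrow{\bar{v}} C\xrightarrow{\alpha} F\xrightarrow{\beta} TB$ (the underlying triangle one needs for $R(\Delta)$) and a comparison triangle $\Delta_4\co TA\xrightarrow{\chi} F\xrightarrow{\gamma} K\xrightarrow{\delta} T^2 A$. The octahedral commutativity relations give $\alpha\circ\phi=\chi\circ w$ and $\beta\circ\chi=-Tu$ (the minus coming from the anti-commuting square). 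Because the vertex $K$ is the $r$-acyclic object from the cofiber of $\phi$, the map $\chi$ is automatically an $r$-isomorphism.

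Next I would build the data witnessing $w(R(\Delta))=2r$. Using Proposition~\ref{prop-r-iso}(i) I pick a left $r$-inverse $\tilde{\phi}\co F\to\Sigma^{-r}TA$ of $\chi$, so $\tilde{\phi}\circ\chi=\eta^{TA}_r$; by Corollary~\ref{cor-2} this $\tilde{\phi}$ is a $2r$-isomorphism. A natural candidate for a right $2r$-inverse is $\tilde{\psi}:=\chi\circ\eta^{TA}_r\co\Sigma^{r}TA\to F$: a monoidal computation using the relations in Remark~\ref{rem:shift} gives $\tilde{\phi}\circ\tilde{\psi}=\eta^{TA}_r\circ\eta^{TA}_r=\eta^{\Sigma^{-r}TA}_{2r}$, as required. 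Then I set $\bar{w}':=\tilde{\phi}\circ\alpha$. The compatibility $\Sigma^{2r}(-\bar{u}')=\beta\circ\tilde{\psi}$ demanded by Definition~\ref{dfn-set} unwinds via the naturality of $\eta$ to $-T\bar{u}\circ\eta^{TA}_r=\beta\circ\chi\circ\eta^{TA}_r$, which is exactly the octahedral identity $\beta\circ\chi=-Tu$ composed with $\eta^{TA}_r$.

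The delicate point is verifying $\bar{w}'\simeq_r \bar{w}$. From $\alpha\circ\phi=\chi\circ w$ and $\tilde{\phi}\circ\chi=\eta^{TA}_r$ one gets $\bar{w}'\circ\phi=\eta^{TA}_r\circ w$ exactly, while Remark~\ref{rmk- set}(b) gives only $\bar{w}\circ\phi\simeq_r \eta^{TA}_r\circ w$; so $(\bar{w}-\bar{w}')\circ\phi\simeq_r 0$. My plan is to turn this into the desired $r$-equivalence by applying the cohomological functor $\Mor_{\C}^{\,\cdot}(-,\Sigma^{-r}TA)$ to the exact triangle $C'\xrightarrow{\phi} C\xrightarrow{g} K\xrightarrow{h} TC'$: the displayed vanishing forces $i_{0,r}(\bar{w}-\bar{w}')$ to lift to a class in $\Mor^{r}(K,\Sigma^{-r}TA)$, and then Lemma~\ref{lemma-acyclic}(iii) applied to $K\simeq_r 0$ kills this class at the appropriate persistence level.

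The main obstacle, and the reason this last step is not purely formal, is exactly the sharpness of the estimate. A routine chase through Corollary~\ref{cor-3} only produces $\bar{w}\simeq_{2r}\bar{w}'$, because Corollary~\ref{cor-3} demands strict equality to the right of the $r$-iso and we only have $\simeq_r$. To recover the claimed $\simeq_r$ one must either exploit the freedom in the choice of $\tilde{\phi}$ within its $r$-equivalence class (Proposition~\ref{prop-r-iso}(ii)) to absorb the discrepancy, or feed the vanishing from the $r$-acyclicity of $K$ into the long exact sequence at the correct persistence level before the extra $i_{r,2r}$ is applied. Apart from this, the rest of the argument is book-keeping with the octahedral and the shift relations in Remark~\ref{rem:shift}.
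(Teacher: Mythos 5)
Your argument follows the paper's in substance. Running the octahedral axiom on the composable pair $v,\phi$ is a clean repackaging of what the paper does with TR3 together with Corollary~\ref{cor-1}: your $F,\alpha,\beta,\chi$ are the paper's $TA',w',u',\bar{\phi}$ from diagram~\eqref{eq:foursq}, your $\tilde{\phi}$ and $\tilde{\psi}=\chi\circ\eta^{TA}_r$ are the paper's $\bar{\psi}$ and $\bar{\phi}\circ\eta^{TA}_r$, and your verification $\Sigma^{2r}(-\bar{u}')=-T\bar{u}\circ\eta^{TA}_r$ is the same computation. That $\chi$ is an $r$-isomorphism because its cofiber is the $r$-acyclic $K$ is the observation the paper packages as an application of Corollary~\ref{cor-1}.

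The point you flag as open, $\bar{w}'\simeq_r\bar{w}$, is indeed the delicate step. Your long-exact-sequence scheme, starting from $(\bar{w}-\bar{w}')\circ\phi\simeq_r 0$, does yield only $\simeq_{2r}$: lifting the difference over the $r$-acyclic cofiber $K$ costs one $r$, and killing the lift via $r$-acyclicity costs another. Note, though, that the paper's own chain in equation~\eqref{eq:lsquare}, namely $\bar{\psi}\circ w'\simeq_r\Sigma^{-r}w\circ\psi'\simeq_r\Sigma^{-r}w\circ\Sigma^{-r}\psi$, also visibly produces only $\simeq_{2r}$; the sharper constant $r$ is asserted there without an explicit mechanism to collapse the two $r$-equivalences. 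Neither of your suggested work-arounds obviously closes this: the freedom in the choice of $\tilde{\phi}$ is only modulo $\simeq_r$ (Proposition~\ref{prop-r-iso}(ii)) and so cannot absorb an $r$-sized discrepancy against the fixed $\bar{w}$. In practice this does not matter: the only downstream use of the rotation (TR2 for $\C_\infty$ in \S\ref{subsubsec:proofTh1}) needs only $\bar{w}'\simeq_\infty\bar{w}$. Modulo this shared imprecision in the constant, your proof is correct and coincides with the paper's.
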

We call \jznote{$R(\Delta)$} the (first) positive rotation of $\Delta$.

\begin{proof} By definition, there exists a commutative diagram,
  \[ \xymatrix{
      & & \Sigma^r C \ar[d]_-{\psi} \ar[rd]^-{\Sigma^{r} \bar{w}} & \\
      A \ar[r]^-{\bar{u}} & B \ar[r]^-{v} \ar[rd]_-{\bar{v}}
      & C' \ar[r]^-{w} \ar[d]^-{\phi} & TA\\
      & & C &} \] where
  $A \xrightarrow{\bar{u}} B \xrightarrow{v} C' \xrightarrow{w} TA$ is
  an exact triangle in $\C_0$ \pbnote{and $\psi$ is a right
    $r$-inverse of $\phi$.} By the rotation property of $\C_0$,
  $B \xrightarrow{v} C' \xrightarrow{w}TA \xrightarrow{-T\bar{u}} TB$
  is an exact triangle in $\C_0$. \pbnote{We now construct the
    following diagram in $\C_0$ in which the upper squares will be
    commutative and the lower square $r$-commutative:}
  \pbnote{
    \begin{equation}\label{eq:foursq}\xymatrixcolsep{4pc} \xymatrix{
        B \ar[r]^-{v} \ar[d]_-{\mathds 1_B}
        & C' \ar[r]^-{w} \ar[d]_-{\phi}
        & TA \ar[r]^{-T\bar{u}} \ar[d]_-{\bar{\phi}}
        & TB \ar[d]_-{\mathds 1_{TB}} \\
        B \ar[r]^-{\bar{v}}
        & C \ar[r]^-{w'} \ar[d]_-{\Sigma^{-r} \psi}
        & TA' \ar[r]^-{u'} \ar[d]_-{\bar{\psi}} & TB \\
        & \Sigma^{-r} C' \ar[r]^-{\Sigma^{-r} w} & \Sigma^{-r} TA} 
    \end{equation}
  } \pbnote{Here the second row of maps comes from embedding
    $B \xrightarrow{\bar{v}} C$ into an exact triangle
    $B \xrightarrow{\bar{v}} C \xrightarrow{w'} A'' \to TB$ for some
    $A''$ in $\C_0$ and $A'=T^{-1}A''$. The map $\bar{\phi}$ is then
    induced by the functoriality of triangles in $\C_{0}$ and is an
    $r$-isomorphism by Corollary \ref{cor-1}. So far this gives the
    upper three squares of the diagram and their commutativity. To
    construct the lower square, let $\bar{\psi}$ be a left $r$-inverse
    of $\bar{\phi}$
    (i.e.~$\bar{\psi} \circ \bar{\phi} = \eta^{TA}_r$). By
    Corollary~\ref{cor-2}, $\bar{\psi}$ is a $2r$-isomorphism.}

  \pbnote{We claim that the lower square in diagram~\eqref{eq:foursq}
    is $r$-commutative, and therefore we have
    $\bar{\psi} \circ w'\simeq_{r} \Sigma^{-r}w\circ \Sigma^{-r}\psi =
    \bar{w}$.}

  \pbnote{Indeed, let $\psi'$ be a left $r$-inverse of $\phi$. By
    using the commutativity of the middle upper square in
    diagram~\eqref{eq:foursq}, we deduce
    $\bar{\psi}\circ w'\circ \phi= \Sigma^{-r}w\circ \psi'\circ
    \phi$. As $\phi$ is an $r$-isomorphism we obtain
  \begin{equation} \label{eq:lsquare} \bar{\psi}\circ w' \simeq_{r}
    \Sigma^{-r}w\circ \psi'\simeq_{r} \Sigma^{-r}w\circ
    \Sigma^{-r}\psi = \bar{w},
  \end{equation}
  because, by Proposition~\ref{prop-r-iso}, we have
  $\Sigma^{-r}\psi \simeq_{r} \psi'$. This shows the lower square is
  $r$-commutative and the related $r$-identity.}

  We next consider the following diagram
  \[ \xymatrix{ & & \Sigma^r TA \ar[d]_-{\bar{\phi} \circ \eta^{TA}_r}
      \ar[rd]^-{\Sigma^{2r} \bar{u}'} & \\
      B \ar[r]^-{\bar{v}} & C \ar[r]^-{w'} \ar[rd]_-{\bar{w}'}
      & TA' \ar[r]^-{u'} \ar[d]_{\bar{\psi}} & TB\\
      & & \Sigma^{-r} TA &} \] where
  $\bar{u}'= \Sigma^{-2r}(u'\circ \bar{\phi}\circ \eta_{r}^{TA})$ and
  $\bar{w}'=\bar{\psi}\circ w'$.  Given that $\bar{\psi}$ is a
  $2r$-isomorphism, this means that we have a strict exact triangle of
  weight $2r$ of the form:
  $$B\xrightarrow{\bar{v}} C \xrightarrow{\bar{w}'}
  TA'\xrightarrow{\bar{u}'} \Sigma^{-2r}TB$$ We already know
  $\bar{w}'=\bar{\psi}\circ w'\simeq_{r} \bar{w}$. On the other hand,
  \[ \Sigma^{2r} \bar{u}'=u' \circ (\bar{\phi} \circ \eta^{TA}_r) =
    -T\bar{u} \circ \eta^{TA}_r = -\eta^{\Sigma^{r}TB}_r \circ
    \Sigma^r T\bar{u} \] which concludes the proof.
\end{proof}
 
\begin{rem}\label{rem:neg-rot} A perfectly similar argument also
  shows that there exists a strict exact triangle of strict weight
  $2r$ and of the form:
  $$R^{-1}(\Delta): T^{-1}\Sigma^{r}C\to A\to B \to \Sigma^{-r}C$$
  which is the \jznote{(first)} negative rotation of $\Delta$. \jznote{Note that $R^{-1} (R(\Delta)) \neq R(R^{-1}(\Delta)) \neq \Delta$.} 
\end{rem}

\begin{rem} \label{r:rotation-weight}
  \pbnote{Proposition~\ref{prop-rot} describes a rotation of weighted
    exact triangles that does {\em not} preserve weights. Indeed, the
    rotation of the weight $r$ triangle $\Delta$ from
    Proposition~\ref{prop-rot} has weight $2r$. It is not clear to
    what extent one can improve this. Ideally, one would like to be
    able to rotate $\Delta$ into a weighted exact triangle
    $B \xrightarrow{} C \xrightarrow{} \Sigma^{-r} TA \xrightarrow{}
    \Sigma^{-r} TB$ of the {\em same} weight $r$. There is some
    evidence, coming from symplectic topology, indicating that in
    certain circumstances this might be possible
    (see~\S\ref{sbsb:cobs-rot}). However, the algebraic setting in
    this paper, in particular the definition of weighted exact
    triangles, might be too general to render this feasible, at least
    without additional assumptions on $\Delta$.}
\end{rem}

\begin{prop} [Weighted octahedral formula] \label{prop-w-oct} Given
  two strict exact triangles
  $$\Delta_1: E \xrightarrow{\beta} F \xrightarrow{\alpha} X
  \xrightarrow{k} \Sigma^{-r} TE$$ and
  $$\Delta_2: X \xrightarrow{u} A \xrightarrow{\gamma} B
  \xrightarrow{b} \Sigma^{-s}TX$$ with $w(\Delta_1) = r$ and
  $w(\Delta_2) = s$, there exists diagram
  \begin{equation} \label{w-oct} \xymatrix{ E \ar[d]_{\beta}\ar[r] & 0
      \ar[r] \ar[d]
      & TE \ar[d] \ar[r]& TE \ar[d]^{T\beta}\\
      F \ar[d]_{\alpha} \ar[r]^{u\circ \alpha} & A \ar[r] \ar[d]_{id}
      & C \ar[d] \ar[r] & TF \ar[d]^{(\Sigma^{-s} T\alpha)\circ \eta^{TF}_{s}} \\
      X \ar[d]_{k} \ar[r]^{u} & A \ar[r]^{\gamma}\ar[d]
      & B \ar[r]^{b} \ar[d] & \Sigma^{-s} TX \ar[d]^{\Sigma^{-s}(Tk) } \\
      \Sigma^{-r} TE \ar[r] &0 \ar[r] &\Sigma^{-r-s} T^{2}E \ar[r] &
      \Sigma^{-r-s} T^{2}E}
  \end{equation}
  with all squares commutative except for the right bottom one that is
  $r$-anti-commutative, such that the triangles
  $\Delta_3: F \to A \to C \to TF$ and
  $\Delta_4: TE \to C \to B \to \Sigma^{-r-s} T^{2}E$ are strict exact
  with $w(\Delta_3) =0$ and $w(\Delta_4) = r+s$.
\end{prop}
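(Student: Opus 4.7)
My plan is to build $\Delta_3$ and $\Delta_4$ by two applications of the octahedral axiom in $\C_0$, and then to read off the commutativity of the diagram (together with the $r$-anti-commutativity of the exceptional square) from the resulting octahedra.

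First, using Definition~\ref{dfn-set}, I would unpack the given strict exact triangles. From $w(\Delta_1)=r$ one obtains an exact triangle $E \xrightarrow{\beta} F \xrightarrow{v_1} X_1' \xrightarrow{w_1} TE$ in $\C_0$, an $r$-isomorphism $\phi_1 \colon X_1' \to X$ with right $r$-inverse $\psi_1$, such that $\alpha = \phi_1 \circ v_1$ and $\Sigma^r k = w_1 \circ \psi_1$. Similarly, from $w(\Delta_2)=s$ one obtains $X \xrightarrow{u} A \xrightarrow{v_2} B_2' \xrightarrow{w_2} TX$ in $\C_0$, an $s$-isomorphism $\phi_2 \colon B_2' \to B$ and right $s$-inverse $\psi_2$, with $\gamma = \phi_2 \circ v_2$ and $\Sigma^s b = w_2 \circ \psi_2$. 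Completing $u \circ \alpha \colon F \to A$ into an exact triangle $F \xrightarrow{u \circ \alpha} A \to C \to TF$ in $\C_0$ defines $\Delta_3$, which is strict exact of weight~$0$.

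Next, I apply the $\C_0$-octahedral axiom twice. First, to the composable pair $v_1$ and $u \circ \phi_1$ whose composition is $u \circ \alpha$: this produces a commutative octahedron containing an exact triangle $TE \to C \to \tilde{B} \to T^2 E$ in $\C_0$, where $\tilde{B}$ is a cone of $u \circ \phi_1$. Second, to the pair $\phi_1$ and $u$, with the same composition $u \circ \phi_1$: this gives an exact triangle $K_1 \to \tilde{B} \to B_2' \to TK_1$, where $K_1$ is the cone of $\phi_1$ and hence $r$-acyclic by Definition~\ref{dfn-r-iso}. The induced map $\tilde{B} \to B_2'$ has $r$-acyclic cone $TK_1$, so it is an $r$-isomorphism; composing with $\phi_2$ and invoking Proposition~\ref{prop-r-iso}(iii) yields an $(r+s)$-isomorphism $\Phi \colon \tilde{B} \to B$. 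This exhibits $\Delta_4 \colon TE \to C \to B \to \Sigma^{-r-s} T^2 E$ as strict exact of weight $r+s$, with second map $C \to \tilde{B} \xrightarrow{\Phi} B$ and third map determined by a right $(r+s)$-inverse of $\Phi$ composed with the third map of the $\C_0$-triangle $TE \to C \to \tilde{B} \to T^2 E$.

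Finally, the commutativity of the squares in the statement's diagram is encoded by the two octahedra above, once one uses the identifications $\alpha = \phi_1 v_1$, $\gamma = \phi_2 v_2$, $\Sigma^r k = w_1 \psi_1$ and $\Sigma^s b = w_2 \psi_2$ to translate between the maps in the underlying $\C_0$-triangles and the maps $\alpha, k, \gamma, b$ appearing in $\Delta_1, \Delta_2$. The main obstacle, and the only non-strict commutativity, is the lower-right $2\times 2$ square: this will reduce to the classical anti-commutativity of the lower-right face of the $\C_0$-octahedron, together with the $r$-commutativity relations coming from the definition of a strict exact triangle (in particular $\bar{w} \circ \phi \simeq_r \eta^{TA}_r \circ w$ for $\Delta_1$ and its analogue for $\Delta_2$). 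Keeping careful track of the shift morphism $\eta^{TF}_s$ which appears on the right column, and using Corollary~\ref{cor-3} to replace equalities by $r$-equivalences whenever an $r$-isomorphism is pre- or post-composed, is where the bookkeeping is most delicate; otherwise the verification is routine.
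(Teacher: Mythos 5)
Your strategy matches the paper's closely and is correct in outline: unpack the two strict exact triangles into $\C_0$-triangles plus $r$- and $s$-isomorphisms, define $\Delta_3$ from the exact triangle over $u\circ\alpha$ in $\C_0$ (which has weight $0$ immediately), then produce the comparison $(r+s)$-isomorphism from the cone of $u\circ\phi_1$ to $B$, and read off $\Delta_4$. Your variant uses a second octahedron (for $\phi_1$ and $u$) to exhibit $\tilde{B}\to B_2'$ as an $r$-isomorphism via the $r$-acyclicity of $K_1 = \mathrm{Cone}(\phi_1)$; the paper instead invokes TR3 plus Corollary~\ref{cor-1} to get the same $r$-isomorphism $\phi'$ from a map of exact triangles. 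These are essentially the same argument packaged differently, and both are valid; there is no real advantage either way.

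The gap in your proposal is the verification of the $r$-anti-commutativity of the lower right square, which you dismiss as routine bookkeeping. In the paper this is in fact the longest and most delicate part of the proof. It is not enough to cite the anti-commutativity of the octahedron face together with the defining $r$-commutativities, because the map $B\to\Sigma^{-r-s}T^2E$ in $\Delta_4$ is built via the chosen right $(r+s)$-inverse $\psi''$ of $\Phi$, and the map $\Sigma^{-s}Tk:\Sigma^{-s}TX\to\Sigma^{-r-s}T^2E$ is built from $\psi_1$ and the underlying $\C_0$-triangle; reconciling the two routes requires chasing a specific $r$-commutative square coming from the compatibility of right inverses under a morphism of triangles. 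The relevant tool is Corollary~\ref{cor-6} (on inverses of $r$-isomorphisms in commuting squares), not Corollary~\ref{cor-3} as you suggest: you must first prove that the square formed by $\psi_2$, $T\psi_1$ and the connecting maps $\theta$, $\Sigma^r g$ is $r$-commutative, which follows from the commutativity of the corresponding square with the forward maps $\phi'$, $T\phi_1$, and then propagate this $r$-equivalence through the definition of $\psi''$. You should carry this out explicitly; as written your proposal asserts the conclusion without establishing it.
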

By forgetting the $\Sigma$'s (or assuming that $r=s=0$) this is
equivalent to the usual octahedral axiom in a triangulated category
(namely $\C_{0}$) and the right bottom square is commutative up to
sign (or anti-commutative).
\begin{proof} [Proof of Proposition \ref{prop-w-oct}]
  By definition, there are two commutative diagrams,
  \begin{equation} \label{prop-w-oct-1} \xymatrixcolsep{4pc}
    \xymatrix{
      & E \ar[d] & \\
      & F \ar[d]_{\alpha'} \ar[rd]^{\alpha} & \\
      \Sigma^r X \ar[r]^-{\psi_1} \ar[rd]_-{\Sigma^r k}
      & X' \ar[d]_-{\delta} \ar[r]^-{\phi} & X \\
      & TE &} \,\,\,\,\,\,\,\, \xymatrixcolsep{4pc} \xymatrix{
      & & \Sigma^s B \ar[d]_-{\psi_3} \ar[rd]^-{\Sigma^s b} & \\
      X \ar[r]^-{u} & A \ar[r]^-{v'} \ar[rd]_{\gamma}
      & B' \ar[r]^-{g} \ar[d]_-{\bar{\phi}} & TX \\
      & & B &}
  \end{equation}
  with $\phi$ an $r$-isomorphism and $\bar{\phi}$ an $s$-isomorphism
  and $\psi_1$ and $\psi_{3}$ are, their right $r$ and $s$-inverses,
  respectively. By the octahedral axiom in $\C_0$, we construct the
  following diagram commutative except for the right bottom square
  that is anti-commutative:
  \begin{equation} \label{w-oct-2} \xymatrixcolsep{3pc} \xymatrix{
      E \ar[r]\ar[d] & 0 \ar[r] \ar[d] & TE \ar[d] \ar[r]& TE\ar[d] \\
      F \ar[r]^{\alpha''} \ar[d]_{\alpha'} & A \ar[r] \ar[d]_{\mathds{1}_A}
      & C \ar[r] \ar[d]_-{w} & TF\ar[d]_{T\alpha'}\\
      X' \ar[r]^-{u \circ \phi} \ar[d]_-{\delta} & A\ar[d] \ar[r]^-{v}
      & B'' \ar[r]^-{\theta} \ar[d]_-{t}
      & TX' \ar[d]_-{T\delta} \\
      TE \ar[r] & 0 \ar[r]& T^{2}E \ar[r]^{\mathds{1}_{T^2E}} & T^{2}E}
  \end{equation}
  Thus \jznote{$\alpha''=u\circ \phi\circ \alpha'$}, $t=-T\delta\circ \theta$.
  We denote by $$\Delta_3: F \xrightarrow{\alpha''} A \to C \to TF$$
  the respective exact triangle in $\C_0$ so that, as in Remark
  \ref{rmk- set}, $w(\Delta_3) =0$. The map $w\in \Mor^{0}(C,B'')$ is
  induced from the commutativity of the middle, left triangle. We now
  consider the following diagram.
  \begin{equation} \label{w-oct-3} \xymatrixcolsep{3pc} \xymatrix{ F
      \ar[d]_{\alpha'}\ar[r] &A \ar[d]_{\mathds{1}_A}\ar[r]& C \ar[d]_{w}\ar[r]
      & TF \ar[d]_{T\alpha'}\\
      X' \ar[r]^-{u \circ \phi} \ar[d]_-{\phi} & A \ar[r]^-{v}
      \ar[d]_{\mathds{1}_A} & B'' \ar[r]^-{\theta} \ar[d]_-{\phi'}
      & TX' \ar[d]_-{T\phi} \\
      X \ar[r]^-{u} & A\ar[d]_{\mathds{1}_A} \ar[r]^-{v'}
      & B' \ar[r]^-{g}\ar[d]_{\bar{\phi}} & TX\\
      & A\ar[r]^{\gamma} & B \ar[r]^{b}& \Sigma^{-s} TX }
  \end{equation}
  The three long rows are exact triangles in $\C_{0}$ and we deduce
  the existence of $\phi' \in \Mor^0(B'', B')$ making commutative the
  adjacent squares.  This is an $r$-isomorphism by Corollary
  \ref{cor-1}.  We fix a right $r$-inverse
  $\psi_{2}\in \Mor^{0}(\Sigma^{r}B',B'')$ of $\phi'$.  The
  composition $\phi'' = \bar{\phi} \circ \phi' \in \Mor^0(B'', B)$ is
  an $(r+s)$-isomorphism by Proposition \ref{prop-r-iso} (iii).  Let
  \jznote{$\psi''=\psi_{2}\circ \Sigma^{r}\psi_{3}$} (recall $\psi_{3}$ from
  (\ref{prop-w-oct-1})) and notice that $\psi''$ is a right
  $(r+s)$-inverse of $\phi''$.

  We are now able to define the triangle $\Delta_{4}$:
  \[ \Delta_4: TE \to C \xrightarrow{\phi'' \circ w} B
    \xrightarrow{\Sigma^{-r-s}(t \circ \psi'')} \Sigma^{-r-s}T^{2}E
    ~.~\] The following commutative diagram shows that $\Delta_{4}$ is
  strict exact and $w(\Delta_4) = r+s$.
  \[ \xymatrix{
      & & \Sigma^{r+s} B \ar[d]_-{\psi''} \ar[rd]^-{t \circ \psi''} & \\
      TE \ar[r] & C \ar[r]^-{w} \ar[rd]_-{\phi'' \circ w}
      & B'' \ar[r]^-{t} \ar[d]^-{\phi''} & T^{2}E\\
      & & B &} \] It is easy to check that all the squares in
  (\ref{w-oct}), except the right bottom one, are commutative.

  We now check the $r$-anti-commutativity of the right bottom
  square. We need to show
  $\Sigma^{-s} (Tk) \circ b \simeq_r - \Sigma^{-r-s}(t \circ \psi'')$,
  which is equivalent to
  $\Sigma^r (Tk) \circ \Sigma^{r+s} b \simeq_r - t \circ
  \psi''$. Given that the $B''(TX')(TX)B'$ square in (\ref{w-oct-3})
  commutes and using Corollary \ref{cor-6}, we have the following
  $r$-commutative diagram
  \[ \xymatrixcolsep{3pc} \xymatrix{ \Sigma^{r} B' \ar[r]^-{\Sigma^r
        g} \ar[d]_-{\psi_2}
      & \Sigma^r TX \ar[d]_-{T\psi_1} \\
      B'' \ar[r]^-{\theta} & TX'} ~.~\]

  Now consider the following diagram, commutative except the middle
  square being $r$-commutative,
  \[ \xymatrixcolsep{3pc} \xymatrix{ \Sigma^{r+s} B \ar[d]_-{\Sigma^r
        \psi_3}
      \ar[rd]^-{\Sigma^{r+s} b} \ar@/_4pc/[dd]_-{\psi''} & & \\
      \Sigma^r B' \ar[r]^-{\Sigma^r g} \ar[d]_-{\psi_2}
      & \Sigma^r TX \ar[d]_-{T\psi_1} \ar[rd]^-{\Sigma^r (Tk)} & \\
      B'' \ar[r]^-{\theta} \ar@/_1.5pc/[rr]^-{-t} & TX'
      \ar[r]^{T\delta} & T^{2}E } \] and write
  \[- t \circ \psi'' = (T\delta) \circ (\theta \circ \psi_2) \circ
    \Sigma^r \psi_3 \simeq_r (T\delta) \circ ((T\psi_1) \circ
    \Sigma^rg) \circ \Sigma^r \psi_3 = \Sigma^r (Tk) \circ
    \Sigma^{r+s}b \] which completes the proof.
\end{proof}

Given a triple of maps
$\Delta: A \xrightarrow{u} B \xrightarrow{v} C \xrightarrow{w} D$ with
shifts $\ceil*{u}, \ceil*{v}, \ceil*{w} \in \R$ it is useful to
introduce a special notation for an associated triple in $\C_{0}$,
denoted by $\Sigma^{s_1, s_2, s_3, s_4} \Delta$, for
$s_1, s_2, s_3, s_4 \in \R$ satisfying the following relations
\begin{align*}
  -s_1 + s_2 + \ceil*{u} & \leq 0 \\
  -s_2 + s_3 + \ceil*{v} & \leq 0\\
  -s_3 + s_4 + \ceil*{w} & \leq 0.
\end{align*}
The triple $\Sigma^{s_1, s_2, s_3, s_4} \Delta$ has the form
\begin{equation}\label{eq:shift-tr} \Sigma^{s_1} A
  \xrightarrow{\bar{u}} \Sigma^{s_2}B \xrightarrow{\bar{v}}
  \Sigma^{s_3} \xrightarrow{\bar{w}} \Sigma^{s_4} D
\end{equation}
where $\bar{u}$ is the composition of the composition
$\Sigma^{s_1} A \xrightarrow{(\eta_{s_1, 0})_A} A \xrightarrow{u} B
\xrightarrow{(\eta_{0, s_2})_B} \Sigma^{s_2} B$ and the persistence
structure map $i_{-s_1 + s_2, 0}$, i.e.,
\begin{equation} \label{shift-u} \bar{u} = i_{-s_1 + s_2 + \ceil*{u},
    0}\left((\eta_{0, s_2})_B \circ u \circ(\eta_{s_1, 0})_A\right).
\end{equation}
The definitions of $\bar{v}$ and $\bar{w}$ are similar and, in
particular, $\ceil*{\bar{u}}=\ceil*{\bar{v}}= \ceil*{\bar{w}}=0$. The
inequalities above ensure that the resulting triangle
(\ref{eq:shift-tr}) has all morphisms in $\C_{0}$.

For $s_1 = s_2 = s_3 = s_4 =k$ (which implies that
$\ceil*{u}, \ceil*{v}, \ceil*{w} \leq 0$) we denote, for
brevity, $$\Sigma^k \Delta : = \Sigma^{s_1, s_2, s_3, s_4} \Delta~.~$$

\begin{remark} \label{rmk-shift-notation} Assume that
  $\Delta: A \xrightarrow{u} B \xrightarrow{v} C \xrightarrow{w}
  \Sigma^{-r} TA$ is strict exact of weight $w(\Delta) = r$

  (a) It is a simple exercise to show that the triangle
  $\Sigma^k \Delta: \Sigma^kA \to \Sigma^k B \to \Sigma^k C \to
  \Sigma^{-r +k} TA$ is strict exact and
  $w\left(\Sigma^k \Delta\right) = r$.

  (b) For $s\geq 0$, Proposition \ref{prop-wt-1} (ii) claims that
  $\Sigma^{0,0,0,-s}\Delta$ is strict exact of weight $r+s$. It is
  again an easy exercise to see that $\Sigma^{0,0,-s,-s}\Delta$ is
  strict exact of weight $r+s$.
\end{remark}

\begin{prop}[Functoriality of triangles] \label{prop-ind} Consider two
  strict exact triangles as below with $f \in \Mor^0(A_1, A_2)$ and
  $g \in \Mor^0(B_1, B_2)$
  \[ \xymatrix{ \Delta_1: & A_1 \ar[r]^-{\bar{u}_1} \ar[d]_-{f} & B_1
      \ar[r]^-{\bar{v}_1} \ar[d]_-{g}
      & C_1 \ar[r]^-{\bar{w}_1} & \Sigma^{-r} TA_1 \\
      \Delta_2: & A_2 \ar[r]^-{\bar{u}_2} & B_2 \ar[r]^-{\bar{v}_2} &
      C_2 \ar[r]^-{\bar{w}_2} & \Sigma^{-s} TA_2} \] and
  $w(\Delta_1) = r$, $w(\Delta_2) = s$. Then there exists a morphism
  $h \in \Mor^0(C_1, \Sigma^{-r} C_2)$ inducing maps relating the
  triangles $\Delta_1 \to \Sigma^{0,0,-r,-r}\Delta_2$ as in the
  following diagram
  \[ \xymatrix{ A_1 \ar[r] \ar[d]_-{f} & B_1 \ar[r] \ar[d]_-{g} & C_1
      \ar[r] \ar[d]_-{h} & \Sigma^{-r} TA_1 \ar[d]^-{\eta^{TA_2}_s
        \circ \Sigma^{-r} Tf} \\
      A_2 \ar[r] & B_2 \ar[r] & \Sigma^{-r} C_2 \ar[r] & \Sigma^{-r-s}
      TA_2} \] where the middle square is $r$-commutative and the
  right square is $s$-commutative. \end{prop}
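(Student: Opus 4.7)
The plan is to lift the morphism of triangles in $\C_0$ that one gets by classical triangulated functoriality, and then transport the induced third-map back through the $r$- and $s$-isomorphisms that compare $C_i$ with the ``true'' mapping cones $C_i'$ in $\C_0$.

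First, I would unpack Definition~\ref{dfn-set} for both triangles: for $i=1,2$ there is a $\C_0$-exact triangle $A_i \xrightarrow{u_i} B_i \xrightarrow{v_i} C_i' \xrightarrow{w_i} TA_i$ together with an $r$-isomorphism (resp.\ $s$-isomorphism) $\phi_i: C_i' \to C_i$, a right inverse $\psi_i$, and relations $\bar{u}_i=u_i$, $\bar{v}_i = \phi_i\circ v_i$, $\Sigma^{r_i}\bar{w}_i = w_i\circ \psi_i$ (with $r_1=r$, $r_2=s$). Since $g \circ \bar{u}_1 = \bar{u}_2\circ f$ coincides with $g \circ u_1 = u_2 \circ f$, the standard triangulated functoriality in $\C_0$ furnishes a morphism $h' \in \Mor_{\C}^0(C_1', C_2')$ with $h'\circ v_1 = v_2\circ g$ and $w_2\circ h' = Tf\circ w_1$.

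Second, I would choose a left $r$-inverse $\phi_1': C_1 \to \Sigma^{-r}C_1'$ of $\phi_1$, provided by Proposition~\ref{prop-r-iso}(i), and define
\[
h \;:=\; \Sigma^{-r}(\phi_2\circ h')\circ \phi_1' \;\in\; \Mor_{\C}^0(C_1, \Sigma^{-r}C_2).
\]
For the middle square I would compute directly:
\[
h\circ \bar v_1 \;=\; \Sigma^{-r}(\phi_2\circ h')\circ \phi_1'\circ \phi_1\circ v_1 \;=\; \Sigma^{-r}(\phi_2\circ h')\circ \eta^{C_1'}_r\circ v_1,
\]
and then apply naturality of $\eta$ to move $\eta^{C_1'}_r$ past the $\Sigma^{-r}$-shifted map, yielding $\eta^{C_2}_r\circ \phi_2\circ h'\circ v_1 = \eta^{C_2}_r\circ \phi_2\circ v_2\circ g = \eta^{C_2}_r\circ \bar v_2\circ g$. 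After unwinding the conventions of $\Sigma^{0,0,-r,-r}\Delta_2$ via (\ref{shift-u}), this is exactly the ``$v$-edge'' of $\Sigma^{0,0,-r,-r}\Delta_2$ composed with $g$. In fact, the middle square is strictly commutative, which is stronger than the $r$-commutativity required.

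For the right square, using the second formula of Remark~\ref{rmk- set}(b) one has $\bar w_2\circ \phi_2 \simeq_s \eta^{TA_2}_s\circ w_2$; applying $\Sigma^{-r}$ preserves the $s$-equivalence, and precomposing with $h'\circ \phi_1'$ one obtains
\[
\Sigma^{-r}\bar w_2\circ h \;\simeq_s\; \Sigma^{-r}(\eta^{TA_2}_s\circ w_2\circ h')\circ \phi_1' \;=\; \Sigma^{-r}(\eta^{TA_2}_s\circ Tf\circ w_1)\circ \phi_1',
\]
where the last equality uses $w_2\circ h' = Tf\circ w_1$ from $\C_0$-functoriality. Comparing with $\eta^{TA_2}_s\circ \Sigma^{-r}Tf\circ \bar w_1$, the task reduces to showing that $\Sigma^{-r}w_1\circ \phi_1'$ agrees with $\bar w_1$ under the appropriate $r$-equivalence (a direct consequence of the identity $\bar w_1 \circ \phi_1 \simeq_r \eta^{TA_1}_r\circ w_1$ from Remark~\ref{rmk- set}(b) together with Corollary~\ref{cor-3}). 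This $r$-equivalence gets absorbed into the $s$-equivalence after the postcomposition with $\eta^{TA_2}_s$, which annihilates $r$-equivalences (by Lemma~\ref{lemma-comp}/Lemma~\ref{lemma-acyclic} applied to the cone of $\eta^{TA_2}_s$, which is $s$-acyclic).

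The main obstacle is not conceptual but notational: keeping track of the various shifts $\Sigma^{-r}$, $\Sigma^{-r-s}$ and matching the maps in $\Sigma^{0,0,-r,-r}\Delta_2$ with the right compositions involving $\eta$. The whole argument is an elementary diagram chase once one has $h'$ from $\C_0$, and the only genuinely new input beyond $\C_0$-functoriality is the use of Remark~\ref{rmk- set}(b) to convert strict commutativities of $\bar u_i,\bar v_i,\bar w_i$ into the appropriate $r$-/$s$-equivalences.
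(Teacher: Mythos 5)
The paper leaves this proof as an exercise, so there is no proof of record to compare against; I will instead assess your argument on its own terms.

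Your overall strategy --- produce $h'$ by TR3 in $\mathcal{C}_0$ and transport it through the comparison data of Definition~\ref{dfn-set} --- is the right one, and your verification of the middle square is correct (in fact with your $h$ that square is \emph{strictly} commutative, since $\phi_1'\circ\phi_1 = \eta^{C_1'}_r$ exactly). However, the right-square argument has a real gap at the last step. You correctly reduce the question to comparing $\Sigma^{-r}w_1\circ\phi_1'$ with $\bar{w}_1$: these differ only up to $r$-equivalence (via Proposition~\ref{prop-r-iso}(i), $\phi_1'\simeq_r\Sigma^{-r}\psi_1$, and $\Sigma^{-r}w_1\circ\Sigma^{-r}\psi_1=\bar{w}_1$ is \emph{exact} by Definition~\ref{dfn-set}; incidentally, invoking Corollary~\ref{cor-3} on top of the $\simeq_r$ identity from Remark~\ref{rmk- set}(b) would only give you $\simeq_{2r}$, not $\simeq_r$). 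But the claim that this $r$-equivalence ``gets absorbed into'' $s$-equivalence after postcomposing with $\eta^{TA_2}_s\circ\Sigma^{-r}Tf$ is false in general. Writing $d := \Sigma^{-r}w_1\circ\phi_1'-\bar{w}_1$, the $r$-equivalence says $\eta^{\Sigma^{-r}TA_1}_r\circ d=0$, while $s$-commutativity of the right square requires $\Sigma^{-r-2s}Tf\circ\eta^{\Sigma^{-r}TA_1}_{2s}\circ d=0$. The factorization $\eta_{2s} = \eta_{2s-r}\circ\eta_r$ needed to conclude this only exists when $2s\geq r$, since the $\eta$'s shift strictly downward; the $s$-acyclicity of $\text{cone}(\eta^{TA_2}_s)$ controls $s$-equivalences, not $r$-equivalences, and Lemma~\ref{lemma-comp}/Corollary~\ref{cor-3} allow you to \emph{cancel} a postcomposed $\eta_s$, not to upgrade an $r$-discrepancy into an $s$-discrepancy. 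So when $r>2s$ your choice of $h$ does not give an $s$-commutative right square.

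The repair is to discard the left inverse $\phi_1'$ in the definition of $h$ and use instead the specific right $r$-inverse $\psi_1$ packaged with $\Delta_1$ by Definition~\ref{dfn-set}, i.e., set $h := \Sigma^{-r}(\phi_2\circ h'\circ\psi_1)$. Then the right square becomes exactly verifiable: $\Sigma^{-r}\bar w_2\circ h = \Sigma^{-r}(\bar w_2\circ\phi_2\circ h'\circ\psi_1)\simeq_s \Sigma^{-r}(\eta^{TA_2}_s\circ w_2\circ h'\circ\psi_1) = \Sigma^{-r}(\eta^{TA_2}_s\circ Tf\circ w_1\circ\psi_1)$, and since $w_1\circ\psi_1 = \Sigma^r\bar w_1$ by Definition~\ref{dfn-set} this equals $\eta^{TA_2}_s\circ\Sigma^{-r}Tf\circ\bar w_1$, as required. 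The price is that the middle square is now only $r$-commutative (since $\Sigma^{-r}\psi_1\circ\phi_1\simeq_r\eta^{C_1'}_r$ rather than $=$), which is exactly what the statement asks for. The moral is that one should feed the $\psi_i$ (for which the $\bar w_i$-compatibility is exact) into the components that meet $\bar w_i$, rather than relying on generic left $r$-inverses.
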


The proof is left as exercise.

\begin{prop}\label{prop:tr-isos} Let
  $\Delta: A \xrightarrow{\bar{u}} B \xrightarrow{\bar{v}}
  C\xrightarrow{\bar{w}} \Sigma^{-r} TA$ be a strict exact of weight
  $r$ in $\C$ and let
  $\Delta': A\xrightarrow{\bar{u}} B\xrightarrow{v} C'\xrightarrow{w}
  TA$ be the exact triangle in $\C_{0}$ associated to $\Delta$ as in
  Definition \ref{dfn-set}. For any $s\geq 2r$ there are morphisms of
  triangles $h: \Sigma^{2r}\Delta\to \Delta'$ and
  $h':\Delta' \to \Sigma^{-r} \Delta$ such that the compositions
  $h'\circ h$ and $h\circ \Sigma^{3r} h'$ have as vertical maps the
  \jznote{shifted natural transformations $\eta_{3r}^{-}$ defined in (\ref{dfn-eta-map}).}
\end{prop}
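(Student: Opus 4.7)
The plan is to unpack the data defining $\Delta$---the $\C_0$-exact triangle $\Delta'\colon A\to B\to C'\to TA$, the $r$-isomorphism $\phi\colon C'\to C$, and its right and left $r$-inverses $\psi\colon\Sigma^rC\to C'$ and $\tilde\psi\colon C\to\Sigma^{-r}C'$, which by Proposition \ref{prop-r-iso} satisfy $\phi\circ\psi=\eta^C_r$, $\tilde\psi\circ\phi=\eta^{C'}_r$, and $\Sigma^r\tilde\psi\simeq_r\psi$---and to construct $h$ and $h'$ vertex by vertex. For $h\colon\Sigma^{2r}\Delta\to\Delta'$ I take the vertical components at $A,B,TA$ to be the natural transformations $\eta^A_{2r},\eta^B_{2r},\eta^{TA}_{r}$, and define the middle component $h_C\colon\Sigma^{2r}C\to C'$ by $h_C:=\eta^{C'}_r\circ\Sigma^r\psi$. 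For $h'\colon\Delta'\to\Sigma^{-r}\Delta$ I set the outer components to be $\eta^A_r,\eta^B_r,\eta^{TA}_{2r}$ and let $h'_C:=\eta^C_r\circ\phi$.

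To verify that $h$ is a morphism of triangles: the left square commutes strictly by naturality of $\eta$ applied to $\bar u$, and the right square commutes strictly because the defining relation $\Sigma^r\bar w=w\circ\psi$ from Definition \ref{dfn-set}, combined with naturality of $\eta$, gives $\eta^{TA}_r\circ\Sigma^{2r}\bar w=w\circ h_C$. The middle square will only be $r$-commutative, since $\bar v=\phi\circ v$ and $\psi\circ\Sigma^r\phi\simeq_r\eta^{C'}_r$ from Proposition \ref{prop-r-iso}(i). For $h'$ the situation is symmetric: the left and middle squares are strict from $\bar v=\phi\circ v$ and naturality, while the right square is $r$-commutative---this is precisely the content of Remark \ref{rmk- set}(b).

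For the compositions, the vertices $A,B,TA$ immediately produce the $\eta^-_{3r}$ maps by composing two shifted natural transformations along $\R$. At the $C$-vertex of $h'\circ h$ the strict identity $\phi\circ\psi=\eta^C_r$ and naturality of $\eta$ give
\[ h'_C\circ h_C=\eta^C_r\circ\phi\circ\eta^{C'}_r\circ\Sigma^r\psi=\eta^C_{2r}\circ\Sigma^r(\phi\circ\psi)=\eta^C_{2r}\circ\Sigma^r\eta^C_r=\eta^C_{3r}, \]
and at the $C'$-vertex of $h\circ\Sigma^{3r}h'$ the analogous calculation using the weaker relation $\psi\circ\Sigma^r\phi\simeq_r\eta^{C'}_r$ produces $h_C\circ\Sigma^{3r}h'_C\simeq_r\eta^{C'}_{3r}$. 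The main obstacle is this asymmetry---the strict identity $\phi\circ\psi=\eta^C_r$ versus the $r$-equivalence $\psi\circ\Sigma^r\phi\simeq_r\eta^{C'}_r$---which forces some diagrams to hold only up to $r$-equivalence rather than on the nose; careful tracking of shifts using the identity $\Sigma^r(\eta_{s,s'})_A=(\eta_{s+r,s'+r})_A$ from Remark \ref{rem:shift}, together with the canonical isomorphisms \eqref{nt-iso}--\eqref{sigma-mor}, is required throughout.
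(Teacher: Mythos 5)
Your construction of the maps $h$ and $h'$ is exactly the one the paper uses: the vertex components at $A,B,TA$ are the appropriate $\eta$'s, $h_C=\eta^{C'}_r\circ\Sigma^r\psi$ (equivalently $\psi\circ\eta^{\Sigma^rC}_r$), and $h'_C=\eta^C_r\circ\phi$. However, the verification contains a genuine gap that makes the proof fail as written. You claim that the middle square of $h$ and the right square of $h'$ are ``only $r$-commutative'', and that $h_C\circ\Sigma^{3r}h'_C\simeq_r\eta^{C'}_{3r}$. If these were the correct conclusions, then $h$ and $h'$ would \emph{not} be morphisms of triangles and $h\circ\Sigma^{3r}h'$ would \emph{not} have $\eta^{-}_{3r}$ as vertical components --- i.e.~you would have proved something strictly weaker than the proposition.

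What you are missing is precisely the mechanism that the shifts $\Sigma^{2r}$ (in the domain of $h$) and $\Sigma^{-r}$ (in the target of $h'$) are there to provide. By Lemma~\ref{lemma-comp}, if $f\simeq_r g$ in $\Mor^0(X,Y)$ then $f\circ\eta^X_r=g\circ\eta^X_r$ and $\eta^Y_r\circ f=\eta^Y_r\circ g$ \emph{on the nose}. The extra factor of $\eta_r$ baked into $h_C$ (the $\eta^{C'}_r\circ$ or $\circ\,\eta^{\Sigma^rC}_r$) and $h'_C$ (the $\eta^C_r\circ$) is exactly such a composition. Concretely, for the middle square of $h$: you correctly observe $\psi\circ\Sigma^r\phi\simeq_r\eta^{C'}_r$, but then
\[
 h_C\circ\Sigma^{2r}\bar v \;=\; \eta^{C'}_r\circ\Sigma^r(\psi\circ\Sigma^r\phi)\circ\Sigma^{2r}v \;=\; \eta^{C'}_r\circ\Sigma^r(\eta^{C'}_r)\circ\Sigma^{2r}v \;=\; \eta^{C'}_{2r}\circ\Sigma^{2r}v \;=\; v\circ\eta^B_{2r},
\]
where the second equality holds \emph{strictly}, not just up to $r$-equivalence, because postcomposing an $r$-equivalence with $\eta_r$ kills the discrepancy (Lemma~\ref{lemma-comp}). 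The same argument makes the right square of $h'$ strictly commutative (use that $\bar w\circ\phi\simeq_r\eta^{TA}_r\circ w$ and precompose with nothing but postcompose with $\eta^{\Sigma^{-r}TA}_r$ coming from $\eta^{TA}_{2r}$). Likewise, for the $C'$-vertex of $h\circ\Sigma^{3r}h'$, one gets $h_C\circ\Sigma^{3r}h'_C=\psi\circ\Sigma^r\phi\circ\eta^{\Sigma^rC'}_{2r}$, and precomposing the $r$-equivalence $\psi\circ\Sigma^r\phi\simeq_r\eta^{C'}_r$ with $\eta^{\Sigma^rC'}_{2r}$ (which factors through $\eta^{\Sigma^rC'}_r$) yields $\eta^{C'}_{3r}$ exactly. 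So the ``asymmetry'' you flag as an obstacle is not one: the weak relation $\psi\circ\Sigma^r\phi\simeq_r\eta^{C'}_r$ always appears in your formulas composed with an $\eta_r$ that promotes it to an equality. This is the essential point of the proof and must be invoked explicitly.

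Incidentally, the paper's proof organizes this by first writing $h_1=(\eta^A_r,\eta^B_r,\psi,\mathds{1}_{TA})$ and $h'_1=(\mathds{1}_A,\mathds{1}_B,\phi,\eta^{TA}_r)$ (which indeed only $r$-commute in one square each, as you observe via Remark~\ref{rmk- set}(b)), and then defines $h=h_1\circ\eta_r$, $h'=\eta_r\circ h'_1$; the strict commutativity of $h,h'$ is then an immediate consequence of Lemma~\ref{lemma-comp}. Your componentwise formulas agree with theirs after unwinding, but without explicitly invoking this lemma in the verification step the argument is incomplete.
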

\begin{proof} We use the notation in Definition \ref{dfn-set} and
  consider the diagram below:
  \[ \xymatrixcolsep{3pc} \xymatrix{ \Sigma^r A \ar[r]^{\Sigma^r
        \bar{u}} \ar[d]_{\eta^A_r} & \Sigma^r B \ar[r]^{\Sigma^r
        \bar{v}} \ar[d]_{\eta^B_r} & \Sigma^r C \ar[r]^-{\Sigma^r
        \bar{w}} \ar[d]_-{\psi}
      &  TA \ar[d]^-{\mathds{1}_{TA}} \\
      A \ar[r]^-{\bar{u}} \ar[d]_{\mathds 1_A} & B \ar[r]^-{v}
      \ar[d]_-{\mathds 1_B} & C' \ar[r]^-{w} \ar[d]_-{\phi}
      & TA \ar[d]^-{\eta^{TA}_r} \\
      A \ar[r]^-{\bar{u}} & B \ar[r]^-{\bar{v}} & C \ar[r]^{\bar{w}} &
      \Sigma^{-r} TA} ~.~\] Denote
  \jznote{$h_{1}=(\eta_{r}^{A},\eta_{r}^{B},\psi , \mathds{1}_{TA})$ and
  $h_{1}'=(\mathds 1_{A}, \mathds{1}_{B},\phi,\eta_{r}^{TA})$}.  Notice that $h_{1}$ as
  well as $h'_{1}$ are not morphisms of triangles because the bottom
  right-most square is only $r$-commutative, and the same is true for
  the middle top square - as discussed in Remark \ref{rmk- set}
  (b). Let $h=h_{1}\circ \eta_{r}$ and $h'=\eta_{r}\circ h'_{1}$
  (where we view $\eta_{r}$ as a quadruple of morphisms of the form
  $\eta_{r}^{-}$). It follows that both $h$ and $h'$ are morphisms of
  triangles. Moreover, given that $\phi\circ \psi=\eta_{r}^{C}$, it is
  clear that $h'\circ h=\eta_{3r}$.  The other composition
  $h\circ \Sigma^{2r}h'$ has one term of the form
  $\psi\circ \eta_{r}\circ \Sigma^{3r}\phi\circ \eta_{r}$ so, by
  Proposition \ref{prop-r-iso}, this coincides with $\eta_{3r}^{C'}$
  as claimed.
\end{proof}

\begin{rem}\label{rem:tr-iso-0}
  We have seen above that any strict exact triangle of weight $r$ is
  isomorphic, in the precise sense of Proposition \ref{prop:tr-isos},
  to an exact triangle in $\C_{0}$. It is also useful to have a
  converse of this result.  Indeed, it is easy to show that if
  $\Delta :A\to B\to C\to TA$ is a triangle with all morphisms in
  $\C_{0}$ and if there is a morphism of triangles
  $h:\Delta\to \Delta'$ with $\Delta'$ an exact triangle in $\C_{0}$
  and with each component of $h$ an $s$-isomorphism, then there exist
  $k,t\geq 0$ such that the triangle $\Sigma^{0,0,-k,-k-t}\Delta$ is
  strict exact in $\C$.  By combining this remark with the statement
  in Proposition \ref{prop:tr-isos} and using that the natural maps
  $\eta_{r}^{-}$ are $r$-isomorphisms, one can easily see that the
  same statement remains true in the more general case when $\Delta'$
  is strict exact in $\C$.
\end{rem}

\subsection{Fragmentation pseudo-metrics on
  $\mathrm{Obj}(\C)$}\label{subsubsec:frag1}

In a triangulated persistence category there is a natural notion of
iterated-cone decomposition, similar to the corresponding notion in
the triangulated setting from \S\ref{subsec:triang-gen}.

\begin{dfn}\label{def:iterated-coneD} Let $\C$ be a triangulated
  persistence category, and $X \in {\rm Obj}(\C)$. An {\em iterated
    cone decomposition} $D$ of $X$ with {\em linearization}
  $(X_1,X_{2}, ..., X_n)$ where $X_i \in {\rm Obj}(\C)$ consists of a
  family of strict exact triangles in $\C$
  \[ 
    \left\{ 
      \begin{array}{ll}
        \Delta_{1}: \, \, & X_{1}\to 0\to Y_{1}\to \Sigma^{-r_{1}} TX_{1}\\
        \Delta_2: \,\, & X_2 \to Y_1 \to Y_2 \to \Sigma^{-r_2} TX_2\\
        \Delta_3: \,\, & X_3 \to Y_2 \to Y_3 \to \Sigma^{-r_3} TX_3\\
                          &\,\,\,\,\,\,\,\,\,\,\,\,\,\,\,\,\,\,\,\,\vdots\\
        \Delta_n: \,\, & X_n \to Y_{n-1} \to X \to \Sigma^{-r_n} TX_n
      \end{array} \right.\]
  The (strict) weight of such a cone decomposition is defined by
  $$w(D) = \sum_{i=1}^n w(\Delta_i)~.~$$ 
  The linearization of $D$ is denoted by $\ell(D) = (X_1, ..., X_n)$.
\end{dfn}

\begin{prop} \label{prop-cone-ref} Assume that $X$ admits an iterated
  cone decomposition $D$ with linearization $(X_1, ..., X_n)$ and for
  some $i \in \{1, ..., n\}$, $X_i$ admits an iterated cone
  decomposition $D'$ with linearization $(A_1, ..., A_k)$. Then $X$
  admits an iterated cone decomposition $D''$ of linearization
  \begin{equation} \label{cone-ref} (X_1, ..., X_{i-1}, TA_1, ...,
    TA_k, X_{i+1}, ..., X_n).
  \end{equation}
  Moreover, the weights of these cone decompositions satisfy
  $w(D'') = w(D) + w(D')$.
\end{prop}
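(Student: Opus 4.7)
The plan is to prove the proposition by induction on $k$, the length of $D'$, with the weighted octahedral formula (Proposition \ref{prop-w-oct}) as the main engine. At each step the last triangle $\Delta'_k$ of $D'$ will be ``peeled off'' and spliced against the $i$-th triangle $\Delta_i$ of $D$, reducing the sub-decomposition length by one while producing exactly one new triangle of the form required for the linearization $(\ldots, TA_j, \ldots)$.

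For the base case $k=1$, I apply Proposition \ref{prop-w-oct} to $\Delta'_1 : A_1 \to 0 \to X_i \to \Sigma^{-s_1} TA_1$ (weight $s_1$) and $\Delta_i : X_i \to Y_{i-1} \to Y_i \to \Sigma^{-r_i} TX_i$ (weight $r_i$). Because the middle term of $\Delta'_1$ is $0$, the resulting $\Delta_3$ has the form $0 \to Y_{i-1} \to C \to 0$ of weight $0$, which forces $C \cong Y_{i-1}$ in $\C_0$, while $\Delta_4$ takes the form $TA_1 \to C \to Y_i \to \Sigma^{-s_1-r_i} T^2 A_1$ of weight $s_1 + r_i$. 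Transporting $\Delta_4$ along the $0$-isomorphism $C \cong Y_{i-1}$ using the weight invariance property (Proposition \ref{prop-wt-1}(i)) yields a strict exact replacement $TA_1 \to Y_{i-1} \to Y_i \to \Sigma^{-s_1-r_i} T^2 A_1$ for $\Delta_i$, of weight $s_1 + r_i$; the net change in total weight is $s_1 = w(D')$, as required.

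For the inductive step, I apply Proposition \ref{prop-w-oct} to $\Delta'_k : A_k \to Z_{k-1} \to X_i \to \Sigma^{-s_k} TA_k$ (weight $s_k$) and $\Delta_i$ (weight $r_i$), obtaining $\tilde\Delta_3 : Z_{k-1} \to Y_{i-1} \to C \to TZ_{k-1}$ of weight $0$ and $\tilde\Delta_4 : TA_k \to C \to Y_i \to \Sigma^{-s_k-r_i} T^2 A_k$ of weight $s_k + r_i$. Splicing these in place of $\Delta_i$ yields an intermediate cone decomposition $\bar D$ of $X$ of length $n+1$ with linearization $(X_1, \ldots, X_{i-1}, Z_{k-1}, TA_k, X_{i+1}, \ldots, X_n)$ and total weight $w(D) + s_k$. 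Now $Z_{k-1}$ admits the cone decomposition $\tilde D' = (\Delta'_1, \ldots, \Delta'_{k-1})$ of length $k-1$ and weight $w(D') - s_k$. Applying the inductive hypothesis to $\bar D$ and $\tilde D'$ at the position of $Z_{k-1}$ gives a cone decomposition of $X$ with linearization $(X_1, \ldots, X_{i-1}, TA_1, \ldots, TA_{k-1}, TA_k, X_{i+1}, \ldots, X_n)$ and total weight $(w(D) + s_k) + (w(D') - s_k) = w(D) + w(D')$.

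The main obstacle is bookkeeping rather than any deep conceptual issue: one must verify that after each invocation of the octahedral formula the chain $Y_{i-1} \to \cdots \to Y_i$ is preserved, so that the subsequent triangle $\Delta_{i+1} : X_{i+1} \to Y_i \to Y_{i+1}$ still fits unchanged. This works because by construction $\tilde\Delta_4$ has $Y_i$ as its cofiber; in the base case, the replacement of $C$ by $Y_{i-1}$ is justified by the $0$-isomorphism invariance of weights (Proposition \ref{prop-wt-1}(i)), and in the inductive step the strictness and weights of the new triangles are precisely what Proposition \ref{prop-w-oct} asserts.
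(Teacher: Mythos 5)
Your proof is correct and essentially the same as the paper's: both peel off the last triangle $\Delta'_k$ of $D'$ and splice it against $\Delta_i$ using the weighted octahedral formula (Proposition \ref{prop-w-oct}), which produces one new triangle of weight $r_i + s_k$ and a residual of weight $0$. The only organizational difference is that you wrap this in a formal induction on $k$ (so the intermediate object at each stage is a genuine cone decomposition of $X$), whereas the paper constructs the full replacement sequence $\overline{\Delta}_1, \ldots, \overline{\Delta}_k$ by explicit iteration, keeping the weight-$0$ residual triangle only implicitly; your base-case invocation of Proposition \ref{prop-wt-1}(i) plays the same role as the paper's diagram (\ref{eq:final-sq}), where the cofiber of $0 \to B$ is simply taken to be $B$ itself.
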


A cone decomposition $D''$ as in the statement of Proposition
(\ref{cone-ref}) is called a {\em refinement} of the cone
decomposition $D$ with respect to $D'$.

\begin{ex} \label{ex:crefine} A single strict exact triangle
  $A \to B \to X \to \Sigma^{-r} TA$ can be regarded as a cone
  decomposition $D$ of $X$ with linearization $(T^{-1}B,A)$ such that
  $w(D) = r$. Assume that $A$ fits into a second strict exact triangle
  $E\to F\to A\to \Sigma^{-s} TE$ of weight $s$. Thus we have a
  cone-decomposition of $A$, $D'$, with linearization $(T^{-1}F, E)$,
  $w(D')=s$.  Diagram (\ref{w-oct}) from Proposition \ref{prop-w-oct}
  yields the following commutative diagram,
  \[ \xymatrix{
      E \ar[r] \ar[d] & 0 \ar[r] \ar[d] & TE \ar[d] \\
      F \ar[r] \ar[d] & B \ar[r] \ar[d] & Y \ar[d] \\
      A \ar[r] & B \ar[r] & X} \] for some object
  $Y \in {\rm Obj}(\C)$. In particular, we obtain a strict exact
  triangle $TE \to Y \to X \to \Sigma^{-r-s} T^{2}E$ of weight
  $r+s$. Thus, we have a refinement of $D$ with respect to $D'$ as
  follows,
\jznote{  \[ D'' := \left\{
      \begin{array}{l}
        T^{-1}B\to 0 \to B \to B\\
        F  \to B \to Y \to TF \\
        TE \to Y \to X \to \Sigma^{-r-s} TE
      \end{array} \right. \]}
  Moreover, $w(D'') = r+s = w(D) + w(D')$.
\end{ex}

\begin{proof} [Proof of Proposition \ref{prop-cone-ref}] By
  definition, the cone decomposition $D$ consists of a family of
  strict exact triangles in $\C$ as follows,
  \[ 
    \left\{ 
      \begin{array}{ll}
        &\,\,\,\,\,\,\,\,\,\,\,\,\,\,\,\,\,\,\,\,\vdots\\
        \Delta_{i-1}: \,\, & X_{i-1} \to Y_{i-2} \to Y_{i-1}
                             \to \Sigma^{-r_{i-1}} X_{i-1}\\
        \Delta_i: \,\, & X_{i} \,\,\,\,\,\,\to Y_{i-1}
                         \to Y_{i} \,\,\,\,\,\,\to \Sigma^{-r_{i}} X_{i}\\
        \Delta_{i+1}: \,\, & X_{i+1} \to Y_{i} \,\,\,\,\,\,\to Y_{i+1}
                             \to \Sigma^{-r_{i+1}} X_{i+1}\\
        &\,\,\,\,\,\,\,\,\,\,\,\,\,\,\,\,\,\,\,\,\vdots\\
      \end{array} \right.\]
  We aim to replace the triangle $\Delta_i$ by a sequence of strict
  exact triangles
  $$\overline{\Delta}_j: A_j \to B_{j-1} \to B_j \to \Sigma^{-s_j} TA_j$$
  for $j \in \{1, ..., k\}$ with $B_{0} = Y_{i-1}$, $B_k = Y_i$ and
  such that
  \begin{equation} \label{sum-ref} \sum_{j=1}^k w(\overline{\Delta}_j)
    = w(\Delta_i) + w(D').
  \end{equation}
  In this case, the ordered family of strict exact triangles
  $(\Delta_1, ..., \Delta_{i-1}, \overline{\Delta}_1, ...,
  \overline{\Delta}_k, \Delta_{i+1}, ..., \Delta_n)$ form the
  refinement $D''$, and (\ref{sum-ref}) implies that
  \begin{equation}
    w(D'')  = \sum_{j \in \{1, ..., n\} \backslash \{i\}} w(\Delta_j) +
    \sum_{j =1}^{k} w(\overline{\Delta}_j) 
    = \sum_{j=1}^n w(\Delta_j) + w(D') = w(D) + w(D')
  \end{equation}
  as claimed. 

  In order to obtain the desired sequence of strict exact triangles we
  focus on $\Delta_i$ and, to shorten notation, we rename its terms by
  $A = X_i$, $B = Y_{i-1}$, $C = Y_i$ and $r = r_i$ so that, with this
  notation, $\Delta_{i}$ is a strict exact triangle
  $A \to B \to C \to \Sigma^{-r} TA$.

  We now fix notation for the cone decomposition $D'$ of $A=X_{i}$. It
  consists of the following family of strict exact triangles,
  \[ 
    \left\{ 
      \begin{array}{ll}
        \Delta'_1: \,\, & A_1 \to 0 \to Z_1 \to  \Sigma^{-s_{1}}TA_1\\
        \Delta'_2: \,\, & A_2 \to Z_1 \to Z_2 \to \Sigma^{-s_2} TA_2\\
                        &\,\,\,\,\,\,\,\,\,\,\,\,\,\,\,\,\,\,\,\,\vdots\\
        \Delta'_{k-1}: \,\, & A_{k-1} \to Z_{k-2} \to Z_{k-1}
                              \to \Sigma^{-s_{k-1}} TA_{k-1}\\
        \Delta'_k: \,\, & A_k \to Z_{k-1} \to A \to \Sigma^{-s_k} TA_k
      \end{array} \right.
  \]
  We will apply Proposition \ref{prop-w-oct} iteratively. The first
  step is the following commutative diagram obtained from
  (\ref{w-oct}),
  \[ \xymatrix{
      A_k \ar[r] \ar[d] & 0 \ar[r] \ar[d] & TA_k \ar[d] \\
      Z_{k-1} \ar[r] \ar[d] & B \ar[r] \ar[d]_{\mathds{1}_B} & B_{k-1} \ar[d] \\
      A \ar[r] & B \ar[r] & C} \] for some
  $B_{k-1} \in {\rm Obj}(\C)$. Define
  $$\overline{\Delta}_k :TA_{k} \to B_{k-1} \to C \to
  \Sigma^{-r - s_k} T^{2}A_k~.~$$ We have
  \begin{equation} \label{inductive-1} w(\overline{\Delta}_k) = r +
    s_k = w(\Delta_i) + w(\Delta'_k).
  \end{equation}
  We then consider the following commutative diagram again obtained
  from (\ref{w-oct}),
  \begin{equation}\label{eq:diag-del} \xymatrix{
      A_{k-1} \ar[r] \ar[d] & 0 \ar[r] \ar[d] & TA_{k-1} \ar[d] \\
      Z_{k-2} \ar[r] \ar[d] & B \ar[r] \ar[d] & B_{k-2} \ar[d] \\
      Z_{k-1} \ar[r] & B \ar[r] & B_{k-1}} 
  \end{equation}
  for some $B_{k-2} \in {\rm Obj}(\C)$. Define
  $\overline{\Delta}_{k-1}$ to be the strict exact triangle:
  $$\overline{\Delta}_{k-1}: TA_{k-1} \to B_{k-2} \to B_{k-1}
  \to \Sigma^{- s_{k-1}} T^{2}A_{k-1}~.~$$ Then
  \begin{equation} \label{inductive-2} w(\overline{\Delta}_{k-1}) =
    s_{k-1} = w(\Delta'_{k-1}).
  \end{equation}
  Inductively, we obtain $B_i \in {\rm Obj}(\C)$, strict exact
  triangles
  \jznote{
  \begin{equation}\label{eq:it-tr}
    \overline{\Delta}_{i}: TA_i \to B_{i-1} \to B_i \to
    \Sigma^{-s_i} T^{2}A_i
  \end{equation}
} \pbnote{for $2 \leq i \leq k-1$} such that
  \begin{equation} \label{inductive-i}
    w(\overline{\Delta}_i) = s_i  = w(\Delta'_i).
  \end{equation}
  The final step lies in the consideration of the following diagram,
  \begin{equation} \label{eq:final-sq}\xymatrix{
      A_{1} \ar[r] \ar[d] & 0 \ar[r] \ar[d] & TA_{1} \ar[d] \\
      0 \ar[r] \ar[d] & B \ar[r] \ar[d] & B \ar[d] \\
      Z_1 \ar[r] & B \ar[r] & B_1}
  \end{equation}
  for some $B_0 \in {\rm Obj}(\C)$. Define $\overline{\Delta}_1$ to be
  the strict triangle
  $$\overline{\Delta}_{1}: TA_1 \to B\to B_1 \to  TA_1~.~$$ Then 
  \begin{equation} \label{inductive-11} w(\overline{\Delta}_1) =
    w(\Delta'_1)=s_{1}~.~
  \end{equation}
  Together, the ordered family
  $(\overline{\Delta}_1, ..., \overline{\Delta}_k)$ form the desired
  sequence of strict exact triangles. \pbnote{Finally, the
    equalities~\eqref{inductive-1},~\eqref{inductive-i}
    and~\eqref{inductive-11}} yield
  \[ \sum_{j=1}^k w(\overline{\Delta}_j) =s_{1}+ s_2 + ... + s_k + r =
    \sum_{j=1}^k w(\Delta'_j) + w(\Delta_i) = w(D') + w(\Delta_i) \]
  as claimed in (\ref{sum-ref}).
\end{proof}

Let $\mathcal F \subset {\rm Obj}(\C)$ be a family of objects of $\C$.
For two objects $X, X' \in {\rm Obj}(\C)$, define just as in
\S\ref{subsec:triang-gen},
\begin{equation} \label{frag-met} \delta^{\mathcal F} (X, X') =
  \inf\left\{ w(D) \, \Bigg| \, \begin{array}{ll} \mbox{$D$ is an
        iterated cone decomposition} \\ \mbox{of $X$ with
        linearization}\ \mbox{$(F_1, ..., T^{-1}X', ..., F_k)$}\\
      \mbox { where $F_i \in \mathcal F$, $k \in
        \N$} \end{array} \right\}.
\end{equation}

\begin{cor} \label{tri-ineq} With the definition of $\delta^{\F}$ in
  (\ref{frag-met}), we have the following inequality:
  \[ \delta^{\F}(X, X') \leq \delta^{\F}(X, X'') + \delta^{\F}(X'', X') \]
  for any $X, X', X'' \in {\rm Obj}(\C)$.
\end{cor}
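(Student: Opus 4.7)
The plan is to combine cone decompositions witnessing the two summands on the right-hand side via the refinement construction of Proposition \ref{prop-cone-ref}. If either $\delta^{\F}(X,X'')$ or $\delta^{\F}(X'',X')$ is $+\infty$ the inequality is trivial, so I assume both are finite. Fix $\varepsilon>0$ and choose iterated cone decompositions $D$ of $X$ with linearization $(F_1,\ldots,F_{i-1},T^{-1}X'',F_{i+1},\ldots,F_k)$, $F_j\in\F$, and $D'$ of $X''$ with linearization $(G_1,\ldots,G_{\ell-1},T^{-1}X',G_{\ell+1},\ldots,G_m)$, $G_j\in\F$, such that $w(D)\leq\delta^{\F}(X,X'')+\varepsilon$ and $w(D')\leq\delta^{\F}(X'',X')+\varepsilon$.

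Next I convert $D'$ into a cone decomposition of $T^{-1}X''$ by applying the functor $T^{-1}$ to each of its strict exact triangles. Call the result $T^{-1}D'$; its linearization is $(T^{-1}G_1,\ldots,T^{-1}G_{\ell-1},T^{-2}X',T^{-1}G_{\ell+1},\ldots,T^{-1}G_m)$. Because $T$ commutes with $\Sigma$ and with the natural transformations $\eta$, it preserves strict exact triangles together with their weights (Remark \ref{rmk- set} (c)); the same is then true for $T^{-1}$, so $w(T^{-1}D')=w(D')$.

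Finally, apply Proposition \ref{prop-cone-ref} to refine $D$ at the slot occupied by $T^{-1}X''$ using $T^{-1}D'$. Since the refinement replaces $T^{-1}X''$ by the $T$-translates of the entries of $T^{-1}D'$, the $T$ and $T^{-1}$ cancel and I obtain a cone decomposition $D''$ of $X$ with linearization $(F_1,\ldots,F_{i-1},G_1,\ldots,G_{\ell-1},T^{-1}X',G_{\ell+1},\ldots,G_m,F_{i+1},\ldots,F_k)$ and weight $w(D'')=w(D)+w(T^{-1}D')=w(D)+w(D')$. Each entry other than $T^{-1}X'$ lies in $\F$, so $D''$ is admissible in the infimum defining $\delta^{\F}(X,X')$, whence $\delta^{\F}(X,X')\leq w(D)+w(D')\leq\delta^{\F}(X,X'')+\delta^{\F}(X'',X')+2\varepsilon$. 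Letting $\varepsilon\to 0$ gives the result. The only subtle point in the argument is the bookkeeping with $T$-shifts needed to match the refinement formula to the form of linearization required by (\ref{frag-met}); Proposition \ref{prop-cone-ref} does all the substantive work, including the additivity of weights.
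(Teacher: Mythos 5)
Your proof is correct and follows essentially the same route as the paper: choose near-optimal decompositions $D$ of $X$ and $D'$ of $X''$, apply $T^{-1}$ to $D'$ to produce a decomposition of $T^{-1}X''$, and then invoke Proposition \ref{prop-cone-ref} to refine $D$ at the $T^{-1}X''$ slot, using the $T$-translation in the refinement's linearization to cancel the $T^{-1}$. The only differences are cosmetic: you explicitly dispatch the $+\infty$ case and spell out why $T^{-1}$ preserves strict exact triangles and their weights, details the paper leaves implicit.
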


\begin{proof} For any $\ep>0$, there are cone decompositions $D$ of
  $X$ and $D'$ of $X''$ respectively such that
  \[ w(D) \leq \delta^{\F}(X, X'')+ \ep\,\,\,\,\,\mbox{and}\,\,\,\,
    w(D') \leq \delta^{\F}(X'', X')+ \ep \] with linearizations
  $\ell(D) = (F_1, ..., T^{-1}X'', ..., F_s)$ and
  $\ell(D') = (F'_1,..., T^{-1}X', ..., F'_k)$, respectively,
  $F_i, F'_j\in \F$.  That means that $T^{-1}X''$ has a corresponding
  cone decomposition $T^{-1}D'$ with linearization
  $\ell(T^{-1}D')=(T^{-1}F'_{1},\ldots, T^{-2} X',\ldots,
  T^{-1}F'_{k})$.  Proposition \ref{prop-cone-ref} implies that there
  exists a cone decomposition $D''$ of $X$ that is a refinement of $D$
  with respect to $T^{-1}D'$ such that
  $\ell(D'')=(F_{1},\ldots, F'_{1},\ldots, T^{-1}X',\ldots
  F'_{k},\ldots, F_{s})$ and
  \[ w(D'') = w(D) + w(D') \leq \delta^{\F}(X, X'') + \delta^{\F}(X'',
    X') + 2 \ep\] which implies the claim.
\end{proof}

Finally, there are also fragmentation pseudo-metrics specific to this
situation with properties similar to those in Proposition
\ref{prop:tr-weights-gen}.

\begin{dfn} \label{dfn-frag-met} Let $\C$ be a triangulated
  persistence category and let $\F \subset {\rm Obj}(\C)$.  The {\em
    fragmentation pseudo-metric}
  $$d^{\F}: {\rm Obj}(\C)\times {\rm Obj}(\C)\to [0,\infty)\cup \{
  +\infty\}$$ associated to $\F$ is defined by:
  \[ d^{\F}(X, X') = \max\{\delta^{\F}(X, X'), \delta^{\F}(X',
    X)\}. \]
\end{dfn}

\begin{rem} \label{ex-delta} (a) It is clear from Corollary
  \ref{tri-ineq} that $d^{\F}$ satisfies the triangle inequality and,
  by definition, it is symmetric. It is immediate to see that
  $d^{\F}(X,X)=0$ for all objects $X$ (this is because of the
  existence of the exact triangle in $\C_{0}$, $T^{-1}X\to 0\to X$).
  It is of course possible that this pseudo-metric be degenerate and
  it is also possible that it is not finite.

  (b) \jznote{If $X\in \mathcal F$}, then $\delta^{\F}(TX,0)=0$ because of the exact
  triangle $X\to 0 \to TX$.  On the other hand, $\delta^{\F}(0,TX)$ is
  not generally trivial. However, the exact triangle $TX\to TX\to 0$
  shows that, if $TX\in \F$, then $\delta^{\F}(0,TX)=0$.

  (c) It follows from the previous point that if $\F = {\rm Obj}(\C)$,
  then $d^{\F}(X,X') =0$ (in other words, the pseudo-metric
  $d^{\mathcal{F}}(-,-)$ is completely degenerate).  More generally,
  if the family $\F$ is $T$ invariant (in the sense that if $X\in \F$,
  then $T^{\pm}X\in \F$), then $T$ is an isometry relative to the
  pseudo-metric $d^{\F}$ and $d^{\F}(X,X')=0$ for all $X,X'\in \F$.

  (d) The remark \ref{rem:finite-metr} (c) applies also in this
  setting in the sense that we may define at this triangulated
  persistence level fragmentation pseudo-metrics
  $\underline{d}^{\mathcal{F}}$ given by (the symmetrization of)
  formula (\ref{eq:frag-simpl}) but making use of weighted triangles
  in $\mathcal{C}$ instead of the exact triangles in the triangulated
  category $\mathcal{D}$.
\end{rem}

Recall that by assumption $\C_0$ is triangulated and thus
additive. Therefore, for any two objects
$X,X' \in {\rm Obj}(\C_0) = {\rm Obj}(\C)$, the direct sum
$X \oplus X'$ is a well-defined object in ${\rm Obj}(\C)$.

\begin{prop} \label{prop-frag-sum} For any
  $A, B, A', B' \in {\rm Obj}(\C)$, we have
  \[ d^{\F}(A \oplus B, A' \oplus B') \leq d^{\F}(A, A') + d^{\F}(B,
    B'). \]
\end{prop}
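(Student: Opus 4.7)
By symmetry it suffices to prove the one-sided inequality
\[
\delta^{\F}(A \oplus B, A' \oplus B') \leq \delta^{\F}(A,A') + \delta^{\F}(B,B'),
\]
together with the same statement obtained by swapping the roles of $(A,A')$ and $(B,B')$; taking maxima on both sides then yields the desired estimate for $d^{\F}$. Given $\epsilon>0$, I will pick iterated cone decompositions $D_A$ of $A$ and $D_B$ of $B$ witnessing the two right-hand sides up to $\epsilon$, with linearizations $(F_1,\ldots,F_p,T^{-1}A',F_{p+1},\ldots,F_k)$ and $(G_1,\ldots,G_q,T^{-1}B',G_{q+1},\ldots,G_m)$ respectively and constituent strict exact triangles $\Delta^A_i$, $\Delta^B_j$. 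Using these, I will explicitly build a cone decomposition of $A\oplus B$ whose linearization contains the single non-$\F$ entry $T^{-1}(A'\oplus B')=T^{-1}A' \oplus T^{-1}B'$ and whose weight is at most $w(D_A)+w(D_B)$.

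The key technical ingredient is a direct-sum stability property of strict exact triangles, namely: (a) the direct sum of a strict exact triangle of weight $r$ with the identity triangle $0 \to W \to W \to 0$ of weight $0$ is strict exact of weight $r$; and (b) the direct sum $\Delta_1 \oplus \Delta_2$ of strict exact triangles of weights $r_1, r_2$ is strict exact of weight $\max(r_1,r_2)\le r_1+r_2$. Granted the additivity of $T$, $\Sigma^r$ and the natural transformations $\eta_{r,s}$ on $\C_0$ together with the direct-sum splittings of morphism spaces from Remark \ref{rem:shifts-T}(b), the verification reduces to the fact that the direct sum of an $r$-acyclic object $K$ and an $s$-acyclic object $K'$ is $\max(r,s)$-acyclic. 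This last claim follows by decomposing $\mathds{1}_{K \oplus K'}$ across the four summands of $\Mor^0(K\oplus K',K\oplus K')$ and checking that the persistence map $i_{0,\max(r,s)}$ kills each diagonal piece; the remaining strict-exact data (the model exact triangle in $\C_0$, the $r$-isomorphism $\phi$, its right $r$-inverse $\psi$, and the compatibilities $\bar v = \phi\circ v$, $\Sigma^r \bar w = w\circ \psi$) combine termwise under the direct sum.

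Assuming these stability properties, I assemble a cone decomposition $D$ of $A \oplus B$ in five stages: first, run the initial $p$ triangles $\Delta^A_1,\ldots,\Delta^A_p$ to reach $Y^A_p$; next, for $j=1,\ldots,q$, take the direct sum of $\Delta^B_j$ with the identity triangle on $Y^A_p$, ending at $Y^A_p\oplus Z^B_q$; then form the single triangle $\Delta^A_{p+1}\oplus \Delta^B_{q+1}$, whose first object is $T^{-1}(A'\oplus B')$; continue with $\Delta^A_{p+2},\ldots,\Delta^A_n$ summed with the identity on $Z^B_{q+1}$ to reach $A\oplus Z^B_{q+1}$; finally append $\Delta^B_{q+2},\ldots,\Delta^B_m$ summed with the identity on $A$ to reach $A\oplus B$. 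The resulting linearization is $(F_1,\ldots,F_p,G_1,\ldots,G_q,T^{-1}(A'\oplus B'),F_{p+1},\ldots,F_k,G_{q+1},\ldots,G_m)$, as required. By stability, each summed-with-identity step has the same weight as the original triangle, while the middle combined step has weight $\leq w(\Delta^A_{p+1})+w(\Delta^B_{q+1})$. Summing, $w(D)\leq w(D_A)+w(D_B)\leq \delta^{\F}(A,A')+\delta^{\F}(B,B')+2\epsilon$, and letting $\epsilon\to 0$ followed by symmetrization gives the inequality.

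The main obstacle is the direct-sum stability asserted in the second paragraph — in particular, showing that the direct sum of two weighted exact triangles in a TPC is again strict exact in the sense of Definition \ref{dfn-set} with the expected weight. All the required structure is in place in a TPC (additivity of $\C_0$, $T$, each $\Sigma^r$, and the $\eta$'s, together with the compatibility of the persistence maps $i_{r,s}$ with direct sums), but the termwise bookkeeping of the $\C_0$-exact triangle model, the $r$-isomorphism $\phi$, its right inverse $\psi$, and the two compatibility identities needs to be carried out carefully under the direct sum.
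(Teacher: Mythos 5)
Your argument rests on the same key ingredient the paper uses, namely the direct-sum stability of strict exact triangles stated there as Lemma~\ref{claim-frag-sum} (that $\Delta\oplus\overline{\Delta}$ is strict exact of weight $\max\{r,s\}$, proved by direct-summing the $\C_0$-exact model, $\phi\oplus\bar\phi$, $\psi\oplus\bar\psi$, and checking $\phi\oplus\bar\phi$ is a $\max\{r,s\}$-isomorphism — exactly your ``direct-sum stability''). Your five-stage assembly is also in the same spirit as the paper's $D''$ in~\eqref{eq:sum-dec}.

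However, you do handle one point more carefully than the paper's displayed decomposition. In~\eqref{eq:sum-dec} the paper first runs all triangles of $D$ to build up $A$ and then tacks on all triangles of $D'$ summed with the identity triangle on $A$, so the resulting linearization is the concatenation $\ell(D)\cup\ell(D')$ and therefore contains \emph{two} entries — $T^{-1}A'$ and $T^{-1}B'$ — that need not lie in $\F$. Taken literally, this does not have the form $(F_1,\dots,T^{-1}(A'\oplus B'),\dots,F_k)$ required by~\eqref{frag-met} for $\delta^{\F}(A\oplus B, A'\oplus B')$. Your interleaved construction — running the pre-$A'$ part of $D_A$, then the pre-$B'$ part of $D_B$ carried along by $\mathrm{id}$ on $Y^A_p$, then forming the single triangle $\Delta^A_{p+1}\oplus\Delta^B_{q+1}$ whose first object is $T^{-1}A'\oplus T^{-1}B'=T^{-1}(A'\oplus B')$ (using additivity of $T$), and then the two tails carried by identities — produces a linearization with exactly one non-$\F$ entry, so it is the one that plugs directly into the definition. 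The weight bookkeeping is identical in both versions, so the estimate $w(D)\le w(D_A)+w(D_B)$ and the symmetrization step go through as you describe. In short: same core lemma, but your arrangement of the middle step is the tighter one.
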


\begin{proof}
  The proof follows easily from the following lemma.

  \begin{lem} \label{claim-frag-sum} Let
    $\Delta: A \to B \to C \to \Sigma^{-r} TA$ and
    $\overline{\Delta}: \overline{A} \to \overline{B} \to \overline{C}
    \to \Sigma^{-s} T\overline{A}$ be two strict exact triangles with
    $w(\Delta) = r$ and $w(\Delta') = s$. Then
    \[ \Delta'': A \oplus \overline{A} \to B \oplus \overline{B} \to C
      \oplus \overline{C} \to \Sigma^{- \max\{r,s\}} TA \oplus
      T\overline{A} \] is a strict exact triangle with
    $w(\Delta'') = \max\{r,s\}$.
  \end{lem}
  \begin{proof}[Proof of Lemma \ref{claim-frag-sum}]
    By definition, there are two commutative diagrams, 
    \[ \xymatrix{
        & & \Sigma^r C \ar[d]_-{\psi} \ar[rd] & \\
        A \ar[r] & B \ar[r] \ar[rd] & C' \ar[r] \ar[d]^-{\phi} & TA\\
        & & C &} \,\,\,\,\, \xymatrix{
        & & \Sigma^s \overline{C} \ar[d]_-{\bar{\psi}} \ar[rd] & \\
        \overline{A} \ar[r] & \overline{B} \ar[r] \ar[rd]
        & \overline{C'} \ar[r] \ar[d]^-{\bar{\phi}} & T\overline{A}\\
        & & \overline{C} &} \] This yields the following commutative
    diagram,
   \jznote{ \[ \xymatrix{ & & \Sigma^{\max\{r,s\}} C \oplus \overline{C}
        \ar[d]_-{\psi \oplus \bar{\psi}} \ar[rd] & \\
        A \oplus \overline{A} \ar[r] & B \oplus \overline{B} \ar[r]
        \ar[rd] & C' \oplus \overline{C'} \ar[r] \ar[d]^-{\phi \oplus
          \bar{\phi}}
        & TA \oplus T\overline{A}\\
        & & C \oplus \overline{C} &} \] }and it is easy to check that
    $\phi \oplus \bar{\phi}$ is a
    $\max\{r,s\}$-isomorphism.
  \end{proof}

  Returning to the proof of the proposition, it suffices to prove
  $\delta^{\F}(A \oplus B, A' \oplus B') \leq \delta^{\F}(A, A') +
  \delta^{\F}(B, B')$. For any $\ep>0$, by definition, there exist
  cone decompositions $D$ and $D'$ of $A$ and $B$ respectively with
  $\ell(D) = (F_1, ..., F_{i-1}, T^{-1}A', F_{i+1},..., F_s)$ and
  $\ell(D') = (F'_1, ..., T^{-1}B', ..., F'_{s'})$ such that
  \[ w(D) \leq \delta^{\F}(A, A') + \ep \,\,\,\,\,\mbox{and}\,\,\,\,\,
    w(D') \leq \delta^{\F}(B, B') + \ep. \]

  The desired cone decomposition of $A \oplus B$ is defined as follows. 
  \begin{equation}\label{eq:sum-dec} D'' : = 
    \left\{
      \begin{array}{l} 
        F_1 \to 0 \to E_1 \to \Sigma^{-r_1} TF_1 \\
        F_2 \to E_1 \to E_2 \to \Sigma^{-r_2} TF_2\\
        \,\,\,\,\, \,\,\,\,\, \,\,\,\,\, \,\,\,\,\, \,\,\,\,\,\vdots\\
        F_s  \to E_{s-1} \to A \to \Sigma^{-r_s} TF_s\\
        F'_{1}\to A\oplus 0 \to A\oplus E'_{1}\to \Sigma^{-r'_{1}}TF'_{1}\\
        F'_{2} \to A\oplus E'_{1} \to A\oplus E'_{2} \to \Sigma^{-r'_{2}}  
        TF'_{s+2-i}\\
        \,\,\,\,\, \,\,\,\,\, \,\,\,\,\, \,\,\,\,\, \,\,\,\,\,\vdots\\
        T^{-1}B' \to A \oplus E'_j \to A \oplus E'_{j+1} \to
        \Sigma^{- r'_{B'}} B' \\
        \,\,\,\,\, \,\,\,\,\, \,\,\,\,\, \,\,\,\,\, \,\,\,\,\,\vdots\\
        F'_{s'} \to A \oplus E'_{s'-1} \to A \oplus B \to
        \Sigma^{-r'_{s'}} TF'_{s'}
      \end{array} \right. 
  \end{equation}
  Here we identify $0\oplus F'_{i}=F'_{i}$. The first $s$-triangles
  come from the decomposition of $A$ and the following $s'$ triangles
  are associated, using Lemma \ref{claim-frag-sum}, to the respective
  triangles in the decomposition of $B$ and to the triangle
  $0\to A\to A$ (of weight $0$).  It is obvious that
  $w(D'')=w(D)+w(D')$ and thus
  $\delta^{\F}(A \oplus B, A' \oplus B') \leq \delta^{\F}(A, A') +
  \delta^{\F}(B,B') $.
\end{proof}

The next statement
is an immediate consequence of Proposition \ref{prop-frag-sum}.

\begin{cor} \label{cor:Hsp} The set ${\rm Obj}(\C)$ with the topology induced by the fragmentation pseudo-metric $d^{\F}$ is an $H$-space relative to the operation $$(A,B)\in {\rm Obj}(\C) \times {\rm Obj}(\C)
\to A\oplus B\in  {\rm Obj}(\C) ~.~$$\end{cor}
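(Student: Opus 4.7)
The plan is to verify directly the two defining properties of an H-space, namely the existence of a two-sided identity $e$ and the continuity of the sum $\mu(A,B) = A \oplus B$ in the topology induced by $d^{\F}$ (where, as is customary with pseudo-metrics, one passes to the quotient identifying points at distance zero whenever a strict equality of the form $\mu(e,X) = X$ is required).

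For the identity, I would take $e = 0$, the zero object of the triangulated category $\C_0$. The canonical isomorphism $A \oplus 0 \cong A$ in $\C_0$ is a $0$-isomorphism, and can be inserted into the exact triangle $T^{-1}A \to 0 \to A \oplus 0 \to A$ in $\C_0$ (this triangle being obtained by transporting the standard rotation of $A \xrightarrow{\mathds{1}} A \to 0 \to TA$ along the canonical iso). By Remark \ref{rmk- set}(a), this triangle is strict exact of weight $0$ in $\C$, and it constitutes a cone decomposition of $A \oplus 0$ with linearization $(T^{-1}A)$ of total weight $0$. Hence $\delta^{\F}(A \oplus 0, A) = 0$; the symmetric direction $\delta^{\F}(A, A \oplus 0) = 0$ is analogous, using the triangle $A \oplus 0 \to A \oplus 0 \to 0 \to T(A \oplus 0)$ suitably composed with the canonical iso. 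The same argument applies to $0 \oplus A$, so $\mu(0, A) = \mu(A, 0) = A$ holds after identifying points at zero $d^{\F}$-distance.

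For continuity, the topology on $\mathrm{Obj}(\C) \times \mathrm{Obj}(\C)$ is the product topology, with basic neighborhoods of $(A_0, B_0)$ given by pairs of $d^{\F}$-open balls. Given $\varepsilon > 0$, for any $(A, B)$ with $d^{\F}(A, A_0) < \varepsilon/2$ and $d^{\F}(B, B_0) < \varepsilon/2$, Proposition \ref{prop-frag-sum} yields
\[ d^{\F}(A \oplus B, A_0 \oplus B_0) \leq d^{\F}(A, A_0) + d^{\F}(B, B_0) < \varepsilon, \]
so $\mu$ sends this neighborhood into the $\varepsilon$-ball around $A_0 \oplus B_0$, proving continuity of $\mu$ at $(A_0, B_0)$.

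I do not anticipate any real obstacle: the identity axiom reduces to the triviality of the zero object as a direct summand together with the fact that $0$-isomorphic objects lie at zero pseudo-distance via cone decompositions of weight $0$, while continuity is nothing more than the subadditivity inequality of Proposition \ref{prop-frag-sum} translated into the $\varepsilon$-$\delta$ language of the induced pseudo-metric topology.
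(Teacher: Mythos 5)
Your proof is correct and follows the same route the paper has in mind: it derives the identity axiom from the existence of a weight-$0$ cone decomposition of $A\oplus 0$ with linearization $(T^{-1}A)$ (and vice versa), and continuity from the subadditivity inequality of Proposition~\ref{prop-frag-sum}. The paper itself records the corollary as an immediate consequence of Proposition~\ref{prop-frag-sum} without spelling out these details.

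One remark: you rightly flag that $\mu(0,A)=0\oplus A$ is not \emph{literally} equal to $A$, only at $d^{\F}$-distance $0$ from it, while the paper's stated definition of $H$-space asks for strict equality $\mu(e,x)=x$. Your resolution via passage to the metric quotient (or, equivalently, observing that points at pseudo-distance $0$ are topologically indistinguishable, so a constant homotopy suffices) is the right way to reconcile this, and it is what the authors implicitly assume. The only minor sloppiness is the shape of the triangle you invoke for the direction $\delta^{\F}(A,A\oplus 0)=0$; what is actually needed (after rotating and composing with the canonical $0$-isomorphism) is a strict exact triangle of the form $T^{-1}(A\oplus 0)\to 0\to A\to A\oplus 0$, giving a cone decomposition of $A$ with linearization $(T^{-1}(A\oplus 0))$ of weight $0$. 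Your phrase ``suitably composed with the canonical iso'' indicates you had this in mind, so the argument stands.
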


\subsubsection{Proof of Proposition
  \ref{prop:tr-weights-gen}}\label{subsubsec:proof-prop}
We now return to the setting in \S\ref{subsec:triang-gen}. Thus,
$\mathcal{D}$ is triangulated category, $w$ is a triangular weight on
$\mathcal{D}$ in the sense of Definition \ref{def:triang-cat-w}, and
the quantities $w(D)$ (associated to an iterated cone-decomposition
$D$), $\delta^{\mathcal{F}}$, $d^{\mathcal{F}}$ are defined as in
\S\ref{subsec:triang-gen}.

The first (and main) step is to establish a result similar to
Proposition \ref{prop-cone-ref}. Namely, if $X$ admits an iterated
cone decomposition $D$ with linearization $(X_1, ..., X_n)$ and some
$X_i$ admits a decomposition $D'$ with linearization
$(A_1, ..., A_k)$, then $X$ admits an iterated cone decomposition
$D''$ with linearization
$(X_1, ..., X_{i-1}, TA_1, ..., TA_k, X_{i+1}, ..., X_n)$ and
\begin{equation}\label{eq:weight-ineq2}
  w(D'') \leq w(D) + w(D')
\end{equation}
For convenience, recall from \S\ref{subsec:triang-gen} that the
expression of $w(D)$ is:
\begin{equation}\label{eq:weight-again}
 w(D)= \sum_{i=1}^{n}w(\Delta_{i})-w_{0}
\end{equation}
where $w_{0}=w(0\to X\xrightarrow{id} X\to 0)$ (for all $X$).  To show
(\ref{eq:weight-ineq2}) we go through exactly the same construction of
the refinement $D''$ of the decomposition $D$ with respect to $D'$, as
in the proof of \ref{prop-cone-ref}, assuming now that all shifts are
trivial along the way. The analogue of diagram (\ref{eq:final-sq})
that appears in the last step of the construction of $D''$ remains
possible in this context due to the point (ii) of Definition
\ref{def:triang-cat-w}.  By tracking the respective weights along the
construction and using Remark \ref{rem:gen-weights} (b) to estimate
the weight of $TA_{1}\to B\to B_{1}\to T^{2}A_{1}$ from
(\ref{eq:final-sq}) we deduce (with the notation in the proof of
Proposition \ref{prop-cone-ref})
\jznote{$$\sum_{j=1}w(\bar{\Delta}_{j}) \leq w(D')+w(\Delta_{i})-w_{0}$$}
which implies (\ref{eq:weight-ineq2}).  Once formula
(\ref{eq:weight-ineq2}) established, it immediately follows that
$\delta^{\mathcal{F}}(-,-)$ satisfies the triangle
inequality. Further, because the weight of a cone-decomposition is
given by (\ref{eq:weight-again}), it follows that the
cone-decomposition $D$ of $X$ with linearization $(T^{-1}X)$ given by
the single exact triangle $T^{-1}X\to 0\to X\to X$ is of weight
$w(D)=0$. As a consequence, $\delta^{\mathcal{F}}( X,X)=0$. It follows
that $d^{\mathcal{F}}(-,-)$ is a pseudo-metric as claimed at the point
(i) of Proposition \ref{prop:tr-weights-gen}.

Assuming now that $w$ is subadditive, the same type of decomposition
as in equation (\ref{eq:sum-dec}) can be constructed to show that
$\delta^{\mathcal{F}}(A\oplus B, A'\oplus B')\leq
\delta^{\mathcal{F}}(A,A')+\delta^{\mathcal{F}}(B,B')+w_{0}$ which
implies the claim. \Qed

\section{Triangulated structure of $\C_{\infty}$ and triangular weights}\label{subsec:trstr-weights}
The purpose of this section is to further explore the structure of 
the limit category $\C_{\infty}$ associated to a triangulated persistence category $\C$.
We will see that $\C_{\infty}$ is triangulated and that it carries a triangular weight, in the sense of \S\ref{subsec:triang-gen},  induced by the persistence structure.

\subsection{Exact triangles in  $\C_{\infty}$ and main algebraic result} \label{subsubsec:ex-cinfty}

Assume that $\C$ is a persistence category and recall its $\infty$-level  $\C_{\infty}$
from Definition \ref{dfn-c0-cinf}: its objects are the same as those of $\C$ and its morphisms
are $\Mor_{\C_{\infty}}(A,B) = \varinjlim_{\alpha \to \infty} \Mor_{\C}^{\alpha}(A,B)$ for any two objects $A, B$ of $\C$. For a morphism $\bar{f}$ in $\C$ we denote by $[\bar{f}]$ the corresponding morphism in $\C_{\infty}$ and if $f=[\bar{f}]$, $f\in\Mor_{\C_{\infty}}$, $\bar{f}\in\Mor_{\C}$, 
we say that $\bar{f}$ represents $f$. We use the same terminology for diagrams (including triangles) 
in $\C$ in relation to corresponding diagrams in $\C_{\infty}$ in the sense that a diagram in $\C$ represents one in $\C_{\infty}$ if  the nodes in the two cases are the same and the morphisms of the diagram in $\C$  represent the corresponding ones in the $\C_{\infty}$ diagram. Clearly, all $r$-commutativities and $r$-isomorphisms in $\C$ become, respectively, commutativities and isomorphisms in $\C_{\infty}$. For instance, if $K$ is $r$-acyclic, then $K$ is isomorphic to $0$ in $\C_{\infty}$. 

The hom-sets of $\C_{\infty}$ admit a natural filtration as follows. 
For any $A, B \in {\rm Obj}(\C_{\infty})$, and $f \in {\rm Hom}_{\C_{\infty}}(A,B)$, let the 
{\em spectral invariant} of $f$ be given by:
\[ \sigma(f) : =\inf \left\{ k\in \R \cup \{-\infty\} \, \bigg| \, \mbox{$f = \left[\bar{f}\right]$ for some $\bar{f} \in \Mor_{\C}^{k}(A,B)$} \right\} \]
and $$\Mor_{\C_{\infty}}^{\leq \alpha}(A,B)=\{f\in \Mor_{\C_{\infty}}(A,B) \, | \, \sigma(f)\leq \alpha\}.$$

\smallskip

Assume from now on that $\C$ is a triangulated persistence category.
It is easy to see that in this case $\C_{\infty}$ is
additive. Moreover, as explained in \S \ref{subsubsec:frag1}, $\C$ is
endowed with a family of strict exact triangles.  The purpose of this
section is to extract from the properties of the strict exact
triangles in $\C$ a triangulated structure on $\C_{\infty}$. This is
not quite immediate because a morphism $\bar{f}:A\to B$ in $\C$ of
strictly positive shift $\ceil*{\bar{f}}>0$ can not be completed to a
strict exact triangle. As a consequence, the definition of the exact
triangles in $\mathcal{C}_{\infty}$ is necessarily more subtle than
just taking the image in $\C_{\infty}$ of the strict exact triangles
in $\C$ because we need to be able to complete to exact triangles
those morphisms $f$ in $\C_{\infty}$ with $\sigma(f)>0$.

\

Before proceeding with the relevant definition we notice that the
shift functor $\Sigma$ associated to $\mathcal{C}$ (see Definition
\ref{dfn-tpc}) induces a similar functor
$\Sigma : (\R, +) \to \mathrm{End}(\C_{\infty})$. We will continue to
use the same notation for the $r$-shifts $\Sigma^{r}$ and the natural
transformations $\eta_{r,s}:\Sigma^{r}\to \Sigma^{s}$. \pbnote{At the
  same time, in contrast to morphisms in $\C$, there is no meaning to
  the ``amount of shift'' for a morphism in $\C_{\infty}$ (though one
  can associate to such a morphism its spectral invariant as above).}
Similarly, the functor $T$ (which is defined as in
Remark~\ref{rem:shifts-T} (b) on all of $\C$) also induces a similar
functor on $\C_{\infty}$.  Given a triple of maps
$\Delta: A\to B\to C\to D$ in $\C$ we will make use of the shifted
triple $\Sigma^{s_1, s_2, s_3, s_4} \Delta$ as defined in
(\ref{eq:shift-tr}) and we will use the same notation for similar
triples in $\C_{\infty}$. Note however that in $\C_{\infty}$ the
inequalities relating the $s_{i}$'s and the shifts of $u,v,w$ are no
longer relevant \pbnote{(in fact, do not make sense)} and the shift
$\Sigma^{s_{1},s_{2},s_{3},s_{4}}$ will be used in $\C_{\infty}$
without these constraints.

\begin{dfn} \label{dfn-extri-inf} A triangle
  $\Delta: A \xrightarrow{u} B \xrightarrow{v} C \xrightarrow{w} TA$
  in $\C_{\infty}$ is called {\em exact} if there exists a
  \pbnote{diagram}
  $\bar{\Delta}: A \xrightarrow{\bar{u}} B \xrightarrow{\bar{v}} C
  \xrightarrow{\bar{w}} TA$ in $\C$ that represents $\Delta$, such
  that the shifts $\ceil*{\bar{u}}, \ceil*{\bar{v}}, \ceil*{\bar{w}}$
  are all non-negative and the shifted triangle
  \begin{equation} \label{extri-shifted} \widetilde{\Delta} =
    \Sigma^{0, -\ceil*{\bar{u}}, -\ceil*{\bar{u}}-\ceil*{\bar{v}},
      -\ceil*{\bar{u}}-\ceil*{\bar{v}}- \ceil*{\bar{w}}} \bar{\Delta}
  \end{equation}
  is strict exact in $\C$.  The {\em unstable weight} of $\Delta$,
  $w_{\infty}(\Delta)$, is the infimum of the strict weights
  $w(\widetilde{\Delta})$ of all the strict exact triangles
  $\widetilde{\Delta}$ as above. Finally, the {\em weight} of
  $\Delta$, $\bar{w}(\Delta)$, is given by:
  $$\bar{w}(\Delta)=\inf\{ \ w_{\infty}(\Sigma^{s,0,0,s}\Delta)
  \ | \ s\geq 0\}~.~$$
\end{dfn}

The convention is that if there does not exist $\widetilde{\Delta}$ as
in the definition, then $w_{\infty}(\Delta) =\infty$. Notice that,
from the definition of the strict weights in $\C$, the strict weight
of $\widetilde{\Delta}$ is
$\ceil*{\bar{u}}+\ceil*{\bar{v}}+\ceil*{\bar{w}}$.  It is not
difficult to show that (see Remark \ref{rem:tr-ex2} (b)) a triangle
$\Delta$ in $\C_{\infty}$ is exact if and only if $\bar{w}(\Delta)$ is
finite. \jznote{Moreover, by definition, $\bar{w}(\Delta) \leq w_{\infty}(\Delta)$, and Example \ref{ex-embed} below shows that this inequality can be strict.} 

\medskip

\jznote{For the weight $\bar{w}$ of exact triangles in $\C_{\infty}$ defined as above, recall that $\bar{w}_0$ denotes the normalization constant in Definition \ref{def:triang-cat-w} (ii).} The main result is the following:

\begin{thm}\label{thm:main-alg}
  Let $\C$ be a triangulated persistence category. The limit category
  $\C_{\infty}$ together with the class of exact triangles defined in
  \ref{dfn-extri-inf} is triangulated and $\bar{w}$ is a subadditive
  triangular weight on $\C_{\infty}$ with $\bar{w}_{0}=0$.
\end{thm}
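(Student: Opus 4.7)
The plan is to verify the four axioms of a triangulated category for $\C_{\infty}$, and then the weighted octahedral, normalization, and subadditivity axioms for $\bar{w}$, in each case by transferring the corresponding property of strict exact triangles in $\C$ supplied by Propositions~\ref{prop-rot}, \ref{prop-ind}, \ref{prop-w-oct} and Lemma~\ref{claim-frag-sum}. The central simplification is that every natural transformation $\eta_{r,s}$ becomes an isomorphism in $\C_{\infty}$, since $r$-acyclic objects become zero and $r$-isomorphisms become genuine isomorphisms there; consequently each $\Sigma^{r}$ is naturally isomorphic to the identity functor, and any strict exact triangle $A \to B \to C \to \Sigma^{-r} TA$ in $\C$ projects to a triangle of the required shape $A \to B \to C \to TA$ in $\C_{\infty}$, while $w_{\infty}$ simply records the minimal strict weight among such lifts.

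For TR1, the trivial triangle $A \to A \to 0 \to TA$ arises from the corresponding exact triangle in $\C_{0}$; for completion, given $f \in \Mor_{\C_{\infty}}(A,B)$, lift to $\bar{f} \in \Mor_{\C}^{k}(A,B)$ with $k \geq 0$, identify it with the shift-$0$ morphism $\bar{f} \circ \eta_{k}^{A} \colon \Sigma^{k} A \to B$, embed this in a $\C_{0}$-exact triangle, and read off, via Definition~\ref{dfn-set} taken with $C' = C$, $\phi = \mathds{1}_{C}$ and $\psi = \eta_{k}^{C}$, a strict exact triangle $A \xrightarrow{\bar{f}} B \to C \to \Sigma^{-k} TA$ of strict weight $k$ in $\C$ whose image in $\C_{\infty}$ is an exact completion of $f$. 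TR2 follows from Proposition~\ref{prop-rot} together with Remark~\ref{rem:neg-rot}: the rotated strict exact triangle of weight $2r$ in $\C$ projects to the rotated exact triangle in $\C_{\infty}$ once the extra $\Sigma^{-r}$ factors are absorbed into the natural isomorphisms $\eta$. TR3 is obtained from Proposition~\ref{prop-ind}, after lifting the commuting square in $\C_{\infty}$ to one in $\C$ at a sufficiently high shift level (commutativity in $\C_{\infty}$ is witnessed by strict commutativity at some finite shift) and observing that the $r$-commutativities in the conclusion become strict once projected. TR4 is a direct translation of Proposition~\ref{prop-w-oct}, whose diagram already exhibits strict exact $\Delta_{3}, \Delta_{4}$ completing the octahedron, and whose bottom-right $r$-anti-commutativity becomes strict anti-commutativity in $\C_{\infty}$.

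For the weight axioms, $\bar{w} \geq 0$ is definitional; the trivial triangles $0 \to X \xrightarrow{\mathds{1}} X \to 0$ and their rotations lift to strict exact triangles of strict weight $0$ by Remark~\ref{rmk- set}, hence $\bar{w}_{0} = 0$; the replacement clause of Definition~\ref{def:triang-cat-w}(ii) is built into Proposition~\ref{prop-w-oct} once the object $F$ there is set to zero. For the weighted octahedral inequality on $\C_{\infty}$, given exact triangles $\Delta_1, \Delta_2$ sharing a vertex $X$ and any $\varepsilon > 0$, choose strict exact lifts of $\Sigma^{s_1,0,0,s_1}\Delta_1$ and $\Sigma^{s_2,0,0,s_2}\Delta_2$ of strict weights $r_i \leq \bar{w}(\Delta_i) + \varepsilon$, apply Proposition~\ref{prop-w-oct} to produce strict exact $\Delta_3, \Delta_4$ of strict weights $0$ and $r_1 + r_2$, and pass to $\C_{\infty}$ using $\bar{w}(\Delta_i) \leq w(\widetilde{\Delta}_i)$ for any strict lift $\widetilde{\Delta}_i$; letting $\varepsilon \to 0$ yields the desired inequality. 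Subadditivity $\bar{w}(X \oplus \Delta) \leq \bar{w}(\Delta)$ follows from Lemma~\ref{claim-frag-sum} applied to a strict lift of $\Delta$ together with the weight-$0$ trivial triangle on $X$. The main subtlety, and the only place where the infimum over the $s$-parameter in the definition $\bar{w}(\Delta) = \inf_{s \geq 0} w_{\infty}(\Sigma^{s,0,0,s} \Delta)$ is essential, is aligning the strict lifts of $\Delta_1$ and $\Delta_2$ so that they share the actual middle vertex $X$ in $\C$; this is handled by taking a common $s$ large enough, invoking Remark~\ref{rmk-shift-notation}(a) to confirm that the shift $\Sigma^{s,0,0,s}$ preserves both strict exactness and strict weight.
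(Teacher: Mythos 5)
Your proposal follows the same overall architecture as the paper (verify TR1--TR3 using the rotation and functoriality results, then establish the weighted octahedral property, normalization, and subadditivity), so the plan is sound. However, there are three places where the execution either has a computational slip or glosses over a step the paper handles more carefully.

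First, in the TR1 completion step, $\bar{f}\circ\eta_{k}^{A}\colon \Sigma^{k}A \to B$ has shift $k$, not shift $0$: $\eta_{k}^{A}\in\Mor_{\C}^{0}(\Sigma^{k}A,A)$ and $\bar{f}\in\Mor_{\C}^{k}(A,B)$, so the composite lies in $\Mor_{\C}^{k}(\Sigma^{k}A,B)$. The shift-$0$ incarnation is obtained either by composing with $(\eta_{k,0})_A$ (shift $-k$) or, as in Example~\ref{ex-embed}, by composing on the \emph{other} side with $(\eta_{0,-k})_B$ to get $f'\colon A\to\Sigma^{-k}B$ in $\Mor_{\C}^{0}$; the paper then embeds $f'$ into a $\C_0$-exact triangle with first vertex $A$ and restores the original $B$ by a $\Sigma^{0,t,t,t}$-type shift. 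Also, the claim of a strict exact triangle $A\xrightarrow{\bar{f}}B\to C\to\Sigma^{-k}TA$ cannot stand as written since a strict exact triangle requires all three morphisms to be in $\Mor_{\C}^{0}$.

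Second, you say TR4 "is a direct translation of Proposition~\ref{prop-w-oct}," but the diagram in \eqref{oct-c-inf} (two triangles sharing a common source $U$) is a different formulation of the octahedron than the composition form of Definition~\ref{def:triang-cat-w}(i) that Proposition~\ref{prop-w-oct} establishes. The paper bridges this by observing that TR1--TR3 together with the Five Lemma (Remark~\ref{rem:iso-cone}) and the weighted octahedral formula \emph{without weights} imply TR4 (``a simple exercise in manipulating exact triangles''). You should either explicitly invoke this reduction, or prove the Five Lemma and carry out the manipulation; as written the step is skipped, and the Five Lemma never appears in your argument.

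Third, and most substantively, your fix for aligning the two strict lifts when minimizing over the $s$-parameter in $\bar{w}$ does not work as stated. If $\Delta_1: A\to B\to C\to TA$ and $\Delta_2: C\to D\to E\to TC$, then applying $\Sigma^{s,0,0,s}$ to both gives third vertex $C$ for the first triangle but first vertex $\Sigma^{s}C$ for the second --- they still do not share a vertex in $\C$. Moreover Remark~\ref{rmk-shift-notation}(a) concerns the \emph{uniform} shift $\Sigma^{k}$, not $\Sigma^{s,0,0,s}$, so it does not license the claim that $\Sigma^{s,0,0,s}$ preserves strict exactness and weight. The paper's construction instead picks representatives $\widetilde{\Delta}_1$, $\widetilde{\Delta}_2$ of the two triangles and then shifts $\widetilde{\Delta}_2$ by the \emph{uniform} $\Sigma^{-t_1-t_2}$ so its source matches $\Sigma^{-t_1-t_2}C$, the third vertex of $\widetilde{\Delta}_1$, before applying Proposition~\ref{prop-w-oct}; one may in addition replace $\Sigma^{\sigma_2,0,0,\sigma_2}\Delta_2$ by $\Sigma^{-\sigma_2}\bigl(\Sigma^{\sigma_2,0,0,\sigma_2}\Delta_2\bigr)=\Sigma^{0,-\sigma_2,-\sigma_2,0}\Delta_2$, which has first vertex $C$ and, by the uniform shift invariance, the same $w_{\infty}$, if one wants to capture the full $\bar{w}$ infimum rather than $w_{\infty}$. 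Your proposal should be amended to use a uniform shift for the alignment step rather than a common $\Sigma^{s,0,0,s}$.
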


A persistence refinement of a triangulated category $\mathcal{D}$ is a
TPC, $\widetilde{\mathcal{D}}$, such that
$\widetilde{\mathcal{D}}_{\infty}=\mathcal{D}$. The triangular weight
$\bar{w}$ as in Theorem \ref{thm:main-alg} is called the persistence
weight induced by the respective refinement.The following consequence
of Theorem \ref{thm:main-alg} is immediate from the general
constructions in \S\ref{subsec:triang-gen}.

\begin{cor} If a small triangulated category $\mathcal{D}$ admits a
  TPC \pbnote{refinement} $\widetilde{\mathcal{D}}$, then
  $\mathrm{Obj}(\mathcal{D})$ is endowed with a family of
  fragmentation pseudo-metrics $\bar{d}^{\mathcal{F}}$, defined as in
  \S\ref{subsec:triang-gen}, associated to the persistence weight
  $\bar{w}$ induced by the refinement $\widetilde{\mathcal{D}}$ and it
  has an H-space structure with respect to the induced topology.
\end{cor}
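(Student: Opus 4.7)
The plan is to derive this corollary as a direct combination of Theorem~\ref{thm:main-alg} with the general machinery developed in \S\ref{subsec:triang-gen}. By hypothesis $\widetilde{\mathcal{D}}$ is a TPC with $\widetilde{\mathcal{D}}_{\infty} = \mathcal{D}$ as triangulated categories. Theorem~\ref{thm:main-alg} then provides two pieces of structure on $\mathcal{D}$: first, its class of exact triangles is exactly the one described in Definition~\ref{dfn-extri-inf} (so the triangulation identification is compatible with the weight construction), and second, the function $\bar{w}$ is a \emph{subadditive} triangular weight on $\mathcal{D}$ with normalization constant $\bar{w}_{0}=0$.

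With these two inputs in hand, I would invoke Proposition~\ref{prop:tr-weights-gen} applied to the triangulated category $\mathcal{D}$ together with the triangular weight $\bar{w}$. For each choice of family $\mathcal{F}\subset \mathrm{Obj}(\mathcal{D})$ the proposition produces the asymmetric quantity $\delta^{\mathcal{F}}_{\bar{w}}$ via formula~(\ref{frag-met-0}) and, after symmetrization, the pseudo-metric $\bar{d}^{\mathcal{F}}$; part~(i) of Proposition~\ref{prop:tr-weights-gen} guarantees that $\bar{d}^{\mathcal{F}}$ is indeed a pseudo-metric, which is the first assertion of the corollary.

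For the H-space assertion I would invoke part~(ii) of Proposition~\ref{prop:tr-weights-gen}. Since Theorem~\ref{thm:main-alg} supplies both the subadditivity of $\bar{w}$ and the vanishing $\bar{w}_{0}=0$, the inequality
\[
\bar{d}^{\mathcal{F}}(A\oplus B,\, A'\oplus B') \;\leq\; \bar{d}^{\mathcal{F}}(A,A') + \bar{d}^{\mathcal{F}}(B,B')
\]
holds with no defect term, which is precisely the statement that $\oplus$ is continuous for the topology induced by $\bar{d}^{\mathcal{F}}$ and hence that $\mathrm{Obj}(\mathcal{D})$ is an H-space with respect to this topology (with the zero object of $\mathcal{D}$ playing the role of identity).

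There is no real obstacle here: the whole point of Theorem~\ref{thm:main-alg} is to arrange the hypotheses of Proposition~\ref{prop:tr-weights-gen}. The only very minor bookkeeping step is to check that the triangular weight $\bar{w}$ produced by the theorem on $\widetilde{\mathcal{D}}_{\infty}$ coincides, under the identification $\widetilde{\mathcal{D}}_{\infty}=\mathcal{D}$, with a triangular weight on $\mathcal{D}$ in the sense of Definition~\ref{def:triang-cat-w}, so that Proposition~\ref{prop:tr-weights-gen} is literally applicable; but this is automatic from how the exact triangles in $\C_{\infty}$ match the exact triangles of $\mathcal{D}$.
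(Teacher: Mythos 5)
Your proposal is correct and follows the same route the paper indicates: the statement is explicitly presented as an immediate consequence of Theorem~\ref{thm:main-alg} combined with Proposition~\ref{prop:tr-weights-gen}, and you correctly identify that the subadditivity of $\bar{w}$ and the normalization $\bar{w}_{0}=0$ supplied by the theorem are exactly what is needed to invoke both parts of that proposition.
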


\begin{rem}
  (a) We have seen in \S\ref{subsubsec:frag1}, in particular
  Definition \ref{dfn-frag-met}, that for a TPC $\mathcal{C}$ there
  are fragmentation pseudo-metrics $d^{\mathcal{F}}$ defined on
  $\mathrm{Obj}(\mathcal{C})$.  The metrics $\bar{d}^{\mathcal{F}}$
  associated to the persistence weight $\bar{w}$ on $\C_{\infty}$,
  through the construction in \S\ref{subsec:triang-gen}, are defined
  on the same underlying set, $\mathrm{Obj}(\mathcal{C})$.  The
  relation between them is
  $$\bar{d}^{\mathcal{F}}\leq d^{\mathcal{F}}$$ for any family of
  objects $\mathcal{F}$. The interest to work with
  $\bar{d}^{\mathcal{F}}$ rather than with $d^{\mathcal{F}}$ is that
  if $\mathcal{F}$ is a family of triangular generators of
  $\C_{\infty}$, and is closed to the action of $T$, then
  $\bar{d}^{\mathcal{F}}$ is finite (see Remark
  \ref{rem:finite-metr}).

  (b) As discussed in Remark \ref{eq:iterated-tr} (c) (and also in
  \ref{ex-delta} (d)), in this setting too we may define a simpler
  (but generally larger) fragmentation pseudo-metric of the type
  $\underline{\bar{d}}^{\mathcal{F}}$.
\end{rem}

We postpone the proof of \jznote{Theorem \ref{thm:main-alg} to \S \ref{subsubsec:proofTh1}
and \S \ref{subsubsec:proofTh2}. Here, we pursue} with a few remarks
shedding some light on Definition \ref{dfn-extri-inf}.

\begin{remark} \label{rmk-tilde} The shifted triangle
  (\ref{extri-shifted}) has the form
  \[ \widetilde{\Delta}: A \xrightarrow{u'} \Sigma^{-\ceil*{\bar{u}}}
    B \xrightarrow{v'} \Sigma^{-\ceil*{\bar{u}}-\ceil*{\bar{v}}} C
    \xrightarrow{w'} \Sigma^{-\ceil*{\bar{u}}-\ceil*{\bar{v}}-
      \ceil*{\bar{w}}} TA \] where the morphisms $u', v'$ and $w'$
  are:
  \begin{align*} 
    u' & = (\eta_{0, -\ceil*{\bar{u}}})_B \circ \bar{u} \\
    v' & = (\eta_{0,-\ceil*{\bar{u}}-\ceil*{\bar{v}}})_C
         \circ \bar{v} \circ (\eta_{-\ceil*{\bar{u}}, 0})_B\\
    w' & = (\eta_{0, -\ceil*{\bar{u}}-\ceil*{\bar{v}}- \ceil*{\bar{w}}})_{TA}
         \circ \bar{w} \circ (\eta_{-\ceil*{\bar{u}}-\ceil*{\bar{v}}, 0})_C.
  \end{align*}
  In particular, $\ceil*{u'} = \ceil*{v'} = \ceil*{w'} = 0$.
\end{remark}

\begin{ex} \label{ex-tri-c-c-inf} Assume that
  $\Delta: A \xrightarrow{\bar{u}} B \xrightarrow{\bar{v}} C
  \xrightarrow{\bar{w}} \Sigma^{-r} TA$ is a strict exact triangle of
  weight $r$ in $\C$. Consider the following triangle in
  $\C_{\infty}$,
  \[ \Delta_{\infty}: A \xrightarrow{u} B \xrightarrow{v} C
    \xrightarrow{w} TA \] where $u = [\bar{u}]$, $v = [\bar{v}]$ and
  $w = [C \xrightarrow{\bar{w}} \Sigma^{-r} TA
  \xrightarrow{(\eta_{-r,0})_{TA}} TA]$. We claim that
  $\Delta_{\infty}$ is an exact triangle in $\C_{\infty}$ and
  $w_{\infty}(\Delta_{\infty}) \leq r$.  Indeed, by construction,
  $\Delta_{\infty}$ is represented by
  $\bar{\Delta}_{\infty}: A \xrightarrow{\bar{u}} B
  \xrightarrow{\bar{v}} C \xrightarrow{(\eta_{-r,0})_A \circ \bar{w}}
  TA$ and $\ceil*{\bar{u}} = \ceil*{\bar{v}} =0$,
  $\ceil*{(\eta_{-r,0})_A \circ \bar{w}} = r +0 = r$. The shifted
  triangle
  $\widetilde \Delta_{\infty} = \Sigma^{0,0,0,-r}
  \bar{\Delta}_{\infty}$ obviously equals $\Delta$, the initial strict
  exact triangle of weight $r$. Thus,
  $[\widetilde{\Delta}_{\infty}] = \Delta_{\infty}$ and
  $w(\widetilde{\Delta}_{\infty}) = r$. Therefore, $\Delta_{\infty}$
  is exact in $\C_{\infty}$ and $w_{\infty}(\Delta_{\infty}) \leq r$.
\end{ex}

\begin{rem} \label{rem:ex0tr} A special case of the situation in
  Example \ref{ex-tri-c-c-inf} is worth emphasizing.  Any exact
  triangle in $\C_0$, induces an exact triangle in $\C_{\infty}$ with
  unstable weight equal to $0$.  This implies that any morphism
  $f\in \Mor_{\C_{\infty}}(A,B)$ with $\sigma(f)\leq 0$ can be
  completed to an exact triangle of unstable weight $0$ in
  $\C_{\infty}$. Indeed, we first represent $f$ by a morphism
  $\bar{f}\in \Mor^{\alpha}_{\C}(A,B)$ with $\alpha\leq 0$. If
  $\alpha\not=0$, we shift $\bar{f}$ up using the persistence
  structure maps and denote $\bar{f}'=i_{\alpha,0} (\bar{f})$. We
  obviously have $[\bar{f}']=f$. We then complete $\bar{f}'$ to an
  exact triangle in $\C_{0}$. The image of this triangle in
  $\C_{\infty}$ is exact, of unstable weight $0$, and has $f$ as the
  first morphism in the triple.
\end{rem}

\begin{ex} \label{ex-id-rigid} Consider the strict exact triangle
  $\Delta: A \to 0 \to \Sigma^{-r}TA \xrightarrow{id} \Sigma^{-r} TA$
  in $\C$, which is of weight $r\geq 0$ (see
  Remark~\ref{rmk-shift-notation}~(b)). Let $\Delta_{\infty}$ be the
  following triangle in $\C_{\infty}$
  \[ \Delta_{\infty}: A \to 0 \to \Sigma^{-r} TA
    \xrightarrow{\eta_{-r,0}} TA. \] We claim that if
  $\sigma(id_A) = 0$, then
  $\bar{w}(\Delta_{\infty})=w_{\infty}(\Delta_{\infty}) = r$.  Indeed,
  assume $w_{\infty}(\Delta_{\infty}) = s<r$.  Therefore there exists
  a strict exact triangle in $\C$ of the form
  \[ A \to 0 \to \Sigma^{-r'} TA \xrightarrow{w'} \Sigma^{-s} TA, \]
  with $r'\geq r$, of weight $s$ and such that $\ceil*{w'}=0$,
  $[\eta_{-s,0}\circ w'\circ \eta_{-r,-r'}]=[\eta_{-r,0}]$. Notice
  $\ceil*{(\eta_{-s,0}\circ w'\circ \eta_{-r,-r'})}=r-r'+s$. Thus, as
  $r'\geq r>s$, we deduce $\sigma([\eta_{-r,0}])< r$. By writing
  $id_{A}= \eta_{0,-r}\circ \eta_{-r,0}$ we deduce
  $\sigma (id_{A})<0$. We conclude $w_{\infty}(\Delta_{\infty})=r$. We
  next consider a triangle $\Sigma^{k,0,0,k}\Delta_{\infty}$:
  $$\Sigma^{k}A \to 0 \to \Sigma^{-r} TA \to \Sigma^{k}TA$$
  and rewrite it as
  $$ \Delta':B\to 0 \to \Sigma^{-r-k} TB\to TB$$
  with $B= \Sigma^{k}A$. Suppose that $w_{\infty}(\Delta') < r+k$,
  then $\sigma (id_{B}) <0$ by the previous argument. This implies
  $\sigma (id_{A})<0$ and again contradicts our assumption.  Thus
  $w_{\infty}(\Delta')=r+k$ and $\bar{w}(\Delta_{\infty})=r$.
\end{ex}

\begin{ex} \label{ex-embed} Let $f \in \Mor_{\C_{\infty}}(A,B)$ and
  suppose that $\sigma(f) > 0$.  We will see here that we can extend
  $f$ to an exact triangle in $\C_{\infty}$, of unstable weight
  $\leq \sigma(f)+\epsilon$ (for any $\epsilon >0$) but, at the same
  time, no triangle extending $f$ has unstable \jznote{weight} less than
  $\sigma(f)$. \pbnote{Fix $\sigma(f) < t \leq \sigma(f)+\epsilon$ and
    let $\bar{f}\in \Mor^{t}_{\C}(A,B)$ be a representative of $f$.}
  Consider the composition
  $f': A \xrightarrow{\bar{f}} B \xrightarrow{(\eta_{0,-t})_B}
  \Sigma^{-t} B$. Then $f' \in \Mor^0(A,\Sigma^{-t} B)$. There exists
  an exact triangle in $\C_{0}$
  $$\Delta: A \xrightarrow{f'} \Sigma^{-t} B \xrightarrow{v'}
  C \xrightarrow{w} TA$$ for some $C \in {\rm Obj}(\C_0)$. In
  particular \pbnote{$w \in \Mor_{\C}^0(C,TA)$}. Next, consider the
  following triangle
  \[ \Sigma^{0,0,0,-t} \Delta: A \xrightarrow{f'} \Sigma^{-t} B
    \xrightarrow{v'} C \xrightarrow{w'} \Sigma^{-t} TA,
    \,\,\,\,\mbox{where $w' = \eta_{t}^{TA} \circ w$}. \] By
  \pbnote{Remark~\ref{rmk-shift-notation}(b)},
  $\Sigma^{0,0,0,-t} \Delta$ is a strict exact triangle in $\C$ of
  weight $t$. Finally, consider the following triangle in $\C$,
  obtained by shifting up the last three terms of \jznote{$\Sigma^{0,0,0,-t}\Delta$}:
  \[ \bar{\Delta}: A \xrightarrow{\bar{f}= (\eta_{-t, 0})_B \circ f'}
    B \xrightarrow{v: = \Sigma^{t} v'} \Sigma^t C \xrightarrow{w:
      =\Sigma^{t} w'} TA. \] Its image in $\C_{\infty}$ is the
  triangle
  \begin{equation} \label{eq:triangle4} \Delta_{\infty}: A
    \xrightarrow{f=[\bar{f}]} B \xrightarrow{[v]} \Sigma^t C
    \xrightarrow{[w]} TA
  \end{equation}
  and is exact. In the terminology of Definition \ref{dfn-extri-inf},
  the representative $\tilde{\Delta}$ of $\Delta_{\infty}$ is the
  strict exact triangle $\Sigma^{0,0,0,-t} \Delta$. In particular,
  $w_{\infty}(\Delta_{\infty})\leq t$.  Notice that Definition
  \ref{dfn-extri-inf} immediately implies that any triangle
  $\Delta'': A\xrightarrow{f} B\to D\to TA$ in $\C_{\infty}$ satisfies
  $w_{\infty}( \Delta'')\geq \sigma (f)$ (because, with the notation
  of the definition, the weight of the triangle $\tilde{\Delta}$ in
  that definition is at least $\ceil*{\bar{u}}$).  At the same time
  $\bar{w}(\Delta_{\infty})=0$ because
  $\Sigma^{t,0,0,t}(\Delta_{\infty})$ has as
  representative
  $$\Sigma^{t}\Delta : \Sigma^{t}A\to B\to \Sigma^{t}C\to
  \Sigma^{t}TA$$ which is exact in $\C_{0}$.
\end{ex}

\begin{ex}\label{ex-embed2} Let $\Delta: A\to B\to C\to TA$ be an
  exact triangle in $\C_{0}$. It is clear that, in $\C_{\infty}$ and
  for $s\leq 0$, the corresponding triangles of the form
  $\Delta_{s}:\Sigma^{s}A\to B\to C\to \Sigma^{s}TA$ (defined using
  the (pre)-composition of the maps in $\Delta$ with the appropriate
  $\eta$'s) have the property $\bar{w}(\Delta_{s})=0$. On the other
  hand, if $s>0$ this is no longer the case, in general. Indeed,
  assuming $s>0$ and $\bar{w}(\Delta_{s})=0$ it follows that for some
  possibly even larger $r>0$ we have $w_{\infty}(\Delta_{r})=0$.  That
  means that there is an exact triangle in $\C_{0}$ of the form
  $\Sigma^{r}A\to B\to C\to \Sigma^{r}TA$. This triangle can be
  compared to the initial $\Delta$ and it follows that
  $\eta_{r}^{A}: \Sigma^{r}A\to A$ is a $0$-isomorphism which implies
  that $\sigma(\mathds 1_{A})\leq -r\leq -s$.  Using this remark, we can
  revisit the triangle $\Delta_{\infty}$ from Example \ref{ex-embed}
  and deduce (from the rotation property of exact triangles) that if
  we consider a triangle $\Delta_{\infty}^{s}$ obtained by replacing
  $\Sigma^{t}C$ with $\Sigma^{s}C$ with $s>t$ in $\Delta_{\infty}$ in
  equation (\ref{eq:triangle4}) (and using the appropriate
  pre/compositions with the $\eta$'s for the maps in the triangle),
  then $\bar{w}(\Delta_{\infty}^{s})>0$.
\end{ex}

\begin{ex} \label{ex-other}
  Let $\Delta : A\to 0\to \Sigma^{r} TA \to TA$ be a triangle in
  $\C_{\infty}$ with the last map (the class of) $\eta^{TA}_{r}$ and
  with $r\geq 0$.  Assuming $\sigma(\mathds 1_{A})=0$, we want to notice that
  $w_{\infty}(\Delta) = r$ and $\bar{w}(\Delta)=0$. The fact that
  $\bar{w}(\Delta)=0$ is obvious because
  $\Sigma^{r,0,0,r}\Delta :\Sigma^{r}A\to 0\to \Sigma^{r} TA\to
  \Sigma^{r}TA$ is exact in $\C_{0}$.  Now assume that
  $w_{\infty}(\Delta)=s<r$. Then there is a triangle
  $A\to 0 \to \Sigma^{r-s'} TA\xrightarrow{u} \Sigma^{-s}TA$ which is
  strict exact in $\C$ and with $s\geq s'$. Using now the definition
  of strict exact triangles and the existence of the exact triangle
  $A\to 0 \to TA\to TA$ in $\C_{0}$, we deduce that there exists an
  $s$-isomorphism $\phi: TA \to \Sigma^{r-s'}TA$ with a right inverse
  $\psi:\Sigma^{s+r-s'}TA \to TA$ that coincides with $\eta^{TA}$ in
  $\C_{\infty}$. As a result, $\phi$ coincides with
  $\eta^{TA}_{0,r-s'}$ in $\C_{\infty}$ and thus
  $0=\ceil*{\phi}\geq r-s'$ (because, due to $\sigma (\mathds 1_{A})=0$, we
  have that $\sigma([\eta^{TA}_{0,r-s'}])=r-s'$). Therefore,
  $s'\geq r$ which contradicts $s<r$.
\end{ex}

\begin{rem}\label{rem:tr-ex2}
  (a) The definition of the exact triangles in $\C_{\infty}$ is
  designed precisely to allow for the construction in Example
  \ref{ex-embed}. This is quite different compared to the case when
  the spectral invariant of $f$ is non positive (compare with Remark
  \ref{rem:ex0tr}) because the persistence structure maps can be used
  to ``shift'' up but not down.  It also follows from Example
  \ref{ex-embed} that for $f\in \Mor_{\C_{\infty}}(A,B)$ with
  $\sigma(f)\geq 0$ we have:
  \begin{equation}
    \sigma(f)= \inf \{\  w_{\infty}(\Delta) \ | \ \Delta :
    A\xrightarrow{f} B\to C \to TA \ \}~.~
  \end{equation}

  (b) It is not difficult to see that if
  $w_{\infty}(\Sigma^{s,0,0,s}\Delta)\leq r$ for a triangle $\Delta$
  in $\C_{\infty}$ and $s\geq 0$, then $w_{\infty}(\Delta)\leq
  r+s$. Thus, a triangle in $\C_{\infty}$ is exact if and only if its
  weight $\bar{w}$ is finite.
\end{rem}

\subsection{The category $\C_{\infty}$ is
  triangulated} \label{subsubsec:proofTh1} We have noticed before that
$\C_{\infty}$ is additive and the functor $T$ has been extended to all
of $\C$ (as discussed in Remark \ref{rem:shifts-T} (b)) in a way that
commutes with $\Sigma$ and thus induces an automorphism of
$\C_{\infty}$. We now start to check the axioms of a triangulated
category as listed in Section 10.2 in Weibel \cite{Weib:intro-halg}
for the class of exact triangles in $\C_{\infty}$ as introduced in
Definition \ref{dfn-extri-inf}.

{\bf Axiom TR1}. For any $[f] \in {\rm Hom}_{\C_{\infty}}(A,B)$, there
exists an exact triangle in $\C_{\infty}$ in the form of
$A \xrightarrow{[f]} B \to C \to TA$ for some
$C \in {\rm Obj}(\C_{\infty})$. This is due to Example \ref{ex-embed}.
For any $A \in {\rm Obj}(\C_{\infty})$, the triangle
$A \xrightarrow{\mathds{1}_A} A \to 0 \to TA$ is exact in $\C_{\infty}$ and of
unstable weight $0$. This follows from Remark \ref{rem:ex0tr}.  The
verification of this first axiom is completed by the following
statement.

\begin{lem}\label{lem:iso-oftr}
  Given an exact triangle $\Delta$ in $\C_{\infty}$, any other
  triangle $\Delta'$ in $\C_{\infty}$ such that
  $\Delta' \simeq \Delta$ (in the sense that all the vertical
  morphisms are isomorphisms in $\C_{\infty}$) is also an exact
  triangle in $\C_{\infty}$.
\end{lem}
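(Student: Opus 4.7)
The plan is to reduce the assertion to an application of Remark~\ref{rem:tr-iso-0}. Write $\Delta: A\to B\to C\to TA$ and $\Delta': A'\to B'\to C'\to TA'$ with vertical $\C_\infty$-isomorphisms $\alpha, \beta, \gamma$ intertwining them, and fix a representative $\bar\Delta$ of $\Delta$ in $\C$ whose associated shifted triangle $\widetilde\Delta$ (as in Definition~\ref{dfn-extri-inf}) is strict exact in $\C$; in particular $\widetilde\Delta$ has all morphisms of shift $0$. I will build a representative $\widetilde{\Delta}''$, in $\C_0$, of a suitable shift of $\Delta'$ together with a morphism of triangles $h\colon \widetilde{\Delta}''\to \widetilde\Delta$ whose three vertical components are $r$-isomorphisms for some common $r\ge 0$, and then invoke Remark~\ref{rem:tr-iso-0} to conclude that a further shift of $\widetilde{\Delta}''$ is strict exact in $\C$, realizing $\Delta'$ as exact in the sense of Definition~\ref{dfn-extri-inf}.

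The vertical components of $h$ will be lifts to $\C$ of the inverse isomorphisms $\alpha^{-1}, \beta^{-1}, \gamma^{-1}$: using the natural identifications of Remark~\ref{rem:shift} to absorb their shifts into the source objects, each can be represented by a $\C_0$-morphism from a suitable shift of the corresponding object of $\Delta'$ into the corresponding object of $\widetilde\Delta$. To show such a lift is an $r$-isomorphism in $\C_0$ for some $r\ge 0$, I complete it to an exact triangle in $\C_0$ with third term $K$ and apply the functor $\Mor^\alpha_\C(K,-)$ to this triangle for each $\alpha\in\R$; this functor is exact by Remark~\ref{rem:shifts-T}(a). Taking the direct limit in $\alpha$ (which is exact on $\k$-vector spaces) yields a long exact sequence of $\C_\infty$-hom-sets. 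The hypothesis that the lifted map represents an isomorphism in $\C_\infty$ then forces $\Mor_{\C_\infty}(K,K)=0$; in particular $[\mathds{1}_K]=0$ in $\C_\infty$, so by Definition~\ref{def:acyclics} one has $K\simeq_r 0$ for some $r$, and the lift is an $r$-isomorphism by Definition~\ref{dfn-r-iso}.

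For the three squares, the commutativity relations in $\C_\infty$ translate, for the chosen lifts, into $r$-equivalence of the relevant compositions in $\C$ after possibly enlarging $r$; by Lemma~\ref{lemma-comp} a further post-composition with the natural transformations $\eta$ (equivalently, further shifting via the persistence structure maps) turns each square into a strictly commutative square in $\C_0$. Choosing $r$ large enough so that all three vertical lifts are simultaneously $r$-isomorphisms and all three squares simultaneously commute on the nose yields the desired $h\colon \widetilde{\Delta}''\to\widetilde\Delta$, and Remark~\ref{rem:tr-iso-0}, in its formulation valid for strict exact targets, produces $k,t\ge 0$ such that $\Sigma^{0,0,-k,-k-t}\widetilde{\Delta}''$ is strict exact in $\C$, completing the argument.

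The principal obstacle is the bookkeeping of shifts: the three lifts and the three squares each introduce their own constants, and one must verify that a single global shift simultaneously realizes all three vertical components as $r$-isomorphisms and makes all three squares strictly commute in $\C_0$, while also ensuring that the resulting diagram is a legitimate representative of a shift of $\Delta'$ (with non-negative shifts on its morphisms) in the sense of Definition~\ref{dfn-extri-inf}. Modulo this bookkeeping the strategy is forced, the crux being the exact-sequence argument that promotes $\C_\infty$-isomorphisms to $r$-isomorphisms in $\C_0$ together with the converse direction of Remark~\ref{rem:tr-iso-0}.
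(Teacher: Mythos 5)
Your proposal is correct and follows essentially the same route as the paper's proof: you pass to $\C_0$-representatives of both triangles, produce a morphism of triangles into a strict exact representative of $\Delta$ whose vertical components are $r$-isomorphisms, and conclude via Remark~\ref{rem:tr-iso-0} in its strict-exact form. The paper simply asserts that the $\C_\infty$-isomorphisms upgrade to $s$-isomorphisms on suitable $\C_0$-lifts, whereas you supply the (valid) long-exact-sequence-plus-direct-limit argument for this step; otherwise the two arguments coincide.
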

 
\begin{proof} Pick a representative $\bar{\Delta}'$ in $\C$ of
  $\Delta'$ as well as the associated triangle $\widetilde{\Delta}'$
  in $\C_{0}$, given as in Definition \ref{dfn-extri-inf}.  By
  definition $\Delta$ admits a representative $\bar{\Delta}$ such that
  $\widetilde{\Delta}:\Sigma^{0,s_{1},s_{2},s_{3}}\bar{\Delta}$ is
  strict exact in $\C$.  Because $\Delta'$ and $\Delta$ are isomorphic
  in $C_{\infty}$, there is some very large $s>0$ and a morphism of
  triangles $h:\widetilde{\Delta}'\to \Sigma^{-s}\widetilde{\Delta}$
  such that all vertical components of $h$ are
  $s$-isomorphisms. Clearly, $\Sigma^{-s}\widetilde{\Delta}$ is strict
  exact and, by Remark \ref{rem:tr-iso-0}, it follows that there are
  $k,t>0$ such that $\Sigma^{0,0,-k,-k-t}\widetilde{\Delta}'$ is
  strict exact. But this means that $\Delta'$ is exact in
  $\C_{\infty}$.
\end{proof}

\medskip

{\bf Axiom TR2}. Suppose
$\Delta: A \xrightarrow{u} B \xrightarrow{v} C \xrightarrow{w} TA$ is
an exact triangle in $\C_{\infty}$. Notice that its first positive
rotation
$R(\Delta): B \xrightarrow{v} C \xrightarrow{w} TA \xrightarrow{-Tu}
TB$ is also an exact triangle in $\C_{\infty}$. Indeed, by definition,
there exists a triangle
$\bar{\Delta}: A \xrightarrow{\bar{u}} B \xrightarrow{\bar{v}} C
\xrightarrow{\bar{w}} TA$ representing $\Delta$ such that the shifted
triangle
$\widetilde{\Delta}: A \xrightarrow{u'} \Sigma^{-t_1} B
\xrightarrow{v'} \Sigma^{-t_1-t_2} C \xrightarrow{w'} \Sigma^{-r} TA$
(where we put $t_1 = \ceil*{\bar{u}}$, $t_2 = \ceil*{\bar{v}}$,
$t_3 = \ceil*{\bar{w}}$ and $r = t_1 + t_2 + t_3$) is a strict exact
triangle in $\C$ of weight $r$.  By Proposition \ref{prop-rot}, the
first positive rotation
\[ R(\widetilde{\Delta}): \Sigma^{-t_1} B \xrightarrow{v'}
  \Sigma^{-t_1-t_2} C \xrightarrow{w''} \Sigma^{-r} TA
  \xrightarrow{u''} \Sigma^{-t_1 - 2r} TB \] is a strict exact
triangle in $\C$ of weight $2r$. We have
\begin{align*} 
u'' & = -(\eta_{0, -t_1 - 2r})_B \circ \bar{u} \circ (\eta_{-r, 0})_A\\
v' & = (\eta_{0, -t_1 -t_2})_C \circ \bar{v} \circ (\eta_{-t_1, 0})_B \\
w''&\simeq_r w = (\eta_{0, -r})_A \circ \bar{w} \circ (\eta_{-t_1 - t_2, 0})_C. 
\end{align*}
Consider a new triangle: 
\begin{equation} \label{rot-tilde-tri} \Sigma^{t_1}
  R(\widetilde{\Delta}): B \xrightarrow{\Sigma^{t_1} v'} \Sigma^{-t_2}
  C \xrightarrow{\Sigma^{t_1} w''} \Sigma^{-r+t_1} TA
  \xrightarrow{\Sigma^{t_1} u''} \Sigma^{-2r} TB.
\end{equation}
By Remark \ref{rmk-shift-notation} (1),
$\Sigma^{t_1} R(\widetilde{\Delta})$ is also a strict exact triangle
in $\C$ of weight $2r$. By shifting up this triangle we get to
$B\to C\to TA \to TB$ that represents $R(\Delta)$ and thus
$w_{\infty}(R(\Delta))\leq 2r$. In a similar way, using Remark
\ref{rem:neg-rot}, it follows that the negative rotation of $\Delta$,
$R^{-1}(\Delta)$, is exact.

\medskip

{\bf Axiom TR3}. Completing a commutative square into a morphism of
triangles follows from Proposition \ref{prop-ind}. Namely, consider
the $\C_{\infty}$ diagram:
$$
\xymatrix{ \Delta : & A \ar[r] \ar[d]_{a} & B \ar[r] \ar[d]_{b}
  & C\ar@{-->}[d]_{c} \ar[r] & TA\ar[d] \\
  \Delta' : & A' \ar[r] & B' \ar[r] & C' \ar[r] & TA'}
$$
with rows exact triangles and with the left square commutative. We
need to construct $c:C\to C'$ making commutative the last two squares.
We consider the associated strict exact triangles $\widetilde{\Delta}$
and $\widetilde{\Delta}'$ in $\C$, as in Definition
\ref{dfn-extri-inf}. We use Proposition \ref{prop-ind} to obtain a
morphism of triangles of the form
$h:\widetilde{\Delta}\to \Sigma^{-k, -k,-k,-k-s}\Delta'$ for
$k,s\in \R$ sufficiently big, $h=(\bar{a},\bar{b},\bar{c},
\bar{a}')$. In particular, the morphism $\bar{c}$ has the form
$\bar{c}: C\to \Sigma^{-m} C'$ for a certain large $m\in \R$. We
consider the composition $c'=(\eta_{-m,0})_{C'}\circ\bar{c}$ (where we
recall $(\eta_{-m,0})_{C'}:\Sigma^{-m} C'\to C'$) and we put
$c = [c']$. The needed commutativities follow from the ones in
Proposition~\ref{prop-ind}.  \medskip

\begin{remark}[Five Lemma]\label{rem:iso-cone}
  A further useful consequence of this argument is that, if the
  morphisms $a$ and $b$ are isomorphisms (in $\C_{\infty}$), then so
  is $c$. Indeed, if $a$ and $b$ are isomorphisms, then
  $\bar{a},\bar{b}$ are $t$-isomorphisms for $t$ large and, by
  Corollary \ref{cor-1}, $\bar{c}$ is a $2t$-isomorphism which implies
  that $c$ is an isomorphism. \pbnote{Note that usually the five lemma
    comes as a corollary of a triangulated structure on a category,
    however here we will use it in order to show that $\C_{\infty}$ is
    triangulated.}
\end{remark}

{\bf Axiom TR4}. Given exact triangles
$\Delta_1: U \to V \to Z \to TU$ and
$\Delta_2: U \to V' \to Z' \to TU$, $\Delta_3: V\to V' \to W \to TV$
in $\C_{\infty}$ commuting as in the diagram below, there exists an
exact triangle $\Delta_4: Z \to Z' \to W \to TZ$ in $\C_{\infty}$
completing the diagram (with the bottom right-most square
anti-commutative).
\begin{equation} \label{oct-c-inf} \xymatrix{
    U \ar[d] \ar[r] & V \ar[r] \ar[d] & Z \ar[d] \ar[r]& TU\ar[d] \\
    U \ar[d] \ar[r] & V' \ar[r] \ar[d] & Z' \ar[d] \ar[r] & TU\ar[d] \\
    0 \ar[r] \ar[d]&  W \ar[r] \ar[d]& W\ar[r]\ar[d] & 0\ar[d]\\
    TU\ar[r] & TV \ar[r] & TZ \ar[r] & T^{2}U}
\end{equation}
We will give the proof in the next section where we will also show
that the weight $\bar{w}$ satisfies the weighted octahedral axiom from
Definition \ref{def:triang-cat-w} (i).

\subsection{An exotic triangular weight on
  $\C_{\infty}$} \label{subsubsec:proofTh2}

If an additive category $\mathcal{D}$ together with an automorphism
$T$ and a class of distinguished triangles satisfies the Axioms TR1,
TR2, TR3 and the Five Lemma (in the form in Remark \ref{rem:iso-cone})
and also the property in Definition \ref{def:triang-cat-w} (i),
without reference to the weight $w$ and without the inequality
(\ref{eq:weight:ineq}), then it satisfies TR4 and thus it is
triangulated.  This is a simple exercise in manipulating exact
triangles that we leave to the reader.  We have already seen in
\S\ref{subsubsec:proofTh1} that $\C_{\infty}$ together with the
functor $T$ induced from $\C$ and the class of exact triangles from
Definition \ref{dfn-extri-inf} satisfies TR1, TR2, TR3 and the Five
Lemma.  Thus, if we show that the weighted octahedral axiom is
satisfied by $\bar{w}$ in $\C_{\infty}$ (relative to our class of
exact triangles) we deduce that $\C_{\infty}$ is
triangulated. Moreover, if $\bar{w}$ also satisfies the normalization
in Definition \ref{def:triang-cat-w} (ii), then $\bar{w}$ is a
triangular weight on $\C_{\infty}$. Finally, to complete the claim in
Theorem \ref{thm:main-alg} we also need to show that $\bar{w}$ is
subadditive and $\bar{w}_{0}=0$.

We start below with the weighted octahedral axiom and will end with
the normalization property.

\begin{lem}
  The class of exact triangles in $\C_{\infty}$ and the weight
  $\bar{w}$ as defined in Definition \ref{dfn-extri-inf} satisfies the
  weighted octahedral axiom from Definition \ref{def:triang-cat-w}
  (i).
\end{lem}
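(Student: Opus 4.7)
The plan is to lift $\Delta_1,\Delta_2$ to nearly $\bar{w}$-optimal strict exact triangles in $\C$, align their joining object by a uniform $\Sigma$-shift, invoke Proposition \ref{prop-w-oct} inside $\C$, and push the result back to $\C_\infty$ via the $\eta$-natural transformations. Fix $\epsilon>0$. From the two-step infimum defining $\bar{w}$, choose $s_1,s_2\geq 0$ and strict exact triangles
\begin{align*}
  \widetilde{\Delta}_1:~ & \Sigma^{s_1}A \to \Sigma^{-t_1}B \to \Sigma^{-t_1-t_2}C \to \Sigma^{s_1-t_1-t_2-t_3}TA,\\
  \widetilde{\Delta}_2:~ & \Sigma^{s_2}C \to \Sigma^{-u_1}D \to \Sigma^{-u_1-u_2}E \to \Sigma^{s_2-u_1-u_2-u_3}TC
\end{align*}
in $\C$ representing $\Sigma^{s_1,0,0,s_1}\Delta_1$ and $\Sigma^{s_2,0,0,s_2}\Delta_2$ respectively, with strict weights $w_1=t_1+t_2+t_3\leq\bar{w}(\Delta_1)+\epsilon$ and $w_2=u_1+u_2+u_3\leq\bar{w}(\Delta_2)+\epsilon$.

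Next, apply the uniform shift $\Sigma^{-s_2-t_1-t_2}$ to $\widetilde{\Delta}_2$. By Remark \ref{rmk-shift-notation}(a) this preserves strict exactness and weight, and its first object becomes $\Sigma^{-t_1-t_2}C$, now matching the third object of $\widetilde{\Delta}_1$; the $\C_\infty$-image of the shifted triangle remains isomorphic to $\Delta_2$ via the $\eta$-components. Proposition \ref{prop-w-oct} applied to the aligned pair yields strict exact triangles $\widetilde{\Delta}_3$ of strict weight $0$ and $\widetilde{\Delta}_4$ of strict weight $w_1+w_2$, fitting into an octahedron in $\C$ whose right-bottom square is $r$-anti-commutative and whose other squares commute. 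Passing to $\C_\infty$ and replacing each shifted object $\Sigma^{-k}X$ by $X$ through the $\eta$-isomorphism produces exact triangles $\Delta_3: B\to D\to F\to TB$ and $\Delta_4: TA\to F\to E\to T^2A$ that complete the required $\C_\infty$-octahedron ($r$-commutativities in $\C$ become genuine commutativities in $\C_\infty$, and the $r$-anti-commutative square becomes anti-commutative). The weight estimates follow from Example \ref{ex-tri-c-c-inf} and the definition of $\bar{w}$: $\bar{w}(\Delta_3)\leq w(\widetilde{\Delta}_3)=0$ and $\bar{w}(\Delta_4)\leq w_\infty(\Sigma^{s_1,0,0,s_1}\Delta_4)\leq w(\widetilde{\Delta}_4)=w_1+w_2\leq\bar{w}(\Delta_1)+\bar{w}(\Delta_2)+2\epsilon$.

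The main obstacle I anticipate is the $\epsilon$-slack intrinsic to the iterated infimum defining $\bar{w}$: each $\epsilon>0$ yields a potentially different pair $(\Delta_3^\epsilon,\Delta_4^\epsilon)$, so to deduce the weighted octahedral inequality with no slack one must either interpret the axiom as an infimum assertion or control the dependence of the construction on $\epsilon$. The remaining work---threading the $\eta$-isomorphisms through the octahedron and verifying that the alignment shift preserves the $\C_\infty$-representative of $\Delta_2$---is mostly bookkeeping, made manageable by the naturality of $\eta$ and the commutativity of $\Sigma$ with $T$.
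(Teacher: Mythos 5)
Your proof follows the same strategy as the paper's: lift $\Delta_1,\Delta_2$ to strict exact triangles in $\C$, align the common vertex by a uniform $\Sigma$-shift, apply Proposition~\ref{prop-w-oct}, and push the resulting octahedron back to $\C_\infty$. There is, however, one place where you are more careful than the printed argument, and one shared loose end worth noting.

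The paper's proof represents $\Delta_1,\Delta_2$ directly (first object $A$, resp.\ $C$, with no pre-shift), so the strict weights $r,s$ of the chosen $\widetilde\Delta_i$ track $w_\infty(\Delta_i)$ rather than $\bar w(\Delta_i)$. The conclusion there is $\bar w(\Delta_3)+\bar w(\Delta_4)\leq r+s$, which a priori only bounds the sum by $w_\infty(\Delta_1)+w_\infty(\Delta_2)$; since $\bar w$ can be strictly smaller than $w_\infty$ (cf.~Example~\ref{ex-embed}), this does not literally yield the stated inequality. You handle this correctly by pre-shifting to $\Sigma^{s_i,0,0,s_i}\Delta_i$ before choosing representatives and then aligning $\widetilde\Delta_2$ with a uniform $\Sigma^{-s_2-t_1-t_2}$, which is precisely what the two-step infimum defining $\bar w$ requires. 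So your weight-accounting is tighter and in fact repairs a gap that the paper glosses over.

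The $\epsilon$-slack you flag is genuine and is also unaddressed in the paper's version: the axiom asks for a single pair $(\Delta_3,\Delta_4)$, whereas the construction produces $(\Delta_3^\epsilon,\Delta_4^\epsilon)$ depending on the chosen representatives. Observe that $\bar w(\Delta_3^\epsilon)=0$ exactly for every $\epsilon$ (its $\widetilde\Delta_3$ is an exact triangle of $\C_0$), so the dependence concerns only $\Delta_4$. A natural way to close this, consistent with the paper's spirit, is to argue that all $\Delta_4^\epsilon$ are mutually isomorphic exact triangles in $\C_\infty$ (each $F$ is a cone on the fixed map $B\to D$, so by the five lemma in Remark~\ref{rem:iso-cone} they are isomorphic, and the isomorphisms can be fed into $\Delta_4^\epsilon$), and then check that $\bar w$ is invariant under isomorphism of exact triangles in $\C_\infty$ by transporting representatives through the isomorphism as in Lemma~\ref{lem:iso-oftr}. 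With that, passing $\epsilon\to 0$ gives the inequality for one fixed $\Delta_4$. This extra step would be worth spelling out.
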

\begin{proof} 
  Recall that given the exact triangles
  $\Delta_{1}: A\to B\to C\to TA$ and $\Delta_{2}: C\to D\to E\to TC$
  in $\C_{\infty}$ we need to show that there are exact triangles:
  $\Delta_{3}: B\to D\to F\to TB$ and
  $\Delta_{4}: TA\to F\to E\to T^{2}A$ making the diagram below
  commute, except for the right-most bottom square that anti-commutes,
  \[ \xymatrix{
      A \ar[r] \ar[d] & 0 \ar[r] \ar[d] & TA \ar[d]\ar[r]& TA\ar[d] \\
      B \ar[r] \ar[d] & D \ar[r] \ar[d] & F \ar[d]\ar[r]& TB \ar[d]\\
      C \ar[r] \ar[d] & D \ar[r]\ar[d] & E\ar[r]\ar[d] & TC\ar[d] \\
      TA\ar[r] & 0 \ar[r] & T^{2}A \ar[r]& T^{2} A }
  \]
  and such that
  $\bar{w}(\Delta_{3}) + \bar{w}(\Delta_{4})\leq \bar{w}(\Delta_{1}) +
  \bar{w}(\Delta_{2})$.

  In $\C$, there are triangles $\bar{\Delta}_1: A \to B \to C \to TE$
  with non-negative morphisms shifts $t_1, t_2, t_3$ and
  $\bar{\Delta}_2: C \to D \to E \to TC$ with non-negative morphisms
  shifts $k_1, k_2, k_3$ that represent $\Delta_{1}$ and $\Delta_{2}$
  respectively and such that the associated triangles
  \[ \widetilde{\Delta}_1: A \to \Sigma^{-t_1} B \to \Sigma^{-t_1-t_2}
    C \to \Sigma^{-r} TA \,\,\,\,\,\mbox{where $r = t_1 + t_2 +
      t_3$} \] and
  \[ \widetilde{\Delta}_2: C \to \Sigma^{-k_1} D \to \Sigma^{-k_1
      -k_2} E \to \Sigma^{-s} TC \,\,\,\,\,\mbox{where
      $s = k_1 + k_2 + k_3$} \] are strict exact triangles in $\C$ of
  weight $r$ and $s$, respectively.  Consider
  $\Sigma^{-t_{1}-t_{2}} \widetilde{\Delta}_2$,
  \[ \Sigma^{-t_{1}-t_{2}} \widetilde{\Delta}_2: \Sigma^{-t_{1}-t_{2}}
    C \to \Sigma^{-k_1 -t_{1}-t_{2}} D \to
    \Sigma^{-k_1-k_2-t_{1}-t_{2}} E \to \Sigma^{-s-t_{1}-t_{2}} TC. \]
  The weighted octahedral property for strict exact triangles in $\C$
  in Proposition \ref{prop-w-oct} implies that we can construct the
  following commutative diagram in $\C$ (with the bottom right square
  which is $r$-anti-commutative).
  \begin{equation} \label{oct-c-inf-2} \xymatrix{
      A \ar[d] \ar[r] & 0 \ar[r] \ar[d] & TA \ar[d] \ar[r] & TA\ar[d] \\
      \Sigma^{-t_1} B \ar[d] \ar[r] & \Sigma^{-k_1-t_{1}-t_{2}}
      D\ar[r] \ar[d] & D' \ar[d]\ar[r]
      & \Sigma^{-t_1} TB  \ar[d]\\
      \Sigma^{-t_{1}-t_{2}} C \ar[r]\ar[d] & \Sigma^{-k_1-t_{1}-t_{2}}
      D \ar[r] \ar[d] & \Sigma^{-k_1-k_2-t_{1}-t_{2}} E \ar[r] \ar[d]
      & \Sigma ^{-t_{1}-t_{2}-s}TC \ar[d]\\
      \Sigma^{-r}TA \ar[r]& 0\ar[r] & \Sigma^{-r-s}T^{2}A\ar[r] &
      \Sigma^{-r-s}T^{2} A}
  \end{equation}
  Here the triangle
  $\Delta_{3}' :\Sigma^{-t_1} B \to \Sigma^{-k_1-t_{1}-t_{2}} D \to D'
  \to \Sigma^{-t_1} TB$ is an exact triangle in $\C_{0}$. The triangle
  $\Delta_{3}'' : \Sigma^{k_{1}+t_{2}}B\to D\to
  \Sigma^{k_{1}+t_{1}+t_{2}}D'\to \Sigma^{k_{1}+t_{2}}TB$ obtained by
  shifting up $\Delta_{3}'$ by $k_{1}+t_{1}+t_{2}$ is also exact in
  $\C_{0}$. Let $[\Delta''_{3}]$ be the image of this triangle in
  $\C_{\infty}$. We put $F= \Sigma^{k_{1}+t_{1}+t_{2}}D'$ and take
  $\Delta_{3}$ to be the triangle in $\C_{\infty}$
  $$\Delta_{3}: B\to D\to F\to TB$$ obtained  by applying
  $\Sigma^{-k_{1}-t_{2},0,0,-k_{1}-t_{2}}$ to $[\Delta_{3}'']$. We
  obviously have $w_{\infty}([\Delta_{3}''])=0$ and thus
  $\bar{w}(\Delta_{3})=0$.

  The next step is to identify the triangle $\Delta_{4}$. Proposition
  \ref{prop-w-oct} implies that the third column in
  (\ref{oct-c-inf-2}):
  $$\Delta_{4}' : TA\to \Sigma^{-k_{1}-t_{1}-t_{2}}F
  \to \Sigma^{-k_{1}-k_{2}-t_{1}-t_{2}}E\to \Sigma^{-r-s}T^{2}A$$ is a
  strict exact triangle in $\C$. We let $[\Delta'_{4}]$ be the image
  of this triangle in $\C_{\infty}$ and let $\Delta_{4}$ be given by
  applying $\Sigma^{0, k_{1}+t_{1}+t_{2},k_{1}+k_{2}+t_{1}+t_{2},r+s}$
  to $[\Delta'_{4}]$:
  $$\Delta_{4} : TA \to F \to E\to T^{2}A~.~$$
  We deduce from Definition \ref{dfn-extri-inf} that
  $w_{\infty}(\Delta_{4})\leq r+s$ and thus also
  $\bar{w}(\Delta_{4})\leq r+s$.

  The commutativity required in the statement follows from that
  provided by Proposition \ref{prop-w-oct} for (\ref{oct-c-inf-2}).
\end{proof}

\begin{remark} \label{rem:weak-oct} For the triangle $\Delta_{3}$
  produced in this proof it is easy to see that
  $w_{\infty}(\Delta_{3})\leq k_{1}+t_{2}$.  Therefore we have:
  \[ w_{\infty}(\Delta_3) + w_{\infty}(\Delta_4) \leq (k_1 + t_2) +
    (r+s) \leq 2
    (r+s)=2(w_{\infty}(\Delta_{1})+w_{\infty}(\Delta_{2})). \] Thus
  the weight $w_{\infty}$ satisfies a weak form of the weighted
  octahedral axiom.
\end{remark}

The next step in proving Theorem \ref{thm:main-alg} is to show the
normalization property in Definition \ref{def:triang-cat-w} (ii). This
property is satisfied with the constant $\bar{w}_{0}=0$. Indeed, any
triangle $0\to X\to X\to 0$ and all its rotations are exact in
$\C_{0}$ and thus they are of unstable weight equal to $0$.  The last
verification needed is to see that, if $B=0$ in the diagram of the
weighted octahedral axiom, then the triangle $\Delta_{3}$ -
constructed in the proof of the Lemma - can be of the form:
$\Delta_{3}: 0\to D\to D\to 0$. This is trivially satisfied in our
construction because if $B=0$ we may take $t_{1}=t_{2}=0$ and the
triangle
$\Delta'_{3}: 0 \to \Sigma^{-k_{1}}D\xrightarrow{\mathds{1}}
\Sigma^{-k_{1}}D\to 0$.

\

Finally, to finish the proof of Theorem \ref{thm:main-alg} we need to
show that $\bar{w}$ is sub-additive. Thus, assuming that
$\Delta: A\to B\to C\to TA$ is exact in $\mathcal{C}_{\infty}$ and $X$
is an object in $\C$, then
$\bar{w}(X\oplus \Delta)\leq \bar{w}(\Delta)$ where the triangle
$X\oplus\Delta$ has the form
$X\oplus \Delta: A\to X\oplus B\to X\oplus C\to TA$. We consider the
strict exact triangle in $\C$
$$\widetilde{\Delta}: A\to \Sigma^{-s_{1}}B
\to \Sigma^{-s_{1}-s_{2}}C \to \Sigma^{-s_{1}-s_{2}-s_{3}}TA$$
associated to $\Delta$ as in Definition \ref{dfn-extri-inf} with
$s_{i}\geq 0$, $1\leq i\leq 3$.  Consider the triangle
\jznote{$$\Delta' : 0\to \Sigma^{-s_{1}}X \xrightarrow{\eta_{s_{2}}^{X}}
\Sigma^{-s_{1}-s_{2}}X\to 0~.~$$} This triangle is obtained from the
exact triangle in $\C_{0}$,
$0\to \Sigma^{-s_{1}}X\xrightarrow{\mathds{1}} \Sigma^{-s_{1}}X\to 0$ by
applying $\Sigma^{0,0,-s_{2}, -s_{2}}$ and it is of strict weight
$\leq s_{2}$. By Lemma \ref{claim-frag-sum} we have
$w(\Delta' \oplus \widetilde{\Delta})\leq w(\widetilde{\Delta})$.  We
now notice that $\Delta'\oplus \widetilde{\Delta}$ can be viewed as
obtained from $X\oplus\Delta$ by applying
$\Sigma^{0,-s_{1},-s_{1}-s_{2}, -s_{1}-s_{2}-s_{3}}$ and thus
$w_{\infty}(X\oplus \Delta)\leq w(\widetilde{\Delta})$ which implies
the claim. The proof for $\Delta\oplus X$ is similar.


\subsection{Fragmentation pseudo-metrics and their
  non-degeneracy.}\label{subsec:rem-nondeg}
The purpose of this section is to rapidly review some of the
properties of the persistence fragmentation pseudo-metrics.  We start
by recalling the main definitions, we then discuss some algebraic
properties and conclude with a discussion of non-degeneracy.
\subsubsection{Persistence fragmentation pseudo-metrics, review of
  main definitions} Assume that $\C$ is a TPC. We have defined in \S
\ref{subsubsec:frag1} and \S\ref{subsubsec:ex-cinfty} three types of
similarly defined measurements on the objects of $\C$ that, after
symmetrization, define fragmentation pseudo-metrics on
$\mathrm{Obj}(\C)$.  In \S \ref{subsubsec:frag1} this construction
uses directly the weight $w$ of the strict exact triangles in $\C$
(making use of Proposition \ref{prop-w-oct}) and it gives rise to
pseudo-metrics $d^{\mathcal{F}}$ as in Definition \ref{dfn-frag-met}
as well as a simplified version $\underline{d}^{\mathcal{F}}$
mentioned in Remark \ref{ex-delta} (d).

In \S\ref{subsubsec:ex-cinfty} the aim is to consider weights on the
triangles of the category $\C_{\infty}$.  This category is
triangulated - this is the main part of Theorem \ref{thm:main-alg} -
and its exact triangles are endowed with an unstable weight
$w_{\infty}$ as well as with a (smaller) stable weight $\overline{w}$,
as given by Definition \ref{dfn-extri-inf}. Working in the category
$\C_{\infty}$ has a significant advantage compared to the category
$\C$ because, by contrast to $\C$, in $\C_{\infty}$ any morphism can
be completed to an exact triangle of finite weight. Moreover, in
$\C_{\infty}$ exact triangles have the standard form expected in a
triangulated category and do not involve shifts.  As a consequence,
we will focus here on the fragmentation pseudo-metrics defined using
$w_{\infty}$ and, mainly, $\overline{w}$.

An important remark at this point is that the unstable weight
$w_{\infty}$ does not satisfy the weighted octahedral axiom (but only
its weak form as discussed in Remark \ref{rem:weak-oct}) and thus only
the pseudo-metrics of the form $\underline{\bar{d}}^{\mathcal{F}}$ can
be defined using it. By contrast, $\overline{w}$ does satisfy the
weighted octahedral axiom and there is a pseudo-metric
$\bar{d}^{\mathcal{F}}$ associated to it through the construction in
\S\ref{subsec:triang-gen}.  Both $\overline{w}$ and $w_{\infty}$ are
subadditive and satisfy the normalization property in Definition
\ref{def:triang-cat-w} with $w_{0}=0$.

To eliminate possible ambiguities we recall the definitions of the two
relevant pseudo-metrics here. Both of them are based on considering a
sequence of exact triangles in $\C_{\infty}$ as below:
\begin{equation}\label{eq:iterated-tr2}\xymatrixcolsep{1pc} \xymatrix{
 Y_{0} \ar[rr] &  &  Y_{1}\ar@{-->}[ldd]  \ar[r] &\ldots  \ar[r]& Y_{i} \ar[rr] &  &  Y_{i+1}\ar@{-->}[ldd]  \ar[r] &\ldots \ar[r]&Y_{n-1} \ar[rr] &   &Y_{n} \ar@{-->}[ldd]  &\\
 &         \Delta_{1}                  &  & & &  \Delta_{i+1}                          & &  &  &    \Delta_{n}             \\
  & X_{1}\ar[luu] &  & & &X_{i+1}\ar[luu] &  &  & &X_{n}\ar[luu] }
\end{equation}
where the dotted arrows represent maps $Y_{i} \to TX_{i}$. We fix a
family of objects $\mathcal{F}$ in $\C$ with $0\in \mathcal{F}$ and
now:
\begin{equation}
\underline{\bar{\delta}}^{\mathcal{F}}(X,X') =\inf\left\{ \sum_{i=1}^{n}w_{\infty}(\Delta_{i}) \, \Bigg| \, \begin{array}{ll} \mbox{$\Delta_{i}$ are successive exact triangles as in (\ref{eq:iterated-tr2})} \\ \mbox{with $Y_{0}=X'$,
$X=Y_{n}$  and  $X_i \in \mathcal{F}$,   $n \in \N$} \end{array} \right\}
\end{equation}

\begin{equation}
\bar{\delta}^{\mathcal{F}}(X,X') =\inf\left\{ \sum_{i=1}^{n}\bar{w}(\Delta_{i}) \, \Bigg| \, \begin{array}{ll} \mbox{$\Delta_{i}$ are as in (\ref{eq:iterated-tr2}) with $Y_{0}=0$, $X=Y_{n}$, $X_i \in \mathcal{F}$,   } \\ \mbox{$n \in \N$  except for some $j$ such that $X_{j}=T^{-1}X'$} \end{array} \right\}~.~
\end{equation}
Finally, the pseudo-metrics $\underline{\bar{d}}^{\mathcal{F}}$ and $\bar{d}^{\mathcal{F}}$ are obtained 
by symmetrizing $\underline{\bar{\delta}}^{\mathcal{F}}$ and $\bar{\delta}^{\mathcal{F}}$, respectively:
$$\underline{\bar{d}}^{\mathcal{F}}(X,X')=\max\{\underline{\bar{\delta}}^{\mathcal{F}}(X,X'),\underline{\bar{\delta}}^{\mathcal{F}}(X',X) \}, \ \ \bar{d}^{\mathcal{F}}(X,X')=\max\{\bar{\delta}^{\mathcal{F}}(X,X'), \bar{\delta}^{\mathcal{F}}(X',X) \}~.~$$

\subsubsection{Algebraic properties.}
There are many fragmentation pseudo-metrics of persistence type
associated to the same weight, depending on the choices of family
$\mathcal{F}$.  In fact, the choices available are even more abundant
for the following two reasons:
\begin{itemize}
\item[i.] triangular weights themselves can be mixed. For instance, if
  $\C$ is a TPC, there is a triangular weight of the form
  $\overline{w}^{+}=\overline{w}+w_{fl}$ that is defined on
  $\C_{\infty}$ (where $w_{fl}$ is the flat weight \jznote{defined in \S\ref{subsec:triang-gen}}).
\item[ii.] fragmentation metrics themselves can also be mixed. If
  $d^{\mathcal{F}}$ and $d^{\mathcal{F}'}$ are two fragmentation
  pseudo-metrics (whether defined with respect to the same weight or
  not), then the following expressions
  $\alpha ~d^{\mathcal{F}}+\beta~ d^{\mathcal{F}'}$ with
  $\alpha, \beta\geq 0$ as well as
  $\max \{d^{\mathcal{F}}, d^{\mathcal{F}'}\}$ are also
  pseudo-metrics.
\end{itemize}

In essence, while it is not easy to produce interesting sub-additive
triangular weights on a triangulated category, once such a weight is
constructed - as in the case of the persistence weight $\overline{w}$
defined on $\C_{\infty}$ (where $\C$ is a TPC) - one can associate to
it a large class of pseudo-metrics, either by combining the weight
with the flat one and/or by ``mixing'' the pseudo-metrics associated
to different families $\mathcal{F}$.  Another useful (and obvious)
property relating the pseudo-metrics $d^{\mathcal{F}}$ and
$d^{\mathcal{F}'}$ associated to the same triangular weight is that:
 
\begin{itemize}
\item[iii.] If $\mathcal{F}\subset \mathcal{F}'$, then
  $d^{\mathcal{F}'}\leq d^{\mathcal{F}}$.
\end{itemize}

\subsubsection{Vanishing and non-degeneracy of fragmentation
  metrics.}\label{subsubsec:non-deg-v}
We fix here a TPC denoted by $\C$ together with the associated weights
and pseudo-metrics, as above.  We will assume that $0\in
\mathcal{F}$. We will denote by $\bar{d}^{\emptyset}$ the
pseudo-metric associated to the family consisting of only the element
$0$. In view of point iii. above $\bar{d}^{\emptyset}$ is \jznote{an} upper
bound for all the pseudo-metrics $\bar{d}^{\mathcal{F}}$.

It is obvious, as noticed in Remark \ref{ex-delta}, that, in general
$\bar{d}^{\mathcal{F}}$ is degenerate and, for instance, if
$\mathcal{F}=\mathrm{Obj}(\mathcal{C})$ then
$\bar{d}^{\mathcal{F}}\equiv 0$. The rest of Remark \ref{ex-delta}
also continues to apply to $\bar{d}^{\mathcal{F}}$. We list below some
other easily proven properties. We assume for all the objects $X$
involved here that $\sigma(\mathds{1}_{X})=0$ and we will use the calculations
in Examples \ref{ex-id-rigid}, \ref{ex-embed}, \ref{ex-embed2} and
\ref{ex-other}. Recall the notion of $r$-isomorphism from Definition
\ref{dfn-r-iso}, in particular, this is a morphism in
$\mathcal{C}_{0}$. A $0$-isomorphism is simply an isomorphism in the
category $\C_{0}$ and is denoted by $\equiv$.
\begin{itemize}
\item[i.] If $X\equiv X'$, then $\bar{d}^{\mathcal{F}}(X,X')=0=
  \underline{\bar{d}}^{\mathcal{F}}(X,X')$ for any family
  $\mathcal{F}$.
\item[ii.] If $\bar{\delta}^{\emptyset}(X,X')=s$, then
  $s=\inf\{ r \in \R \ |\ \exists \ \phi : \Sigma^{k}X'\to X \ \
  \mbox{$r$-isomorphism}, \ k\geq 0\} $.
\item[iii.] If $\underline{\bar{\delta}}^{\emptyset}(X,X')=s$ , then
  $s=\inf\{ r \in \R \ |\ \exists \ \phi : \Sigma^{k}X'\to X \ \
  \mbox{$r$-isomorphism}, \ r\geq k\geq 0 \}$.
 \item[iv.] We have $\bar{d}^{\emptyset}(X,\Sigma^{r}X)=r$ for any
   $r\in \R$ (this follows from Examples \ref{ex-id-rigid} and
   \ref{ex-other}).
\end{itemize}

Thus $\bar{d}^{\emptyset}$ is finite for objects that are isomorphic
in $\C_{\infty}$ and $\bar{d}^{\emptyset}(X,X')$ is the optimal
upper-bound $r$ such there are $s$-isomorphisms in $\C$ with
$s\leq r$, from some positive shift of $X$ to $X'$ and, similarly,
from some positive shift of $X'$ to $X$. \jznote{To some extent, $\bar{d}^{\emptyset}$ can be viewed as an abstract analogue of the interleaving distance in the persistence module theory (cf.~Section 1.3 in \cite{PRSZ20} and Proposition \ref{prop-1} below).} For
$\underline{\bar{d}}^{\emptyset}$ there is an additional constraint
that the respective shifts \jznote{should} be also bounded by $r$.  As a consequence:
\begin{itemize}
\item[v.] if $\bar{d}^{\emptyset}(X,X')=0$, then $X$ and $X'$ are
  $0$-isomorphic up to shift. Moreover, if $X$ and $X'$ are not
  $0$-isomorphic, they are both periodic in the sense that there exist
  $k$ and $k'$ (not both null) and $0$-isomorphisms
  $\Sigma^{k}X\to X$, $\Sigma^{k'}X'\to X'$.
\item[vi.] if $\underline{\bar{d}}^{\emptyset}(X,X')=0$, then $X\equiv X'$.
\end{itemize}

In summary, this means that the best we can expect from the
fragmentation pseudo-metrics is that they \jznote{should be} non-degenerate on the
space of $0$-isomorphism types. From now on, we will say that a
fragmentation pseudo-metric is non-degenerate if this is the
case. Assuming no periodic objects exist, the metric
$\bar{d}^{\emptyset}$ is non-degenerate in this sense. However, the
distance it measures for two objects that are not isomorphic in
$\C_{\infty}$ is infinite. On the other hand, a metric such as
$\bar{d}^{\mathcal{F}}$ (as well as
$\underline{\bar{d}}^{\mathcal{F}}$) where $\mathcal{F}$ is a family
of triangular generators of $\C_{\infty}$ is finite but is in general
degenerate.

\

The last point we want to raise in this section is that mixing
fragmentation pseudo-metrics can sometimes produce non-degenerate
ones. We will see an example of this sort in the symplectic section \S\ref{subsec-symp}, 
but we end here by describing a more general, abstract argument. Fix
two families $\mathcal{F}_{i}$, $i=1,2$ of generators of
$\mathcal{C}_{\infty}$.  Consider the mixed pseudo-metric defined by
\begin{equation}\label{eq:mixing1}
  \bar{d}^{\mathcal{F}_{1},\mathcal{F}_{2}}=\max\{\bar{d}^{\mathcal{F}_{1}}, \bar{d}^{\mathcal{F}_{2}} \}~.~
\end{equation}
The idea is that if these two families are ``separated'' in a strong
sense, then the mixed metric is non-degenerate. For instance, denote
by $\mathcal{F}_{i}^{\Delta}$ the subcategory of $\C_{0}$ that is
generated by $\mathcal{F}_{i}$.  Now assume that
$\mathrm{Obj}(\mathcal{F}_{1}^{\Delta})\cap
\mathrm{Obj}(\mathcal{F}_{2}^{\Delta})=\{0\}$ (this is of course quite
restrictive).  We now claim that
$\underline{\bar{d}}^{\mathcal{F}_{1},\mathcal{F}_{2}}$ is
non-degenerate and that $\bar{d}^{\mathcal{F}_{1},\mathcal{F}_{2}}$
satisfies a weaker non-degeneracy conditions which is that
$\bar{d}^{\mathcal{F}_{1},\mathcal{F}_{2}}(X,0)=0$ if and only if
$X\equiv 0$. This latter fact follows immediately by noticing that
$\bar{d}^{\mathcal{F}_{i}}(X,0)=0$ means that
$X\in \mathcal{F}_{i}^{\Delta}$ and we leave the former as an exercise.

\section{Examples}\label{sec-example}

\subsection{Filtered dg-categories}\label{subsec:dg}

The key property of dg-categories, introduced in \cite{BK91} (see also
\cite{Dri04}), is that they admit natural, pre-triangulated
closures. The $0$-cohomological category of this closure is
triangulated. We will see here that there is a natural notion of
filtered dg-categories.  Such a category also admits a
pre-triangulated closure, defined using filtered twisted complexes,
following closely \cite{BK91}. Its $0$-cohomological category is a
triangulated persistence category.

\subsubsection{Basic definitions} \label{subsec:basic-def}
Following standard convention we will work in a co-homological setting
and we keep all the sign conventions as in \cite{BK91}.  For our
purposes it is convenient to view a filtered cochain complex over the
field $\k$ as a triple $(X, \partial, \ell)$ consisting of a cochain
complex $(X, \partial)$ and a filtration function
$\ell: X \to \R \cup\{-\infty\}$ such that for any $a, b \in X$ and
$\lambda \in \k \backslash \{0\}$,
$\ell(\lambda a + b) \leq \max\{\ell(a), \ell(b)\}$,
$\ell(a) = -\infty$ if and only if $a = 0$, and
$\ell(\partial a) \leq \ell(a)$.  We denote
$X^{\leq r}= \{x \in X \,|\, \ell(x) \leq r\} \subset X$ the
filtration induced on $X$ by the filtration function $l$. Clearly,
$X^{\leq r}$ is again a filtered cochain complex. The family
$\{X^{\leq r}\}$ determines the function $\ell$. The \jznote{cohomology} of a
filtered cochain complex is a persistence module:
$V^{r}(X)=H(X^{\leq r};\k)$ whose structural maps $i_{r,s}$ are
induced by the inclusions
$\iota_{r,s}:X^{\leq r}\hookrightarrow X^{\leq s}$, $r\leq s$. We have
omitted here the grading, as is customary. In case it needs to be
indicated we write, for instance,
$[V^{r}(X)]^{i}=H^{i}(X^{\leq r};\k)$. We denote this (graded)
persistence module by $\mathbb{V}(X)$,
\begin{equation}\label{eq:pers-v}\mathbb{V}(X)= (V^{r}(X), i_{r,s})~.~
\end{equation}
Given two filtered cochain complexes $X = (X, \partial^X, \ell_X)$ and
$Y = (Y, \partial^Y, \ell_Y)$, their tensor product is a filtered
\jznote{cochain} complex $(X \otimes Y, \partial^{\otimes}, \ell_{\otimes})$
given by $(X\otimes Y)_k = \bigoplus_{i+j = k} (X_i \otimes Y_j)$ and
\begin{equation} \label{dfn-tensor-complex}
  \partial^{\otimes}(x\otimes y) = \partial^X(x) \otimes y +
  (-1)^{|x|}x \otimes \partial^Y(y) \ , \ \ell_{\otimes}(a \otimes b)
  = \ell_X(a) + \ell_Y(b) ~.~
\end{equation}
If $(X, \ell_{X})$ and $(Y,\ell_{Y})$ are filtered vector spaces, we call a  linear map $\phi : X\to Y$
$r$-filtered if $\ell_{Y}(\phi(x))\leq \ell_{X}(x)+r$ for all $x\in X$. A $0$-filtered map is sometimes
called (for brevity) filtered. For more background on this formalism,  see \cite{UZ16}.

The next definition is an obvious analogue of the notion of dg-category in \cite{BK91} \S 1.

\begin{dfn} \label{dfn-fdg-cat} A {\em filtered dg-category} is a
  preadditive category $\A$ where
  \begin{itemize}
  \item[(i)] for any $A, B \in {\rm Obj}(\A)$ the hom-set
    ${\rm Hom}_{\A}(A,B)$ is a filtered cochain complex with
    filtrations denoted by ${\rm Hom}_{A}^{\leq r}(A,B)$ such that for
    each identity element we have $\ell(\mathds 1_{A})=0$ and $\mathds 1_{A}$ is
    closed.
  \item[(ii)] the composition is a filtered chain map:
    $${\rm Hom}_{\A}(B,C) \otimes {\rm Hom}_{\A}(A,B)
    \xrightarrow{\circ} {\rm Hom}_{\A}(A,C)~;~$$
  \item[(iii)] for any inclusions $\iota^{AB}_{r,r'}$ and
    $\iota^{BC}_{s,s'}$, the composition morphism satisfies the
    compatibility condition
    $\iota^{BC}_{s,s'}(g) \circ \iota^{AB}_{r,r'}(f) =
    \iota^{AC}_{r+s, r'+s'}(g \circ f)$ for any
    $f \in {\rm Hom}^{\leq r}_{\A}(A,B)$ and
    $g \in {\rm Hom}^{\leq s}_{\A}(B,C)$.
\end{itemize}
\end{dfn}

\begin{remark} \label{rmk-fil-dgc} 
  A filtered dg-category is trivially a persistence category by
  forgetting the boundary maps on each ${\rm
    Hom}_{\A}(A,B)$. Explicitly, for any $A, B \in {\rm Obj}(\A)$,
  define $E_{AB}: (\R, \leq) \to {\rm Vect}_{\k}$ by
  $E_{AB}(r) = {\rm Hom}_{\A}^{\leq r}(A,B)$ and
  $E_{AB}(i_{r,s}) = \iota_{r,s}: {\rm Hom}_{\A}^{\leq r}(A,B)\to {\rm
    Hom}_{\A}^{\leq s}(A,B)$.
\end{remark}

The (co)homology category of a filtered dg-category $\A$, denoted by
${\rm H}(\A)$, is a category with
$${\rm Obj}({\rm H}(\A)) = {\rm Obj}(\A)$$ and, for any
$A, B \in {\rm Obj}({\rm H}(\A))$,
\pbnote{
$${\rm Hom}_{{\rm H}(\A)}(A,B) := \mathbb V({\rm Hom}_{\A}(A,B))
= \Bigl( \bigl\{ H^*(\Hom_{\A}^{\leq r}(A,B))\bigr\}_{r \in
  \mathbb{R}}, \{i_{r,s}\}_{r \leq s}\Bigr),$$} \pbnote{is the
persistence module as described in~\eqref{eq:pers-v}.}  It is
immediate to see that for any filtered dg-category $\A$, its (co)homology
category ${\rm H}(\A)$ is a (graded) persistence category.

\subsubsection{Twisted complexes} \label{subsubsec:tw-cplxes} It is easy to construct a formal shift-completion
of a dg-category.


\begin{dfn} \label{dfn-sus} Let $\A$ be a filtered dg-category. The {\em shift completion} $\Sigma \A$ of $\A$ 
is a  filtered dg-category such that: 
\begin{itemize}
\item[(i)] The objects of $\Sigma \A$ are
\begin{equation} \label{suspend-not}
{\rm Obj}(\Sigma\A) = \left\{ \Sigma^r A[d] \, | \, A \in {\rm Obj}(\A), \, r \in \R \,\,\mbox{and}\,\, d\in \Z\right\}
\end{equation}
such that $\Sigma^0 A  = A$, $\Sigma^s (\Sigma^r A)  = \Sigma^{r+s} A$, $A[0] = A$, $(A[d_1])[d_2] = A[d_1 +d_2]$, $(\Sigma^r A)[d] = \Sigma^r(A[d])$, for any $r, s \in \R$ and $d_1, d_2, d \in \Z$.
\item[(ii)] For any $\Sigma^r A[d_A], \Sigma^s B[d_B] \in {\rm Obj}(\A)$, the hom-set ${\rm Hom}(\Sigma^rA[d_{A}], \Sigma^s B [d_{B}])$ is a filtered \jznote{cochain} complex with the same underlying cochain complex of ${\rm Hom}(A,B)$ but with degree shifted by $d_B -d_A$ and filtration function $\ell_{\Sigma^rA[d_A]\, \Sigma^s B[d_B]} = \ell_{AB}+s -r$.
\end{itemize}
\end{dfn}

\begin{remark} \label{rmk-sus}  It is immediate to check that $\Sigma \A$ as given in Definition \ref{dfn-sus} is still a filtered dg-category.   
\end{remark}

The category $\Sigma \A$ carries a natural functor $\Sigma: (\R, +) \to \mathcal P{\rm End}(\Sigma \A)$ defined on objects by $\Sigma^r(A) = \Sigma^r A$ and with an obvious definition on morphisms such that  $\Sigma^r$  is filtration preserving and for $r \leq s$, the natural transformations $\eta_{r,s}:\Sigma^{r}\to \Sigma^{s}$ are such that $(\eta_{r,s})_A :\Sigma^{r}A\to \Sigma^{s} A$ is
 induced by the identity map $id_A$ for each $A \in {\rm Obj}(\A)$.
In this context we have a natural definition of (one-sided) twisted complexes obtained by adjusting to the filtered case  the Definition 1 in \S 4 \cite{BK91}.

\begin{dfn} \label{dfn-tw-cpx} Let $\A$ be a filtered dg-category. A filtered (one-sided) twisted complex of $\Sigma \A$ is a pair $A = \left(\bigoplus_{i=1}^n \Sigma^{r_i} A_i[d_i], q = (q_{ij})_{1 \leq i, j \leq n} \right)$ such that the following conditions hold.
\begin{itemize}
\item[(i)] $\Sigma^{r_i} A_i[d_i] \in {\rm Obj}(\Sigma \A)$, where $r_i \in \R$ and $d_i \in \Z$.
\item[(ii)] $q_{ij} \in {\rm Hom}_{\Sigma \A}(\Sigma^{r_j}A_j[d_j], \Sigma^{r_i}A_i[d_i])$ is of degree $1$, and $q_{ij} =0$ for $i \geq j$.
\item[(iii)] $d_{\rm Hom} q_{ij} + \sum_{k=1}^n q_{ik} \circ q_{kj} =0 $.
\item[(iv)] For any $q_{ij}$, $\ell_{\Sigma^{r_j}A_j[d_j] \Sigma^{r_i}A_i[d_i]}(q_{ij}) \leq 0$.
\end{itemize}
\end{dfn}

\begin{rem} We will mostly work with \jznote{{\em filtered} one-sided twisted complexes} as defined above but, more generally, the pair $A = \left(\bigoplus_{i=1}^n A_i[r_i], q = (q_{ij})_{1 \leq i, j \leq n} \right)$ subject only to (i),(ii), (iii) is called a one-sided twisted complex. 
\end{rem}


It is easy to see that there are at least as many filtered one-sided twisted complexes as one-sided twisted complexes as it follows from the statement below whose proof we leave to the reader.

\begin{lemma} \label{lemma-exist-fil-tc} Given a twisted complex $\left(\bigoplus_{i=1}^n A_i[d_i], q = (q_{ij})\right)$, there exist $(r_i)_{1 \leq i \leq n}$ such that condition (iv) in Definition \ref{dfn-tw-cpx} is satisfied for the filtration shifted twisted complex $\left(\bigoplus_{i=1}^n \Sigma^{r_i}A_i[d_i], q = (q_{ij})\right)$. \end{lemma}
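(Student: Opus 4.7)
The plan is to reduce the lemma to a finite system of one-sided inequalities in the parameters $r_i$ and solve it by a straightforward recursive maximization.

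First I would unwind condition (iv) explicitly. By Definition \ref{dfn-sus}(ii), the filtration function on $\Hom_{\Sigma\A}(\Sigma^{r_j}A_j[d_j], \Sigma^{r_i}A_i[d_i])$ is related to the one on $\Hom_{\A}(A_j,A_i)$ by
\[ \ell_{\Sigma^{r_j}A_j[d_j]\,\Sigma^{r_i}A_i[d_i]}(q_{ij}) \;=\; \ell_{A_jA_i}(q_{ij}) + r_i - r_j. \]
Writing $L_{ij} := \ell_{A_jA_i}(q_{ij})$ for $i<j$ (with $L_{ij} = -\infty$ when $q_{ij}=0$), condition (iv) is therefore equivalent to requiring
\[ r_j - r_i \;\geq\; L_{ij} \quad\text{for every pair } 1\leq i<j\leq n. \]
Since $q_{ij}=0$ whenever $i\geq j$, no constraints are imposed by pairs with $i \geq j$.

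Next I would construct the $r_i$ inductively. Set $r_1 := 0$. Having chosen $r_1,\ldots,r_{j-1}\in\R$, define
\[ r_j \;:=\; \max_{1\leq i<j}\bigl(r_i + L_{ij}\bigr), \]
where the maximum is taken over the finitely many indices $i<j$ for which $q_{ij}\neq 0$; if no such $i$ exists, set $r_j := 0$. Each such maximum is finite because it ranges over finitely many real numbers. By construction $r_j - r_i \geq L_{ij}$ for every $i<j$, which is precisely the inequality derived above. Hence the shifted twisted complex $\bigl(\bigoplus_{i=1}^n \Sigma^{r_i}A_i[d_i],\, q=(q_{ij})\bigr)$ satisfies condition (iv).

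There is no real obstacle here: the other conditions (i)--(iii) in Definition \ref{dfn-tw-cpx} are unaffected by the filtration shift since $\Sigma^{r_i}$ acts by identity on the underlying graded vector space and on the degree, so the differentials $q_{ij}$ still have degree $1$ and still satisfy the Maurer--Cartan relation $d_{\Hom}q_{ij} + \sum_k q_{ik}\circ q_{kj}=0$. The only point to verify carefully is the sign convention in the formula for $\ell_{\Sigma^rA[d_A]\,\Sigma^sB[d_B]}$ from Definition \ref{dfn-sus}(ii); once that is pinned down, the argument is purely the finite recursive maximization above.
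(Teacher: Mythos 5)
Your proof is correct. The paper leaves this lemma to the reader with no proof supplied, so there is nothing to compare against; your reduction of condition (iv) to the system $r_j - r_i \geq \ell_{A_jA_i}(q_{ij})$ for $i<j$ (using the formula from Definition~\ref{dfn-sus}(ii)) and the subsequent finite recursive choice of the $r_j$'s is exactly the straightforward argument the authors surely had in mind. A minor remark: one can also take $r_j := jC$ for any $C \geq \max\{0,\ \ell_{A_jA_i}(q_{ij}) : i<j,\ q_{ij}\neq 0\}$, which dispenses with the recursion, but your version is equally clean and your observation that conditions (i)--(iii) are untouched by the filtration shift is the right point to flag.
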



\subsubsection{Pre-triangulated completion.}
We will see next that the filtered twisted complexes over $\A$ form a category that provides a  (pre-)triangulated
closure of $\A$. The $0$-cohomology category of this completion is a triangulated persistence category.

\begin{dfn} \label{dfn-fil-pre-tr} Given a filtered dg-category $\A$,
  define its \jznote{{\em filtered pre-triangulated completion}}, denoted by
  $Tw(\A)$, to be a category with the following properties.
  \begin{itemize}
  \item[(i)] Its objects are,
    \[ {\rm Obj}(Tw(\A)) := \{ \mbox{filtered one-sided twisted
        complex of $\Sigma\A$} \}.\]
  \item[(ii)] For $A = \left(\bigoplus \Sigma^{r_j}A_j[d_j], q\right)$
    and $A' = \left(\bigoplus \Sigma^{r'_i}A'_i[d'_i], q'\right)$ in
    ${\rm Obj}(Tw(\A))$, a morphism $f \in {\rm Hom}_{Tw(\A)}(A, A')$
    is a matrix of morphisms in $\A$ denoted by
    $f = (f_{ij}): A \to A'$, where
    \[ f_{ij} \in {\rm Hom}_{\Sigma\A}\left(\Sigma^{r_j}A_j[d_j],
        \Sigma^{r'_i}A'_i[d'_i]\right). \]
  \item[(iii)] The hom-differential is defined as follows. For any
    $f \in {\rm Hom}_{Tw(\A)}(A, A')$ as in (ii) above, define
    \begin{equation} \label{diff-hull} d_{Tw\A}(f) := (d_{\rm
        Hom}f_{ij}) + q' f - (-1)^l f q\end{equation} where
    ${\rm deg}(f_{ij}) = l$ and the right-hand side is written
    \pbnote{in matrix form}. The composition $f' \circ f$ is given by
    the matrix multiplication.
  \end{itemize}
\end{dfn}

\begin{lemma} \label{lemma-pre-tr-fdg} Given a filtered dg-category
  $\A$, its filtered \jznote{pre-triangulated completion $Tw (\A)$} is a filtered
  dg-category.
\end{lemma}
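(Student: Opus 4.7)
The plan is to transport the standard argument from~\cite{BK91} verifying that $Tw(\A)$ is a dg-category into the filtered setting, and then to check that the filtration functions and their shifts behave well at each stage. Concretely, I would equip $\mathrm{Hom}_{Tw(\A)}(A,A')$ with the filtration
\[
\ell_{A,A'}(f) := \max_{i,j}\, \ell_{\Sigma^{r_j}A_j[d_j],\,\Sigma^{r'_i}A'_i[d'_i]}(f_{ij}),
\]
grade it by $\mathrm{deg}(f) = l$ when each $f_{ij}$ has degree $l$ after the shifts $d_j, d'_i$ are taken into account, and verify axioms (i)--(iii) of Definition~\ref{dfn-fdg-cat} one at a time.

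The first step is to check that $d_{Tw\A}$ is a filtered map of degree $+1$ squaring to zero. Degree is immediate from the definition. For filtration preservation, I would argue term by term on the right hand side of~\eqref{diff-hull}: the piece $(d_{\mathrm{Hom}} f_{ij})$ has filtration at most $\ell_{A,A'}(f)$ because each hom-complex in $\A$ is filtered; the piece $q'f$ has $(q'f)_{ij} = \sum_k q'_{ik}\circ f_{kj}$, and using condition (iv) of Definition~\ref{dfn-tw-cpx} together with the filtered composition axiom in $\A$ (and the built-in shifts of Definition~\ref{dfn-sus}~(ii)), each summand has filtration at most $\ell(q'_{ik}) + \ell(f_{kj}) \leq \ell_{A,A'}(f)$; the piece $fq$ is analogous. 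The identity $d_{Tw\A}^2 = 0$ is the standard Maurer--Cartan computation using condition (iii) of Definition~\ref{dfn-tw-cpx}, which is purely algebraic and carries over verbatim from~\cite{BK91}.

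Next I would verify that composition $\circ \colon \mathrm{Hom}_{Tw(\A)}(A',A'') \otimes \mathrm{Hom}_{Tw(\A)}(A,A') \to \mathrm{Hom}_{Tw(\A)}(A,A'')$ is a filtered chain map. Filtration compatibility is the observation
\[
\ell_{A,A''}(f'\circ f) \;\leq\; \max_{i,j,k}\bigl(\ell(f'_{ik}) + \ell(f_{kj})\bigr) \;\leq\; \ell_{A',A''}(f') + \ell_{A,A'}(f),
\]
which follows from axiom (ii) of Definition~\ref{dfn-fdg-cat} applied in $\Sigma\A$; this in turn also yields the inclusion compatibility of axiom (iii) in that definition. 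The Leibniz rule for $d_{Tw\A}$ with respect to composition is again algebraic and is checked by expanding both sides in matrix form exactly as in~\cite{BK91}. Finally, for each twisted complex $A$ the identity $\mathds{1}_A$ is the diagonal matrix with entries $\mathds{1}_{A_i}$; each entry has filtration $0$ in $\Sigma\A$ by the shift convention $\ell_{\Sigma^{r_i}A_i[d_i],\Sigma^{r_i}A_i[d_i]} = \ell_{A_i A_i} + r_i - r_i$, so $\ell(\mathds{1}_A) = 0$, and $d_{Tw\A}(\mathds{1}_A) = 0 + q - q = 0$.

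The main obstacle is bookkeeping rather than conceptual: one has to keep the degree shifts $[d_i]$, the filtration shifts $\Sigma^{r_i}$, the signs in~\eqref{diff-hull}, and the matrix structure of composition all aligned simultaneously. Once the filtration function above is fixed, the conditions on twisted complexes (in particular (iv)) are exactly what is needed to ensure that the twisting terms $q,q'$ do not raise filtration, so every verification reduces to an inequality that holds by definition. No new ideas beyond Definition~\ref{dfn-tw-cpx} and Definition~\ref{dfn-sus} are required.
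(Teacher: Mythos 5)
Your proof is correct and takes essentially the same approach as the paper: you define the same filtration function~\eqref{fil-fun-pre-tr} on $\mathrm{Hom}_{Tw(\A)}(A,A')$, and the remaining verifications (that $d_{Tw\A}$ and composition are filtered, that identities have filtration $0$ and are closed) are exactly the ``easily checked'' items the paper's proof leaves implicit. Your term-by-term checking using condition (iv) of Definition~\ref{dfn-tw-cpx} to bound the twisting contributions is precisely the right bookkeeping.
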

\begin{proof} The main step is to notice that there exists a
  filtration function on \jznote{${\rm Hom}_{Tw(\A)}(A, A')$} for any
  $A, A' \in {\rm Obj}(Tw(\A))$. For any
  $f = (f_{ij}) \in {\rm Hom}_{Tw(\A)}(A, A')$,
  set
  \begin{equation} \label{fil-fun-pre-tr} \ell_{AA'}(f) = \max_{i,j}
    \left\{\ell_{\Sigma^{r_j}A_j[d_j]
        \,\Sigma^{r'_i}A'_i[d'_i]}(f_{ij})\right\}.
  \end{equation}
  It is easily checked that $\ell_{AA'}$ is a filtration function as
  well as the other required properties.
\end{proof}

The first step towards triangulation is to define an appropriate cone
of a morphism.

\begin{dfn} \label{dfn-fmc} Let $\A$ be a filtered dg-category and
  $Tw(\A)$ be \jznote{its pre-triangulated completion}. Let
  $A = \left(\bigoplus \Sigma^{r_j} A_j[d_j], q = (q_{ij})_{1 \leq i,
      j \leq n} \right)$,
  $A' = \left(\bigoplus \Sigma^{r'_i} A'_i[d'_i], q = (q'_{ij})_{1
      \leq i, j \leq m} \right)$ be two objects of $Tw(\A)$ and let
  $f: A \to A'$ be a closed, degree preserving, morphism. Define the
  {\em $\lambda$-filtered mapping cone} of $f$, where
  $\lambda \geq \ell_{AA'}(f)$, by
  \begin{equation} \label{const-cone} {\rm Cone}^{\lambda}(f) : =
    \left( \bigoplus_{i} \Sigma^{r'_i} A'_i[d'_i] \oplus \bigoplus_{j}
      \Sigma^{r_j + \lambda} A_j[d_j-1], q_{\rm co}
    \right)\,\,\mbox{where} \,\, q_{\rm co} = \begin{pmatrix}
      q' & f  \\
      0 & q\end{pmatrix},
  \end{equation}
  where $q', q,f$ are all block matrices.
\end{dfn}

\begin{remark}\label{thm-pre-tr-cone} (1) The condition
  $\lambda \geq \ell_{AA'}(f)$ guarantees that
  ${\rm Cone}^{\lambda}(f)$ is indeed a filtered one-sided twisted
  complex over $\Sigma \A$. Therefore, $Tw(\A)$ is closed under taking
  degree-shifts, filtration-shifts, and filtered mapping cones of
  (degree preserving) closed morphisms.

  (2) Notice that a $\lambda$-filtered cone can also be written as a
  $0$-filtered cone but for a different map.

  (3) Given a filtered dg-category $\A$ it is easy to see that every
  object in $Tw(\A)$ can be obtained from objects in $\Sigma \A$ by
  taking iterated filtered $0$-filtered mapping cones.
\end{remark}

The $0$-cohomological category associated to a dg-category is a
triangulated category. The next result is the analogue in the filtered
case.

\begin{prop} \label{thm-hol-hull} If $\A$ is a filtered dg-category
  and $Tw(\A)$ is its \jznote{filtered pre-triangulated completion}, then the
  degree-$0$ cohomology category ${\rm H}^0(Tw(\A))$ is a triangulated
  persistence category.
\end{prop}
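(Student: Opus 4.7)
The plan is to exhibit explicitly each of the three structural conditions in Definition~\ref{dfn-tpc}, building on the fact that $Tw(\A)$ is itself a filtered dg-category (Lemma~\ref{lemma-pre-tr-fdg}). The persistence structure on ${\rm H}^0(Tw(\A))$ is the standard one from Remark~\ref{rmk-fil-dgc}: for each pair $A, A' \in {\rm Obj}(Tw(\A))$ define
$$E_{A,A'}(r) = H^0(\Hom_{Tw(\A)}^{\leq r}(A,A')),$$
with persistence structure maps $i_{r,s}$ induced by the inclusions $\iota_{r,s}$. Compatibility of composition with the persistence structure follows from condition (iii) of Definition~\ref{dfn-fdg-cat} applied to $Tw(\A)$. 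The shift functor $\Sigma:(\R,+)\to \mathcal{P}{\rm End}({\rm H}^0(Tw(\A)))$ is defined on objects by $\Sigma^r\bigl(\bigoplus \Sigma^{r_j}A_j[d_j],q\bigr) = \bigl(\bigoplus \Sigma^{r_j+r}A_j[d_j],q\bigr)$ (with the obvious action on morphisms), and the natural transformations $\eta_{r,s}:\Sigma^r\to \Sigma^s$ are induced, object by object, by the morphisms in $Tw(\A)$ given on underlying data by the identity. By the filtration formula~(\ref{fil-fun-pre-tr}) these have filtration $s-r$, so $\eta_{r,s}$ is a persistence natural transformation of shift $s-r$.

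Next I verify condition (i), that $({\rm H}^0(Tw(\A)))_0 = H^0(\Hom^{\leq 0}_{Tw(\A)}(-,-))$ is triangulated. Define the translation $T$ to be the degree shift $A \mapsto A[1]$ on twisted complexes, and declare exact triangles to be those isomorphic in the $0$-level to triangles of the form
$$A \xrightarrow{f} A' \to {\rm Cone}^0(f) \to TA,$$
where $f$ is a closed, degree $0$, $0$-filtered morphism and ${\rm Cone}^0(f)$ is the $0$-filtered mapping cone of Definition~\ref{dfn-fmc}. The Bondal--Kapranov proof (\cite{BK91}, \S2) applies verbatim: rotation, morphism lifting (TR3), and the octahedral axiom (TR4) all follow from explicit manipulations with $0$-filtered mapping cones and standard homotopies, and one only needs to check that in each construction the relevant morphisms and contracting homotopies can be arranged to have filtration $\leq 0$. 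This is automatic because the relevant formulas are the same as in the unfiltered case, built out of identity components and components of the given $0$-filtered data, and they all have filtration $\leq 0$ by (\ref{fil-fun-pre-tr}). Condition (ii) is then straightforward: $\Sigma^r$ commutes with $T$ and, on closed degree-zero $0$-filtered morphisms, with the formation of $0$-filtered cones, since $\Sigma^r{\rm Cone}^0(f) = {\rm Cone}^0(\Sigma^r f)$.

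The main content is condition (iii). Given $A \in {\rm Obj}(Tw(\A))$ and $r \geq 0$, the map $\eta_r^A : \Sigma^r A \to A$ is represented by the identity on underlying data, which has filtration $\ell(\mathds{1}_A) + 0 - r = -r \leq 0$, so it is $0$-filtered. Set
$$K = {\rm Cone}^0(\eta_r^A) = A \oplus \Sigma^r A[-1], \qquad q_{\rm co} = \begin{pmatrix} 0 & \eta_r^A \\ 0 & 0 \end{pmatrix},$$
which gives the exact triangle $\Sigma^r A \xrightarrow{\eta^A_r} A \to K \to T\Sigma^r A$ in the $0$-level by definition. It remains to prove $K$ is $r$-acyclic. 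Consider the degree $-1$ morphism
$$h = \begin{pmatrix} 0 & 0 \\ \mathds{1} & 0 \end{pmatrix} \in \Hom_{Tw(\A)}(K,K),$$
where $\mathds{1}$ denotes the identity $A \to \Sigma^r A[-1]$ on underlying data. By~(\ref{fil-fun-pre-tr}) and Definition~\ref{dfn-sus}(ii) this component has filtration $r - 0 = r$, hence $h \in \Hom^{\leq r}_{Tw(\A)}(K,K)$. A direct matrix computation using~(\ref{diff-hull}), noting that $d_{\rm Hom}(\mathds{1}) = 0$ and accounting for the sign $(-1)^{-1}$, yields $d_{Tw\A}(h) = q_{\rm co} h + h q_{\rm co} = \mathds{1}_K$. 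Therefore $i_{0,r}(\mathds{1}_K) = 0$ in $H^0(\Hom^{\leq r}(K,K))$, which is precisely the statement that $\mathds{1}_K \simeq_r 0$, i.e.\ $K$ is $r$-acyclic. This completes the verification of all three conditions in Definition~\ref{dfn-tpc}. The only subtle point is (iii): the rest of the argument is a direct filtered adaptation of the Bondal--Kapranov construction, while the filtration count for the contracting homotopy $h$ is what matches the discrepancy between the domain and target of $\eta_r^A$.
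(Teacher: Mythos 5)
Your approach mirrors the paper's proof exactly: exhibit the persistence structure, the shift functor, and then check the three conditions of Definition~\ref{dfn-tpc}, with the only substantive work being the $r$-acyclicity of $K = \mathrm{Cone}^0(\eta_r^A)$ via an explicit contracting homotopy of filtration $r$.

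There is, however, one genuine slip in your verification of condition (iii). You write the twisted structure of $K = A \oplus \Sigma^r A[-1]$ as
$q_{\rm co} = \begin{pmatrix} 0 & \eta_r^A \\ 0 & 0 \end{pmatrix}$,
but this omits the internal twisting of $A$. By Definition~\ref{dfn-fmc}, the correct block form is
$q_{\rm co} = \begin{pmatrix} q & \eta_r^A \\ 0 & \Sigma^r q \end{pmatrix}$,
where $q$ is the structural map of the twisted complex $A$. Since (\ref{diff-hull}) computes $d_{Tw\A}(h)$ using the \emph{full} twisted structure of $K$, the actual calculation produces, in the lower-left block, a cross term of the form $\Sigma^r q \circ (\eta_{0,r})_A - (\eta_{0,r})_A \circ q$ in addition to the diagonal $\mathds{1}_A$ and $\mathds{1}_{\Sigma^r A[-1]}$. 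Your reduced $q_{\rm co}$ makes this term disappear tautologically, so your matrix computation proves the claim only when $q=0$, i.e.~when $A$ has no nontrivial twisting. The fix is short: observe that $(\eta_{0,r})_A$ is the identity on underlying data and so intertwines $q$ with $\Sigma^r q$, making the cross term vanish for every $A$. This is precisely the extra observation the paper makes at the end of its proof, and it is the one step your argument as written does not address.
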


In view of this result, it is natural to call a filtered dg-category
$\A$ {\em pre-triangulated} if the inclusion
$\A \hookrightarrow Tw(\A)$ is an equivalence of filtered
dg-categories.
\begin{cor}\label{cor:pre-tr} Let $\A$ be a filtered pre-triangulated
  dg-category. Then its degree-0 cohomology category ${\rm H}^0(\A)$
  is a triangulated persistence category.
\end{cor}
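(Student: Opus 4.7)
The plan is to derive this corollary as a direct consequence of Proposition \ref{thm-hol-hull}. By the definition of pre-triangulated just above the statement, the inclusion functor $\iota : \A \hookrightarrow Tw(\A)$ is an equivalence of filtered dg-categories. Since $H^0(Tw(\A))$ is a triangulated persistence category by Proposition \ref{thm-hol-hull}, the strategy is to transport this structure back along $H^0(\iota)$ onto $H^0(\A)$.

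First, I would verify that any equivalence of filtered dg-categories $\F : \A \to \mathcal{B}$ induces an equivalence of the associated persistence categories $H^0(\F) : H^0(\A) \to H^0(\mathcal{B})$. This follows because $\F$, by definition, respects filtrations on hom-complexes, composition, and identities, so it induces a filtered quasi-isomorphism $\Hom_{\A}^{\leq r}(A,B) \to \Hom_{\mathcal{B}}^{\leq r}(\F A, \F B)$ for every $r$, and taking cohomology in each filtration degree yields an isomorphism of persistence $\k$-modules. Applied to $\iota$, this gives a persistence equivalence $H^0(\A) \simeq H^0(Tw(\A))$.

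Next, I would check that the shift functor $\Sigma$ on $H^0(Tw(\A))$ (constructed from $\Sigma \A$) transports to a shift functor on $H^0(\A)$ via this equivalence, together with its natural transformations $\eta_{r,s}$. This is again a bookkeeping step: $\Sigma^r$ acts on hom-complexes by translating the filtration function by $r$, and equivalences of filtered dg-categories commute with this operation up to natural isomorphism. A triangle in $H^0(\A)_0$ is then declared exact iff its image under $H^0(\iota)$ is exact in $H^0(Tw(\A))_0$. With this declaration, the triangulated structure, together with conditions (i) and (ii) of Definition \ref{dfn-tpc}, is automatic from the corresponding property for $H^0(Tw(\A))$. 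Condition (iii) transfers because $r$-acyclicity is a purely intrinsic property of the persistence structure on the hom-sets (cf.~Lemma \ref{lemma-acyclic}), hence preserved by the equivalence.

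The one point that requires a little care, and which I expect to be the main obstacle, is checking that the natural isomorphism $\iota \circ \iota^{-1} \simeq \mathds{1}_{Tw(\A)}$ can be chosen to be a \emph{filtered} natural isomorphism (of shift $0$), so that the transported shift functors and natural transformations $\eta_{r,s}$ are genuinely compatible with the persistence structure and not merely up to some bounded shift. This should however follow from unpacking what is meant by an equivalence of filtered dg-categories, where the quasi-inverse and the witnessing natural transformations are by assumption themselves filtered (i.e.~$0$-filtered and closed), so that their images at the $H^0$ level live in $\Mor^0$ and provide the required compatibilities.
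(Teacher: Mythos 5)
The paper gives no explicit proof of this corollary; it simply states it as a consequence of Proposition~\ref{thm-hol-hull} together with the definition of ``pre-triangulated'' that immediately precedes it, leaving the transport-of-structure argument implicit. Your proposal makes exactly that implicit argument explicit, and it is correct: the equivalence $\A \hookrightarrow Tw(\A)$ of filtered dg-categories induces an equivalence of persistence categories at the $H^0$ level, along which the shift functor, the $\eta_{r,s}$ transformations, the class of exact triangles, and $r$-acyclicity all transport. You are also right to flag the filteredness of the quasi-inverse and the witnessing natural isomorphisms as the point requiring care; this is baked into what ``equivalence of \emph{filtered} dg-categories'' must mean, though the paper does not spell it out. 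One small technicality you might add to your outline: the transported shift functor $\bar{\Sigma}^r := \kappa \circ \Sigma^r \circ \iota$ (with $\kappa$ a quasi-inverse of $\iota$) will a priori only be monoidal up to coherent natural isomorphism rather than \emph{strictly} monoidal as Definition~\ref{dfn-shift-functor} requires, so one either needs to strictify or observe that the relevant constructions go through with a weak monoidal structure; this is a standard issue and the paper itself glosses over it.
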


\begin{proof}[Proof of Proposition \ref{thm-hol-hull}]
  It is trivial to notice that the category $H^{0}(Tw(\A))$ is a
  persistence category. It is endowed with an obvious shift functor as
  defined in \S\ref{subsubsec:tw-cplxes}. The first thing to check at
  this point is that the $0$-level category $[H^{0}(Tw(\A))]_{0}$ with
  the same objects as $H^{0}(Tw(\A))$ and only with the shift
  $0$-morphisms is triangulated - see Definition \ref{dfn-tpc}. The
  family of triangles that will provide the exact ones are the
  triangles of the form
  $$A\xrightarrow{f}B\xrightarrow{i}
  \mathrm{Cone}^{0}(f)\xrightarrow{\pi}A[-1]$$ associated to the
  $0$-cones, as given in Definition \ref{dfn-fmc}. From this point on
  checking that $[H^{0}(Tw(\A))]_{0}$ is triangulated comes down to
  the usual verifications showing that the $H^{0}$ of a dg-category is
  triangulated, with a bit of care to make sure that the relevant
  homotopies preserve filtration. We leave this verification to the
  reader. It is then automatic that $\Sigma^{r}$ is triangulated when
  restricted to $[H^{0}(Tw(\A))]_{0}$. The last step is to show that
  the morphism $\eta_{r}^{A}:\Sigma^{r}A\to A$ has an $r$-acyclic
  cofiber in $Tw(\A)$. In this context, of filtered dg-categories, an
  object $K$ is $r$-acyclic if the identity
  $\mathds{1}_{K}\in \Hom_{Tw(\A)}(K,K)$ is a boundary of some element
  $\eta\in \Hom_{Tw(\A)}^{\leq r}(K,K)$.

  The map $\eta^A_r \in {\rm Mor}^0_{Tw(\A)}(\Sigma^r A, A)$ is
  induced by the identity.  By definition
  ${\rm Cone}^0(\eta^A_r) = A \oplus \Sigma^r A[-1]$ and
  $$ q_{\rm co} = \begin{pmatrix}
    q & \eta^{A}_{r}  \\
    0 & q'\end{pmatrix}
  $$
  where $q$ is the structural map of the twisted complex $A$ and
  $q'=\Sigma^{r}q$.

  Consider a homotopy
  \[ K = \begin{pmatrix} 
      0  & 0 \\
      (\eta_{0,r})_A&0
    \end{pmatrix}: {\rm Cone}^0(\eta^A_r) \to {\rm
      Cone}^0(\eta^A_r)[-1]. \] Note that $\ell(K) =r$. We have
\jznote{      \begin{align*} 
dK & = \begin{pmatrix} 
      0 & 0   \\
      0 & 0
    \end{pmatrix} + \begin{pmatrix}
      q & \eta^A_r  \\
      0 & q'
    \end{pmatrix} \begin{pmatrix} 
      0  & 0 \\
      (\eta_{0,r})_A & 0 
    \end{pmatrix} + \begin{pmatrix} 
      0  & 0 \\
      (\eta_{0,r})_A & 0 
    \end{pmatrix}  \begin{pmatrix} 
      q & \eta^A_r  \\
      0 & q'
    \end{pmatrix} \\ &= \begin{pmatrix}
      \mathds{1}_A & 0   \\
      q'\circ (\eta_{0,r})_A-(\eta_{0,r})_A\circ q & \mathds{1}_{\Sigma^r A[-1]})
    \end{pmatrix} =\mathds{1}_{{\rm Cone}^0(\eta^A_r)}.
  \end{align*}}
  because $\Sigma^{r}q\circ (\eta_{0,r})_A= (\eta_{0,r})_A\circ q$ and this
  concludes the proof.
\end{proof}

\begin{rem}
  It is easy to see that in the filtered dg-category $Tw(\A)$ we can
  replicate all the constructions in \S\ref{subsec:TPC} at the chain
  level, similarly to the definition of $r$-acyclic objects mentioned
  inside the proof above.
\end{rem}


\subsection{Filtered cochain complexes.}\label{subsec:filt-co}

In this section we discuss the main example of a filtered dg-category,
the category of filtered co-chain complexes. As we shall see, this is
pre-triangulated and thus, in view of Corollary \ref{cor:pre-tr}, its
homotopy category is a triangulated persistence category.

We will work over a field $\k$ and will denote the resulting category
by $\mathcal{FK}_{\k}$.  The objects of this category are filtered
cochain complexes $(X,\partial, \ell)$ where $(X,\partial)$ is a
cochain complex and $\ell$ is a filtration function, as in
\S\ref{subsec:basic-def}. Given two filtered \jznote{cochain} complexes
$(X,\partial_{X}, \ell_{X})$ and $(Y,\partial_{Y},\ell_{Y})$ the
morphisms $\Hom_{\mathcal{FK}_{\k}}(X,Y)$ are linear graded maps
$f:X\to Y$ such that the quantity

\begin{equation}\label{eq:filtr-hom}
  \ell(f)= \inf\{ r \in \R \ | \ \ell_{Y}(f(x))
  \leq \ell_{X}(x)+r, \forall x \in X\}
\end{equation}
is finite. The filtration function on $\Hom_{\mathcal{FK}_{k}}(X,Y)$
is then defined through (\ref{eq:filtr-hom}).  The differential on
$\Hom_{\mathcal{FK}_{k}}(X,Y)$ is given, as usual, by
$\partial (f)= \partial _{Y}\circ f - (-1)^{|f|}f\circ\partial_{X}$
and it obviously preserves filtrations.  The composition of morphisms
is also obviously compatible with the filtration and therefore
$\mathcal{FK}_{\k}$ is a filtered dg-category.

There is a natural shift functor on $\mathcal{FK}_{\k}$ defined by
$\Sigma: (\R, +) \to \mathcal P{\rm End}(\mathcal {FK}_k)$ by
\[ \Sigma^r(X, \partial, \ell_{X}) = (X, \partial, \ell_{X}+r),
  \,\,\,\,\, \mbox{and}\,\,\,\,\, \Sigma^r(f) = f \] for any
$f \in {\rm Hom}_{\mathcal {FK}_k}(X,Y)$. Moreover, for $r, s\in \R$,
there is a natural transformation from $\Sigma^r$ to $\Sigma^s$
induced by the identity.

Assume that
$f: (X,\partial_{X}, \ell_{X})\longrightarrow
(Y,\partial_{Y},\ell_{Y})$ is a cochain morphism such that
$\ell(f)\leq 0$. In this case, the usual cone construction
$\mathrm{Cone}(f)=(Y\oplus X[-1], \partial_{\mathrm{co}})$ with
$$\partial_{\mathrm{co}} = \begin{pmatrix} 
  \partial_{Y} & f  \\
  0 & \partial_{X}\end{pmatrix}$$ produces a filtered complex and fits
into a triangle of maps with $\ell \leq 0$:

$$X \xrightarrow{f} Y\xrightarrow{i}
\mathrm{Cone}(f)\xrightarrow{\pi} X[-1]~.~$$

The standard properties of this construction immediately imply that
the dg-category $\mathcal{FK}_{\k}$ is pre-triangulated and thus the
$0$-cohomological category, $H^{0}\mathcal{FK}_{\k}$, is a
\pbnote{triangulated} persistence category.

It is useful to make explicit some of the properties of this category:
\begin{itemize}
\item[i.] The objects of $H^{0}\mathcal{FK}_{\k}$ are filtered cochain
  complexes $(X,\partial_{X},\ell_{X})$.
\item[ii.] The morphisms in $\Mor_{H^{0}\mathcal{FK}_{\k}}^{r}(X,Y)$
  are cochain maps
  $f:(X,\partial_{X},\ell_{X})\to (Y,\partial_{Y},\ell_{Y})$ such that
  $\ell(f)\leq r$ up to chain homotopy $h:f\simeq f'$ with
  $\ell(h)\leq r$.
\item[iii.] A filtered complex $(K,\partial_{K},\ell_{K})$ is
  $r$-acyclic if the identity $\mathds{1}_{K}$ is chain homotopic to $0$
  through a chain homotopy $h:\mathds{1}_{K}\simeq 0$ with $\ell(h)\leq r$.
\item[iv.] The construction of weighted exact triangles as well as
  their properties can be pursued in this context by following closely
  the scheme in \S\ref{subsubsec:weight-tr}.
\item[v.] The limit category $[H^{0}\mathcal{FK}_{\k}]_{\infty}$ has
  as morphisms chain homotopy classes of cochain maps \pbnote{(where
    both the cochain maps and the homotopies are assumed to be of
    bounded shifts)}.  Its objects are still filtered cochain
  complexes. It is triangulated, with translation functor $TX= X[-1]$,
  as expected.
\end{itemize}

\begin{remark} \label{rem:extensions} The example of the dg-category
  $\mathcal{FK}_{\k}$ can be extended in a number of ways and we
  mention a couple of them here.

  (a) Assume that we fix a filtered dg-category $\mathcal{A}$. There
  is a natural notion of filtered (left/right) module $\mathcal{M}$
  over $\mathcal{A}$. Such modules together with filtered maps
  relating them form a new filtered dg-category denoted by
  $\mathrm{Mod}_{\mathcal{A}}$. The $0$-cohomology category associated
  to this filtered dg-category, $H^{0}\mathrm{Mod}_{\mathcal{A}}$, is
  pre-triangulated because the category $\mathrm{Mod}_{\mathcal{A}}$
  is naturally endowed with a shift functor, just like
  $\mathcal{FK}_{\k}$, as well as with an appropriate
  cone-construction over filtered, closed, degree preserving
  morphisms.

  (b) Similarly to (a), we may take $\mathcal{A}$ to be a filtered
  $A_{\infty}$-category and consider the category of filtered modules,
  $\mathrm{Mod}_{\mathcal{A}}$, over $\mathcal{A}$. Again this is a
  filtered dg-category and it is pre-triangulated (the formalism
  required to establish this fact appears in \cite{Bi-Co-Sh:LagrSh},
  in a version dealing with weakly filtered structures).
\end{remark}

As mentioned in the beginning of Introduction \S \ref{sec-intro},
there exists a quantitative comparison between two filtered cochain
complexes $X, Y$, called the bottleneck distance and denoted by
$d_{\rm bot}(X,Y)$. Here, we directly use the barcode language
from~\cite{Bar94} or~\cite{UZ16} to define $d_{\rm
  bot}(X,Y)$. Explicitly, by Proposition 7.4 in~\cite{UZ16}, there is
a filtered isomorphism as follows,
\begin{equation} \label{decomp} X \simeq \bigoplus_{[a,+\infty) \in
    \mathcal B(X)} E_{1}(a) \oplus \bigoplus_{[c,d) \in \mathcal B(X)}
  E_{2}(c,d)
\end{equation} 
where $E_{1}(a), E_{2}(c,d) \in {\rm Obj}(\mathcal{FK}_{\k})$ are
filtered \jznote{cochain} complexes defined by
\[ E_{1}(a) = ((\cdots \to 0 \to \k\left<x\right> \to 0 \to \cdots),
  \ell(x) = a). \] and
\pbnote{
  \[ E_{2}(c,d) = ((\cdots \to 0 \to \k\left<y \right>
    \xrightarrow{\partial} \k\left<x \right> \to 0 \to \cdots), \;
    \ell(y) =c, \ell(x) = d ),\]} where $c \geq d$,
\pbnote{$\partial(y) = \kappa x$ for some $0 \neq \kappa \in
  \k$}. \pbnote{The notation $\mathcal B(X)$ in~\eqref{decomp} stands
  for a collection of finite or semi-infinite intervals or $\R$.} In
what follows, sometimes for brevity, denote by $E_*(I)$ either
$E_{1}(a)$ or $E_{2}(c,d)$ for the corresponding interval
$I = [a, + \infty)$ or $I =[c,d)$ in $\mathcal B(X)$. Importantly,
$\mathcal B(X)$ is well-defined, up to filtered isomorphism, by
Theorem A in \cite{UZ16}. Then $d_{\rm bot}(X, Y)$ is defined as the
infimum $\tau$ satisfying the following conditions: there exist some
subsets consisting of certain ``short intervals''
$\mathcal B(X)_{\rm short} \subset \mathcal B(X)$ and
$\mathcal B(Y)_{\rm short} \subset \mathcal B(Y)$ such that
\begin{itemize}
\item[(i)] each short interval $[c,d)$ satisfies $2(d-c) \leq \tau$;
\item[(ii)] there is a bijection
  $\sigma: \mathcal B(X) \backslash \mathcal B(X)_{\rm short} \to
  \mathcal B(Y) \backslash \mathcal B(Y)_{\rm short}$;
\item[(iii)] if $\sigma([a,b)) = [c,d)$, then $\max\{|a-c|, |b-d|\} \leq \tau$;
\item[(iv)] if $\sigma([a, \infty)) = [c,\infty)$, then $|a-c| \leq \tau$. 
\end{itemize}

In what follows, we assume that the cardinalities of barcodes
$\#|\mathcal B(X)|$ and $\#|\mathcal B(Y)|$ are both finite. The
following result compares the fragmentation pseudo-metric $d^{\F}$
defined in Definition \ref{dfn-frag-met} with the bottleneck distance
$d_{\rm bot}$ defined above.

\begin{prop} \label{prop-1} Let $\C = H^0\mathcal{FK}_{\k}$ and
  $\F \subset {\rm Obj}(\C)$ be a subset containing $0$. Then
  \[ d^{\F}(X,Y) \leq C_{X,Y} d_{\rm bot}(X,Y),\] where
  $C_{X,Y} = 4\{\#|\mathcal B(X)|, \#|\mathcal B(Y)|\} +1$.
\end{prop}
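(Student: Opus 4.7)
The plan is to leverage the normal form decomposition $X \cong \bigoplus_{I \in \mathcal B(X)} E_*(I)$ and $Y \cong \bigoplus_{J \in \mathcal B(Y)} E_*(J)$ from~\eqref{decomp} together with a matching $\sigma$ realizing $d_{\rm bot}(X,Y)$. Fix $\varepsilon>0$, set $\tau := d_{\rm bot}(X,Y) + \varepsilon$, and obtain from the definition of $d_{\rm bot}$ the short-bar subsets $\mathcal B(X)_{\rm short}, \mathcal B(Y)_{\rm short}$ together with the bijection $\sigma: \mathcal B(X)\setminus \mathcal B(X)_{\rm short} \to \mathcal B(Y)\setminus \mathcal B(Y)_{\rm short}$ whose matched endpoints differ by at most $\tau$ and whose unmatched bars have length at most $\tau/2$. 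The goal is to construct an iterated cone decomposition of $X$ with linearization of the shape $(F_1,\ldots, T^{-1}Y, \ldots, F_k)$ and total weight at most $C_{X,Y}\tau$, and a symmetric one witnessing $\delta^{\F}(Y,X)\leq C_{X,Y}\tau$.

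The core algebraic input is two local weight estimates. First, every elementary complex $E_2(c,d)$ with $c-d\le \tau/2$ is $\tau/2$-acyclic in $\C_0$: the chain homotopy $h(y)=0, \ h(x)=y/\kappa$ satisfies $dh+hd=\mathds 1$ and has filtration shift exactly $c-d\leq \tau/2$. Second, for each matched pair $(I_X,I_Y)$ I produce a strict exact triangle $E_*(I_Y)\to E_*(I_X)\to K\to TE_*(I_Y)$ of weight $\le 2\tau$ in $\C$; for semi-infinite matched bars this is the triangle whose cofiber $K$ is built from $E_2(\max(a_X,a_Y),\min(a_X,a_Y))$ (which is $\leq \tau$-acyclic), while for matched finite bars a similar routing through $E_2(\max c_*,\min d_*)$ works, using Proposition~\ref{prop-wt-1}(ii) to adjust weights and Lemma~\ref{claim-frag-sum} to split and recombine direct summands.

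With these weighted triangles in hand, I assemble the decomposition in four stages. Step~(a): insert $T^{-1}Y$ so that $Y_1\cong Y$ at weight $0$. Step~(b): peel off the short summands of $Y$ one by one via weight-$\le\tau/2$ triangles $0\to Y_{j-1}\to Y_j$ that realize the $(\tau/2)$-isomorphism removing a short summand. Step~(c): transform each matched summand of $Y$ into its $\sigma$-image in $X$ via the weight-$\le 2\tau$ triangles constructed above, using Proposition~\ref{prop-cone-ref} to glue these refinements together. Step~(d): attach the short summands of $X$ via further weight-$\le\tau/2$ triangles. Summing these weights bounds the total by
\[
|\mathcal B(Y)_{\rm short}|\cdot\tfrac{\tau}{2} + |\sigma|\cdot 2\tau + |\mathcal B(X)_{\rm short}|\cdot\tfrac{\tau}{2} \le (4N+1)\tau,
\]
where $N=\max\{|\mathcal B(X)|,|\mathcal B(Y)|\}$; letting $\varepsilon\to 0$ gives $\delta^{\F}(X,Y)\le C_{X,Y}d_{\rm bot}(X,Y)$, and the symmetric argument handles $\delta^{\F}(Y,X)$.

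The principal difficulty is step~(c) when the matched endpoints compare ``in the wrong direction'' for a direct $\Mor^0$ morphism between $E_*(I_Y)$ and $E_*(I_X)$, since the only nonzero $\Mor^0$ map between distinct $E_1(a)$'s is $\eta_{|a-c|}^{E_1(\min)}$ going from the higher-filtration representative to the lower one. The remedy will be to factor each matched transformation through an intermediate common enlargement $E_2(\max,\min)$ so as to produce two weighted triangles rather than one, with combined weight still $\le 2\tau$; Lemma~\ref{claim-frag-sum} ensures that these intermediate steps, together with the direct-summing of unaffected bars, preserve the cone decomposition structure, while Proposition~\ref{prop-cone-ref} controls the weight accounting under refinement.
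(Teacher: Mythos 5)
Your overall strategy -- decompose $X$ and $Y$ via the barcode normal form~\eqref{decomp}, fix a bottleneck-optimal matching, kill the short bars as $(\tau/2)$-acyclics, and pay a per-matched-bar weight for the matched ones -- is the same as the paper's. The main organizational difference is that you build an explicit iterated cone decomposition stage by stage (insert $T^{-1}Y$, then a sequence of triangles with $X_i = 0$), whereas the paper invokes Proposition~\ref{prop-frag-sum} to split $d^{\F}$ over the direct-sum decomposition and then estimates $d^{\F}(E_*(I), E_*(\sigma(I)))$ one summand at a time. These are two packagings of the same underlying mechanism (and Proposition~\ref{prop-frag-sum} is itself proved by exactly the kind of stagewise decomposition you describe), so the difference is cosmetic.

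The genuine issue is in your stage~(c). You claim a single strict exact triangle $E_*(I_Y)\to E_*(I_X)\to K\to TE_*(I_Y)$ of weight $\le 2\tau$ per matched pair. But a strict exact triangle requires the first arrow to lie in $\Mor^0_{\C}$, i.e.\ to be a filtered ($\ell\le 0$) chain map, and for finite bars $E_2(a,b)$, $E_2(c,d)$ such a map exists in a given direction only when \emph{both} endpoints are ordered consistently (the identity $E_2(c',d')\to E_2(c,d)$ is filtered iff $c\le c'$ and $d\le d'$). When, say, $a>c$ but $b<d$, there is no filtered map in either direction between $E_2(a,b)$ and $E_2(c,d)$, and the proposed routing through $E_2(\max c_*, \min d_*)$ does not repair this: a filtered map $E_2(\max(a,c),\min(b,d))\to E_2(a,b)$ requires $b\le\min(b,d)$, i.e.\ $b\le d$, so the intermediate object is filtered-comparable to only one of the two endpoints in general. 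The paper sidesteps this by changing one endpoint at a time -- passing through $E_2(c,b)$ -- and paying $\le 2\tau$ per endpoint, hence $\le 4\tau$ per matched finite bar. Your estimate of $2\tau$ per pair is therefore not established. Fortunately this does not sink the conclusion: replacing $2\tau$ by $4\tau$ in your final sum still yields a bound $\le 4N\tau \le (4N+1)\tau$, so the inequality survives, but you should either prove the $2\tau$ bound via a correct two-step routing that works uniformly in all orderings of $(a,b,c,d)$, or simply fall back to the paper's $4\tau$ per pair.

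A smaller point: be careful that every auxiliary triangle in your stages (b)--(d) has the form $0 \to Y_{j-1}\to Y_j \to 0$ so that the auxiliary layers remain in $\F=\{0\}$; this means each such transition must itself be an $r$-isomorphism $\mathds{1}\oplus f$, which again forces $f$ to be a filtered map -- so the directionality constraint reappears here too, and needs the same fix.
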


\begin{proof} It suffices to prove the conclusion when $\mathcal B(X)$
  and $\mathcal B(Y)$ have the same cardinality of the infinite-length
  bars (otherwise by definition $d_{\rm bot}(X,Y) = +\infty$ and the
  conclusion holds trivially). Let $\tau : = d_{\rm bot}(X,Y) + \ep$
  for an arbitrarily small $\ep>0$. Since $d^{\F}(\cdot, \cdot)$ is
  invariant under \pbnote{filtered isomorphisms (applied to either of
    its two inputs) then} by~\eqref{decomp} and by reordering summands
  we obtain:
  \begin{align*}
    d^{\F}(X,Y) & \leq  d^{\F}\left(\bigoplus_{I \in \mathcal B(X) \backslash \mathcal B(X)_{\rm short}} E_*(I),  \bigoplus_{\sigma(I) \in \mathcal B(Y) \backslash \mathcal B(Y)_{\rm short}} E_*(\sigma(I))\right)\\
                & + d^{\F} \left(\bigoplus_{J \in \mathcal B(X)_{\rm short}} E_2(J),  \bigoplus_{J' \in \mathcal B(Y)_{\rm short}} E_2(J')\right),
  \end{align*}
  where the inequality is given by the triangle inequality of $d^{\F}$
  with respect to the direct sum, see Proposition
  \ref{prop-frag-sum}. For $d^{\F}$ with short intervals, both
  $\bigoplus_{J \in \mathcal B(X)_{\rm short}} E_2(J)$ and
  $\bigoplus_{J' \in \mathcal B(Y)_{\rm short}} E_2(J')$ are acyclic
  objects in $\C$, therefore by (i) in the definition $d_{\rm bot}$
  above, triangles
  \[ 0 \to 0 \to \bigoplus_{J \in \mathcal B(X)_{\rm short}} E_2(J)
    \to 0 \,\,\,\,\mbox{and}\,\,\,\, 0 \to 0 \to \bigoplus_{J' \in
      \mathcal B(Y)_{\rm short}} E_2(J') \to 0\] are
  weight-$\frac{\tau}{2}$ exact triangles (here we identify
  $\Sigma^{\lambda} 0$ with $0$ for any shift $\lambda \in \R$). Thus,
  \begin{align*} \label{est-2}
d^{\F} \left(\bigoplus_{J \in \mathcal B(X)_{\rm short}} E_2(J),  \bigoplus_{J' \in \mathcal B(X)_{\rm short}} E_2(J')\right) &  \leq d^{\F}\left(\bigoplus_{J \in \mathcal B(X)_{\rm short}} E_2(J), 0\right) \\
& + d^{\F}\left(0, \bigoplus_{J' \in \mathcal B(X)_{\rm short}} E_2(J')\right) \leq \tau.
  \end{align*}

  On the other hand, by Proposition \ref{prop-frag-sum} again, for
  $d^{\F}$ with non-short intervals, we have
  \[ d^{\F}\left(\bigoplus_{I \in \mathcal B(X) \backslash \mathcal
        B(X)_{\rm short}} E_*(I), \bigoplus_{\sigma(I) \in \mathcal
        B(Y)} E_*(\sigma(I) \right) \leq \sum_{I \in \mathcal B(X)
      \backslash \mathcal B(X)_{\rm short}} d^{\F}(E_*(I),
    E_*(\sigma(I)). \] Since $d_{\rm bot}(X,Y)< +\infty$, the bijection
  $\sigma$ will always map a finite interval to a finite interval, a
  semi-infinite interval to a semi-infinite interval, so it suffice to
  consider the following two cases.

  \medskip

  \noindent {Case I}. \emph{Estimate $d^{\F}(E_1(a), E_1(c))$}. We
  need to build a desired cone decomposition. Without loss of
  generality, assume $a\geq c$. Then the identity map
  $\left<x \right>_{E_1(a)} \to \left<x \right>_{E_1(c)}$ (with {\it
    negative} filtration shift) implies that the triangle
  $E_1(a)\to E_1(c)\to K \to E_1(a)$ is weight-0 exact triangle (in
  fact in $\C_0$) where $K$ is the filtered mapping cone. Then in the
  following cone decomposition (with linearization $(0, E_1(c))$),
  \[ 
    \left\{ \begin{array}{l} 0 \to 0 \to K \to 0 \\ E_1(c) \to K \to
        E_1(a) \to E_1(c) \end{array} \right.\] the first triangle is
  a weight-$(c-a)$ exact triangle since it is readily to verify that
  $K$ is $(c-a)$-acyclic. Then
  $\delta^{\F}(E_1(a), E_1(c)) \leq (c-a) + 0 \leq \tau$ by (iv) in
  the definition $d_{\rm bot}$ above. On the other hand, consider the
  following cone decomposition with linearization $(0, E_1(a), 0)$
  (note that by definition $\Sigma^{-(a-c)} E_1(a) = E_1(c)$),
  \begin{equation} \label{1} \left\{
      \begin{array}{l} 0 \to 0 \to 0 \to
        0 \\ E_1(a) \to 0 \to \Sigma^{-(a-c)} E_1(a) \to \Sigma^{-(a-c)}
        E_1(a) \\ 0 \to E_1(c) \to E_1(c) \to 0
      \end{array} \right.
  \end{equation}
  where the second triangle has weight $a-c >0$ by Remark
  \ref{rmk-shift-notation} (b). Therefore,
  $\delta^{\F}(E_1(c), E_1(a)) \leq 0 + (a-c) + 0 \leq \tau$, which
  implies that
  \begin{equation} \label{est-1-1}
    d^{\F}(E_1(a), E_1(c)) \leq \tau. 
  \end{equation}
  
  \medskip

  \noindent {Case II}. \emph{Estimate $d^{\F}(E_2(a,b),
    E_2(c,d))$}. We will carry on the estimation as follows,
  \[ d^{\F}(E_2(a,b), E_2(c,d)) \leq d^{\F}(E_2(a,b), E_2(c,b)) +
    d^{\F}(E_2(c,b), E_2(d,b)). \] Moreover, we will only estimate
  $d^{\F}(E_2(a,b), E_2(c,b))$ with $a \geq c$, and other situations
  can be done in a similar and symmetric way. Similarly to Case I
  above, consider the following cone decomposition
  \[ 
    \left\{ \begin{array}{l} 0 \to 0 \to K \to 0 \\ E_2(c,b) \to K \to
        E_2(a,b) \to E_2(c,b) \end{array} \right.\] where
  $E_2(a,b) \to E_2(c,b)$ is the identity map
  $\left<x\right>_{E_2(a,b)} \to \left<x\right>_{E_2(c,b)}$ (and
  similarly to the generator $y$) with a negative filtration shift and
  $K$ is the cone. Since $K$ is $(a-c)$-acyclic, we have
  $\delta^{\F}(E_2(a,b), E_2(c,b)) \leq 0 + (a-c) \leq \tau$. On the
  other hand,
  \[ \delta^{\F}(E_2(c,b), E_2(a,b)) \leq \delta^{\F}(E_2(c,b),
    \Sigma^{a-c} E_2(c,b)) + \delta^{\F}(E_2(a, b+a-c), E_2(a,b)), \]
  where $\delta^{\F}(E_2(c,b), \Sigma^{a-c} E_2(c,b)) \leq a-c$ by a
  similar cone decomposition as in (\ref{1}). Meanwhile, since
  $b + a-c \geq b$, the identity map from $E_2(a,b+a-c)$ to $E_2(a,b)$
  (with negative filtration shift) yields
  $\delta^{\F}(E_2(a, b+a-c), E_2(a,b)) \leq a-c$. Therefore, together
  we have, by (iii) in the definition $d_{\rm bot}$ above,
  \[ d^{\F}(E_2(a,b), E_2(c,b)) \leq 2 (a-c) \leq 2\tau, \] which
  implies
  \begin{equation} \label{est-1-2} d^{\F}(E_2(a,b), E_2(c,d)) \leq
    4\tau.
  \end{equation}

  Therefore, by (\ref{est-1-1}) and (\ref{est-1-2}) together, we have
  \begin{align*}
    d^{\F}(X,Y) & \leq \#|\mathcal B(X)\backslash \mathcal B(X)_{\rm short}| \cdot 4\tau + \tau\\
                & \leq (4 \#|\mathcal B(X)\backslash \mathcal B(X)_{\rm short}|+1) (d_{\rm bot}(X,Y) + \ep) \\
                & \leq (4 \min\{\#|\mathcal B(X)|, \#|\mathcal B(Y)|\} +1) (d_{\rm bot}(X,Y) + \ep)
  \end{align*}
  where the last inequality holds since $\sigma$ is a bijection by
  (ii) in the definition $d_{\rm bot}$ above.  Let $\ep \to 0$, and we
  complete the proof.
\end{proof}

\subsection{Topological \jznote{spaces} $+$}\label{subsec:top-sp}

There are many topological categories, consisting of topological
spaces endowed with additional structures (indicated by the $+$ in the
title of the subsection), that can be analyzed with the tools
discussed before. We will discuss here two elementary examples. They
both fit the following scheme: we will have a triple consisting of a
(small) category $\mathcal{K}$, an endofunctor
$T_{\mathcal K}:\mathcal{K}\to \mathcal{K}$ and a class of triangles
$\Delta_{\mathcal{K}}$, in $\mathcal{K}$ of the form
$$A\to B \to C \to
T_{\mathcal{K}}A~.~$$ 
In these cases the objects of $\mathcal{K}$ have an underlying
structure as topological spaces and, similarly, the morphisms in
$\mathcal{K}$ are continuous maps, the functor $T_{\mathcal{K}}$
corresponds to the suspension of spaces.

The aim is to define fragmentation pseudo-metrics on the objects of
$\mathcal{K}$ by first associating a weight with some reasonable
properties to the triangles in $\Delta_{\mathcal{K}}$,
$w_{\mathcal{K}}: \Delta_{\mathcal{K}}\to \R$, and then defining
quantities $\delta^{\mathcal{F}}(X,Y)$ and
$\underline{\delta}^{\mathcal{F}}(X,Y)$ as in, respectively,
(\ref{frag-met-0}) and (\ref{eq:frag-simpl}), only taking into account
decompositions appealing to triangles
$\Delta_{i}\in \Delta_{\mathcal{K}}$. Notice that
$\delta^{\mathcal{F}}$ is not generally defined in this setting as its
definition requires to desuspend spaces. On the other hand, as soon as
$w_{\mathcal{K}}$ is defined, $\underline{\delta} ^{\mathcal{F}}$ is
defined as well as the pseudo-metric $\underline{d}^{\mathcal{F}}$
obtained by its symmetrization (see Remark \ref{rem:finite-metr} (c)).
Based on the various constructions discussed earlier in the paper,
there are two approaches to define a weight $\bar{w}_{\mathcal{K}}$
(that is not flat) and they both require some more structure:
\begin{itemize}
\item[A.] The additional structure in this case is a functor
  $\Phi :\mathcal{K}\to \C_{\infty}$ where $\C$ is a TPC, in the
  examples below $\C=H^{0}\mathcal{FK}_{\k}$ - the triangulated
  persistence \pbnote{homotopy category of \jznote{filtered cochain
    complexes}}. We also require that $\Phi$ commutes with $T$ (at
  least up to some natural equivalence) and that for each
  $\Delta\in \Delta_{\mathcal{K}}$ the image $\Phi(\Delta)$ of
  $\Delta$, is exact in $\mathcal{C}_{\infty}$ (and thus
  $\bar{w}(\Phi(\Delta))<\infty$ where $\bar{w}$ is the persistence
  weight introduced in Definition \ref{dfn-extri-inf}).  In this case
  for each $\Delta\in \Delta_{\mathcal{K}}$ we put
$$\bar{w}_{\mathcal{K}}(\Delta)=\bar{w}(\Phi(\Delta))~.~$$  
\item[B.] This second approach requires first that the morphisms
  $\Mor_{\mathcal{K}} (A,B)$ are endowed with a natural increasing
  filtration compatible with the composition. Secondly, there should
  be a shift functor
  $\Sigma_{\mathcal{K}}: (\R,+)\to {\rm End}(\mathcal{K})$ compatible with
  the filtration on morphisms and that commutes with
  $T_{\mathcal{K}}$. Moreover, the triangles in $\Delta_{\mathcal{K}}$
  have to be part of a richer structure such as a model category or a
  Waldhausen category (that is compatible with the functor
  $\Sigma_{\mathcal{K}}$).  In this case, the definition of weighted
  triangles can be pursued following the steps in
  \S\ref{subsubsec:weight-tr}, but at the space level, without moving
  to an algebraic category. This approach goes beyond the scope of
  this paper and will not be pursued here.
\end{itemize}

\begin{rem}\label{rem:shift-et-al}
  Of course, it is also possible to mix in some sense the two
  approaches mentioned before. For instance, in the two examples below
  the category $\mathcal{K}$ carries a shift functor \jznote{$\Sigma_{\K}$ as
  at $B$ but also a functor $\Phi$ as at $A$} such that $\Phi$ commutes
  with the shift functors in the domain and target. In that case we
  can use $\Phi$ to pull back to $\mathcal{K}$ more of the structure
  and weights in $\C$ (of course, this remains less precise than
  constructing weights at the space level).
\end{rem}

\subsubsection{Topological spaces with action functionals.}
We will discuss here a category denoted by
$\mathcal{AT}op_{\ast}$. The objects of this category are pairs
$(A,f_{A})$ where $A$ is a pointed topological space and
$f_{A}:A\to \R$ is a continuous function bounded from below by the
value $f_{A}(\ast_{A})$ of $f_{A}$ at the base point $\ast_{A}$ of
$A$. We will refer to $f_{A}$ as the action functional associated to
$A$. The morphisms in this category are pointed continuous maps
$u:A\to B$ such that there exists $r\in \R$ with the property that
$f_{B}(u(x))\leq f_{A}(x)+r\ , \ \forall x\in A$.

We will see that there is a natural \pbnote{{\em contravariant}}
functor
\begin{equation} \label{eq:funtor-spaces}
\Phi:\mathcal{AT}op_{\ast} \to [H^{0}\mathcal{FK}_{\k}]_{\infty}
\end{equation}
inducing a weight $\bar{w}_{\mathcal{AT}op_{\ast}}$ and the associated
pseudo-metrics $\underline{d}^{\mathcal{F}}$ on
$\mathrm{Obj}(\mathcal{AT}op_{\ast})$ along the lines of point A
above.

\begin{rem}\label{rem:constr-top}
  The condition on $f_{A}$ being bounded from below is one possible
  choice in this construction. Its role is to allow the constant map
  $u: (A, f_{A})\to (B,f_{B})$ to be part of the morphisms of
  $\mathcal{AT}op_{\ast}$.
\end{rem}

Before proceeding with the construction of the functor $\Phi$ we
discuss some features of $\mathcal{AT}op_{\ast}$. Notice first that
the morphisms are filtered with the $r$-th stage being
$$\Mor_{\mathcal{AT}op_*}^{\leq r}(A,B) =
\{ u: A\to B \ | \ u\ \mathrm{continuous}, \ \ f_{B}(u(x)) \leq
f_{A}(x)+r \ , \ \forall x\in A\}~.~$$ There is an obvious functor
$\Sigma_{\mathcal{AT}op_{\ast}} : (\R,+)\to \mathcal{AT}op_{\ast}$
defined by $\Sigma^{s}(A,f_{A})=(A,f_{A}+s)$ which is the identity on
morphisms.  The next step is to define the translation functor
$T_{\mathcal{AT}op_{\ast}}$.  At the underlying topological level this
is just the topological suspension but we need to be more precise
about the action functional. Given an object $(A,f_{A})$ we first
define the cone $(CA, f_{CA})$. We take $CA$ to be the reduced cone,
in other words the quotient topological space
$CA=A\times [0,1]/(A\times \{1\}\cup \ast_{A}\times [0,1])$.  To
define $f_{CA}$ we first consider the homotopy
$h_{A}:A\times [0,1]\to \R$,
$$
h_{A}(x,t)= \left\{ \begin{array}{l}
f_{A}(x) \  \ \  \mathrm{if}\ \ \  0\leq t \leq \frac{1}{2}\\
(2-2t) (f_{A}(x) -f_{A}(\ast_{A})) + f_{A}(\ast_{A}) \ \ \  \mathrm{if} \  \  \ \frac{1}{2}\leq t \leq 1 
\end{array}\right. ~.~
$$
The map $f_{CA}: CA\to \R$ is induced by $h_{A}$. We now define the
reduced suspension, $SA= CA/A\times\{0\}$ and take $f_{SA}$ to be the
map induced to the quotient by the homotopy
$h'_{A}:A\times [0,1]\to \R$,

$$
h'_{A}(x,t)= \left\{ \begin{array}{l}
2t (f_{A}(x)-f_{A}(\ast_{A}))+f_{A}(\ast_{A}) \  \ \  \mathrm{if}\ \ \  0\leq t \leq \frac{1}{2}\\
(2-2t) (f_{A}(x) -f_{A}(\ast_{A})) + f_{A}(\ast_{A})\ \ \  \mathrm{if} \  \  \ \frac{1}{2}\leq t \leq 1 
\end{array}\right.~.~
$$
We put $T(A,f_{A})=(SA, f_{SA})$. It is immediate to see that $T$
extends to a functor on $\mathcal{AT}op_{\ast}$ and that it commutes
with $\Sigma$. Moreover, both $\Sigma$ and $T$ so defined commute and
are compatible with the filtration of the morphisms in the sense that
they take $\Mor^{\leq r}$ to $\Mor ^{\leq r}$ for each $r$. Moreover,
composition of morphisms is also compatible with the filtrations in
the sense that it takes
$\Mor^{\leq r_{1}}(B,C) \times \Mor^{\leq r_{2}}(A,B)$ to
$\Mor^{\leq r_{1}+r_{2}}(A,C)$.

We now define the class of exact triangles
$\Delta_{\mathcal{AT}op_{\ast}}$. For this we consider a morphism
$u: (A, f_{A})\to (B,f_{B})$ and we first define its cone
$\mathrm{Cone}(u)$. As a topological space this is, as expected, the
quotient topological space $(B\cup CA)/\sim$ where the equivalence
relation $\sim$ is generated by $f(x)\sim x\times \{0\}$. The base
point of $\mathrm{Cone}(u)$ is the same as that of $B$.  The action
functional $f_{\mathrm{Cone}(u)}$ is induced to the respective
quotient by :
$$
G(x)= \left\{ \begin{array}{l}
f_{B}(x) \  \ \  \mathrm{if}\ \ \  x\in B \\
(1-2t) (f_{B}(u(y)) -f_{B}(\ast_{B})) + 2t( f_{A}(y)-f_{A}(\ast_{A}))+f_{B}(\ast_{B}) \   \mathrm{if} \ \ x = (y,t) \in A\times [0,\frac{1}{2}]\\
(2-2t) (f_{A}(y) -f_{A}(\ast_{A}))+ f_{B}(\ast_{B}) \  \mathrm{if}\ \  x=(y,t) \in A\times [\frac{1}{2}, 1]
\end{array}\right.. 
$$
There is an obvious inclusion $i: (B,f_{B})\to \mathrm{Cone}(u)$ as
well as a projection $p :\mathrm{Cone}(u)\to TA$ (that contracts $B$
to a point). This map belongs to our class of morphisms because the
functional $u$ is bounded from below.  The class
$\Delta_{\mathcal{AT}op_{\ast}}$ consists of triangles $\Delta$:
\begin{equation}\label{eq:extr-top}
  \Delta : A\xrightarrow{u} B\xrightarrow{i}
  \mathrm{Cone}(u)\xrightarrow{p} TA ~.~
\end{equation}
We finally construct the functor
$\Phi: \mathcal{AT}op_{\ast}\to [H^{0}\mathcal{FK}_{\k}]_{\infty}$.
\pbnote{This functor will be contravariant, since the objects of
  $\mathcal{FK}_{\k}$ are \jznote{cochain} complexes (rather than chain
  complexes).}


First we fix some notation: for a pointed topological space $X$ we
denote by $\tilde{C}_{\ast}(X)$ the reduced singular chain complex of
$X$ with coefficients in $\k$ and by $\tilde{C}^{\ast}(X)$ the reduced
singular cochain complex (we denote without $\tilde{(-)}$ the
non-reduced chain/cochain complexes) and if $Y\subset X$ is a pointed
subspace, then $\tilde{C}_{\ast}(X,Y)$ and $\tilde{C}^{\ast}(X,Y)$ are
the relative (co)chains.  Consider an object of \jznote{$\mathcal{AT}op_*$},
$(A,f_{A})$, and let $A^{\leq r}=(f_{A})^{-1}(-\infty, r]$. Notice
that the spaces $A^{\leq r}$ are pointed (if non-void).  There is a
filtration of $C^{\ast}(A)$ defined by:
\jznote{$$\tilde{C}^{\ast}(A)^{\leq r}=
\mathrm{im}[\tilde{C}^{\ast}(A, A^{\leq r})\to \tilde{C}^{\ast}(A)]$$}
(these are the cochains in $A$ that vanish over the singular chains of
$A^{\leq r}\subset A$). It is clear that the cochain differential
preserves this filtration.  Finally, we define the functor $\Phi$.
For each object $(A,f_{A})$ of $\mathcal{AT}op_{\ast}$ we take
$\Phi(A, f_{A})$ to consist of the cochain complex
$\tilde{C}^{\ast}(A)$ together with the filtration
$\{\tilde{C}^{\ast}(A)^{\leq r}\}$ defined above.  For a morphism
$u: (A,f_{A})\to (B, f_{B})$ we take $\Phi(u)=[\tilde{C}^{\ast}(u)]$
where $[-]$ represents the cochain-homotopy class of the respective
cochain morphism.

The definition of the morphisms in $\mathcal{AT}op_{\ast}$ implies
that $\Phi(u)$ is indeed a morphism in
$[H^{0}\mathcal{FK}_{\k}]_{\infty}$. Moreover, because we are using
everywhere reduced cochain complexes (and we work in the pointed
category), we have that $\Phi(\Delta)$ is exact in
$[H^{0}\mathcal{FK}_{\k}]_{\infty}$ for each of the triangles in
$\Delta_{\mathcal{AT}op_{\ast}}$. Further, the functor $\Phi$ also
interchanges the shift functors in the domain and the target.

In all cases, the weight $\bar{w}_{\mathcal{AT}op_{\ast}}$ is well
defined as well as the associated fragmentation pseudo-metrics
$\underline{d}^{\mathcal{F}}(-,-)$ on the objects of
$\mathcal{AT}op_{\ast}$. \jznote{Roughly speaking, these fragmentation pseudo-metrics measure how much ``weight'' we need to obtain a given topological space via successive cone attachments of spaces in $\F$.}
\begin{rem} \label{rem:gen-top} (a) The choice of the class
  $\Delta_{\mathcal{AT}op_{\ast}}$ given above is quite restrictive
  with the consequence that the resulting pseudo-metrics are often
  infinite. One alternative is to enlarge this class to all triangles
  in $\mathcal{AT}op_{\ast}$ that are homotopy equivalent to those in
  the initial class through maps (and homotopies) of filtration $0$.

  (b) From some points of view, working in the {\em pointed} category
  of spaces endowed with an action functional is not natural. Other
  choices are possible, in particular some such that the translation
  functor $T$ more closely imitates dynamical stabilization.

  (c) The restriction of $\Phi$ to compact topological spaces admits
  an obvious lift to $H^{0}\mathcal{FK}_{\k}$. However, without such a
  restriction, such a lift does not seem to be available in full
  generality.
\end{rem}

\subsubsection{Metric spaces} \label{subsubsec:metr} The category
$\mathcal{M}etr_{0}$ that we will consider here has as objects
path-connected metric spaces $(X,d_{X})$ of finite diameter.  The
morphisms are Lipschitz maps. These are maps $\phi :X\to Y$ such that
there exists a constant $c\in [0,\infty)$, called the Lipschitz
constant of $u$, with the property that
$d_{Y}(\phi(x),\phi(y))\leq c~ d_{X}(x,y)$ for all $x,y\in X$.
\begin{rem} \label{rem:constr-metr} The finite diameter condition
  imposed here - indicated by the \pbnote{subscript} $_{0}$ -
  \pbnote{is necessary for some of} the constructions below.  The
  connectivity \pbnote{assumption} is more a matter of convenience.
\end{rem}

We will construct a functor as in (\ref{eq:funtor-spaces}) with one
main modification.  For convenience, we prefer defining a covariant
functor and thus our target category will not be a category of cochain
complexes but rather one of filtered {\em chain complexes} (the
passage from one to the other is formal, \pbnote{replacing $C^*$ by
  $C_{-*}$ and vice-versa}). We will denote the category of filtered
chain complexes over $\k$ by $\mathcal{FK}'_{\k}$. This behaves just
as a usual dg-category except that the differential on the space of
morphisms is of degree $-1$. With this change, we will construct:
\jznote{\begin{equation}\label{eq:functor-metr}
  \Phi': \mathcal{M}etr_{0} \to [H_{0}\mathcal{FK}'_{\k}]_{\infty}~.~
\end{equation}}
as well as related structures on $\mathcal{M}etr_{0}$, as at point A at
the beginning of the section (see also Remark \ref{rem:shift-et-al}).

We start by noting that there is an obvious increasing filtration of
the morphisms in $\mathcal{M}etr_{0}$
with
$$\Mor_{\mathcal{M}etr_{0}}^{\leq r}(X,Y)=\{ u: X\to Y \ | \
\mathrm{the\ Lipschitz\ constant\ of}\ u \ \mathrm{is}\ \leq
e^{r}\}~.~$$ It is immediate to see that this filtration is compatible
with composition. There is also a functor
$\Sigma_{\mathcal{M}etr_{0}}:(\R, +)\to \mathcal{M}etr_{0}$ defined by
rescaling the metric, $\Sigma^{s}(A, d_{A}) = (A, e^{s} d_{A})$, and
which is the identity on morphisms. As in the example in the previous
section, we next will define the translation functor
$T_{\mathcal{M}etr_{0}}$ and the class of triangles
$\Delta_{\mathcal{M}etr_{0}}$. The first step is to construct the
metric cone $C'A$ for an object $(A, d_{A})$ in our
class. Topologically, the cone $C'A$ will be this time the {\em
  unreduced} cone over $A$. Thus it is defined by
$C'A=A\times [0,1]/A\times \{0\}$.  To define the metrics $d_{C'A}$,
first let $D_{A}$ be the diameter of $A$.  We then put
\begin{equation}
d_{C'A}((x,t), (y,t'))= \frac{D_{A}}{2}|t-t'| + \min\{t,t'\}~d_{A}(x,y).
\end{equation}
It is immediate to see that this does indeed define a metric on
$C'A$. A similar construction is available to construct $T(A,
d_{A})$. Topologically, we will define first the - {\em non-reduced} -
suspension, $S'A$, as the topological quotient of
$A\times [-\frac{1}{2},\frac{1}{2}]$ with $A\times \{-1/2\}$
identified to a point $S$ and $A\times \{+1/2\}$ identified to a
different point $N$. We now define $d_{S'A}$ by
\begin{equation}
  d_{S'A}((x,t), (y,t'))=
  \frac{D_{A}}{2}|t-t'| +\min\{\frac{1}{2}-|t|,\frac{1}{2}-|t'|\}~d_{A}(x,y)
\end{equation}
and again it is immediate to see that this defines a metric on
$S'A$. We now put $T(A,d_{A})= (S'A, d_{S'A})$.  The next step is to
define the triangles in $\Delta_{\mathcal{M}etr_{0}}$. For this we
assume $u : (A,d_{A})\to (B,d_{B})$ is a morphism in our category and
we want to define the (non-reduced) cone of $u$,
$\mathrm{Cone}'(u)$. Topologically, this is, as usual,
$B\cup C'A/[\{x\}\times \{0\}\sim u(x) \ | \ x\in A]$. To define a
metric on $\mathrm{Cone}'(u)$ we notice first that given \pbnote{a
  map} $g:X\to Y$ and a pseudo-metric $d_{Y}$ on $Y$, there is a
pull-back pseudo-metric on $X$ given by
$g^{\ast}d_{Y} (a,b)= d_{Y}(g(a),g(b))$.  We now let
$A'= u(A)\subset B$ and we denote by $\bar{u}:A'\hookrightarrow B$ the
inclusion. Notice that $\mathrm{Cone}'(\bar{u})\subset C'B$.  Thus
$\mathrm{Cone}'(\bar{u})$ is endowed with a metric given by the
restriction of the metric $d_{C'B}$ on $C'B$. There are obvious
projections $\pi:\mathrm{Cone}'(u)\to \mathrm{Cone}'(\bar{u})$ and
$p: \mathrm{Cone}'(u)\to S'A$. Here $p$ collapses $B$ to the point $S$
in the suspension and sends $(x,t) \to (x,t-\frac{1}{2})$ for the
points $(x,t)\in C'A$. We now define
$$d_{\mathrm{Cone}'(u)}=\max\{\pi^{\ast}d_{C'B}+
p^{\ast}d_{S'A}\}~.~$$ Notice that, if $u$ is not injective and $B$ is
not a single point, then the two pseudo-metrics in the right term of
the equality are each degenerate. Nonetheless, $d_{\mathrm{Cone}'(u)}$
is non-degenerate. Finally, the class of triangles
$\Delta_{\mathcal{M}etr_{0}}$ consists of triangles:
$$A\xrightarrow{u} B\xrightarrow{i} \mathrm{Cone}'(u)\xrightarrow{p} S'A$$
where $i$ is the inclusion and $p$ is the projection above.

With this preparation, we can now define the functor $\Phi'$ from
(\ref{eq:functor-metr}).  Consider an object $(A,d_{A})$ in our
category and the associated singular complex $C_{\ast} (A)$.  This
chain complex is filtered as follows:
\jznote{$$C^{\leq r}_{k}(A)=\left\{\sum_{i }a_{i}
\sigma_{i} \ \bigg|\ a_{i}\in\k \ , \ \sigma_{i} \ \mathrm{a\ singular\
  simplex\ of\ diameter\ at\ most}\ e^{r}\right\}~.~$$} In other words, in
the expression above, $\sigma_{i}: \Delta^{k}\to A$ is a continuous
map with the standard $k$-simplex as domain and such that
$d_{A}(\sigma_{i}(x),\sigma_{i}(y))\leq e^{r}$ for any
$x,y\in \Delta^{k}$.  Consider the constant map $c : A \to \ast$ this
induces an obvious surjection $C_{\ast}\to C_{\ast}(\ast)$ and we
denote by $\bar{C}_{\ast}(A)$ the kernel of this map (this is
quasi-isomorphic to the reduced singular chain complex of $A$ -
because $A$ is connected - but is independent of the choice of
base-point). There is an induced filtration
$\bar{C}^{\leq r}_{\ast}(A)$.  We now put
$$\Phi'(A,d_{A})= \bar{C}_{\ast}(A)\ \mathrm{with\ the\ filtration}\
\{\bar{C}^{\leq r}_{\ast}(A)\}_{r} ~.~$$ Further, for a morphism
$u: (A,d_{A})\to (B,d_{B})$ we take $\Phi'(u)=[C_{\ast}(u)]$, the
chain homotopy class of the singular chain map $C_{\ast}(u)$
(restricted to the $\bar{C}(-)$ complexes).

It is easy to see that this $\Phi'$ is indeed a functor as desired and
that $\Phi'(\Delta)$ is exact for each triangle $\Delta$ as defined
above and, again, $\Phi'$ interchanges the shift functors in the
domain and target. In summary, the weight
$\overline{w}_{\mathcal{M}etr_{0}}$ is well defined as well as the
quantities $\underline{\delta}^{\mathcal{F}}$ and the pseudo-metrics
associated to them.

\begin{rem}\label{rem:gen-metr}
  (a) Similarly to Remark \ref{rem:gen-top}, the definition of the
  triangles in $\Delta_{\mathcal{M}etr_{0}}$ is highly restrictive
  and, in this case, even the objects in our category are subject to a
  constraint - finiteness of the diameter - that might be a hindrance
  in applications. One way to apply the methods above to study spaces
  of infinite diameter is to consider triangles of the form
  $\Delta : A\to B\to C \to S'A$ where $A$ is of finite diameter such
  that $S'A$ admits a metric as above and analyze when $\Phi'(\Delta)$
  is of finite persistence weight in
  $[H_{0}\mathcal{FK}'_{\k}]_{\infty}$.

  (b) In studying metric spaces of infinite diameter by these methods,
  it is likely that the most appropriate structure that fits with the
  cone construction is that of {\em length structure}, in the sense of
  Gromov, as in Chapter 1, Section A in \cite{Pansu}. We will not
  further pursue this theme here.
\end{rem}

\subsubsection{Further remarks on topological examples.}

\

A. In the topological examples above - for instance in
$\mathcal{AT}op_{\ast}$ - it is natural to see what the quantities
$\underline{\delta}^{\mathcal{F}}(-)$ mean even for the flat weight
$w_{0}$, which associates to each exact triangle the value $1$. Of
course, in this case $\underline{\delta}^{\mathcal{F}}(A, B)$ simply
counts the minimal number of cone-attachments in the category
$\mathcal{AT}op_{\ast}$ that are needed to obtain $A$ out of the space
$B$ by attaching cones over spaces in the family $\mathcal{F}$ using
the family of triangles $\Delta_{\mathcal{AT}op_{\ast}}$. Given that
the weight is flat, the question is independent of filtrations and
shift functors and it reduces to the identical question in the
category of pointed spaces $\mathcal{T}op_{\ast}$. In the examples
below we will focus on this category and on
$\underline{\delta}^{\mathcal{F}}(A,\ast)$ which is one of the most
basic quantities involved.

It is useful to keep in mind that there are two more choices that are
essential in defining $\underline{\delta}^{\mathcal{F}}(A,\ast)$: the
choice of family $\mathcal{F}$ and the choice of the class of exact
triangles $\Delta_{\mathcal{T}op_{\ast}}$ - see also Remark
\ref{rem:gen-top} (a).

\begin{itemize}
\item[(i)] $\mathcal{F}=\{S^{0}, S^{1},\ldots, S^{k},\ldots\}$;
  $\Delta_{\mathcal{T}op_{\ast}}$ are the triangles
  $A\xrightarrow{u} B\to \mathrm{Cone}(u)\to SA$ as in
  (\ref{eq:extr-top}) (but omitting the action functionals).  In this
  case, $\underline{\delta}^{\mathcal{F}}(A,\ast)= k < \infty$ means
  that $A$ has the structure of a finite $CW$- complex with $k$ cells.
\item[(ii)] $\mathcal{F}=\{S^{0}, S^{1},\ldots, S^{k},\ldots\}$; we
  now take $\Delta_{\mathcal{T}op_{\ast}}$ to be the triangles
  homotopy equivalent to the triangles
  $A\xrightarrow{u} B\to \mathrm{Cone}(u)\to SA$ from
  (\ref{eq:extr-top}). In this case,
  $\underline{\delta}^{\mathcal{F}}(A,\ast)= k$ means that $A$ is
  homotopy equivalent to a $CW$-complex with $k$ cells.  This number
  is obviously a homotopy invariant. It is clearly bounded from below
  by the sum of the Betti numbers of $A$.
\item[(iii)] $\mathcal{F}$ are pointed spaces with the homotopy type
  of $CW$-complexes; $\Delta_{\mathcal{T}op_{\ast}}$ are as at
  (ii). In this case, the definition
  $\underline{\delta}^{\mathcal{F}}(A,\ast)$ coincides with that of
  the cone-length, $\mathrm{Cl}(A)$, of $A$ (for a space $A$ with the
  homotopy type of a $CW$-complex).  Cone-length is a homotopical
  invariant which is of interest because it is bigger, but not by more
  than one, than the Lusternik-Schnirelmann category \cite{Cor94} which, in turn,
  provides a lower bound for the minimal number of critical points of
  smooth functions on manifolds. Incidentally, as noted by Smale
  \cite{Sm}, a version of the Lusternik-Schnirelmann category provides
  also a measure for the complexity of algorithms, see
  \cite{Co-Lu-Op-Ta} for more on this subject.
\item[(iv)] At this point we will change the underlying category and
  place ourselves in the pointed category of finite type,
  simply-connected {\em rational} spaces $\mathcal{T}op_{1}^{\Q}$
  (see~\cite{Hal-Fel-Th}). We take $\mathcal{F}$ to consist of finite
  wedges of rational spheres of dimension at least $2$.  The triangles
  $\Delta_{\mathcal{T}op^{\Q}_{\ast}}$ are as at (ii) (in the category
  of rational spaces) but we will also allow in
  $\Delta_{\mathcal{T}op^{\Q}_{\ast}}$ ``formal'', triangles of the
  form $S^{-1}F\to \ast \to F$ where $F\in \mathcal{F}$ (de-suspending
  is not possible in our category but we still want to have for a
  rational $2$-sphere, $S^{2}_{\Q}$,
  $\underline{\delta}^{\mathcal{F}}(S^{2}_{\Q}, \ast)=1$).  In this
  setting, it turns out \cite{Co-tr} that
  $$\underline{\delta}^{\mathcal{F}}(A,\ast)=\mathrm{Cl}(A)=\mathrm{nil}(A)~.~$$ 
  Both equalities here are non-trivial, the first because in the
  definition of $\mathrm{Cl}$ we are using cones over arbitrary
  (rational) spaces while in this example $\mathcal{F}$ consists of
  only wedges of spheres. For the second equality, $\mathrm{nil}(A)$
  is the minimal order of nilpotence of the augmentation ideal
  $\overline{\mathcal{A}}$ of a rational differential graded
  commutative algebra $\mathcal{A}$ representing $A$ (recall that by a
  celebrated result of Sullivan \cite{Sul}, the homotopy category of
  rational simply connected spaces is equivalent to the homotopy
  category of rational differential graded commutative algebras, the
  representative of a given space being given by the so-called $PL$-de
  Rham complex of $A$).
\end{itemize}

\

B. One of the difficulties of extracting a triangulated persistence
category from a topological category such as those considered in this
section is very basic and has to do with the difference between stable
and unstable homotopy. In essence, recall that if $\mathcal{C}$ is a
TPC, then the $0$-level category $\mathcal{C}_{0}$ is required to be
triangulated. However, in unstable settings, homotopy categories of
spaces are not triangulated.
\begin{itemize}
\item[(i)] An instructive example is a variant of our discussion
  concerning the category $\mathcal{M}etr_{0}$. In this case the
  morphisms $\Mor_{\mathcal{M}etr_{0}}(A,B)$ carry an obvious topology
  as well as a filtration, as described in \S\ref{subsubsec:metr}. We
  now can consider a new category, $\widetilde{\mathcal{M}}etr_{0}$,
  with the same objects as $ \mathcal{M}etr_{0}$ but with morphisms
  $\Mor_{\widetilde{\mathcal{M}}etr_{0}}(A,B)=S_{\ast}(\Mor_{\mathcal{M}etr_{0}}(A,B))$
  where $S_{\ast}(-)$ stands for cubical chain complexes. These
  morphisms carry an obvious filtration obtained by applying the
  cubical chains to the filtration of
  $\Mor_{\mathcal{M}etr_{0}}(A,B)$.  The composition in this category
  is given by applying cubical chains to the composition
  $\Mor_{\mathcal{M}etr_{0}}(B,C)\times
  \Mor_{\mathcal{M}etr_{0}}(A,B)\to \Mor_{\mathcal{M}etr_{0}}(A,C)$
  and composing with the map
  $S_{\ast} (\Mor_{\mathcal{M}etr_{0}}(B,C))\otimes
  S_{\ast}(\Mor_{\mathcal{M}etr_{0}}(A,B))\to
  S_{\ast}(\Mor_{\mathcal{M}etr_{0}}(B,C)\times
  \Mor_{\mathcal{M}etr_{0}}(A,B))$ induced by taking products of
  cubes. It follows that $\widetilde{\mathcal{M}}etr_{0}$ is a
  filtered dg-category (in homological formalism).  Thus all the
  machinery in \S\ref{subsec:dg} is applicable in this case. Moreover,
  this category carries an obvious shift functor.  However,
  $[H_{0}\widetilde{\mathcal{M}}etr_{0}]_{\infty}$ is not triangulated
  and thus $\widetilde{\mathcal{M}}etr_{0}$ is not pre-triangulated
  (quite far from it).  Indeed,
  $\Mor_{[H_{0}\widetilde{\mathcal{M}}etr_{0}]_{\infty}}(A,B)$ is the
  free abelian group generated by the homotopy classes of Lipschitz
  maps from $A$ to $B$. As a result, the translation functor (which is
  in our case the topological suspension) is certainly not an
  isomorphism.

\item[(ii)] As mentioned before, at point B at the beginning of
  \S\ref{subsec:top-sp}, a way to bypass these issues is to introduce
  a sort of filtered Waldhausen category or a similar formalism and
  develop a machinery parallel to that of TPC's in this unstable
  context. The structure present in $\mathcal{AT}op_{\ast}$ and
  $\mathcal{M}etr_{0}$ suggests that such a construction is possible
  and will be relevant in these cases.

\item[(iii)] There is yet another approach to associate to each of
  $\mathcal{AT}op_{\ast}$ and $\mathcal{M}etr_{0}$ a triangulated
  persistence category that is more geometric in nature. This is based
  on moving from these categories to stable categories where the
  underlying objects are the spectra obtained by stabilizing the
  objects of the original categories and the morphisms come with an
  appropriate filtration induced from the respective structures
  (action functionals or, respectively, metrics) on the initial
  objects.  This seems likely to work and to directly produce a TPC
  but we will not pursue the details at this time.
\end{itemize}


\subsection{Filtrations in Tamarkin's category}\label{subsec:Tam}

This section is devoted to an example of a triangulated persistence
category that comes from the filtration structure present in
Tamarkin's category. This category was originally defined in
\cite{Tam08}, based on singular supports of sheaves, and was used to
prove some non-displaceability results in symplectic geometry, as well
as other more recent results related to Hamiltonian dynamics (see
\cite{GKS12}).

\subsubsection{Background on Tamarkin's category} Let $X$ be a
manifold, and let $\D(\k_X)$ be the derived category of sheaves of
$\k$-modules over $X$. In particular, this is a triangulated
category. For any $A \in {\rm Obj}(\D(\k_X))$, due to microlocal sheaf
theory, as established in \cite{KS90}, one can define the singular
support of $A$, denoted by $SS(A)$, a conical (singular) subset of
$T^*X$. \jznote{We refer to Chapter V in \cite{KS90} for the precise definition of $SS(A)$ and a detailed study of its properties.} Now, let $X = M \times \R$ where $M$ is a closed manifold, and
denote by $\tau$ the co-vector coordinate of $T^*\R$ in
$T^*(M \times \R)$. Consider the following full subcategory of
$\D(\k_{M \times \R})$, denoted by
$\D_{\{\tau \leq 0\}}(\k_{M \times \R})$, where
\[ {\rm Obj}(\D_{\{\tau \leq 0\}}(\k_{M \times \R})) = \left\{A \in {\rm Obj}(\D(\k_{M \times \R})) \, | \, SS(A) \subset \{\tau \leq 0\}\right\}. \]
If $A \to B \to C \to A[1]$ is an exact triangle in $\D(\k_{M \times \R})$, then $SS(C) \subset SS(A) \cup SS(B)$. This implies that $\D_{\{\tau \leq 0\}}(\k_{M \times \R})$ is a triangulated subcategory of $\D_{\k_{M \times \R}}$. Tamarkin's category is defined by 
\begin{equation} \label{dfn-tam-cat}
\T(M) : = \D_{\{\tau \leq 0\}}(\k_{M \times \R})^{\perp, l}
\end{equation} 
where the ${\perp, l}$ denotes the left orthogonal complement of
$\D_{\{\tau \leq 0\}}(\k_{M \times \R})$ in $\D(\k_{M \times
  \R})$. Then $\T(M)$ is also a triangulated subcategory. By
definition, note that
$\T(M) \subset \D_{\{\tau \geq 0\}}(\k_{M \times \R})$. When
$M = \{\rm pt\}$, Tamarkin's category $\T(\{{\rm pt}\})$, together
with a constructibility condition, can be identified with the category
of persistence $\k$-modules (see A.1 in \cite{Zha20}).

\begin{rem} There exists a restricted version of Tamarkin's category
  denoted by $\T_V(M)$ where $V \subset T^*M$ is a closed subset (see
  Section 3.2 in \cite{Zha20}). This restricted Tamarkin category is
  useful to prove the non-displaceability of some subsets in $T^*M$
  (see \cite{AI20}). In this paper, we will only focus on $\T(M)$.
\end{rem}

One way to understand the definition (\ref{dfn-tam-cat}) is that $\T(M)$ is an admissible subcategory (see Definition 1.8 in \cite{Orl06}) in the sense that for any object $A$ in $\D(\k_{M \times \R})$, one can always split $A$ in the form of an exact triangle 
\jznote{
\begin{equation} \label{tamarkin-split}
B \to A \to C \to B[1]
\end{equation}}
in $\D(\k_{M \times \R})$, where $B \in \T(M)$ and $C \in \D_{\{\tau \leq 0\}}(\k_{M \times \R})$. In fact, this splitting can be achieved in a rather concrete manner, which involves an important operator called sheaf convolution on objects in $\D(\k_{M \times \R})$. Explicitly, for any two objects $A, B$ in $\D({\bf k}_{M \times \R})$, the sheaf convolution of $A$ and $B$ is defined by
\begin{equation} \label{dfn-sh-conv}
A \ast B : = \delta^{-1} Rs_! (\pi_1^{-1} A \otimes \pi_2^{-1} B),
\end{equation}
where $\pi_i: (M \times \R)^2 \to M \times \R$ are the projections to each factor of $M \times \R$, $s$ keeps the $M \times M$-part the same but adds up two inputs on the $\R$-factors, and $\delta$ is the diagonal embedding from $M$ to $M \times M$. For instance, $\k_{M \times [0, \infty)} * \k_{M \times [0,\infty)} = \k_{M \times [0, \infty)}$, where $\k_{V}$ for a closed subset $V$ denotes the constant sheaf with its support in $V$. Moreover, this operator is commutative and associative. An important characterization of an object in $\T(M)$ is that (see Proposition 2.1 in \cite{Tam08}), 
\begin{equation} \label{tam-char}
A \in {\rm Obj}(\T(M)) \,\,\,\,\mbox{if and only if} \,\,\,\, A * \k_{M \times [0,\infty)} = A
\end{equation}
which implies that (i) for any object $A$ in $\D(\k_{M \times \R})$, the sheaf convolutions $B: = A * \k_{M \times [0,\infty)}$ and $C: =A * \k_{M \times (0, \infty)}[1]$ provide the desired exact triangle for a splitting of $A$ in (\ref{tamarkin-split}); (ii) sheaf convolution is a well-defined operator on $\T(M)$. 

With the help of the sheaf convolution, the $\R$-component generates a filtration structure in $\T(M)$ in the following way. For any $r \in \R$, consider the map $T_r: M \times \R \to M \times \R$ defined by $(m, a) \mapsto (m, a+r)$. One can show that for any object $A$ in $\T(M)$, the induced object $(T_r)_*A = A * \k_{M \times [r, \infty)}$ (see Lemma 3.2 in \cite{Zha20}). In fact, $\{(T_r)_*\}_{r \in \R}$ defines an $\R$-family of functors on $\T(M)$. Moreover, if $r \leq s$, then by the restriction map $\k_{M \times [r, \infty)} \to \k_{M \times [s, \infty)}$, we have a canonical morphism $\tau_{r,s}(A): (T_{r})_*A \to (T_{s})_*A$. At this point, notice that for $r \leq s$, there does not exist non-zero morphism from $\k_{M \times [s, \infty)}$ to $\k_{M \times [r, \infty)}$, so the canonical map $\tau_{r,s}$ respects the partial order $\leq$ on $\R$. For any $r \leq s$, $\tau_{r,s}$ is viewed as a natural transformation from $(T_r)_*$ to $(T_s)_*$. Finally, we call an object $A$ in $\T(M)$ a $c$-torsion element if $\tau_{0,c}(A): A \to (T_c)_*A$ is zero. For instance, when $M = \{\rm pt\}$, the constant sheaf $\k_{[a,b)} \in \T(\{\rm pt\})$ with a finite interval $[a,b)$ is a $(b-a)$-torsion. 

We will end this subsection by a discussion on the hom-set in $\T(M)$. It is more convenient to consider derived hom, that is, ${\rm RHom}_{\T(M)}(A,B)$ for any two objects $A,B$ in $\T(M)$. Lemma 3.3 in \cite{Zha20} (or (1) in Lemma 3.8 in \cite{Tam08}) provides a more explicit way to express such ${\rm RHom}$, that is, 
\begin{equation} \label{tam-hom-set}
{\rm RHom}_{\T(M)}(A,B) = {\rm RHom}_{\R}(\k_{[0, \infty)}, R\pi_*{\mathcal Hom}^*(A,B)). 
\end{equation}
By taking the cohomology at degree $0$, we obtain ${\rm Hom}_{\T(M)}(A,B)$ as a $\k$-module. Here, $\pi: M \times \R \to \R$ and ${\mathcal Hom}^*(\cdot, \cdot)$ is the right adjoint functor to the sheaf convolution (see Definition 3.1 in \cite{AI20}). The right-hand side of (\ref{tam-hom-set}) is relatively computable since they are all (complexes of) sheaves over $\R$ (cf.~A.2 in \cite{Zha20}). Moreover, by using the adjoint relation between ${\mathcal Hom}^*(\cdot, \cdot)$ and the sheaf convolution, one obtains a shifted version of (\ref{tam-hom-set}), that is,
\begin{equation} \label{tam-hom-set2}
{\rm RHom}_{\T(M)}(A,(T_r)_*B) = {\rm RHom}_{\R}(\k_{[0, \infty)}, (T_r)_*(R\pi_*{\mathcal Hom}^*(A,B))).
\end{equation}
Therefore, for any $r \leq s$, there exists a well-defined morphism 
\begin{equation} \label{tam-hom-transfer}
\iota^{A,B}_{r,s}: {\rm Hom}_{\T(M)}(A,(T_r)_*B) \to {\rm Hom}_{\T(M)}(A,(T_s)_*B),
\end{equation}
which is induced by the morphism $\tau_{r,s}(R\pi_*{\mathcal Hom}^*(A,B))$. Finally, we have a canonical isomorphism, 
\begin{equation} \label{tam-equiv}
T_r: {\rm Hom}_{\T(M)}((T_r)_*A, (T_r)_*B) \simeq {\rm Hom}_{\T(M)}(A,B)
\end{equation}
which is induced by the sheaf convolution with $\k_{M \times [r, \infty)}$. In particular, $T_r$ commutes with the morphism $\iota^{A,B}_{r,s}$ defined in (\ref{tam-hom-transfer}).

\subsubsection{Persistence category from Tamarkin's shift functors}
We have seen before that 
 Tamarkin's category is endowed with a shift functor.   We now discuss the persistence structure induced by this shift functor - see Remark \ref{rem:shifts-T} (d).

\begin{dfn} \label{dfn-P(M)} Given the category $\T(M)$ as before, define an enriched category denoted by ${\mathcal P}(M)$ as follows. The object set of ${\rm Obj}({\mathcal P}(M))$ is the same as ${\rm Obj}(\T(M))$, and the hom-set is defined by
\[ {\rm Hom}_{{\mathcal P}(M)}(A,B) = \left\{\{{\rm Hom}_{\T(M)}(A,(T_{r})_*B)\}_{r \in \R}, \{\iota^{A,B}_{r,s}\}_{r\leq s \in \R}\right\} \]
for any two objects $A,B$ in ${\mathcal P}(M)$, where $\iota^{A,B}_{r,s}$ is the morphism defined in (\ref{tam-hom-transfer}). \end{dfn}

\begin{remark} \jznote{Definition \ref{dfn-P(M)} can be regarded as a generalization of (\ref{per-mor}) in \S\ref{ssec-pm} since when $M = \{\rm pt\}$, Tamarkin's category $\mathcal T(\{\rm pt\})$ can be identified with the category of persistence $\k$-modules.} Also, Definition \ref{dfn-P(M)} fits with geometric examples. Indeed, recall a concrete computation of ${\rm Hom}_{\T(M)}(A,B)$ when both $A$ and $B$ are sheaves coming from generating functions on $M$ (see Section 3.9 in 
\cite{Zha20}). In this case, \jznote{${\rm{Hom}}_{{\mathcal P}(M)}(A,B)$} can be identified to a (Morse) persistence $\k$-module in the classical sense. \end{remark}

\begin{lemma} \label{lem-tamarkin-pc} The category ${\mathcal P}(M)$  from Definition \ref{dfn-P(M)},  is a persistence category. \end{lemma}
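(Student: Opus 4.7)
My plan is to verify directly the two conditions of Definition \ref{dfn-pc}. For condition (i), I will set $E_{A,B}(r) := {\rm Hom}_{\T(M)}(A, (T_r)_*B)$ on objects and $E_{A,B}(i_{r,s}) := \iota^{A,B}_{r,s}$ on morphisms, and check the functoriality identities $\iota^{A,B}_{r,r} = {\rm id}$ and $\iota^{A,B}_{s,t} \circ \iota^{A,B}_{r,s} = \iota^{A,B}_{r,t}$. These follow from the corresponding properties of the natural transformations $\tau_{r,s}$, since the $\tau$'s arise from the restriction morphisms $\k_{M \times [r, \infty)} \to \k_{M \times [s, \infty)}$, which compose transitively and reduce to the identity when $r=s$; functoriality of $R\pi_*\mathcal{Hom}^*(A,-)$ then transports this to the $\iota^{A,B}_{r,s}$ via the formula (\ref{tam-hom-set2}).

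For condition (ii), I will define composition in $\mathcal{P}(M)$ as follows. Given $f \in \Mor^r_{\mathcal{P}(M)}(A,B)$ and $g \in \Mor^s_{\mathcal{P}(M)}(B,C)$, viewed respectively as morphisms $f\colon A \to (T_r)_*B$ and $g\colon B \to (T_s)_*C$ in $\T(M)$, set
\[ g \circ_{(r,s)} f \;:=\; (T_r)_*(g) \circ f \colon A \longrightarrow (T_r)_*(T_s)_*C = (T_{r+s})_*C, \]
using the identity $(T_r)_* \circ (T_s)_* = (T_{r+s})_*$ coming from $T_r \circ T_s = T_{r+s}$. This produces a morphism of shift $r+s$, as required. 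Associativity and the fact that $\mathrm{id}_A\in \Mor^0$ are immediate from the functoriality of $(T_r)_*$.

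The heart of the proof is verifying the naturality square (\ref{comp-nt}), which in our setting amounts to the identity
\[ \iota^{A,C}_{r+s,\, r'+s'}\bigl((T_r)_* g \circ f\bigr) \;=\; (T_{r'})_*\bigl(\iota^{B,C}_{s,s'}(g)\bigr)\circ \iota^{A,B}_{r,r'}(f) \]
for all $r \leq r'$ and $s \leq s'$. I plan to unfold both sides using the description $\iota^{-,-}_{a,b}(h) = \tau_{a,b}(-) \circ h$ and then reduce the equality to four essentially formal inputs: (a) the naturality of $\tau_{r,r'}$ applied to $g\colon B\to (T_s)_*C$, which gives $(T_{r'})_* g \circ \tau_{r,r'}(B) = \tau_{r,r'}((T_s)_*C) \circ (T_r)_* g$; (b) the identity $\tau_{r,r'}((T_s)_*C) = \tau_{r+s,\,r'+s}(C)$, reflecting the fact that $(T_s)_*$ simply shifts the $\R$-parameter and so commutes with the restriction maps defining $\tau$; (c) the analogous identity $(T_{r'})_*\tau_{s,s'}(C) = \tau_{r'+s,\,r'+s'}(C)$; and (d) transitivity $\tau_{b,c}\circ \tau_{a,b} = \tau_{a,c}$. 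Chaining these four identities collapses the left-hand side onto the right-hand side.

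The main obstacle I foresee is bookkeeping rather than anything substantive: the natural transformations $\tau$ must be tracked between several shifted functors, with the verifications taking place in the derived category via the formula (\ref{tam-hom-set}) involving $R\pi_*\mathcal{Hom}^*$. I therefore plan to preface the diagram chase with a short preliminary step collecting the four identities (a)--(d), each of which follows directly from interpreting $(T_r)_*$ as convolution with $\k_{M \times [r, \infty)}$ and $\tau_{r,r'}$ as the natural transformation induced by the canonical restriction $\k_{M \times [r, \infty)} \to \k_{M \times [r', \infty)}$; then the naturality of composition will follow by a concise chain of substitutions.
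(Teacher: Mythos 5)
Your proposal is correct and takes essentially the same route as the paper: you define $E_{A,B}(r) = {\rm Hom}_{\T(M)}(A, (T_r)_*B)$ with structure maps $\iota^{A,B}_{r,s}$, take the composition to be $(f,g) \mapsto (T_r)_*(g) \circ f$ (the paper writes this as $T_r(g)\circ f$, using the isomorphism~(\ref{tam-equiv})), and check the naturality square~(\ref{comp-nt}). The paper simply \emph{asserts} the resulting identity $(T_{r'})_*(\iota^{B,C}_{s,s'}(g)) \circ \iota^{A,B}_{r,r'}(f) = \iota^{A,C}_{r+s,r'+s'}((T_r)_*g \circ f)$ and stops, whereas you supply the verification by decomposing it into the four formal identities (a)--(d) (naturality of $\tau_{r,r'}$, compatibility of $\tau$ with pre- and post-composition by $(T_s)_*$, and transitivity); chaining those does indeed close the diagram, so your additional step is a valid and welcome elaboration of a detail the paper leaves to the reader, not a deviation in method.
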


\begin{proof} Consider the functor $E_{A,B}: (\R, \leq) \to {\rm Vect}_{\k}$ by 
\[ E_{A,B}(r) (= {\rm Mor}^r(A,B)) =  {\rm Hom}_{\T(M)}(A,(T_{r})_*B), \]
and for the morphism $i_{r,s}$ when $r \leq s$, $E_{A,B}(i_{r,s}) = \iota^{A,B}_{r,s}$. Notice that the composition $E_{A,B}(r) \times E_{B,C}(s) \to E_{A,C}(r+s)$ is well-defined due to (\ref{tam-equiv}). Indeed, for any $f \in E_{A,B}(r)$ and $g \in {E_{B,C}(s)}$, the composition is defined by 
\[ (f, g) \mapsto T_r(g) \circ f \in {\rm Hom}_{\T(M)}(A, (T_{r+s})_*C). \]
Then for any $r \leq r'$, $s\leq s'$, we have 
\[ \left(T_{r'}(\iota^{B,C}_{s,s'}(g))\right) \circ \iota^{A,B}_{r,r'}(f) = \iota^{A,C}_{r+s, r'+s'}(T_r(g) \circ f) \]
which completes the proof that ${\mathcal P}(M)$ is a persistence category. 
\end{proof}


We now list  some of the properties of the persistence category ${\mathcal P}(M)$. 
\begin{itemize} 
\item[(a)] The $0$-level category  ${\mathcal P}(M)_0$ has the same objects as ${\mathcal P}(M)$, but 
\[ {\rm Hom}_{{\mathcal P}(M)_0}(A,B) = {\rm Hom}_{\T(M)}(A, B). \]
We use the fact that $(T_0)_*= \mathds{1}$. Thus, ${\mathcal P}(M)_0 = \T(M)$. This category is triangulated as we have seen above.
\item[(b)] The $\infty$-level, ${\mathcal P}(M)_{\infty}$, has the same objects as ${\mathcal P}(M)$, but 
\[ {\rm Hom}_{{\mathcal P}(M)_{\infty}}(A,B) = \varinjlim_{r \to \infty} {\rm Hom}_{\T(M)}(A,(T_{r})_*B) \]
where the direct limit is taken via the map $\iota^{A,B}_{r,s}$. This limit category has been considered in (81) in \cite{GS14}.
\item[(c)] On ${\mathcal P}(M)$, each $(T_r)_*$ is a persistence functor for any $r \in \R$, i.e., $(T_r)_* \in \mathcal P({\rm End}({\mathcal P}(M)))$, since $T_r$ commutes with $\tau_{r,s}$. 
\item[(d)] There exists a natural shift functor on ${\mathcal P}(M)$. Define $\Sigma: (\R, +) \to \mathcal P({\rm End}({\mathcal P}(M)))$ by $\Sigma(r) (= \Sigma^r) = (T_{-r})_*$. For any $r, s \in \R$ and $\eta_{r,s} \in {\rm Mor}_{\R}(r,s)$, define 
\[ \Sigma(\eta_{r,s}) := \mathds{1}_{(T_{-r})_* \cdot}. \]
Then, for any object $A$ in $\mathcal P(M)$, 
\begin{align*}
\Sigma(\eta_{r,s})_A = \mathds{1}_{(T_{-r})_*A} & \in {\rm Hom}_{\T(M)}((T_{-r})_*A, (T_{-r})_*A) \\
& = {\rm Hom}_{\T(M)}((T_{-r})_* A, (T_{s-r})_*((T_{-s})_*A))\\
& = E_{(T_{-r})_* A, (T_{-s})_*A)}(s-r) = {\rm Mor}^{s-r}((T_{-r})_*A, (T_{-s})_*A).
\end{align*}
In other words, $\Sigma(\eta_{r,s})_A$ is a natural transformation of shift $s-r$. In particular, the morphism $\eta^A_r = i_{-r,0}((\eta_{r,0})_A) \in {\rm Mor}^0(\Sigma^rA, A)$ is well-defined for any $r \geq 0$. It is easy to check that $\eta^A_r = (\tau_{-r,0})(A)$. 
\item[(e)] The $r$-acyclic objects in ${\mathcal P}(M)$ are precisely the $r$-torsion elements in $\T(M)$. Indeed, by definition, an object $A$ in ${\mathcal P}(M)$ is $r$-acyclic if and only if $\eta^A_{r} = \tau_{-r,0}(A): (T_{-r})_*A \to A$ is the zero morphism, which coincides with the definition of an $r$-torsion element under the isomorphism (\ref{tam-equiv}). 
\item[(f)] Recall that for each $r$, ${\rm Mor}^r(-, X) = {\rm Hom}_{\T(M)}(-, (T_r)_*X)$. This is an exact functor due to (\ref{tam-hom-set2}) on ${\mathcal P}(M)_0 = \T(M)$.  Similarly, ${\rm Mor}^r(X, -)$ is also an exact functor on ${\mathcal P}(M)_0 = \T(M)$.
\end{itemize}

\begin{lemma} \label{lemma-tor} For any $r \geq 0$ and any object $A$ in ${\mathcal P}(M)$, the morphism $\eta^A_r: (T_{-r})_*A \to A$ embeds into the following exact triangle 
\begin{equation} \label{ext-tor}
(T_{-r})_*A \xrightarrow{\eta^A_r} A \to K \to A 
\end{equation}
in $\T(M) = {\mathcal P}(M)_0$, where $K$ is $r$-acyclic. \end{lemma}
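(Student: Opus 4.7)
The plan is to construct $K$ explicitly using the sheaf convolution structure on $\T(M)$. By property (d) in the list preceding the lemma, $\eta^A_r = \tau_{-r,0}(A)$, and since $(T_s)_*A = A * \k_{M \times [s, \infty)}$, this morphism is obtained from $\mathds 1_A$ by convolving with the natural restriction $\rho: \k_{M \times [-r, \infty)} \to \k_{M \times [0, \infty)}$. So the first step is to identify the cone of $\rho$ in $\mathcal D(\k_{M \times \R})$.

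The inclusion $[0, \infty) \subset [-r, \infty)$ of closed subsets of $\R$ yields a short exact sequence of sheaves
\begin{equation*}
0 \to \k_{M \times [-r, 0)} \to \k_{M \times [-r, \infty)} \xrightarrow{\rho} \k_{M \times [0, \infty)} \to 0,
\end{equation*}
from which the cone of $\rho$ in $\mathcal D(\k_{M \times \R})$ is $\k_{M \times [-r, 0)}[1]$. Since the sheaf convolution $A \ast (-)$ is exact, applying it to the resulting distinguished triangle produces
\[
(T_{-r})_*A \xrightarrow{\eta^A_r} A \to K \to [1](T_{-r})_*A, \qquad K := A * \k_{M \times [-r, 0)}[1].
\]

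Next I would verify that $K$ in fact lies in $\T(M) = \mathcal P(M)_0$ by checking the characterization (\ref{tam-char}). Convolving the short exact sequence above with $\k_{M \times [0, \infty)}$ and using $\k_{M \times [s, \infty)} * \k_{M \times [0, \infty)} = \k_{M \times [s, \infty)}$ shows that $\k_{M \times [-r, 0)} * \k_{M \times [0, \infty)} = \k_{M \times [-r, 0)}$, and hence $K * \k_{M \times [0, \infty)} = K$. Combined with the triangle above, this places the distinguished triangle inside $\T(M)$ as required.

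The final, and conceptually main, step is to prove that $K$ is $r$-acyclic. By the naturality of the transformation $\tau_{0,r}: \mathds{1} \to (T_r)_*$ and the exactness of $A \ast (-)$, we have $\tau_{0,r}(K) = \mathds 1_A \ast \tau_{0,r}(\k_{M \times [-r, 0)}[1])$, so it suffices to show that $\k_{M \times [-r, 0)}$ is $r$-torsion. This is the example recalled in the paragraph preceding the lemma (any finite bar $\k_{[a,b)}$ is $(b{-}a)$-torsion); alternatively, it can be verified directly by computing $\k_{M \times [-r, 0)} * \k_{M \times [r, \infty)} = \k_{M \times [0, r)}$ (via the same argument as above, applied with $\k_{M \times [r, \infty)}$) and observing that any morphism in $\mathcal D(\k_{M \times \R})$ between sheaves supported on the disjoint sets $M \times [-r, 0)$ and $M \times [0, r)$ must vanish. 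The only real subtlety in carrying out the plan is to keep the direction of the natural transformations $\tau_{s,t}$ and the signs of the shifts $\Sigma^r = (T_{-r})_*$ consistent throughout.
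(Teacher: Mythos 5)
Your proof is correct, but it takes a genuinely different route from the paper's. The paper simply invokes that $\T(M)$ is triangulated to produce the cone $K$ and then verifies $r$-torsion of $K$ by citing a criterion of Guillermou--Schapira (Lemma~5.3(ii) in~\cite{GS14}): one exhibits a diagonal arrow $\alpha$ making a certain square in~$\T(M)$ commute, and $\alpha = \mathds{1}_A$ does the job. Your proof instead constructs $K$ explicitly: you identify $\eta_r^A = \mathds{1}_A \ast \rho$ where $\rho: \k_{M\times[-r,\infty)} \to \k_{M\times[0,\infty)}$ is the restriction, compute $\mathrm{Cone}(\rho) = \k_{M\times[-r,0)}[1]$ from the short exact sequence of sheaves, and conclude $K = A * \k_{M\times[-r,0)}[1]$ by exactness of convolution; then you prove $\tau_{0,r}(K)=0$ by showing $\tau_{0,r}$ on the convolution factor is a map $\k_{M\times[-r,0)}[1] \to \k_{M\times[0,r)}[1]$ between sheaves with disjoint supports, which forces it to vanish in degree~$0$. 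Both arguments are sound; yours is more hands-on and self-contained (it avoids the external torsion criterion) at the cost of the explicit sheaf computation, and it has the side benefit of identifying $K$ concretely. Two small remarks: the step checking $K \in \T(M)$ via~\eqref{tam-char} is actually superfluous, since $\T(M)$ is a triangulated subcategory of $\mathcal D(\k_{M\times\R})$ and the other two vertices of the triangle already lie in $\T(M)$; and when asserting that $\k_{M\times[-r,0)} * \k_{M\times[0,\infty)} \cong \k_{M\times[-r,0)}$ via comparison of triangles, one should note the isomorphism is only non-canonical unless one tracks that the relevant structure map is the one induced --- but since you only use this for the superfluous step, it does not affect the argument.
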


\begin{proof} Since $\T(M)$ is a triangulated category, the morphism $\eta^A_r$ embeds into an exact triangle as (\ref{ext-tor}). By item (e) above, we need to show that $K$ is an $r$-torsion element. By (ii) in Lemma 5.3 in \cite{GS14} which provides a criterion to test an object in an exact triangle to be a torsion element, it suffices to verify that the following diagram is commutative, 
\[ \xymatrixcolsep{4pc} \xymatrix{
(T_{-r})_*A \ar[r]^-{\eta^A_r} \ar[d]_-{\tau_{0,r}((T_{-r})_*A)} & A \ar[d]^-{\tau_{0,r}(A)} \ar[ld]_-{\alpha} \\
A \ar[r]_-{T_r(\eta^A_r)} & (T_r)_*A} \]
for some morphism $\alpha$. Indeed, this is commutative by choosing $\alpha = \mathds{1}_A$ together with the functorial properties of $T_{-r}$. 
\end{proof}

\begin{remark} By the definition of an $r$-isomorphism defined earlier, Lemma \ref{lemma-tor} implies that the morphism $\eta^A_r \in {\rm Mor}^0((T_r)_*A, A)$ is an $r$-isomorphism. On the other hand, Section 3.10 in \cite{Zha20} defines an interleaving relation between two objects in $\T(M)$, which is similar to $r$-isomorphism defined in the sense that $A$ and $(T_r)_*A$ are $r$-interleaved. \end{remark}

\begin{ex} Let $M = \{{\rm pt}\}$ and consider $\T(\{\rm pt\})$. For $A = \k_{[0, \infty)}$, we know that $(T_{-r})_*A = \k_{[-r, \infty)}$ for any $r \geq 0$. Then we have an exact triangle in $\T(\{\rm pt\})$,
\[ \k_{[-r, \infty)} \xrightarrow{\tau_{-r,0}(A)} \k_{[0, \infty)} \to \k_{[-r,0)}[1] \to \k_{[-r, \infty)}[1] \]
where as we have seen that $\k_{[-r,0)}[1]$ is an $r$-torsion element (so $r$-acyclic). Here, by definition, $\tau_{-r,0}(A)$ is the restriction map from $\k_{[-r, \infty)}$ to $\k_{[0, \infty)}$, and the exact triangle is from (2.6.33) in \cite{KS90}. \end{ex}

\jznote{Assembling the properties at the points (a) and (d) in the list
above together with Lemma \ref{lem-tamarkin-pc} and Lemma \ref{lemma-tor}, we deduce the consequence of
main interest to us.}

\begin{cor} \label{thm-tam-tpc} The category ${\mathcal P}(M)$, as
  defined above, is a triangulated persistence category.
\end{cor}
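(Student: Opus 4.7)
The plan is to assemble the three axioms of Definition \ref{dfn-tpc} directly from the structural facts already recorded in items (a)--(f) and in Lemmas \ref{lem-tamarkin-pc} and \ref{lemma-tor}. First, Lemma \ref{lem-tamarkin-pc} shows that $\mathcal{P}(M)$ is a persistence category and item (d) constructs the shift functor $\Sigma : (\R,+) \to \mathcal{P}\mathrm{End}(\mathcal{P}(M))$ with $\Sigma^r = (T_{-r})_*$, together with natural transformations $\eta_{r,s}$ of the correct shift. Item (d) also verifies that $\eta^A_r = \tau_{-r,0}(A) \in \Mor^0(\Sigma^r A, A)$, which is the object appearing in axiom (iii).

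For axiom (i) of Definition \ref{dfn-tpc}, item (a) identifies $\mathcal{P}(M)_0$ with Tamarkin's category $\T(M)$, which is triangulated with translation given by the ordinary shift $[1]$ inherited from $\D(\k_{M\times\R})$. The compatibility of the $\Mor^r$ splittings with direct sums follows from the fact that $\Hom_{\T(M)}(A, (T_r)_*(-))$ commutes with finite direct sums in the second argument, and the persistence structure maps $\iota^{A,B}_{r,s}$ are induced by a morphism of sheaves, so they respect the additive decomposition.

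For axiom (ii), I would argue that each $\Sigma^r = (T_{-r})_*$ is a triangulated endofunctor of $\T(M)$. Since $T_{-r}: M \times \R \to M \times \R$ is a diffeomorphism, its pushforward is an exact autoequivalence of $\D(\k_{M\times\R})$ commuting with $[1]$; moreover $(T_{-r})_*$ preserves the singular support condition $\{\tau\leq 0\}^\perp$ defining $\T(M)$ because $T_{-r}$ acts trivially on the $\tau$-coordinate, so $\Sigma^r$ restricts to an exact endofunctor of $\T(M)$. The required compatibility of the natural transformations $\eta_{r,s}$ with the additive structure likewise reduces to the functoriality of $\tau_{r,s}$. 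Finally, axiom (iii) is exactly the content of Lemma \ref{lemma-tor} once one recalls from item (e) that $r$-acyclic in $\mathcal{P}(M)$ is the same as $r$-torsion in $\T(M)$.

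The only step that carries any real content is axiom (ii), and the main (minor) subtlety there is checking that $(T_{-r})_*$ preserves $\T(M) \subset \D(\k_{M \times \R})$ and commutes with the triangulated structure; everything else is a direct bookkeeping exercise using the enrichment~\eqref{tam-hom-set2} and the identities in items (a)--(f).
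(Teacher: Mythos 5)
Your proposal is correct and follows essentially the same route as the paper: the paper's own proof is a one-liner that assembles properties (a) and (d) from the preceding list together with Lemma \ref{lem-tamarkin-pc} and Lemma \ref{lemma-tor}. You fill in slightly more detail than the paper does, notably the explicit check for axiom (ii) that $(T_{-r})_*$ preserves $\T(M)$ and is exact (the paper leaves this implicit, treating it as a consequence of $(T_{-r})_*$ being a pushforward by a diffeomorphism acting trivially on the $\tau$-coordinate), and the compatibility of the splittings with direct sums; these are exactly the routine verifications the paper is silently relying on.
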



\subsection{Symplectic topology} \label{subsec-symp}

The constructions discussed earlier in this paper are inspired by
symplectic rigidity phenomena, in particular the work on Lagrangian
cobordism in \cite{Bi-Co:cob1, Bi-Co:fuk-cob}
and~\cite{Bi-Co-Sh:LagrSh}.  We discuss here one possible
implementation of the TPC machinery in the symplectic setting.

We will focus on a symplectic manifold $(M,\omega)$ that is exact with
$\omega=d\lambda$. We will work with pairs $L=(\bar{L}, f_{L})$ such
that $\bar{L}$ is an exact Lagrangian submanifold $\bar{L} \subset M$,
and $f_{L}:\bar{L}\to \R$ is a primitive of $\lambda|_{\bar{L}}$,
i.e.~$df_{L}=\lambda|_{\bar{L}}$.  We will denote by
$\mathcal{L}ag^{\ast}(M)$ a class of such Lagrangian submanifolds of
$M$ with fixed primitives. Unless otherwise specified, the Lagrangians
are assumed embedded.  We write \jznote{$L\equiv_{\mathcal L} L'$} if the underlying
Lagrangians of $L$ and $L'$ coincide and only the respective
primitives \pbnote{possibly} differ. The set $\bar{\mathcal{L}}$ is
the family of exact Lagrangians in $\mathcal{L}ag^{\ast}(M)$ without
reference to the choice of primitive. In other words it is the
quotient of $\mathcal{L}ag^{\ast}(M)$ by the equivalence relation
\jznote{$\equiv_{\mathcal L}$}.  We will only outline the structures relevant in this
setting and, to simplify matters, we will work \pbnote{with Floer
  theory with $\Z_{2}$-coefficients}. Moreover, the various
homological invariants that will appear below will not be graded.  For
instance, a chain complex is simply a $\Z_{2}$-vector space $V$
together with a linear map $\partial:V\to V$ with $\partial^{2}=0$.

\subsubsection{Fukaya categories}
All our constructions and notation will follow
closely~\cite[\S3]{Bi-Co-Sh:LagrSh}, itself following Seidel's
book~\cite{Seidel}. \pbnote{(But unlike~\cite{Bi-Co-Sh:LagrSh} here we
  work over $\Z_{2}$ and not over the Novikov ring. This is possible
  due to our exactness assumptions.)} In our context, the relevant
Fukaya category $\fuk^{\ast}(M)$ is an $A_{\infty}$-category
associated to the class $\mathcal{L}ag^{\ast}(M)$ in roughly the
following way.  The objects of $\fuk^{\ast}(M)$ are the Lagrangians in
$\mathcal{L}ag^{\ast}(M)$.  The morphisms
\pbnote{$\Hom_{\fuk^{\ast}(M)}(L,L')$} are defined by fixing some
additional so-called Floer data $\mathcal{D}_{L,L'}$.  This is a
couple $\mathcal{D}_{L,L'}=(H_{L,L'}, J_{L,L'})$ where
$H_{L,L'}:[0,1]\times M\to \R$ is a Hamiltonian and
\pbnote{$J_{L,L'} = \{J^t_{L,L'}\}_{t \in [0.1]}$ is a time-dependent}
almost complex structure on $M$ compatible with $\omega$.  Assuming
$\mathcal{D}_{L,L'}$ generic, one may define the Floer chain complex
$CF(L,L';\mathcal{D}_{L,L'})$. \pbnote{As a $\Z_{2}$-vector space this
  chain complex is generated by the Hamiltonian orbits of $H_{L,L'}$
  going from $L$ to $L'$, namely orbits $x:[0,1]\to M$ of the
  time-dependent Hamiltonian vector field $X_t^{H_{L,L'}}$, associated
  to $H_{L,L'}$, with $x(0) \in L$, $x(1) \in L'$. The differential
  $\partial$ of this complex counts (non-stationary) Floer
  trajectories connecting pairs of Hamiltonian orbits, in the sense
  that}
\begin{equation} \label{eq:floer-diff}
  \langle \partial x,y \rangle =
  \#_{2} \mathcal{M}^*(x,y : \mathcal{D}_{L,L'}),
\end{equation}
\pbnote{where the space
  $\mathcal{M}^*(x,y : \mathcal{D}_{L,L'}) = \mathcal{M}(x,y :
  \mathcal{D}_{L,L'}) /\mathbb{R}$ consists of solutions of the Floer
  equation going from $x$ to $y$, up to translation. Specifically, let
  $\mathcal{M}(x,y : \mathcal{D}_{L,L'})$ be the space of
  non-stationary (in case $x=y$) solutions $u:\R\times [0,1]\to M$ of
  the Floer equation} \pbnote{
  $$\frac{\partial u}{\partial s} + J^t_{L,L'} \frac{\partial u}{\partial t}
  - J^t_{L,L'} X_t^{H_{L,L'}}(u)=0,$$} \pbnote{with
  $u(\R\times \{0\})\subset L$, $u(\R\times\{1\})\subset L'$ and
  $\lim_{s\to -\infty}u(s,t)=x(t)$, $\lim_{s\to +\infty}u(s,t)=y(t)$.
  The relevant space for~\eqref{eq:floer-diff} is
  $\mathcal{M}^{\ast}(x,y) = \mathcal{M}(x,y) / \mathbb{R}$, where
  $\mathbb{R}$ acts on $u \in \mathcal{M}^{\ast}(x,y)$ by translation
  along the $s$-variable. Of course, only the $0$-dimensional
  components of $\mathcal{M}^{\ast}(x,y)$ are counted
  in~\eqref{eq:floer-diff} and the counting itself is done modulo
  $2$.}

In case $L$ and $L'$ intersect transversely, then $H_{L,L'}$ may be
taken to be $0$ and the generators of $CF(L,L';\mathcal{D}_{L,L'})$
are \pbnote{the constant orbits at the} intersection points
$L\cap L'$.  We put:
\pbnote{$$\Hom_{\fuk^{\ast}(M)}(L,L')=CF(L,L';\mathcal{D}_{L,L'})$$}
and the first $A_{\infty}$-operation $\mu_{1}$ is the differential of
this complex.  The higher operations $\mu_{n}$ (in homological
notation) are :
$$\mu_{n}:CF(L_{1},L_{2})\otimes CF(L_{2},L_{3})\otimes\ldots
\otimes CF(L_{n},L_{n+1})\to CF(L_{1},L_{n+1})$$ and they are defined
by making use of perturbation data
$\mathcal{D}_{L_{1},\ldots, L_{n+1}}$ that is used to define perturbed
$J$-holomorphic polygons
$$u: D^{2}\backslash\{x_{0}, x_{1},\ldots, x_{n}\}\to M$$
with the $x_{i}$'s in \pbnote{clockwise} order around the circle, the
(open) arc $C_{i}$, $0< i\leq n$, of ends $x_{i-1}x_{i}$, has the
property that $u(C_{i})\subset L_{i}$, the arc $C_{n+1}$ from $x_{n}$
to $x_{0}$ satisfies $u(C_{n+1})\subset L_{n+1}$; the puncture
$x_{i}$, $i\geq 1$, is sent asymptotically to a generator
$\in CF(L_{i},L_{i+1})$, and $x_{0}$ is sent to a generator of
$CF(L_{1},L_{n+1})$.  Moreover, $u$ satisfies an equation of perturbed
Cauchy-Riemann type, with the perturbation prescribed by
$\mathcal{D}_{L_{1},\ldots, L_{n+1}}$.  There is also a system of
strip-like ends around each puncture $x_{i}$ and on this region the
perturbation data coincides with the Floer data
$\mathcal{D}_{L_{i},L_{i+1}}$ when $i\geq 1$ and, respectively, with
$\mathcal{D}_{L_{0},L_{n+1}}$ for $x_{0}$.  While somewhat complex,
this construction is standard today. There are choices of
perturbations such that, when only considering moduli spaces of
dimension $0$, the operations $\mu_{n}$ are well-defined and they
satisfy the $A_{\infty}$-relations. Obviously, the resulting
$A_{\infty}$-category, $\fuk^{\ast}(M)$, depends on the choice of
perturbation data but any two choices yield quasi-equivalent
categories.
 
There is a category of (left) $A_{\infty}$-modules,
$mod(\fuk^{\ast}(M))$, over $\fuk^{\ast}(M)$. This is a dg-category
and is pre-triangulated (in the sense of the current paper) or,
equivalently, $A_{\infty}$-triangulated in the sense of Seidel's
book~\cite{Seidel}. There is a so-called Yoneda embedding
$$\mathcal{Y}:\fuk^{\ast}(M)\to mod(\fuk^{\ast}(M))$$ which 
associates to each object the Yoneda module of $L$, \jznote{denoted by $\mathcal{Y}_{L}$},
such that $\mathcal{Y}_{L}(L')=CF(L',L;\mathcal{D}_{L',L})$.  We
consider the triangulated closure
$\mathrm{Im}(\mathcal{Y})^{\nabla}\subset mod(\fuk^{\ast}(M))$ of the
image of $\fuk^{\ast}(M)$ through the Yoneda functor inside
$mod(\fuk^{\ast}(M))$.  Finally, the derived Fukaya category,
$D\fuk^{\ast}(M)$, is the homological category
$D\fuk^{\ast}(M)= H(\mathrm{Im}(\mathcal{Y})^{\nabla})$. As a result
of this construction, $D\fuk^{\ast}(M)$ is triangulated. Its objects
are iterated cones of Lagrangians from the class
$\mathcal{L}ag^{\ast}(M)$.
  
\subsubsection{Action filtrations and weakly filtered
  $A_{\infty}$-structures} \label{subsubsec:act-filtr} A key property
of the setting outlined above is that the complexes
$CF(L,L';\mathcal{D}_{L,L'})$ admit a filtration function, generally
called an action functional, defined on a generator
$x\in CF(L,L';\mathcal{D}_{L,L'})$, by
\pbnote{$$\mathcal{A}(x)=f_{L'}(x(1))-f_{L}(x(0))+\int_{0}^{1}H_{L,L'}(t,x(t))dt
  - \int_0^1 \lambda(\dot{x}(t))dt,$$} where
$\mathcal{D}_{L,L'}=(H_{L,L'}, J_{L,L'})$ and $x:[0,1]\to M$ is an
orbit of the Hamiltonian flow of $H_{L,L'}$ with $x(0)\in L$,
$x(1)\in L'$ and $f_{L}$, $f_{L'}$ are the fixed primitives associated
respectively to $L$ and to $L'$. This expression is particularly
simple if $L$ and $L'$ are intersecting transversely,
$H_{L,L'}\equiv 0$ and $x\in L \cap L'$. In this case,
$\mathcal{A}(x)=f_{L'}(x)-f_{L}(x)$.
  
It is a central property of Floer theory that the Floer differential
\pbnote{decreases} the action.  On the other hand, things are more
delicate concerning the higher $A_{\infty}$-operations. It is shown
in~\cite{Bi-Co-Sh:LagrSh} that there is a notion of {\em weakly
  filtered} $A_{\infty}$-categories and modules and that
$\fuk^{\ast}(M)$ together with $mod(\fuk^{\ast}(M))$ can be
constructed (with some care in the choice of the perturbation data)
\pbnote{in such a way that they become weakly filtered}. We will
denote by $\widehat{\fuk}^{\ast}(M)$ the weakly filtered Fukaya
category constructed in \cite{Bi-Co-Sh:LagrSh}. The weakly-filtered
modules over this category will be denoted by
$mod(\widehat{\fuk}^{\ast}(M))$.  There is an obvious forgetful
functor $\Theta : \widehat{\fuk}^{\ast}(M) \to \fuk^{\ast}(M)$ and
similarly for the modules. The weakly filtered property differs from
the stronger, {\em filtered}, property by certain discrepancies. For
instance, the morphisms spaces $CF(L, L')$ are filtered with a
filtration denoted by $C^{\leq r}(L, L')$ but the multiplication
$\mu_{2}:C(L_{1},L_{2})\otimes C(L_{2}, L_{3})\to CF(L_{1}, L_{3})$
does not exactly respects the filtration - as required by condition
(ii) in Definition \ref{dfn-fdg-cat} - rather it might increase the
filtration by a discrepancy $\epsilon_{2}$ in the sense that it takes
$CF^{\leq r}(L_{1},L_{2})\otimes CF^{\leq s}(L_{2},L_{3})$ to
$CF^{\leq r+s+\epsilon_{2}}(L_{1},L_{3})$.  There are similar
discrepancies $\epsilon_{n}$ appearing for all the higher operations
$\mu_{n}$.  These discrepancies depend on all the choices of
perturbation data, see \S 2.1 in \cite{Bi-Co-Sh:LagrSh}.  Technically,
the $\epsilon_{n}$ appear in the construction because of the
Hamiltonian perturbation data, inside the polygons that define the
operations $\mu_{n}$, for $n\geq 2$. Obviously, the various algebraic
structures are filtered if all the discrepancies vanish.

The category of weakly filtered modules is also pre-triangulated in an
appropriate sense and we could develop all the machinery in \S
\ref{subsec:dg} in this setting. Passing to the homological category,
this leads to a setting a bit more general than that of TPC's but yet
again weighted triangles and the associated fragmentation
pseudo-metrics make perfect sense.  However, we prefer to avoid these
complications here and focus on a situation that is more restrictive
geometrically \pbnote{but fits precisely with the algebraic framework
  developed in the rest of the paper. The considerations below are not
  fully rigorous and should be taken as an outline of a construction
  that is expected to work.}

\

To this aim, we consider a family of Lagrangians
$\mathcal{X}\subset \mathcal{L}ag^{\ast}(M)$ with the property that
any two objects in $\mathcal{X}$ are either transverse or they
coincide geometrically (hence only \pbnote{possibly} differ by the
choice of primitive). We also require that $\mathcal{X}$ be closed
with respect to changes in primitives. In other words, if
$L\in \mathcal{X}$ and $L=(\bar{L},f_{L})$, then
$(\bar{L},f_{L}+r)\in \mathcal{X}$ \pbnote{for all
  $r \in \mathbb{R}$}.

We consider the full $A_{\infty}$-subcategory
$\widehat{\fuk}^{\ast}(M;\mathcal{X})\subset \widehat{\fuk}^{\ast}(M)$
that has as objects the elements in $\mathcal{X}$. A key, non-trivial
remark is that it is possible to pick the perturbation data
$\mathcal{D}$ so that its restriction $\mathcal{D}_{\mathcal{X}}$ to
the elements in $\mathcal{X}$ makes
$\widehat{\fuk}^{\ast}(M;\mathcal{X})$ a filtered
$A_{\infty}$-category and not only a weakly filtered one. To describe
this construction, recall that our Lagrangians $L$ are in fact pairs
consisting of an exact Lagrangian submanifold $\bar{L}$ of $M$
together with a primitive $f_{L}:\bar{L}\to \R$.  Recall that we write
\jznote{$L\equiv_{\mathcal L} L'$} if the two Lagrangians coincide geometrically and only
\pbnote{possibly} differ by their primitives. Denote by
$\bar{\mathcal{X}}$ the set of equivalence classes
\jznote{$\mathcal{X}/\equiv_{\mathcal L}$}, \pbnote{i.e.~the set of underlying Lagrangian
  submanifolds in $\mathcal{X}$}. With this convention, the
construction uses the ``cluster'' or ``pearly'' model \cite{Co-La} to
define the operations $\mu_{n}$ for sequences of Lagrangians
$L_{1}, L_{2},\ldots, L_{n+1},\subset \mathcal{X}$ such that a
sub-family $L_{i},\ldots, L_{i+k}$ has the property that
\jznote{$L_{i}\equiv_{\mathcal L} L_{i+1} \equiv_{\mathcal L} \ldots \equiv_{\mathcal L} L_{i+k}$}. The
starting point in this model is that if \jznote{$L\equiv_{\mathcal L} L'$}, then $CF(L,L')$
is the Morse complex of a Morse function $h:\bar{L}\to \R$ and the
differential only counts Morse trajectories. In other words, the
``Floer'' data $\mathcal{D}_{L,L}$ consists of a Morse-Smale pair
$(h,(\cdot, \cdot))$ on $\bar{L}$ (where $(\cdot, \cdot)$ is a
Riemannian metric). In terms of filtrations, all critical points of
$h$ appear in filtration $f_{L'}(x)-f_{L}(x)$ for some $x\in L$
(\pbnote{this difference does not depend $x$ since $f_{L'}$ and
  $f_{L}$ differ by a constant}).  To define the operations $\mu_{n}$
for $n\geq 1$, one proceeds again by picking coherent systems of
perturbations $\mathcal{D}_{L_{1},L_{2},\ldots, L_{n+1}}$ following
closely Seidel's scheme with the additional constraint that the
restriction of the perturbation system to any family of objects that
are all coinciding geometrically $(L_{i},L_{i+1},\ldots L_{i+k})$
\pbnote{consists of trees together with a families
  $(h_{z}, (\cdot, \cdot)_{z})$ of Morse functions and metrics on that
  Lagrangian, parametrized by a point $z$ running along each edge of
  the tree. The ends of the tree are associated to the pair
  $(h,(\cdot, \cdot))$.}
%
For the pairs $(L, L')$, $L,L'\in \mathcal{X}$ such that $L$
intersects transversely $L'$ the Floer data has vanishing Hamiltonian
$H_{L,L'}=0$. Finally, the perturbation data inside perturbed
\pbnote{pseudo-holomorphic} polygons is very
small. \pbnote{Geometrically, things would have become much simpler
  without these perturbations, however they are needed in order to
  obtain transversality for the spaces of pseudo-holomorphic
  polygons.}

\pbnote{The $A_{\infty}$-operations are now defined by counting
  ``generalized'' polygons, which are a hybrid consisting of perturbed
  pseudo-holomorphic polygons together with gradient trajectories
  modeled on trees as above, attached to the polygons. (The
  trajectories on each edge of a tree solve the negative gradient
  equation with respect to $(h_{z}, (\cdot, \cdot)_{z})$ as $z$ moves
  along the respective edge.) This type of \pbnote{``hybrid''}
  construction obviously poses non-trivial technical challenges,
  especially on the analytic side of the story, which we are not
  addressing here. At the same time, variants of this construction
  have appeared by now in the literature~\cite{Co-La, Charest,
    Sheridan} so we will not further expand on it here.}

\pbnote{The main advantage of the above construction is that it leads
  to {\em filtered} $A_{\infty}$-category in the sense that the
  $\mu_n$-operations do not increase action. This is so because the
  number of geometric objects in $\bar{\mathcal{X}}$ is finite, hence
  for each given $n$, the number of possible input configurations
  required to define $\mu_n$ is finite.} \pbnote{An energy-action
  estimate (involving curvature terms coming from the perturbations)
  shows that by taking the perturbations small enough the
  $\mu_n$-operation does not increase action. One can also check that
  the compatibility of the perturbation data with respect to splitting
  of polygons can be established in the present setting without
  interfering with the $\mu_n$'s being action preserving. This can be
  done, as usual, by defining the perturbation data inductively in
  $n$, and adjusting them to be small enough at each stage.}

In summary, with the choices of perturbation data as described above,
the $A_{\infty}$-category $\widehat{\fuk}^{\ast}(M;\mathcal{X})$ is in
fact filtered. Moreover, the units in $CF(L,L)$ are strict \pbnote{in
  the $A_{\infty}$-sense (see~\cite[Section~2]{Seidel} for the
  definition)}. Notice also that the perturbation data picked here
only depends on $\bar{\mathcal{X}}$ and that it can be taken to be as
small as desired in $C^{2}$ norm in the sense that the Hamiltonian
part can be as close to $0$ in $C^{2}$ norm as desired. We will
indicate making $\mathcal{D}$ small in this sense by
$||\mathcal{D}||\to 0$ (in this case the Hamiltonian part of the
perturbation data tends to $0$ in the sense before, but the choices of
almost complex structures are allowed to vary).

We denote by
\pbnote{$mod(M;\mathcal{X})=mod(\widehat{\fuk}^{\ast}(M;
  \mathcal{X}))$ the category of {\em filtered} $A_{\infty}$-modules
  over $\widehat{\fuk}^{\ast}(M; \mathcal{X})$. Note that both
  $\widehat{\fuk}^{\ast}(M; \mathcal{X})$ and $mod(M;\mathcal{X})$
  depend on the choice of perturbation data $\mathcal{D}$, but to
  shorten notation we will omit the reference to $\mathcal{D}$
  whenever no confusion might occur (in case we do want to emphasize
  this dependence we will write $mod_{\mathcal{D}}(M;\mathcal{X})$
  etc.).}

The category $mod(M;\mathcal{X})$ is a filtered dg-category in the
sense of Definition \ref{dfn-fdg-cat}. We now remark that this
category is pre-triangulated in the sense of
Corollary~\ref{cor:pre-tr}. This is essentially immediate because the
category $mod(M;\mathcal{X})$ is closed under taking cones over maps
of any ``shift'' (the non-filtered situation is in Seidel's book and
the weakly filtered one is described in~\cite{Bi-Co-Sh:LagrSh}) and is
endowed with a natural shift functor
$\Sigma: (\R,+) \to \mathrm{End}(mod(M;\mathcal{X}))$ \pbnote{which we
  describe next. Given $r \in \mathbb{R}$ and a module
  $\mathcal{M} \in mod(M;\mathcal{X})$ we define the filtered module
  $\Sigma^r \mathcal{M}$ by
  $(\Sigma^r \mathcal{M})^{\leq \alpha}(N) = \mathcal{M}^{\alpha -
    r}(N)$, endowed with the same $\mu_n$-operations as $\mathcal{M}$.
  This definition extends in an obvious way to morphisms in
  $mod(M;\mathcal{X})$, i.e.~to filtered pre-module homomorphisms
  between filtered modules in the sense of~\cite{Bi-Co-Sh:LagrSh}.
  The definition of the natural transformations $\eta_{r,s}$ between
  $\Sigma^r$ and $\Sigma^s$ is done in the obvious way too.}

It follows from Corollary \ref{cor:pre-tr} that the homological
category
$$\mathcal{H}_{\mathcal{D}}(M;\mathcal{X}):=H[mod_{\mathcal{D}}(M;\mathcal{X})]$$
is a TPC. In particular, its $\infty$-level,
$\mathcal{H}_{\mathcal{D}}(M;\mathcal{X})_{\infty}$, carries the
persistence fragmentation pseudo-metrics $\bar{d}^{\mathcal{F}}(-,-)$
as described in \S\ref{subsec:trstr-weights} (see particularly
\S\ref{subsec:rem-nondeg}). We will again omit $\mathcal{D}$ from the
notation when there is no risk for confusion.

\begin{rem}\label{rem:category-x}
  (a) In some cases it is more efficient to use instead of
  $\mathcal{H}(M;\mathcal{X})$ a smaller TPC.  For instance, if
  $\mathcal{X}^{\nabla}$ is the smallest pre-triangulated filtered
  dg-category that contains the family $\mathcal{X}$ and is included
  in $mod(M;\mathcal{X})$, then its homological category,
  $H(\mathcal{X}^{\nabla})$, is again a TPC.  Its infinity level
  $[H(\mathcal{X}^{\nabla})]_{\infty}$ is an obvious sub-category of
  $D\fuk^{\ast}(M)$.

  (b) If $\mathcal{X}$ contains a family of triangular generators for
  $D\fuk^{\ast}(M)$, then $[H(\mathcal{X}^{\nabla})]_{\infty}$ is
  quasi-equivalent to $D\fuk^{\ast}(M)$.

  (c) \pbnote{Here is an alternative way to define shift functors on
    categories of modules over
    $\widehat{\fuk}^{\ast}(M; \mathcal{X})$. This construction is more
    complicated than the one described earlier and seems to work only
    for some categories of modules. However, it might be useful in
    some other algebraic settings than the one discussed here.}

  \pbnote{First define a shift functor on
    $\widehat{\fuk}^{\ast}(M; \mathcal{X})$ as follows. For
    $r \in \mathbb{R}$, define an $A_{\infty}$-functor
    $\Sigma_{\mathcal{X}}^r$ on
    $\widehat{\fuk}^{\ast}(M; \mathcal{X})$ by
    $$\Sigma_{\mathcal{X}}^{r}L=\Sigma_{\mathcal{X}}^{r}(\bar{L}, f_{L})
    := (\bar{L}, f_{L}+r).$$ It is easy to see that
    $\Sigma_{\mathcal{X}}^r$ extends to a filtered
    $A_{\infty}$-functor on $\widehat{\fuk}^{\ast}(M; \mathcal{X})$
    (with trivial terms of order $\geq 2$). Note that we have
    $CF^{\leq \alpha}(\Sigma_{\mathcal{X}}^r L, \Sigma_{\mathcal{X}}^s
    L') = CF^{\leq \alpha+r-s}(L, L')$.} \pbnote{Turning to modules,
    define $\Sigma^r$ on $mod(M;\mathcal{X})$ using the pullback by
    $\Sigma^{-r}_{\mathcal{X}}$
    $$\Sigma^r \mathcal{M} := (\Sigma_{\mathcal{X}}^{-r})^*\mathcal{M}.$$
    Note that we have
    \jznote{$\Sigma^{r}\mathcal{M}(L)=\mathcal{M}(\Sigma_{\mathcal{X}}^{-r}L)$}.}
  \pbnote{The difficulty with this definition has to do with the
    natural (quasi)-isomorphisms $\eta_{r,s}$ between $\Sigma^r$ and
    $\Sigma^s$. More specifically, we need a quasi-isomorphism
    $\eta_{r,s}: \Sigma^r \mathcal{M} \to \Sigma^s \mathcal{M}$ (that
    shift filtrations by a uniform bound and also become isomorphisms
    in the respective persistence homology of the filtered chain
    complex $\Hom(\Sigma^r \mathcal{M}, \Sigma^s \mathcal{M})$). For
    general modules $\mathcal{M}$ it is not clear how to define such
    maps. However, under further assumptions on $\mathcal{M}$ this
    seems possible. For example, if we restrict to the subcategory of
    strictly unital modules, one can try to construct $\eta_{r,s}$ in
    the following way. Let $L \in \mathcal{X}$ and denote by
    $e_{L; r,s} \in CF(\Sigma_{\mathcal{X}}^{-s}L,
    \Sigma_{\mathcal{X}}^{-r}L)$ the element corresponding to the
    (strict) unit $e_{L} \in CF(L,L)$ under the obvious isomorphism
    between the last two Floer chain complexes. The
    $\mu^{\mathcal{M}}_2$-operation gives rise to a chain map
    $$\mu_2^{\mathcal{M}}(e_{L; r,s}, - ): \Sigma^{r}\mathcal{M}(L)
    \longrightarrow \Sigma^{s}\mathcal{M}(L),$$ which is in fact a
    chain isomorphism due to the strict unitality assumptions. It is
    not hard to check that the collection of these isomorphisms (for
    varying $L$'s) extends to an isomorphism of $A_{\infty}$-modules
    $\Sigma^r \mathcal{M} \longrightarrow \Sigma^s \mathcal{M}$.}

  \pbnote{Note that the Yoneda modules inside $mod(M;\mathcal{X})$,
    and more generally iterated cones of such, are strictly unital
    (recall that in our setting
    $\widehat{\fuk}^{\ast}(M; \mathcal{X})$ is strictly unital), hence
    the above construction works for the subcategories of such
    modules. Moreover, one can easily see that in these cases the
    resulting shift functor coincides with the one given before
    Remark~\ref{rem:category-x}.}
    
    \pbnote{Things become more complicated if one tries to weaken the
      assumption of strict unitality of the modules, to homological
      unitality for instance. \Qed}
\end{rem}

\subsubsection{Filtered perturbations of Yoneda
  modules.}\label{subsubec:filt-mod}
From now on, we will assume that the space 
\jznote{\begin{equation}\label{eq:finite}
  \bar{\mathcal{X}}=\mathcal{X}/\equiv_{\mathcal L} \ \mathrm{is\ finite} ~.~
\end{equation}}

Fix $N=(\bar{N}, f_{N})\in \mathcal{L}ag^{\ast}(M)$,
$\bar{N}\not\in \bar{\mathcal{X}}$ and let
$\mathcal{Y}_{N}\in mod(\widehat{\fuk}^{\ast}(M))$ be its
weakly-filtered Yoneda module. We consider the pull-back module
$ \mathcal{Y}'_{N}=j^{\ast}(\mathcal{Y}_{N})$ where $j$ is the
inclusion:
$$\widehat{\fuk}^{\ast}(M;\mathcal{X})\stackrel{j}{\hookrightarrow}
\widehat{\fuk}^{\ast}(M)~.~$$ The module $\mathcal{Y}'_{N}$ is
generally only weakly filtered. However, we will now notice that it is
possible to select the perturbation data $\mathcal{D}$ in the
definition of $\widehat{\fuk}^{\ast}(M)$ and a Hamiltonian
perturbation $N_{\epsilon}$ of $N$ such that the module
$\mathcal{Y}'_{N_{\epsilon}}$, which is quasi-isomorphic to
$\mathcal{Y}'_{N}$, is filtered and thus belongs to
$mod(M,\mathcal{X})$.

We proceed as follows. For the (geometric) Lagrangian
$\bar{N}\in \bar{\mathcal{L}}$ we select a small Hamiltonian
perturbation $\bar{N}_{\epsilon}$ which is the image of the time-one
map $\phi^{G}$ of the Hamiltonian flow $\phi_{t}^{G}$ induced by a
small autonomous Hamiltonian $G=G_{\bar{N},\epsilon} : M\to \R$,
associated to a small Morse function $g_{\bar{N},\epsilon}$, on
$\bar{N}$ extended in an obvious way first to a disk-cotangent bundle
of $\bar{N}$ and then to $M$, and such that $\bar{N}_{\epsilon}$
intersects transversely each element of $\bar{\mathcal{X}}$ (as well
as $\bar{N}$).

We now define $N_{\epsilon}=(\bar{N}_{\epsilon}, f_{N_{\epsilon}})$
where $f_{N_{\epsilon}}$ is the primitive of $\bar{N}_{\epsilon}$
given as follows. We first consider an intersection point
$x_{0}\in \bar{N}\cap \bar{N}_{\epsilon}$ corresponding to a critical
point of $g_{\bar{N},\epsilon}$ and define for
$x\in \bar{N}_{\epsilon}$,
$$f_{N_{\epsilon}}(x)= f_{N}((\phi^{G})^{-1}(x)) - G(x_{0})  +
\int_{0}^{1} G(\phi_{-t}^{G}(x))~dt -
\int_{\gamma^{G}_{-}(x)}\lambda $$ where $\gamma^{G}_{-}$ is the
Hamiltonian path $(x,t)\to \phi_{-t}^{G}(x)$. In other words, this is
the primitive of $\bar{N}_{\epsilon}$ with the property that
$f_{N_{\epsilon}}(x_{0})=f_{N}(x_{0})$.

The next step is to revisit the construction of the filtered
$A_{\infty}$-category $\widehat{\fuk}^{\ast}(M;\mathcal{X})$ and
notice that the choice of perturbation data
$\mathcal{D}_{\mathcal{X}}$ can be extended to a choice of
perturbation data for the larger family
\jznote{$\mathcal{X}_{N}=\mathcal{X}\cup \{L \ | L\equiv_{\mathcal L} N_{\epsilon}\}$}. This
is a triviality as the family $\bar{\mathcal{X}_{N}}$ is of the same
type as $\mathcal{X}$. The inductive construction of the perturbation
data, following the sequences of Lagrangians of the type
$L_{1},L_{2},\ldots, L_{n+1}$ with $n$ increasing, shows that it is
possible to extend the union of the choices of data
$\cup _{N\in \mathcal{L}ag^{\ast}(M)\backslash \mathcal{X}}
\mathcal{D}_{\mathcal{X}_{N}}$ to the perturbation data $\mathcal{D}$
defining the weakly filtered category $\widehat{\fuk}^{\ast}(M)$. The
upshot is that now the modules
$\mathcal{Y}'_{N_{\epsilon}}\in mod (M;\mathcal{X})$ are all filtered
and not only weakly filtered.

\begin{rem}
  (a) Notice that, in general the Yoneda module of some
  $N'_{\epsilon'}$ is not a {\em filtered} $A_{\infty}$-module over
  $\widehat{\fuk}^{\ast}(M; \mathcal{X}_{N})$ for \jznote{$N\not\equiv_{\mathcal L} N'$} (it
  is only weakly-filtered). Nonetheless, it is filtered as a module
  over the smaller category $\widehat{\fuk}^{\ast}(M;\mathcal{X})$.
  Another useful observation is that the functions
  $g_{\bar{N},\epsilon}$ can be taken to be as small as desired in
  $C^{2}$-norm. To make this more precise we will assume that each
  $g_{\bar{N},\epsilon}$ has the property that
  $||g_{\bar{N},\epsilon}||_{C^{2}}\leq \epsilon$. For each
  $\epsilon >0$ there are choices of $g_{\bar{N},\epsilon}$ satisfying
  this constraint that can be used in the construction.

  (b) While the perturbation data $\mathcal{D}_{\mathcal{X}}$ is fixed
  in the construction above, the extension $\mathcal{D}$ depends on
  the choices of perturbations $N_{\epsilon}$. At the same time, for
  any $\epsilon$, the corresponding perturbation data $\mathcal{D}$
  can be assumed to be arbitrarily small .
\end{rem}

\subsubsection{Persistence pseudo-distances on
  $\mathcal{L}ag^{\ast}(M)$}
Under the finiteness assumption (\ref{eq:finite}), we will apply the
machinery described before to define fragmentation-type pseudo-metrics
on $\mathcal{L}ag^{\ast}(M)$. We will then see that, by using some
appropriate mixing - as in \S\ref{subsubsec:non-deg-v}, we obtain a
family of non-degenerate metrics on
\jznote{$\bar{\mathcal{L}}=\mathcal{L}ag^{\ast}(M)/\equiv_{\mathcal L}$}.

Recall that the category
$\mathcal{H}_{\mathcal{D}}(M;\mathcal{X})_{\infty}$ is the
$\infty$-level of the TPC,
$\mathcal{H}_{\mathcal{D}}(M;\mathcal{X})=H[mod_{\mathcal{D}}(M;\mathcal{X})]$,
as constructed in \S\ref{subsubsec:act-filtr}. Thus, given a family
$\tilde{\mathcal{F}}\subset
\mathrm{Obj}(mod_{\mathcal{D}}(M;\mathcal{X}))$, the category 
$\mathcal{H}_{\mathcal{D}}(M;\mathcal{X})_{\infty}$ carries a
fragmentation pseudo-metric $\bar{d}^{\tilde{\mathcal{F}}}$ (recall
its definition from \S\ref{subsec:rem-nondeg}).

Fix now a family $\mathcal{F}\subset \mathcal{X}$ and put
$\mathcal{F}_{\mathcal{D}}=\{\mathcal{Y}'_{F} \ | \ F\in
\mathcal{F}\}\subset mod_{\mathcal{D}}(M;\mathcal{X})$.  For
$L,L'\in \mathcal{L}ag^{\ast}(M)$ define:

\begin{equation}\label{eq:metric-Lag}
  \bar{D}^{\mathcal{F}}(L,L')=
  \limsup_{\epsilon\to 0, ||D||\to 0} ~\bar{d}^{\mathcal{F}_{\mathcal{D}}}(\mathcal{Y}'_{L_{\epsilon}}, \mathcal{Y}'_{L'_{\epsilon}})~.~
\end{equation}

\begin{prop} With the notation above, the
  map
  $$\bar{D}^{\mathcal{F}}:\mathcal{L}ag^{\ast}(M)\times
  \mathcal{L}ag^{\ast}(M)\to [0,\infty)\cup\{\infty\}$$ has the
  following two properties:
  \begin{itemize}
  \item[i.] it is a pseudo-metric.
  \item[ii.] if $\mathcal{F}$ generates (as triangulated category) the
    category $D\fuk^{\ast}(M)$, then $\bar{D}^{\mathcal{F}}$ is finite
    (in the sense that $\bar{D}^{\mathcal{F}}(L,L')< \infty$ for any
    $L$, $L'$).
  \end{itemize}
\end{prop}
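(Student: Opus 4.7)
Property (i) splits into reflexivity, symmetry, and the triangle inequality. Reflexivity $\bar{D}^{\mathcal{F}}(L,L)=0$ is immediate since $\bar{d}^{\mathcal{F}_{\mathcal{D}}}(X,X)=0$ for any object $X$ in a TPC (Remark~\ref{ex-delta}~(a)), and symmetry of $\bar{D}^{\mathcal{F}}$ is inherited directly from the symmetry of each $\bar{d}^{\mathcal{F}_{\mathcal{D}}}$. The real work is in the triangle inequality. The plan is to fix three Lagrangians $L, L', L''$ and, following the construction in \S\ref{subsubec:filt-mod}, to choose a \emph{common} perturbation datum $\mathcal{D}$ (with $\|\mathcal{D}\|$ as small as desired) together with Hamiltonian perturbations $L_\epsilon, L'_\epsilon, L''_\epsilon$ such that the three pulled-back Yoneda modules $\mathcal{Y}'_{L_\epsilon}, \mathcal{Y}'_{L'_\epsilon}, \mathcal{Y}'_{L''_\epsilon}$ are all \emph{filtered} $A_\infty$-modules in $mod_{\mathcal{D}}(M;\mathcal{X})$. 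This is accomplished by extending the initial data $\mathcal{D}_{\mathcal{X}}$ to a perturbation system that simultaneously covers the enlarged family $\mathcal{X} \cup \{L_\epsilon, L'_\epsilon, L''_\epsilon\}$ and their primitive shifts, just as in the one-Lagrangian argument of \S\ref{subsubec:filt-mod} (the only adjustment is that the inductive construction is carried out over a family with three extra equivalence classes instead of one). Once all three Yoneda modules lie inside the same TPC, Corollary~\ref{tri-ineq} applied in $\mathcal{H}_{\mathcal{D}}(M;\mathcal{X})_\infty$ yields
\[
\bar{d}^{\mathcal{F}_{\mathcal{D}}}(\mathcal{Y}'_{L_\epsilon}, \mathcal{Y}'_{L''_\epsilon}) \leq \bar{d}^{\mathcal{F}_{\mathcal{D}}}(\mathcal{Y}'_{L_\epsilon}, \mathcal{Y}'_{L'_\epsilon}) + \bar{d}^{\mathcal{F}_{\mathcal{D}}}(\mathcal{Y}'_{L'_\epsilon}, \mathcal{Y}'_{L''_\epsilon}),
\]
and taking the $\limsup$ as $\epsilon \to 0$ and $\|\mathcal{D}\| \to 0$ along a joint sequence of admissible choices produces the triangle inequality for $\bar{D}^{\mathcal{F}}$.

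For property (ii), the plan is to show that when $\mathcal{F}$ triangularly generates $D\fuk^{\ast}(M)$, each Yoneda module $\mathcal{Y}'_{N_\epsilon}$ admits an iterated cone decomposition in $\mathcal{H}_{\mathcal{D}}(M;\mathcal{X})_\infty$ with pieces among (the iso-classes of) objects in $\mathcal{F}_{\mathcal{D}}$ and \emph{total weight bounded uniformly} as $\epsilon \to 0$ and $\|\mathcal{D}\|\to 0$. I would first produce such a decomposition at the unfiltered derived level by invoking the triangular-generation hypothesis applied to $N$, and then lift the finitely many structural morphisms appearing in this decomposition to morphisms in $mod_{\mathcal{D}}(M;\mathcal{X})$, adjusting by the shift functor $\Sigma$ to absorb the action differences. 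Since the decomposition has finitely many pieces and each piece contributes a finite weight, this gives $\bar{d}^{\mathcal{F}_{\mathcal{D}}}(\mathcal{Y}'_{N_\epsilon},0)<\infty$ for every sufficiently small $\epsilon$ and $\|\mathcal{D}\|$. The triangle inequality already proven in~(i) then gives
\[
\bar{d}^{\mathcal{F}_{\mathcal{D}}}(\mathcal{Y}'_{L_\epsilon}, \mathcal{Y}'_{L'_\epsilon}) \leq \bar{d}^{\mathcal{F}_{\mathcal{D}}}(\mathcal{Y}'_{L_\epsilon},0) + \bar{d}^{\mathcal{F}_{\mathcal{D}}}(0,\mathcal{Y}'_{L'_\epsilon}) < \infty,
\]
uniformly in the perturbation parameters, and hence $\bar{D}^{\mathcal{F}}(L,L')<\infty$.

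The main obstacle is the uniform boundedness of the weights of the lifted cone decompositions as the perturbations shrink. A priori the filtration shifts needed to realize a derived-category cone at the filtered $A_\infty$-level depend on the action values of the generating Floer chords, and these could in principle diverge as $\|\mathcal{D}\|, \epsilon \to 0$. The key point that resolves this is that the underlying cone decomposition lives in $D\fuk^{\ast}(M)$ and depends on finitely many structural morphisms whose (unperturbed) action values are fixed numbers determined by the primitives $f_L,f_{L'},\ldots$ and the geometric Lagrangians in the decomposition; the perturbed action values converge to these unperturbed ones as $\|\mathcal{D}\|, \epsilon \to 0$, so the action discrepancies stay bounded by a constant depending only on the chosen decomposition. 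This stability is precisely what turns a finite derived decomposition into a uniformly finite weighted decomposition in $\mathcal{H}_{\mathcal{D}}(M;\mathcal{X})_\infty$, and it ensures that the $\limsup$ in~(\ref{eq:metric-Lag}) is finite.
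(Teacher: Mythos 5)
The paper does not actually prove this proposition: immediately after stating it, the authors write ``The proofs of these statements are omitted here. They are not difficult but not completely trivial (for the second statement, for instance, the key issue is that units are strict in categories of type $\widehat{\fuk}^{\ast}(M;\mathcal{X})$).'' So a line-by-line comparison is impossible, but the authors explicitly flag the one ingredient they regard as essential for part (ii), and your sketch never invokes it.

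This is a genuine gap. For (ii) you need, for every admissible $(\epsilon,\mathcal{D})$, a cone decomposition of $\mathcal{Y}'_{L_\epsilon}$ in $\mathcal{H}_{\mathcal{D}}(M;\mathcal{X})_{\infty}$ with linearization in $\mathcal{F}_{\mathcal{D}}$ whose total weight is bounded \emph{uniformly} as $\epsilon,\|\mathcal{D}\|\to 0$. Two things control this. The first is the one you identify: the geometric action levels of the structural morphisms in the decomposition, which you handle by stability under shrinking perturbations. The second, which you do not mention, is the normalization constant of the triangular weight, i.e.~the weight carried by the trivial triangle $0\to X\to X\to 0$ (equivalently, $\sigma(\mathds{1}_X)$). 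Every time you write an object as an iterated cone, each stage silently contributes $w_0$; if the identity morphisms $\mathds{1}_{\mathcal{Y}'_{L_\epsilon}}$ and $\mathds{1}_{\mathcal{Y}'_{F}}$ had shift depending on $\mathcal{D}$ (which is what happens for merely homotopy-unital $A_\infty$-structures), the decomposition weights could degenerate as $\|\mathcal{D}\|\to 0$ even if all the geometric Floer actions converge. Strict unitality of $\widehat{\fuk}^{\ast}(M;\mathcal{X})$ is exactly what forces $\sigma(\mathds{1}_X)=0$ in every $\mathcal{H}_{\mathcal{D}}$ and hence $\bar{w}_0=0$ uniformly; your argument tacitly assumes this but your ``stability of action values'' mechanism does not supply it, because the unit is not a geometric Floer chord and its filtration level is an algebraic artifact of the chosen $A_\infty$-model, not of the Lagrangians. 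You should make the appeal to strict units explicit.

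A secondary concern is in part (i). Your triangle inequality is proved along a ``joint sequence'' of perturbation data common to $L,L',L''$, but the definition in~\eqref{eq:metric-Lag} takes the $\limsup$ over \emph{all} admissible pairs $(\epsilon,\mathcal{D})$ for the pair $(L,L')$ in question. To conclude $\bar{D}^{\mathcal{F}}(L,L')\le \bar{D}^{\mathcal{F}}(L,L'')+\bar{D}^{\mathcal{F}}(L'',L')$ you need that restricting to the smaller family of triple-common data does not lose the actual $\limsup$, i.e.~a comparison estimate showing $\bar{d}^{\mathcal{F}_{\mathcal{D}}}$ varies controllably as $\mathcal{D}$ varies over admissible data with $\|\mathcal{D}\|$ small. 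This should follow from the same invariance-under-perturbation properties already implicit in the construction of \S\ref{subsubec:filt-mod}, but it is not automatic from the abstract Corollary~\ref{tri-ineq}, and you should state it.
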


The proofs of these statements are omitted here. They are not
difficult but not completely trivial (for the second statement, for
instance, the key issue is that units are strict in categories of type
$\widehat{\fuk}^{\ast}(M;\mathcal{X})$).

\begin{rem} As long as $\mathcal{F}_{\mathcal{D}}$ is canonically
  defined for each choice of perturbation data $\mathcal{D}$, the
  definition of $\bar{D}^{\mathcal{F}}$ by the formula above remains
  possible. The requirement $\mathcal{F}\subset \mathcal{X}$ and the
  resulting definition of $\mathcal{F}_{\mathcal{D}}$ is just a simple
  way to ensure this. \end{rem}

Notice that if $D\fuk^{\ast}(M)$ admits a finite number of generators,
one can easily pick a family $\mathcal{X}$ and a family
$\mathcal{F}\subset \mathcal{X}$ satisfying all the required
assumptions to define a finite $\bar{D}^{\mathcal{F}}$.

\

Recall the space $\bar{\mathcal{L}}$ of exact Lagrangians in $M$
without reference to the choice of primitive,
\jznote{$\bar{\mathcal{L}}=\mathcal{L}ag^{\ast}(M)/\equiv_{\mathcal L}$}. We now add one
more assumption on the family $\mathcal{F}$, namely that it is closed
with respect to changes in primitives (in the same way that
$\mathcal{X}$ is): if $F=(\bar{F}, f_{L}) \in \mathcal{F}$, then
$(\bar{F}, f_{L}+r)\in\mathcal{F}$ too. Under this additional
assumption, we notice that the pseudo-metric $\bar{D}^{\mathcal{F}}$
induces a pseudo-metric $\tilde{D}^{\mathcal{F}}$ on
$\bar{\mathcal{L}}$ by the formula:
$$\tilde{D}^{\mathcal{F}}(N,N')=\inf_{\bar{L}=N,\bar{L}'=N'} \bar{D}^{\mathcal{F}}(L,L')~.~$$
This follows immediately from the fact that
$\bar{D}^{\mathcal{F}}(\Sigma^{r}L, \Sigma^{r}L')=
\bar{D}^{\mathcal{F}}(L, L')$, for all $r\in \R$, itself a consequence
of $\mathcal{F}$ being closed to changes of primitive.

\

The next important property of the pseudo-metrics
$\bar{D}^{\mathcal{F}}$ has to do with mixing in the sense of
\S\ref{subsubsec:non-deg-v}. Assume that
$\mathcal{F}'\subset \mathcal{X}$ is another family as
above. Similarly to (\ref{eq:mixing1}), define the mixed fragmentation
pseudo-metric:

\begin{equation}\label{eq:mixing2}
  \bar{D}^{\mathcal{F},\mathcal{F}'}(L,L')=
  \max ~\{ \bar{D}^{\mathcal{F}}(L, L') \ , \ \bar{D}^{\mathcal{F}'}(L, L') \}
\end{equation}
as well as the corresponding mixed pseudo-metric on
$\bar{\mathcal{L}}$,
$$\tilde{D}^{\mathcal{F},\mathcal{F}'}=
\max ~\{ \tilde{D}^{\mathcal{F}} \, \ \tilde{D}^{\mathcal{F}'}\}~.~$$

Notice that both \jznote{$\bar{\mathcal{F}}=\mathcal{F}/\equiv_{\mathcal L}$ and
$\bar{\mathcal{F}}'=\mathcal{F}'/\equiv_{\mathcal L}$} are finite as they are
included in $\bar{X}$ which is finite.

\begin{prop} Assuming the setting above and if, additionally,
  $\bar{\mathcal{F}}\cap \bar{\mathcal{F}}'=\emptyset$, then the
  pseudo-metric $\tilde{D}^{\mathcal{F},\mathcal{F}'}$ on the space of
  exact Lagrangians $\bar{\mathcal{L}}$ in $M$ is non-degenerate.
\end{prop}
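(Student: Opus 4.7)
The plan is to combine the abstract mixed-metric non-degeneracy argument at the end of \S\ref{subsubsec:non-deg-v} with a geometric rigidity input coming from the unit in Floer cohomology. Assume $\tilde{D}^{\mathcal{F},\mathcal{F}'}(N,N')=0$. By definition of the infimum over primitives and of the max, for each $\epsilon > 0$ one can choose primitives $f_L, f_{L'}$ with $\bar{L} = N$, $\bar{L}' = N'$, Hamiltonian perturbations of size less than $\epsilon$, and perturbation data $\mathcal{D}$ with $\|\mathcal{D}\| < \epsilon$, such that both $\bar{d}^{\mathcal{F}_\mathcal{D}}(\mathcal{Y}'_{L_\epsilon}, \mathcal{Y}'_{L'_\epsilon}) < \epsilon$ and $\bar{d}^{\mathcal{F}'_\mathcal{D}}(\mathcal{Y}'_{L_\epsilon}, \mathcal{Y}'_{L'_\epsilon}) < \epsilon$ hold inside the TPC $\mathcal{H}_\mathcal{D}(M;\mathcal{X})$.

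The first step is algebraic: I would show that $\bar{\mathcal{F}} \cap \bar{\mathcal{F}}' = \emptyset$ implies $\mathrm{Obj}(\mathcal{F}_\mathcal{D}^\Delta) \cap \mathrm{Obj}((\mathcal{F}'_\mathcal{D})^\Delta) = \{0\}$ inside $[\mathcal{H}_\mathcal{D}(M;\mathcal{X})]_\infty$. This relies on faithfulness of the Yoneda embedding on $\widehat{\fuk}^\ast(M;\mathcal{X})$ together with the finiteness of $\bar{\mathcal{X}}$, which forces iterated cones to remember their geometric generators up to Floer-theoretic isomorphism. Once this is in place, the abstract argument from \S\ref{subsubsec:non-deg-v} applies: since $\sigma(\mathds{1}_{\mathcal{Y}'_L}) = 0$ for Lagrangian Yoneda modules (the Floer unit provides a non-decaying identity), no periodicity obstruction appears, and the simultaneous estimates above propagate in the limit $\epsilon \to 0$ to a $0$-isomorphism $\mathcal{Y}'_{L_\epsilon} \equiv \mathcal{Y}'_{L'_\epsilon}$ in $\mathcal{H}_\mathcal{D}(M;\mathcal{X})_0$, with residual filtration shift $r_\epsilon \to 0$.

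The second step is geometric: by Yoneda, this isomorphism produces an element of $HF^{\leq r_\epsilon}(L_\epsilon, L'_\epsilon)$ whose continuation composes with its reciprocal in $HF^{\leq r'_\epsilon}(L'_\epsilon, L_\epsilon)$ to give the Floer unit modulo a filtration shift of order $r_\epsilon + r'_\epsilon$. If $N \neq N'$ in $\bar{\mathcal{L}}$, the symmetric difference $N \triangle N'$ is non-empty, and a standard action/displacement-energy estimate yields a strictly positive lower bound on any such shift, quantifying how far the two Lagrangians are from sharing an action-zero self-intersection. This contradicts $r_\epsilon + r'_\epsilon \to 0$, so $N = N'$.

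The main obstacle is the first step, namely controlling the passage from the two simultaneous bounds $\bar{d}^{\mathcal{F}_\mathcal{D}}, \bar{d}^{\mathcal{F}'_\mathcal{D}} < \epsilon$ to a genuine $0$-isomorphism in the limit. The abstract argument produces cone decompositions whose total weight is small but whose lengths are not a priori bounded; extracting a stable limit requires the finiteness of $\bar{\mathcal{X}}$, strict unitality of $\widehat{\fuk}^\ast(M;\mathcal{X})$ provided by the cluster-model construction of \S\ref{subsubsec:act-filtr}, and a compactness argument on the space of cone decompositions of bounded weight, essentially a variant of the interleaving-stability theorem applied to two resolutions simultaneously.
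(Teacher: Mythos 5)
Your proposal does not follow the paper's route, and the first step as stated would fail. You invoke the abstract mixing argument from the end of \S\ref{subsubsec:non-deg-v}, which requires $\mathrm{Obj}(\mathcal{F}_1^{\Delta}) \cap \mathrm{Obj}(\mathcal{F}_2^{\Delta}) = \{0\}$. That hypothesis is far stronger than the one in the proposition, namely $\bar{\mathcal{F}} \cap \bar{\mathcal{F}}' = \emptyset$ (disjointness of the underlying Lagrangian submanifolds). In the setting at hand $\mathcal{F}$ and $\mathcal{F}'$ are typically chosen to be generating families for $D\fuk^{\ast}(M)$ — indeed this is essential for finiteness of $\bar{D}^{\mathcal{F}}$ — and in that case $\mathcal{F}_{\mathcal{D}}^{\Delta}$ and $(\mathcal{F}'_{\mathcal{D}})^{\Delta}$ are \emph{both} the entire triangulated category, so their intersection is everything, not $\{0\}$. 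Your claim that ``faithfulness of Yoneda and finiteness of $\bar{\mathcal{X}}$ forces iterated cones to remember their geometric generators'' has no content at the level of derived categories: an object decomposes as an iterated cone in many inequivalent ways, over many different generating families, so the two triangulated hulls do not separate. The abstract mixing argument simply does not apply here.

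The paper's proof hinges on a much weaker, genuinely geometric implication (\ref{eq:non-deg}): the vanishing $\bar{D}^{\mathcal{F}}(L,L')=0$ implies the set-theoretic containment $\bar{L} \subset \bigl( \cup_{\bar{F} \in \bar{\mathcal{F}}} \bar{F} \bigr) \cup \bar{L'}$. Applying this to both $\mathcal{F}$ and $\mathcal{F}'$, and using that $\bar{\mathcal{F}} \cap \bar{\mathcal{F}}' = \emptyset$, one concludes $\bar{L} \subset \bar{L'}$ (and by symmetry equality). The proof of (\ref{eq:non-deg}) itself negates the containment to obtain a symplectically embedded ball $e: B^{2n}(r_0) \to M$ meeting $\bar{L}$ through its equator and disjoint from all the $\bar{F}$'s and from $\bar{L'}$; it then converts the small fragmentation distance into a cone decomposition of small total weight, extracts (via the argument of \cite{Bi-Co-Sh:LagrSh}, analyzing the $2r$-acyclic cone $\mathcal{K}$ and its boundary operator) a perturbed $J$-holomorphic polygon of energy at most $\approx 4\delta$ passing through a prescribed point near $e(0)$, and finally combines Gromov compactness with the Lelong monotonicity inequality to deduce the lower bound $\pi r_0^2/2$ on the area — a contradiction as $\delta \to 0$. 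Your second ``geometric step'' based on a continuation-and-displacement-energy estimate does not capture this: you would first need the $0$-isomorphism your argument cannot produce, and even then the positive lower bound you want does not come from a displacement-energy estimate but from the holomorphic-curve-through-a-point rigidity. The proposal should be rewritten starting from (\ref{eq:non-deg}) as the key lemma, not from the algebraic mixing criterion.
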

\begin{proof}
  This is a reformulation of the main result in \cite{Bi-Co-Sh:LagrSh}
  and we briefly review the main steps.  The proof starts from the
  remark that it is enough to show \pbnote{
    \begin{equation}\label{eq:non-deg}
      \bar{D}^{\mathcal{F}}(L,L')=0 \ \Rightarrow \ \bar{L}
      \subset \bigl( \cup_{\bar{F} \in \bar{\mathcal{F}}} \bar{F} \bigr)
      \cup \bar{L'} ~.~
    \end{equation}
  } To show (\ref{eq:non-deg}) one assumes that
  \pbnote{$\bar{L}\not\subset \bigl(\cup_{\bar{F}\in
      \bar{\mathcal{F}}} \bar{F} \bigr) \cup \bar{L'}$ and thus there
    is a symplectic embedding $e: B^{2n}(r_{0}) \to (M,\omega)$ of the
    standard $2n$-dimensional ball ($2n = \dim_{\mathbb{R}}M$) of
    radius $r_0$} such that $e^{-1}(\bar{L})=\R^{n}\cap \bar{L}$ and
  \pbnote{$\mathrm{Im}(e)\bigcap \Bigl( \bigl( \cup_{\bar{F}\in
      \bar{\mathcal{F}}} \bar{F} \bigr) \cup \bar{L'}
    \Bigr)=\emptyset$.}

  We fix an embedding $e$ as above and proceed on the algebra
  side. The identity $\bar{D}^{\mathcal{F}}(L,L')=0$ means that for
  any $\delta>0$ there exists a sufficiently small $\epsilon >0$ such
  that the \pbnote{pseudo-metric} $\bar{\delta}^{\mathcal{F}}$
  (see~\S\ref{subsec:rem-nondeg} for its definition) that is defined
  on the objects of $\mathcal{H}_{\mathcal{D}}(M;\mathcal{X})$ has the
  property:
  \begin{equation}\label{eq:non-deg2}
    \bar{\delta}^{\mathcal{F}}(\mathcal{Y}'_{L_{\epsilon}},
    \mathcal{Y}'_{L'_{\epsilon}}) < \delta
  \end{equation}
  for any perturbation data $\mathcal{D}$ with
  $||\mathcal{D}||\leq \epsilon$. It is an easy algebraic exercise in
  manipulating strict exact triangles to show that inequality
  (\ref{eq:non-deg2}) implies that there are iterated exact triangles
  in $mod_{\mathcal{D}}(M,\mathcal{X})$ of the form:
  \begin{equation}\label{eq:iterated-tr3}\xymatrixcolsep{1pc}
    \xymatrix{
      Y_{0} \ar[rr] &  &  Y_{1}\ar@{-->}[ldd]  \ar[r]
      &\ldots  \ar[r]& Y_{i} \ar[rr] &  &  Y_{i+1}\ar@{-->}[ldd]  \ar[r]
      &\ldots \ar[r]&Y_{n-1} \ar[rr] &   &Y_{n} \ar@{-->}[ldd]  &\\
      &         \Delta_{1}                  &  & & &  \Delta_{i+1}                          & &  &  &    \Delta_{n}             \\
      & X_{1}\ar[luu] &  & & &X_{i+1}\ar[luu] &  &  & &X_{n}\ar[luu] }
  \end{equation}
  with the following properties:
  \begin{itemize}
  \item[i.] all $\Delta_{i}$ are exact of strict weight $0$.
  \item[ii.] $Y_{0}=0$ and there is an index $j\leq n$ and
    $r_{j}\in \R$ such that $X_{i}\in \mathcal{F}$ for all $i\not=j$,
    $X_{j}=\Sigma^{r_{j}}\mathcal{Y}'_{L'_{\epsilon}}$.
  \item[iii.] there exists an $r$-isomorphism of filtered modules
    $\varphi_{\epsilon,\mathcal{D}} : \mathcal{Y}_{L_{\epsilon}}\to
    Y_{n}$ with $r\leq 2\delta$.
  \end{itemize}
  The reason that the bound at iii. above is $2\delta$ and not
  $\delta$ is that in constructing this sequence of $0$-weight
  triangles from the decomposition employing triangles of higher
  weight one needs to invert $k$-isomorphisms and the left (or right)
  inverse of such an $k$-isomorphism is only a $2k$-isomorphism in
  general.  For the next step, we consider a left inverse of
  $\psi_{\epsilon, \mathcal{D}}: Y'_{n}\to \mathcal{Y}_{L_{\epsilon}}$
  which is now a $2r$-isomorphism. The module $Y'_{n}$ is a shift of
  $Y_{n}$.  We now consider the module
  $\mathcal{K}= \mathrm{Cone}(\psi_{\epsilon,\mathcal{D}})$. The
  module $\mathcal{K}$ is $2r$-acyclic.  We also pick a point
  $x_{\epsilon}\in \bar{L}_{\epsilon}$ such that when $\epsilon \to 0$
  the points $x_{\epsilon} \to x_{0}=e(0)$. The proof of the main
  theorem in \cite{Bi-Co-Sh:LagrSh} shows that there exists a
  perturbed (with respect to the data $\mathcal{D}$) $J$-holomorphic
  polygon $u_{\epsilon, \mathcal{D}}$ with boundary on
  $\bar{L}_{\epsilon}\cup \{\bar{F}\in \bar{\mathcal{F}}\}\cup
  \bar{L}'_{\epsilon}$ of energy at most $2r$ and that passes through
  the point $x_{\epsilon}$. The argument to show this is delicate: it
  is based on considering $x_{\epsilon}$ as the maximum of a Morse
  function on $\bar{L}_{\epsilon}$ and integrating this in the data
  $\mathcal{D}$; one then considers the chain complex
  $\mathcal{K}(L_{\epsilon})$ which is $2r$-acyclic; the differential
  of the complex $\mathcal{K}(L_{\epsilon})$ has a description that
  \pbnote{can be made quite precise using the techniques
    from\jznote{~\cite[\S2.5]{Bi-Co-Sh:LagrSh}};} this description
  shows that the only way to ``kill'' the fundamental class of
  $CF(L,L)$ (which is represented by $x_{\epsilon}$) in
  $\mathcal{K}(L_{\epsilon})$ is if there exists a polygon as claimed.
  Once the polygons $u_{\epsilon,\mathcal{D}}$ are constructed we make
  $\epsilon \to 0$ and $||\mathcal{D}||\to 0$. There is yet another
  little twist here: recall that the perturbation data $\mathcal{D}$
  consists of two parts, a Hamiltonian perturbation and a choice of (a
  family) of almost complex structures. The expression
  $||\mathcal{D}||\to 0$ simply means that the Hamiltonian terms go to
  $0$. However, we will also want to take a limit for the almost
  complex structure part in such a way that these choices converge to
  an almost complex structure that extends the almost complex
  structure $J=e_{\ast} J_{0}$ where $J_{0}$ is the standard almost
  complex structure in $\mathbb{C}^{n}$. We now apply Gromov
  compactness to the $u_{\epsilon,\mathcal{D}}$ and obtain as a limit
  a curve $u_{0}$ that is $J$-holomorphic, of symplectic area
  $\omega(u_{0})\leq 2r\leq 4\delta$, that passes through the point
  $e(0)$ and only intersects $\mathrm{Im}(e)$ along the boundary of
  the ball and $\bar{L}$. Therefore, by \pbnote{a version of the
    Lelong inequality} \jznote{(see \cite{GH-book} and \cite{Lel57})}, the symplectic area of $u_{0}$ is at least
  $\pi r_{0}^{2}/2$. We conclude
  $4\delta \geq \omega(u_{0})\geq \pi r_{0}^{2}/2 >0$ which
  contradicts that $\delta$ can be taken arbitrarily small and
  concludes the proof.
\end{proof}
 
In summary, we have:

\begin{cor} If the derived Fukaya category of exact Lagrangians in the
  class $\mathcal{L}ag^{\ast}(M)$, $D\fuk^{\ast}(M)$, has a finite
  number of geometric generators, then the space of exact Lagrangians
  $\bar{\mathcal{L}}$ (without fixing the primitives) is endowed with
  a finite, non-degenerate fragmentation metric of the form
  $\tilde{D}^{\mathcal{F},\mathcal{F}'}$ induced by a persistence
  weight (on a TPC associated to a certain filtered dg-category of
  $A_{\infty}$-modules).
\end{cor}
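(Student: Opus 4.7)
The plan is to assemble the corollary from the two propositions in \S\ref{subsec-symp} by constructing two suitable families $\mathcal{F}, \mathcal{F}' \subset \mathcal{X}$ with disjoint underlying geometric Lagrangians, each generating $D\fuk^{\ast}(M)$. First, I would fix a finite set $\bar{L}_1,\ldots,\bar{L}_k$ of geometric generators of $D\fuk^{\ast}(M)$ given by hypothesis. For each $i$, I would apply a small generic Hamiltonian isotopy to $\bar{L}_i$ to produce $\bar{L}'_i$, chosen so that the full collection $\{\bar{L}_1,\ldots,\bar{L}_k,\bar{L}'_1,\ldots,\bar{L}'_k\}$ consists of pairwise distinct, pairwise transverse exact Lagrangians. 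This is a standard general position argument; crucially, $\bar{L}'_i$ is Hamiltonian isotopic to $\bar{L}_i$, so its Yoneda module is quasi-isomorphic in $D\fuk^{\ast}(M)$ to that of $\bar{L}_i$, and thus $\{\bar{L}'_1,\ldots,\bar{L}'_k\}$ also generates $D\fuk^{\ast}(M)$.

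Next, I would define $\mathcal{X} \subset \mathcal{L}ag^{\ast}(M)$ to consist of all pairs $(\bar{L},f)$ where $\bar{L}$ runs through the $2k$-element set above and $f$ ranges over all primitives of $\lambda|_{\bar{L}}$. By construction $\bar{\mathcal{X}}$ is finite, verifying (\ref{eq:finite}), and any two objects of $\mathcal{X}$ are either transverse or coincide geometrically, so the filtered $A_{\infty}$-category $\widehat{\fuk}^{\ast}(M;\mathcal{X})$ and the associated TPC $\mathcal{H}(M;\mathcal{X}) = H[mod(M;\mathcal{X})]$ are defined as in \S\ref{subsubsec:act-filtr}. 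Set
\[ \mathcal{F} = \{(\bar{L}_i, f) \in \mathcal{X} \mid 1 \leq i \leq k\}, \qquad \mathcal{F}' = \{(\bar{L}'_i, f) \in \mathcal{X} \mid 1 \leq i \leq k\}. \]
Both families are contained in $\mathcal{X}$, both are closed under changes of primitive, and $\bar{\mathcal{F}} \cap \bar{\mathcal{F}}' = \emptyset$ by the transversality choice above. Moreover, each of $\mathcal{F}$ and $\mathcal{F}'$ generates $D\fuk^{\ast}(M)$ as a triangulated category.

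Now I would apply the two propositions preceding the corollary. The first proposition, applied twice, shows that $\bar{D}^{\mathcal{F}}$ and $\bar{D}^{\mathcal{F}'}$ are both pseudo-metrics on $\mathcal{L}ag^{\ast}(M)$ taking finite values, using that $\mathcal{F}$ and $\mathcal{F}'$ generate $D\fuk^{\ast}(M)$. Consequently the mixed pseudo-metric $\bar{D}^{\mathcal{F},\mathcal{F}'} = \max\{\bar{D}^{\mathcal{F}}, \bar{D}^{\mathcal{F}'}\}$ defined in (\ref{eq:mixing2}) is finite, and because both families are closed under primitive shifts it descends to a finite pseudo-metric $\tilde{D}^{\mathcal{F},\mathcal{F}'}$ on $\bar{\mathcal{L}}$ by infimizing over choices of primitives. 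The second proposition then applies directly, since the disjointness hypothesis $\bar{\mathcal{F}} \cap \bar{\mathcal{F}}' = \emptyset$ is exactly what we arranged, yielding that $\tilde{D}^{\mathcal{F},\mathcal{F}'}$ is non-degenerate on $\bar{\mathcal{L}}$.

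The main obstacle is the construction step: one needs to know that the small Hamiltonian perturbations $\bar{L}'_i$ still generate $D\fuk^{\ast}(M)$ and that the resulting $\mathcal{X}$ really admits a filtered (not merely weakly filtered) $A_{\infty}$-structure with the required properties from \S\ref{subsubsec:act-filtr}. The former is standard Hamiltonian invariance of the Fukaya category; the latter follows from the finiteness of $\bar{\mathcal{X}}$ and the inductive choice of perturbation data described in \S\ref{subsubsec:act-filtr}, but relies on ensuring pairwise transversality among the $2k$ geometric Lagrangians, which is achievable generically. Once these technical points are in hand, the corollary follows immediately from the two propositions.
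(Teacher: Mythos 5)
Your proposal is correct and follows the same route the paper intends: given finitely many geometric generators, take $\mathcal{F}$ to be those Lagrangians with all choices of primitives, $\mathcal{F}'$ to be a Hamiltonian perturbation of them (again with all primitives), set $\mathcal{X}=\mathcal{F}\cup\mathcal{F}'$, and feed the two preceding propositions. One small technical adjustment to note: you ask for the full $2k$-element collection $\{\bar{L}_1,\ldots,\bar{L}_k,\bar{L}'_1,\ldots,\bar{L}'_k\}$ to be pairwise transverse while perturbing only the $\bar{L}'_i$'s, but if the original generators $\bar{L}_i$ are not already pairwise transverse this cannot be achieved without also perturbing the $\bar{L}_i$ themselves. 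Since Hamiltonian isotopies preserve the Yoneda module up to quasi-isomorphism, and $D\fuk^{\ast}(M)$ is therefore insensitive to such perturbations, this is cured by first replacing each $\bar{L}_i$ with a small Hamiltonian perturbation so that all $2k$ Lagrangians in $\bar{\mathcal{X}}$ are mutually transverse (or coincide), as required for the filtered $A_{\infty}$-construction of $\widehat{\fuk}^{\ast}(M;\mathcal{X})$ in \S\ref{subsubsec:act-filtr}. With that emendation the argument is complete.
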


\begin{rem}
  (a) There are many examples of manifolds $M$ and classes of exact
  Lagrangians $\mathcal{L}ag^{\ast}(M)$ in $M$ that satisfy the
  condition in the corollary. Additionally, there are slight
  extensions of the construction that make the various results
  applicable in even more cases.  The simplest such example is when
  $\mathcal{F}$ still consists of a family of exact Lagrangians in $M$
  but is not necessarily included in $\mathcal{L}ag^{\ast}(M)$. All
  the construction apply in this case. The interest of this extension
  is that, sometimes, the Yoneda modules $\mathcal{Y}'_{F}$,
  $F\in \mathcal{F}$, are very efficient generators of
  $D\fuk^{\ast}(M)$. To see a concrete example, consider a co-tangent
  disk bundle $D^{\ast}N$ of some manifold
  $N$. Following~\cite{FSS:ex, Fu-Se-Sm} (see
  also~\cite{Bi-Co:spectr}) we can symplectically embed
  $D^{\ast}N\hookrightarrow (M,\omega)$ where $(M,\omega)$ is the
  total space of a Lefschetz fibration. We can take as class
  $\mathcal{L}ag^{\ast}(M)$ the exact Lagrangians in the interior of
  $D^{\ast}N$. One can consider the full Fukaya category of the exact
  Lagrangians in $M$, $\fuk(M)$. Among its objects are some
  \pbnote{Lagrangian matching spheres} $S\subset M$ with the property
  that the pull-back of the Yoneda module
  $\mathcal{Y}_{S}\subset D\fuk(M)$ to a module over the category
  $\fuk^{\ast}(M)$ generates (in a triangulated sense) the category
  $D\fuk^{\ast}(M)$ (the module $\mathcal{Y}_{S}$ can be seen as
  representing the fiber of $D^{\ast}N$).  In this case, one can take
  $\mathcal{F}$ to be this $S$ together with all its possible
  primitives and $\mathcal{F}'$ a small perturbation $S'$ of $S$,
  again together with all its possible primitives.  The results in
  \cite{Bi-Co:spectr} then show that the resulting metric
  $\tilde{D}^{\mathcal{F},\mathcal{F}'}$ on the exact, (geometric)
  Lagrangians in the interior of $D^{\ast}N$ is of bounded diameter.

  (b) In \cite{Bi-Co-Sh:LagrSh} are introduced some other
  fragmentation pseudo-metrics on the space of Lagrangians in
  particular one based on the shadow of Lagrangian cobordisms. Those
  pseudo-metrics are always upper bounds for the persistence type
  pseudo-metrics described here. One difficulty with the shadow
  pseudo-metrics is that, when only using embedded cobordism, one
  does not get a usable weight for all the exact triangles in
  $D\fuk^{\ast}(M)$. Indeed, a cobordism with multiple ends induces an
  iterated cone-decomposition in $D\fuk^{\ast}(M)$ and this iterated
  cone decomposition can be ``weighted'' by the shadow of the
  respective cobordism, but not all exact triangles in the derived
  Fukaya category are induced by embedded cobordism. A possible way to
  remedy this is to work with immersed Lagrangians as in
  \cite{Bi-Co:LagPict} but this becomes considerably heavier
  technically. The relation to cobordism is discussed in more detail below, in \S\ref{sb:lcob}.
\end{rem}


\newcommand{\xx}{\Gamma}

\subsection{Lagrangian cobordism} \label{sb:lcob} The theory of
Lagrangian cobordism exhibits in a geometric way several key notions
from the algebraic theory of TPC's. The purpose of this section is to
give some geometric insight into TPC's coming from the theory of
Lagrangian cobordism. In particular we will show how certain natural
symplectic measurements on Lagrangian cobordism lead to weighted exact
triangles in the persistence Fukaya category.

\subsubsection{Background on Lagrangian
  cobordism} \label{sbsb:cobs-background} Let $(M, \omega = d\lambda)$
be a Liouville manifold, endowed with a given Liouville form
$\lambda$. We endow $\mathbb{R}^2$ with the Liouville form
$\lambda_{\mathbb{R}^2} = x dy$ and its associated standard symplectic
structure
$\omega_{\mathbb{R}^2} = d\lambda_{\mathbb{R}^2} = dx \wedge dy$. Let
$\widetilde{M} := \mathbb{R}^2 \times M$, endowed with the Liouville
form $\widetilde{\lambda} = \lambda_{\mathbb{R}^2} \oplus \lambda$ and
the symplectic structure
$\widetilde{\omega} = d(\widetilde{\lambda}) = \omega_{\mathbb{R}^2}
\oplus \omega$. We denote by
$\pi: \mathbb{R}^2 \times M \longrightarrow M$ the projection.

Below we will assume known the notion of Lagrangian cobordism, as
developed in~\cite{Bi-Co:cob1, Bi-Co:fuk-cob}. For simplicity we will
consider only {\em negative-ended} cobordisms
$V \subset \mathbb{R}^2 \times M$, which means that $V$ has only
negative ends. Moreover, all the cobordisms considered below will be
assumed to be exact with respect to the Liouville form
$\widetilde{\lambda}$ and endowed with a given primitive
$f_{V}: V \longrightarrow \mathbb{R}$ of
$\widetilde{\lambda}|_V$. Denote by $L_1, \ldots, L_k \subset M$ the
Lagrangians corresponding to the ends of $V$ and by
$\ell_1, \ldots, \ell_k \subset \mathbb{R}^2$ the negative horizontal
rays of $V$ so that $V$ coincides at $-\infty$ with
$(\ell_1 \times L_1) \coprod \cdots \coprod (\ell_k \times L_k)$.  We
remark that we adopt here the conventions from~\cite{Bi-Co:fuk-cob}
regarding the ends of $V$, namely we always assume that the $j$'th ray
$\ell_j$ lies on the horizontal line $\{ y = j \}$. Also, we allow
some of the Lagrangians $L_j$ to be void.

Note that $\lambda_{\mathbb{R}^2}|_{\ell_i} = 0$ hence
$f_{V}|_{\ell_i \times L_i}$ is constant in the $\ell_i$ direction for
all $i$. Therefore the Lagrangians $L_i \subset M$ are $\lambda$-exact
and $f_{V}$ induces well defined primitives
$f_{L_i}: L_i \longrightarrow \mathbb{R}$ of $\lambda|_{L_i}$ for each
$i$, namely $f_{L_i}(p) := f_{V}(z_0,p)$ for every $p \in L_i$, where
$z_0$ is any point on $\ell_i$.

\subsubsection{Filtered and persistence categories associated to
  cobordism} \label{sbsb:cobs-categs} As constructed
in~\cite{Bi-Co-Sh:LagrSh, Bi-Co:spectr}, there is a weakly filtered
Fukaya $A_{\infty}$-category $\fuk(M)$ of $\lambda$-exact Lagrangians
with objects being exact Lagrangians $L \subset M$ endowed with a
primitive $f_L:L \longrightarrow \mathbb{R}$ of
$\lambda|_L$. Similarly, there is also a weakly filtered Fukaya
$A_{\infty}$-category of cobordisms $\fuk(\mathbb{R}^2 \times M)$ with
objects being negative-ended exact cobordisms
$V \subset \mathbb{R}^2 \times M$ endowed with a primitive $f_V$ of
$\widetilde{\lambda}|_V$ as above\footnote{For technical reasons one
  needs to enlarge the set of objects in $\fuk(\mathbb{R}^2 \times M)$
  to contain also objects of the type $\gamma \times L$, where
  $\gamma \subset \mathbb{R}^2$ is a curve which outside of a compact
  set is either vertical or coincides with horizontal ends with
  $y$-value being $l \pm \tfrac{1}{10}$, where $l \in
  \mathbb{Z}$.}. We also have the dg-categories of weakly filtered
$A_{\infty}$-modules over each of the previous Fukaya categories,
which we denote by $\text{mod}_{\fuk(M)}$ and
$\text{mod}_{\fuk(\mathbb{R}^2 \times M)}$ respectively.

The fact that the above categories are only weakly filtered rather
than genuinely filtered will not a play an important role in the
discussion below which is anyway not meant to be fully rigorous. In
fact, below we will mostly concentrate on the chain complexes
associated to various Lagrangians and modules, ignoring the higher
order $A_{\infty}$-operations, and these are genuinely filtered.
Moreover, as explained in~\S\ref{subsec-symp} above, it seems possible
with some technical effort and at the expense of generality, to set up
the theory so that these categories become genuinely filtered.

Let $\mathcal{Y}: \fuk(M) \longrightarrow \text{mod}_{\fuk(M)}$ be the
Yoneda embedding (in the framework of weakly filtered
$A_{\infty}$-categories) and
$\fuk(M)^{\nabla} \subset \text{mod}_{\fuk(M)}$ the triangulated
closure of the image of $\mathcal{Y}$. We denote by
$\mathcal{C} = \mathcal{P}H(\fuk(M)^{\nabla})$ the persistence
homological category associated to $\fuk(M)^{\nabla}$.  Up to the
technical issue concerning ``filtered'' vs.~``weakly filtered'', the
category $\mathcal{C}$ is a TPC. By a slight abuse of notation we will
denote the Yoneda module $\mathcal{Y}(L)$ of a Lagrangian
$L \in \text{Obj}(\fuk(M))$ also by $L$.

There is also a Yoneda embedding
$\fuk(\mathbb{R}^2 \times M) \longrightarrow
\text{mod}_{\fuk(\mathbb{R}^2 \times M)}$ and we will typically denote
the Yoneda modules corresponding to cobordisms by calligraphic
letters, e.g.~the Yoneda module corresponding to
$V \in \text{Obj}(\fuk(\mathbb{R}^2 \times M))$ will be denoted by
$\mathcal{V}$.

Under additional assumptions on $M$, on the Lagrangians taken as the
objects of $\fuk(M)$ and the Lagrangians cobordisms of
$\fuk(\mathbb{R}^2 \times M)$, one can set up a graded theory,
endowing the morphisms in $\fuk(M)$ and $\fuk(\mathbb{R}^2 \times M)$
with a $\mathbb{Z}$-grading and the categories with
grading-translation functors. See~\cite{Seidel} for the case of
$\fuk(M)$ and~\cite{Hensel:stab-cond} for grading in the framework of
cobordisms. In what follows we will not explicitly work in a graded
setting, but whenever possible we will indicate how grading fits in
various constructions.

\subsubsection{Iterated cones associated to
  cobordisms} \label{sbsb:cobs-icone} Let
$\gamma \subset \mathbb{R}^2$ be an oriented\footnote{The orientation
  on $\gamma$ is necessary in order to set up a graded Floer theory,
  and also in order to work with coefficients over rings of
  characteristic $\neq 2$. Here we work with
  $\mathbb{Z}_2$-coefficients, therefore if one wants to ignore the
  grading then the orientation of $\gamma$ becomes irrelevant.}  plane
curve which is the image of a proper embedding of $\mathbb{R}$ into
$\mathbb{R}^2$. Viewing $\gamma \subset \mathbb{R}^2$ as an exact
Lagrangian we fix a primitive $f_{\gamma}$ of
$\lambda_{\mathbb{R}^2}|_{\gamma}$. Given an exact Lagrangian
$L \subset M$, consider the exact Lagrangian
$\gamma \times L \subset \mathbb{R}^2 \times M$ and take
$f_{\gamma, L}: \gamma \times L \longrightarrow \mathbb{R}$,
$f_{\gamma, L}(z, p) = f_{\gamma}(z) + f_{L}(p)$ for the primitive of
$\widetilde{\lambda}|_{\gamma \times L}$. From now on we will make the
following additional assumptions on $\gamma$. The ends of $\gamma$
will be assumed to coincide with a pair of rays $\ell'$, $\ell''$ each
of which is allowed to be either horizontal or vertical. Moreover in
the case of a horizontal ray, the ray is assumed to have
$y$-coordinate which is in $\mathbb{Z} \pm \tfrac{1}{10}$ and in the
case of vertical rays we assume the rays have $x$-coordinate being
$0$.

Below we will mainly work with the following two types of such
curves. The first one is
$\gamma^{\uparrow} = \{ x = 0\} \subset \mathbb{R}^2$ (i.e. the
$x$-axis with its standard orientation) and we take
$f_{\gamma^{\uparrow}} \equiv 0$. Then for every exact Lagrangian
$L \subset M$ we can identify $f_{\gamma^{\uparrow},L}$ with $f_L$ in
the obvious way.

The second type is the curve $\gamma_{i,j}$, where $i\leq j$ are two
integers, depicted in Figure~\ref{f:curves} and oriented by going from
the lower horizontal end to the upper horizontal end. Note that by
taking $\gamma_{i,j}$ close enough to dotted polygonal curve in
Figure~\ref{f:curves} we can assume that
$\lambda_{\mathbb{R}^2}|_{\gamma_{i,j}}$ is very close to $0$. We fix
the primitive $f_{\gamma_{i,j}}$ to be the one that vanishes on the
vertical part of $\gamma_{i,j}$.
\begin{figure}[htbp]
   \begin{center}
     \includegraphics[scale=0.63]{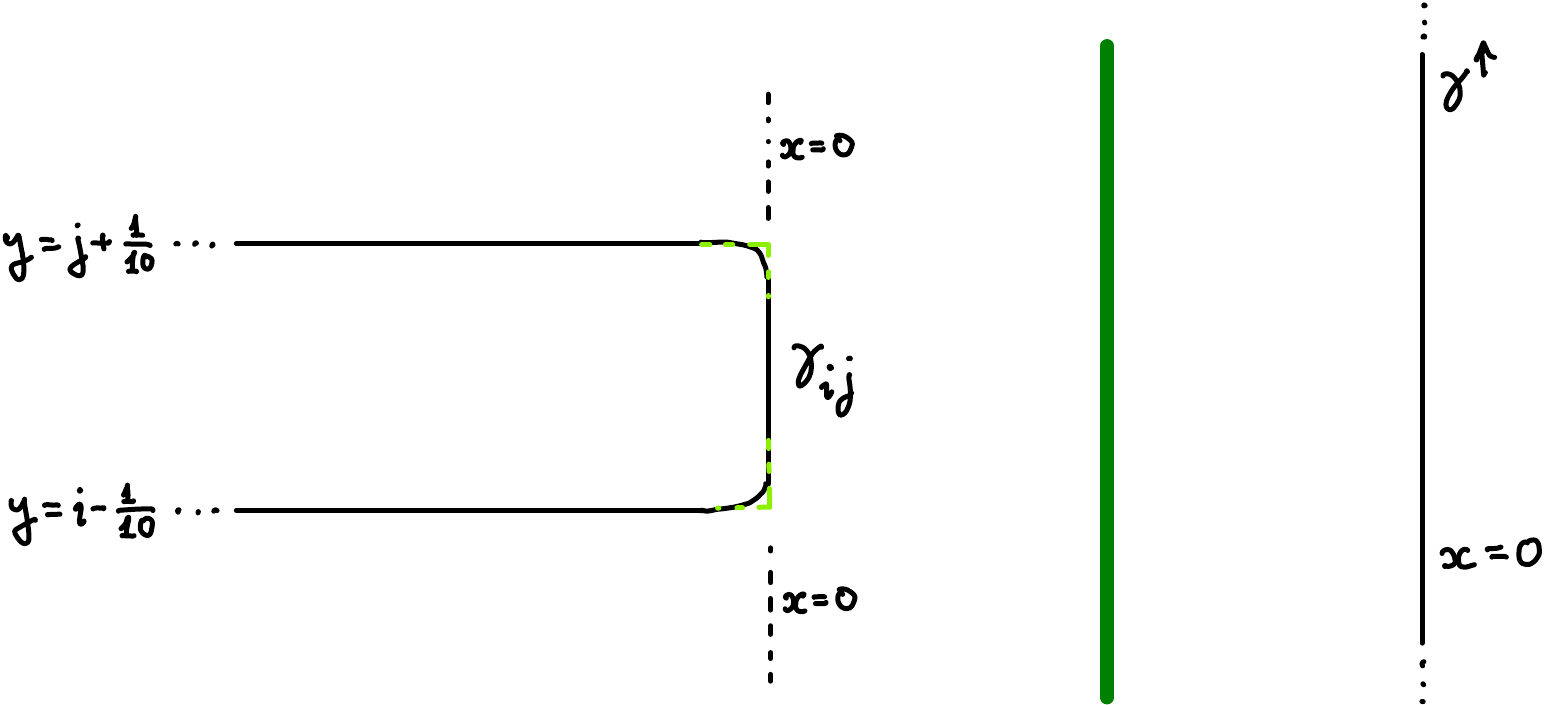}
   \end{center}
   \caption{The curves $\gamma_{i,j}$ and $\gamma^{\uparrow}$.}
   \label{f:curves}
\end{figure}

Let $\gamma \subset \mathbb{R}^2$ and $f_{\gamma}$ be as above.
Following~\cite{Bi-Co:fuk-cob, Bi-Co:cob1} there is a (weakly)
filtered $A_{\infty}$-functor, called an inclusion functor,
$\mathcal{I}_{\gamma}: \fuk(M) \longrightarrow \fuk(\mathbb{R}^2
\times M)$ which sends the object $L \in \text{Obj}(\fuk(M))$ to
$\gamma \times L \in \text{Obj}(\fuk(\mathbb{R}^2 \times M))$.

Let $V \subset \mathbb{R}^2 \times M$ be a Lagrangian
cobordism. Denote by $\mathcal{V}$ the (weakly) filtered Yoneda module
of $V$ and consider the pull-back module
$\mathcal{I}_{\gamma}^* \mathcal{V}$.  Note that for every exact
Lagrangian $N \subset M$ we have
$$\mathcal{I}_{\gamma}^* \mathcal{V} (N) = CF(\gamma \times N, V)$$
as {\em filtered chain complexes}. (The filtrations are induced by
$f_V$, $f_{\gamma,N}$ and by the Floer data in case it is not
trivial.)

Assume that the ends of $V$ are $L_1, \ldots, L_k$ and moreover that
$V$ is cylindrical over $(-\infty, -\delta] \times \mathbb{R}$ for
some $\delta>0$. (This can always be achieved by a suitable
translation along the $x$-axis.)  Fix $1 \leq i \leq k-1$ and consider
the curve $\gamma_{i,i+1}$ and the pull-back (weakly) filtered module
$\mathcal{I}_{\gamma_{i,i+1}}^*\mathcal{V}$. By the results
of~\cite{Bi-Co:fuk-cob, Bi-Co-Sh:LagrSh} there exists a module
homomorphism
$$\xx_{V, \gamma_{i,i+1}}: L_{i+1} \longrightarrow L_i$$
which preserves action filtrations and such that
$$\mathcal{I}_{\gamma_{i,i+1}}^*\mathcal{V} = T^{d_i}\text{cone}(L_{i+1}
\xrightarrow{\xx_{V, \gamma_{i, i+1}}} L_i)$$ as (weakly) filtered
modules. Here $T$ stand for the grading-translation functor and the
amount $d_i \in \mathbb{Z}$ by which we translate depends only on
$i$. We will be more precise about the values of $d_i$ later on.  The
map $\xx_{V, \gamma_{i,i+1}}$ is canonically defined by $V$ and
$\gamma_{i,i+1}$, up to a boundary in
$\hom^{\leq 0}_{\text{mod}_{\fuk(M)}}(L_{i+1}, L_i)$. Therefore it
gives rise to a well defined morphism in the homological persistence
category
$\mathcal{C}_0 = H\bigl(\hom^{\leq 0}_{\text{mod}_{\fuk(M)}}(L_{i+1},
L_i)\bigr)$ which by abuse of notation we still denote by
$\xx_{V, \gamma_{i,i+1}} \in \hom_{\mathcal{C}_0}(L_{i+1}, L_i)$.

The above can be generalized to several consecutive ends in a row as
follows. Fix $1 \leq i \leq j \leq k$.
\begin{figure}[htbp]
  \begin{center}
    \includegraphics[scale=0.63]{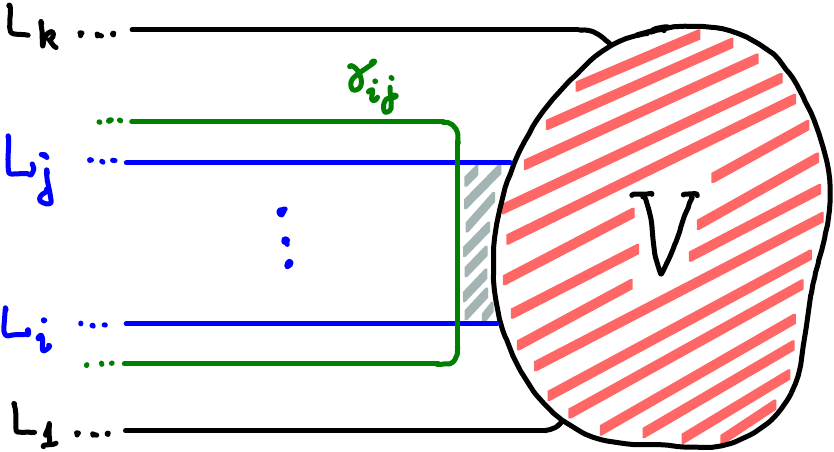}
  \end{center}
  \caption{The module $\mathcal{I}_{\gamma_{i,j}}^* \mathcal{V}$.}
  \label{f:map-ij}
\end{figure}
The pull-back module
$\mathcal{I}_{\gamma_{i,j}}^* \mathcal{V}$ can be identified with an
iterated cone of the type:
\begin{equation} \label{eq:icone-V} \mathcal{I}_{\gamma_{i,j}}^*
  \mathcal{V} = \text{cone}\Bigl(L_j \longrightarrow
  \text{cone}\bigl(L_{j-1} \longrightarrow \ldots \longrightarrow
  \text{cone}(L_{i+1} \longrightarrow L_i) \ldots \bigr)\Bigr),
\end{equation}
where similarly to the case $\xx_{V, \gamma_{i,i+1}}$, all the maps in
the iterated cone are module homomorphism that preserve filtrations.
See figure~\ref{f:map-ij}.

Note that there are some grading-translations in~\eqref{eq:icone-V}
which we have ignored. We will be more precise about this point later
on when we consider iterated cones involving three objects.

\begin{remnonum}
  If $V$ has $k$ ends then for every $i\leq 1$ and $k \leq l$ we have
  $\mathcal{I}_{\gamma_{i,l}}^*\mathcal{V} =
  \mathcal{I}_{\gamma^{\uparrow}}^*\mathcal{V}$.
\end{remnonum}

Finally, fix $1 \leq i \leq l < j \leq k$, and consider the two
modules $\mathcal{I}_{\gamma_{i,l}}^*\mathcal{V}$ and
$\mathcal{I}_{\gamma_{l+1, j}}^*\mathcal {V}$. There is a module
homomorphism
$\xx_{V, \gamma_{i,l}, \gamma_{l+1, j}}:
\mathcal{I}_{\gamma_{l+1,j}}^*\mathcal {V} \longrightarrow
\mathcal{I}_{\gamma_{i,l}}^*\mathcal{V}$ which preserves filtrations.
Note that we have
$$\mathcal{I}^*_{\gamma_{i,j}} \mathcal{V} =
T^{d_{i,l,j}}\text{cone}(\mathcal{I}^*_{\gamma_{l+1,j}}\mathcal{V}
\xrightarrow{\xx_{V, \gamma_{i,l}, \gamma_{l+1,j}}}
\mathcal{I}^*_{\gamma_{i,l}} \mathcal{V}),$$ for some
$d_{i,l,j} \in \mathbb{Z}$. See Figure~\ref{f:maps-ilj}.

\begin{figure}[htbp]
   \begin{center}
     \includegraphics[scale=0.63]{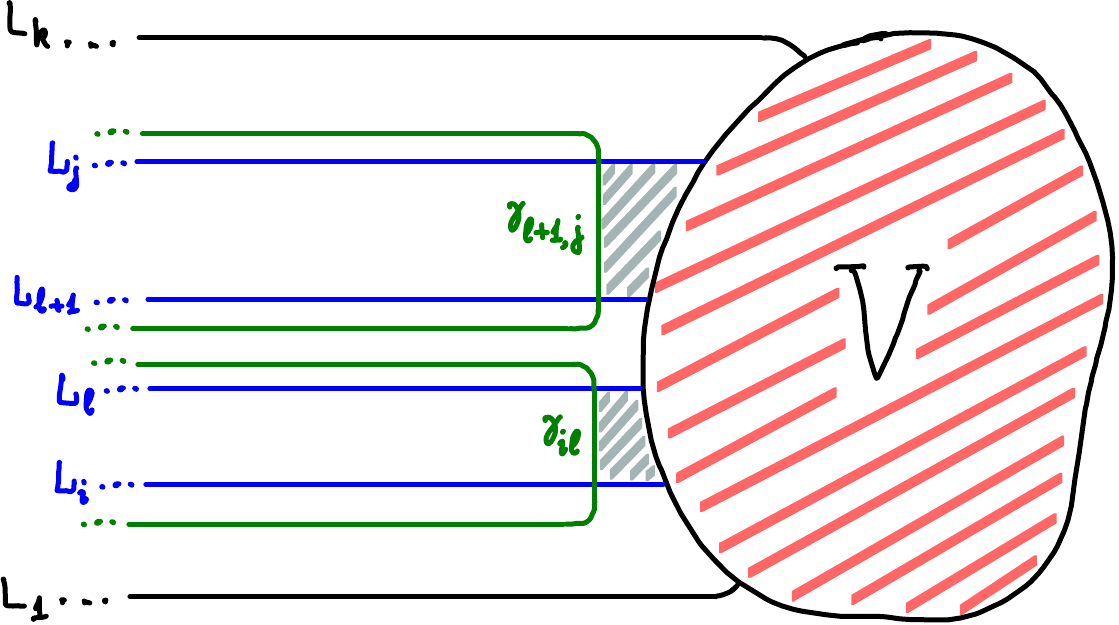}
   \end{center}
   \caption{The modules $\mathcal{I}_{\gamma_{l+1,j}}^* \mathcal{V}$
     and $\mathcal{I}_{\gamma_{i,l}}^* \mathcal{V}$.}
   \label{f:maps-ilj}
\end{figure}

\subsubsection{Shadow of cobordisms} \label{sbsb:cobs-shadow} Recall
from~\cite{Bi-Co-Sh:LagrSh, Co-Sh} that for each Lagrangian cobordism
$V \subset \mathbb{R}^2 \times M$ we can associate its shadow
$\mathcal{S}(V)$ defined by
\begin{equation} \label{eq:shadow} \mathcal{S}(V) := Area \bigl(
  \mathbb{R}^2 \setminus \mathcal{U} \bigr),
\end{equation}
where $\mathcal{U} \subset \mathbb{R}^2 \setminus \pi(V)$ is the union
of all the {\em unbounded} connected components of
$\mathbb{R}^2 \setminus \pi(V)$.

\subsubsection{$r$-acyclic objects} \label{sbsb:cob-r-acyc} Let
$V \subset \mathbb{R}^2 \times M$ be a cobordism with ends
$L_1, \ldots, L_k$ such that $V$ is cylindrical over
$(-\infty, -\delta] \times \mathbb{R}$ for some $\delta>0$.  Let
$\mathcal{S}(V)$ be the shadow of $V$ and denote by $e$ the area of
the region to the right of $\gamma^{\uparrow}$ enclosed between
$\gamma^{\uparrow}$ and the projection to $\mathbb{R}^2$ of the
non-cylindrical part of $V$. See Figure~\ref{f:cob-1}.

\begin{figure}[htbp]
   \begin{center}
     \includegraphics[scale=0.63]{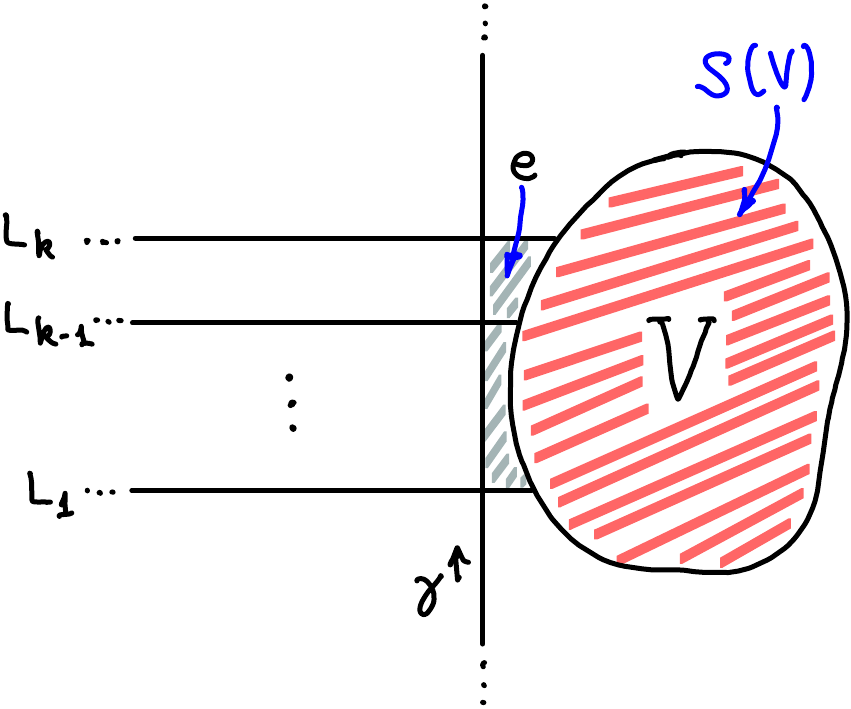}
   \end{center}
   \caption{The projection of a cobordism to $\mathbb{R}^2$, its
     shadow $\mathcal{S}(V)$ and the area $e$.}
   \label{f:cob-1}
\end{figure}

The module
$\mathcal{I}_{\gamma^{\uparrow}}^*\mathcal{V} =
\mathcal{I}_{\gamma_{1,k}}^*\mathcal{V}$ (which also has the
description~\eqref{eq:icone-V} with $i=1$, $j=k$) is $r$-acyclic,
where $r := \mathcal{S}(V) + e$.  This can be easily seen from the
fact that $V$ can be Hamiltonian isotoped to a cobordism $W$ which is
disjoint from $\gamma^{\uparrow} \times M$ via a compactly supported
Hamiltonian isotopy whose Hofer length is $\leq r$. Standard Floer
theory then implies that
$\mathcal{I}_{\gamma^{\uparrow}}^*\mathcal{V}$ is acyclic of boundary
depth $\leq r$. In the terminology used in this paper this means that
the object $\mathcal{I}_{\gamma^{\uparrow}}^*\mathcal{V}$ is
$r$-acyclic.

\begin{remark} \label{r:cobs-e} The area summand $e$ that adds to the
  shadow of $V$ in the quantity $r$ can be made arbitrarily small at
  the expense of applying appropriate shifts to each of the ends $L_i$
  of $V$. One way to do this is to replace the curve
  $\gamma^{\uparrow}$ by a curve $\gamma_{V}^{\uparrow}$ that
  coincides with $\gamma^{\uparrow}$ outside a compact subset and such
  that $\gamma_V^{\uparrow}$ approximates the shape of the projection
  of the non-cylindrical part of $V$ in such a way that the area $e'$
  enclosed between $\gamma^{\uparrow}_V$ and $\pi(V)$ is small. See
  Figure~\ref{f:approx}. One can apply a similar modification to the
  curves $\gamma_{i,j}$. Note that, in contrast to
  $f_{\gamma^{\uparrow}}$, the primitive $f_{\gamma_V^{\uparrow}}$ can
  no longer be assumed to be $0$ (a similar remarks applies to the
  primitives of the modifications of $\gamma_{i,j}$). As a result the
  cone decompositions~\eqref{eq:icone-V} associated to the pullback
  modules $\mathcal{I}^*_{\gamma_{i,j}}$ will have the same shape but
  each of the Lagrangians $L_i, \ldots, L_j$ will gain a different
  shift in action. Note that this will also result in ``tighter''
  weighted exact triangles than the ones we obtain below, in the sense
  of weights and various shifts on the objects forming these
  triangles.

  \begin{figure}[htbp]
   \begin{center}
     \includegraphics[scale=0.63]{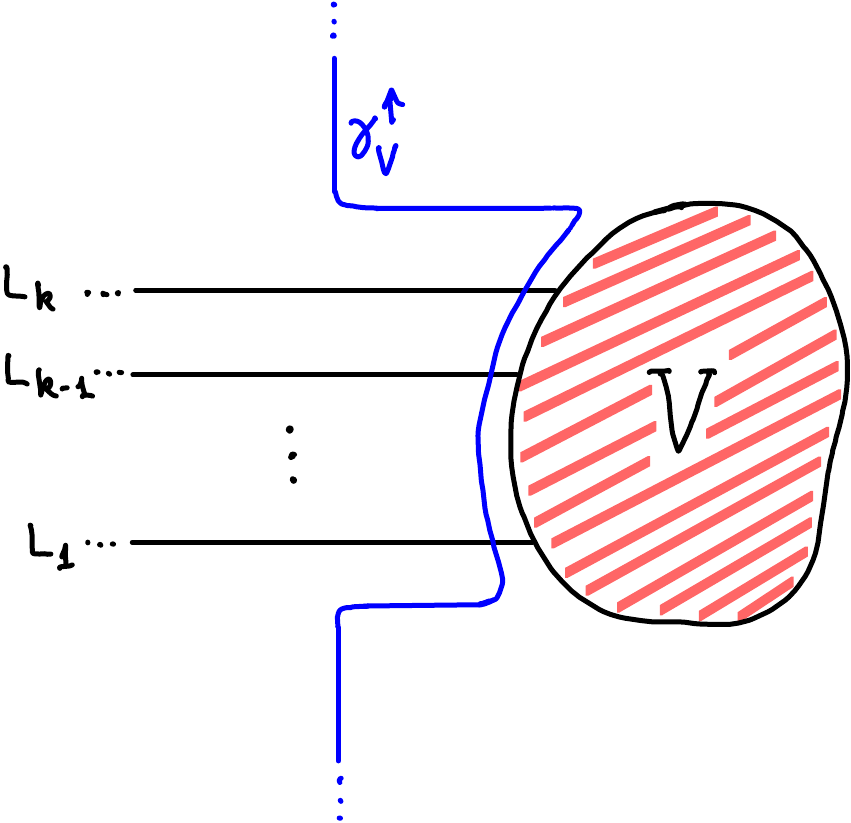}
   \end{center}
   \caption{Replacing $\gamma^{\uparrow}$ by a curve
     $\gamma_V^{\uparrow}$ better approximating  the shape of $V$.}
   \label{f:approx}
\end{figure}

To simplify the exposition, below we will not make these modifications
and stick to the curves $\gamma^{\uparrow}$ and $\gamma_{i,j}$ as
defined above, at the expense of $e$ not being necessarily small and
the weights of the triangles not being necessarily optimal.
\end{remark}

\subsubsection{$r$-isomorphisms} \label{sbsb:cob-r-iso} We begin by
visualizing the canonical map $\eta^{L}_r: \Sigma^r L \to L$, where
$L \subset M$ is an exact Lagrangian. Consider the curve
$\gamma \subset \mathbb{R}^2$ depicted in Figure~\ref{f:eta-r}, and
let $r$ be the area enclosed between $\gamma$ and
$\gamma^{\uparrow}$. Let $f_{\gamma}$ be the unique primitive of
$\lambda_{\mathbb{R}^2}|_{\gamma}$ that vanishes along the lower end
of $\gamma$. Note that $f_{\gamma} \equiv r$ along the upper end of
$\gamma$.  Let $V = L \times \gamma$ an set $f_{V} := f_{\gamma,L}$.
Therefore, the primitives induced by $V$ on its ends are as follows:
the primitive on the lower end coincides with $f_L$, while the
primitive on the upper end coincides with $f_L + r$. In other words,
the cobordism $V$ has ends $L$ and $\Sigma^r L$. Moreover, the map
$\xx_{V, \gamma_{1,2}}: \Sigma^r L \to L$ induced by $V$ and
$\gamma_{1,2}$ is precisely $\eta^L_r$.

\begin{figure}[htbp]
   \begin{center}
     \includegraphics[scale=0.63]{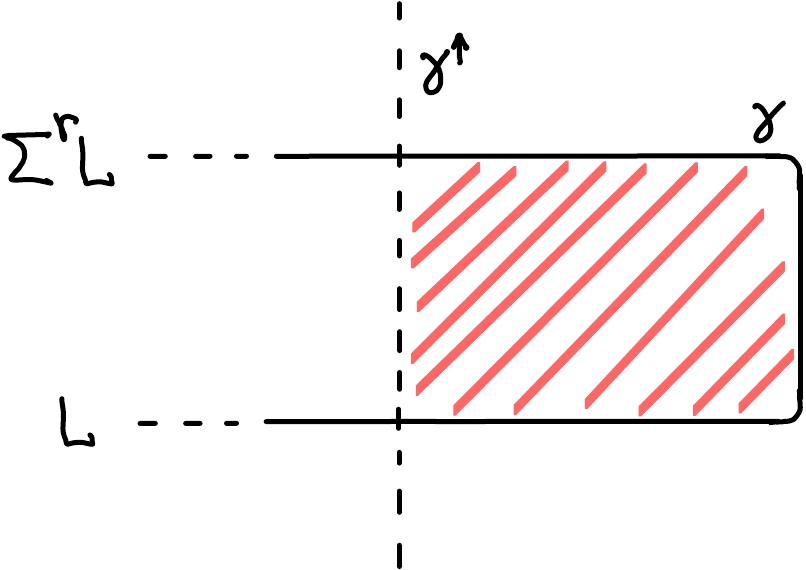}
   \end{center}
   \caption{The cobordism inducing $\eta^L_r: \Sigma^r L \to L$.}
   \label{f:eta-r}
\end{figure}

Another source of geometric $r$-isomorphisms comes from Hamiltonian
isotopies. Let $\phi_t^{H}$, $t \in [0,1]$ be a Hamiltonian isotopy
and let $L \subset M$ be an exact Lagrangian. The Lagrangian
suspension construction gives rise to an exact Lagrangian cobordism
between $L$ and $\phi_1^H(L)$. After bending the ends of that
cobordism to become negative one obtains a Lagrangian cobordism with
negative ends being $L$ and $\phi_1^H(L)$. See
Figure~\ref{f:suspension}. The primitive $f_V$ on $V$ is uniquely
defined by the requirement that $f_V$ coincides with $f_L$ on the
lower end of $V$. The shadow $\mathcal{S}(V)$ of this cobordism equals
the Hofer length of the isotopy $\{\phi_t^H\}_{t \in [0,1]}$, and we
obtain an $r$-isomorphism $\phi_1^H(L) \to L$.

\begin{figure}[htbp]
   \begin{center}
     \includegraphics[scale=0.63]{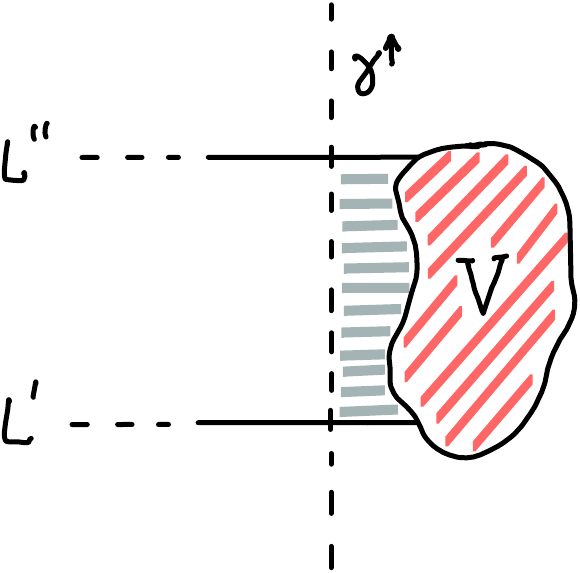}
   \end{center}
   \caption{The Lagrangian suspension of a Hamiltonian isotopy, after
     bending the ends.}
   \label{f:suspension}
\end{figure}

More generally, let $V$ be a Lagrangian cobordism with ends
$L_1, \ldots, L_k$. Let $r = \mathcal{S}(V) + e$ as above.  Fix
$1 \leq l < k$. As explained above we have, up to an overall
translation in grading,
$$\mathcal{I}^*_{\gamma^{\uparrow}} \mathcal{V} = 
\mathcal{I}^*_{\gamma_{1,k}} \mathcal{V} =
\text{cone}(\mathcal{I}^*_{\gamma_{l+1,k}}\mathcal{V}
\xrightarrow{\xx_{V, \gamma_{1,l}, \gamma_{l+1,k}}}
\mathcal{I}^*_{\gamma_{1,l}} \mathcal{V}),$$ and since
$\mathcal{I}^*_{\gamma^{\uparrow}} \mathcal{V}$ is $r$-acyclic the map
$\xx_{V, \gamma_{1,l}, \gamma_{l+1,k}}:
\mathcal{I}^*_{\gamma_{l+1,k}}\mathcal{V} \longrightarrow
\mathcal{I}^*_{\gamma_{1,l}} \mathcal{V}$ is an $r$-isomorphism.

\subsubsection{Weighted exact triangles} \label{sbsb:cobs-exact-t}

Let $V \subset \mathbb{R}^2 \times M$ be a Lagrangian cobordism with
ends $L_1, \ldots, L_k$. Let $V' \subset \mathbb{R}^2 \times M$ be the
cobordism obtained from $V$ by bending the upper end $L_k$ clockwise
around $V$ so that it goes beyond the end $L_1$ as in
Figure~\ref{f:bend-1}. To obtain a cobordism according to our
conventions, we need to further shift $V'$ upwards by one so that its
lower end has $y$-coordinate $1$ (instead of $0$). Clearly $V'$ is
also exact and $\mathcal{S}(V') = \mathcal{S}(V)$. We fix the
primitive $f_{V'}$ for $V'$ to be the unique one that coincides with
$f_V$ on the ends $L_1, \ldots, L_{k-1}$. A simple calculation shows
that $f_{V'}$ induces on the lowest end of $V'$ the primitive
$f_{L_{k}} - r$, where $r = \mathcal{S}(V) + e + \epsilon$. (Here
$\epsilon$ can be assumed to be arbitrarily small. It can be estimated
from above by the area enclosed between the bent end corresponding to
$\ell_k$, the projection to $\mathbb{R}^2$ of the non-cylindrical part
of $V$, $\ell_1$ and $\gamma^{\uparrow}$. See Figure~\ref{f:bend-1}.)

\begin{figure}[htbp]
   \begin{center}
     \includegraphics[scale=0.63]{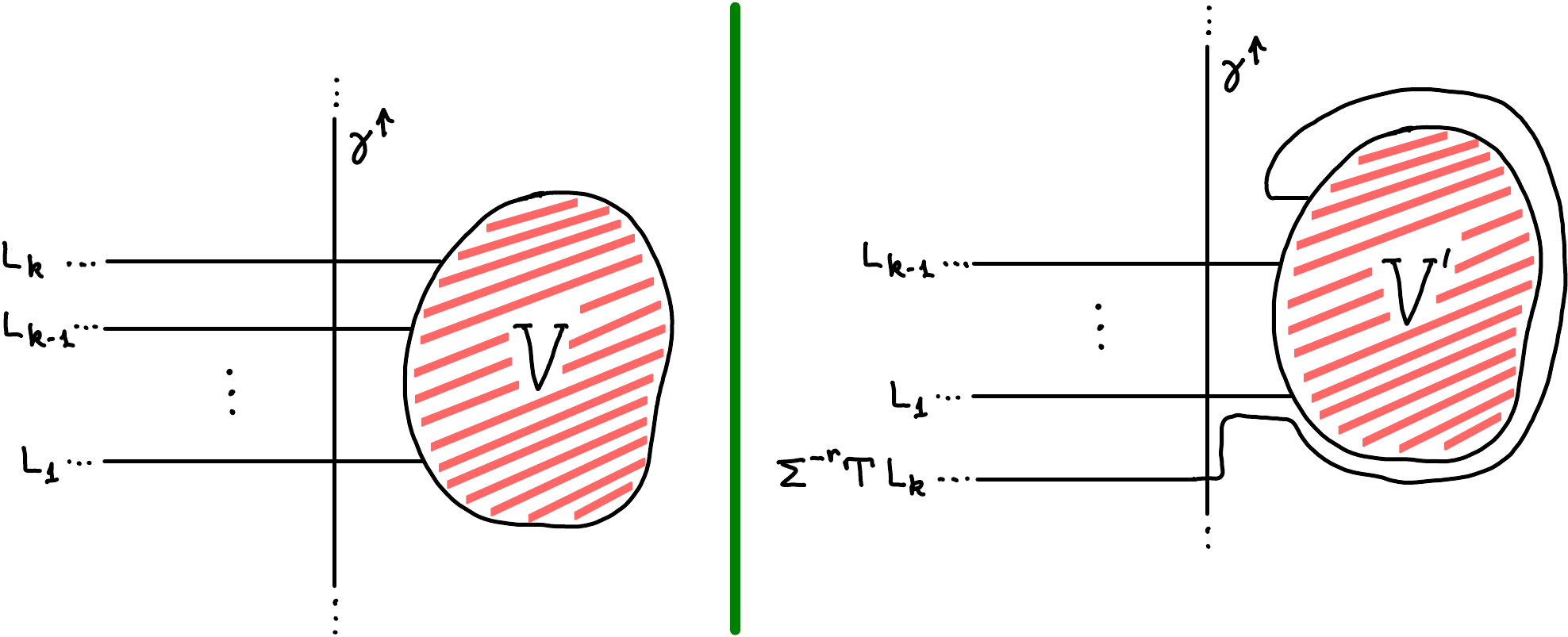}
   \end{center}
   \caption{Bending the upper end $L_k$ of a cobordism $V$.}
   \label{f:bend-1}
\end{figure}

Taking into account grading (in case $V$ is graded in the sense of
Floer theory), one can easily see that the grading on the lowest end
of $V'$ is translated by $1$ in comparison to $L_k$. Summing up, the
above procedure transforms a cobordism $V$ with ends
$L_1, \ldots, L_k$ into a cobordism $V'$ with ends
$\Sigma^{-r}TL_k, L_1, \ldots, L_{k-1}$.

Similarly, one can take $V$ and bend its lowest end $L_1$
counterclockwise around $V$ and obtain a new cobordism $V''$ with ends
$L_2, \ldots, L_k, \Sigma^r T^{-1}L_1$ and with
$\mathcal{S}(V'') = \mathcal{S}(V)$.

We are now in position to describe geometrically weighted exact
triangles.  Let $V$ be a cobordism with three ends, which for
compatibility with Definition~\ref{dfn-set} we denote by $C, B, A$
(going from the lowest end upward). See Figure~\ref{f:tr-abc}.

\begin{figure}[htbp]
   \begin{center}
     \includegraphics[scale=0.63]{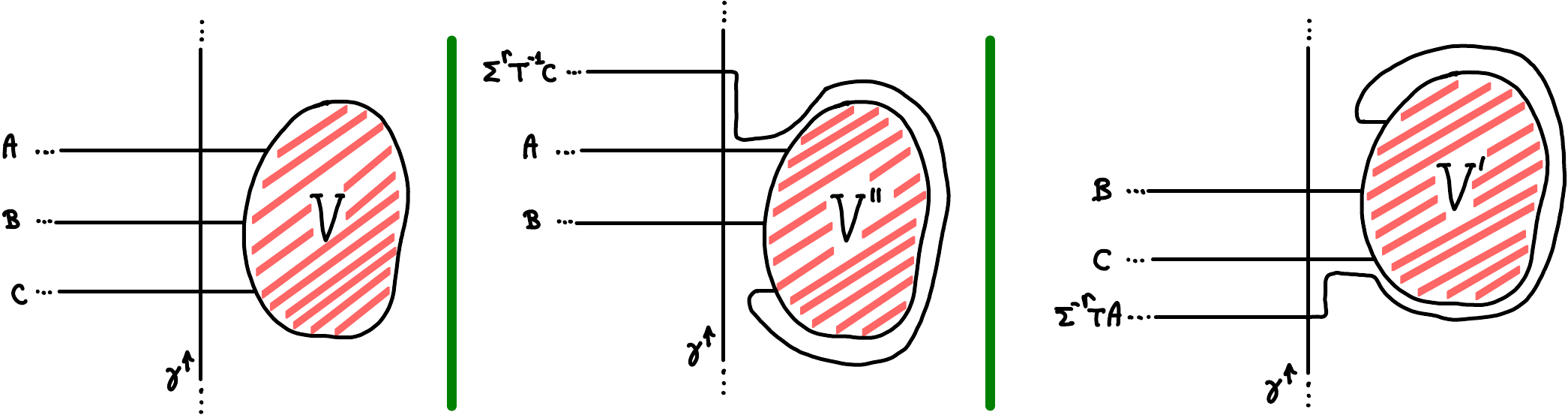}
   \end{center}
   \caption{A cobordism leading to an exact triangle of weight $r$.}
   \label{f:tr-abc}
\end{figure}

Let $r = \mathcal{S}(V) + e + \epsilon$. Put
$$\bar{u} := \xx_{V, \gamma_{2,3}} : A \to B, \quad \bar{v} :=
\xx_{V, \gamma_{2,1}}: B \to C.$$ Consider also the counterclockwise
rotation $V''$ of $V$ whose ends are $B, A, \Sigma^r T^{-1}C$. Let 
$$\bar{w} := \Sigma^{-r} T \xx_{V'', \gamma_{2,3}} =
\xx_{V', \gamma_{1,2}} : C \to \Sigma^{-r}T A.$$ We claim that
\begin{equation} \label{eq:ex-tr-cob-1} A \xrightarrow{\bar{u}} B
  \xrightarrow{\bar{v}} C \xrightarrow{\bar{w}} \Sigma^{-r}TA
\end{equation}
is a strict exact triangle of weight $r$.  This triangle is based on
the genuinely exact triangle from $\C_0$:
\begin{equation} \label{eq:ex-tr-cob-2} A \xrightarrow{u} B
  \xrightarrow{v} C' \xrightarrow{w} TA,
\end{equation}
where $u = \bar{u}$,
$C' = \mathcal{I}^*_{\gamma_{2,3}}V = \text{cone}(A \xrightarrow{u}
B)$, $v: B \to C'$ is the standard inclusion and $w: C' \to TA$ the
standard projection. The $r$-isomorphism $\phi: C' \to C$ and its
right $r$-inverse $\psi: \Sigma^rC \to C'$ are as follows:
$$\phi = \xx_{V, \gamma_{1,1}, \gamma_{2,3}}, \quad
\psi = T\xx_{V'', \gamma_{1,2}, \gamma_{3,3}}: \Sigma^r C\to T
\mathcal{I}_{\gamma_{1,2}}^*V'' = C'.$$ Note that
$\mathcal{I}_{\gamma_{1,2}}^*V'' = T^{-1}\mathcal{I}_{\gamma_{2,3}}^*V
= T^{-1}C' = T^{-1}\text{cone}(A \xrightarrow{u} B)$.

The fact that $\psi$ is a right $r$-inverse to $\phi$ and that these
maps fit into the diagram~\eqref{dfn-set-2} follows from standard
arguments in Floer theory. Note that these statements do not hold on
the chain level, but only in $\C_0$.
\subsubsection{Rotation of triangles} \label{sbsb:cobs-rot} Let $V$ be
a cobordism with three ends $C, B, A$ as in~\S\ref{sbsb:cobs-exact-t}
and consider the exact triangle~\eqref{eq:ex-tr-cob-1} of weight
$r$. Let $V'$ be the clockwise rotation of $V$, with ends
$B, C, \Sigma^{-r}TA$. The exact triangle associated to $V'$ is
\begin{equation} \label{eq:ex-tr-cob-3} B \xrightarrow{\bar{v}} C
  \xrightarrow{\bar{w}} \Sigma^{-r}TA \xrightarrow{\bar{u}'}
  \Sigma^{-r-\epsilon'} TB, 
\end{equation}
where $\epsilon'$ can be assumed to be arbitrarily small.  It is not
hard to see that in $\C_{\infty}$ (up to identifying objects with
their shifts and ignoring signs in the maps) the exact
triangle~\eqref{eq:ex-tr-cob-3} is precisely the rotation of the exact
triangle corresponding to~\eqref{eq:ex-tr-cob-1} in $\C_{\infty}$.

The above shows that rotation of weighted exact triangles coming from
cobordisms with three ends preserves weights (up to an arbitrarily
small error). Interestingly this is sharper than the case in a general
TPC, described in Proposition~\ref{prop-rot}, where the weight of a
rotated triangle doubles. See also Remark~\ref{r:rotation-weight}.

\subsubsection{Weighted octahedral property} \label{sbsb:cobs-oct}

The weighted octahedral formula from Proposition~\ref{prop-w-oct}
admits too a geometric interpretation in the realm of cobordisms. We
will not give the details of this construction here. Instead we will
briefly explain the cobordism counterpart of cone refinement and why
it behaves additively with respect to weights, as described
algebraically in Proposition~\ref{prop-cone-ref}. Note that weighted
cone-refinement is one of the main corollaries of the weighted
octahedral property.

For simplicity we focus here on the case described in
Example~\ref{ex:crefine} and ignore the grading-translation $T$.
Assume that we have two cobordisms $V$ with ends $X, B, A$ and $U$
with ends $A, F, E$.

These cobordisms induce two weighted exact triangles of weights
$r = \mathcal{S}(V) + e_V + \epsilon$ and
$s = \mathcal{S}(U) + e_U + \epsilon$. By gluing the two cobordisms
along the ends corresponding to $A$ we obtain a new cobordism $W$ with
four ends $X, B, F, E$. See Figure~\ref{f:glue-cob}. By the previous
discussion this exhibits $X$ as an iterated cone with linearization
$(B, F, E)$ which corresponds precisely to the algebraic cone
refinement of $X$ with linearization $(B,A)$ by $A$ with linearization
$(F, E)$.

Clearly the shadow $\mathcal{S}(W)$ of $W$ equals to
$\mathcal{S}(U) + \mathcal{S}(V)$, and the total weight of the cone
decomposition of $X$ associated to $W$ is $r+s$. Note again, that by
the procedure explained in Remark~\ref{r:cobs-e} one can make the
errors $e_V$, $e_U$ small at the expense of applying some shifts to
the elements of the linearization.

\begin{figure}[htbp]
   \begin{center}
     \includegraphics[scale=0.63]{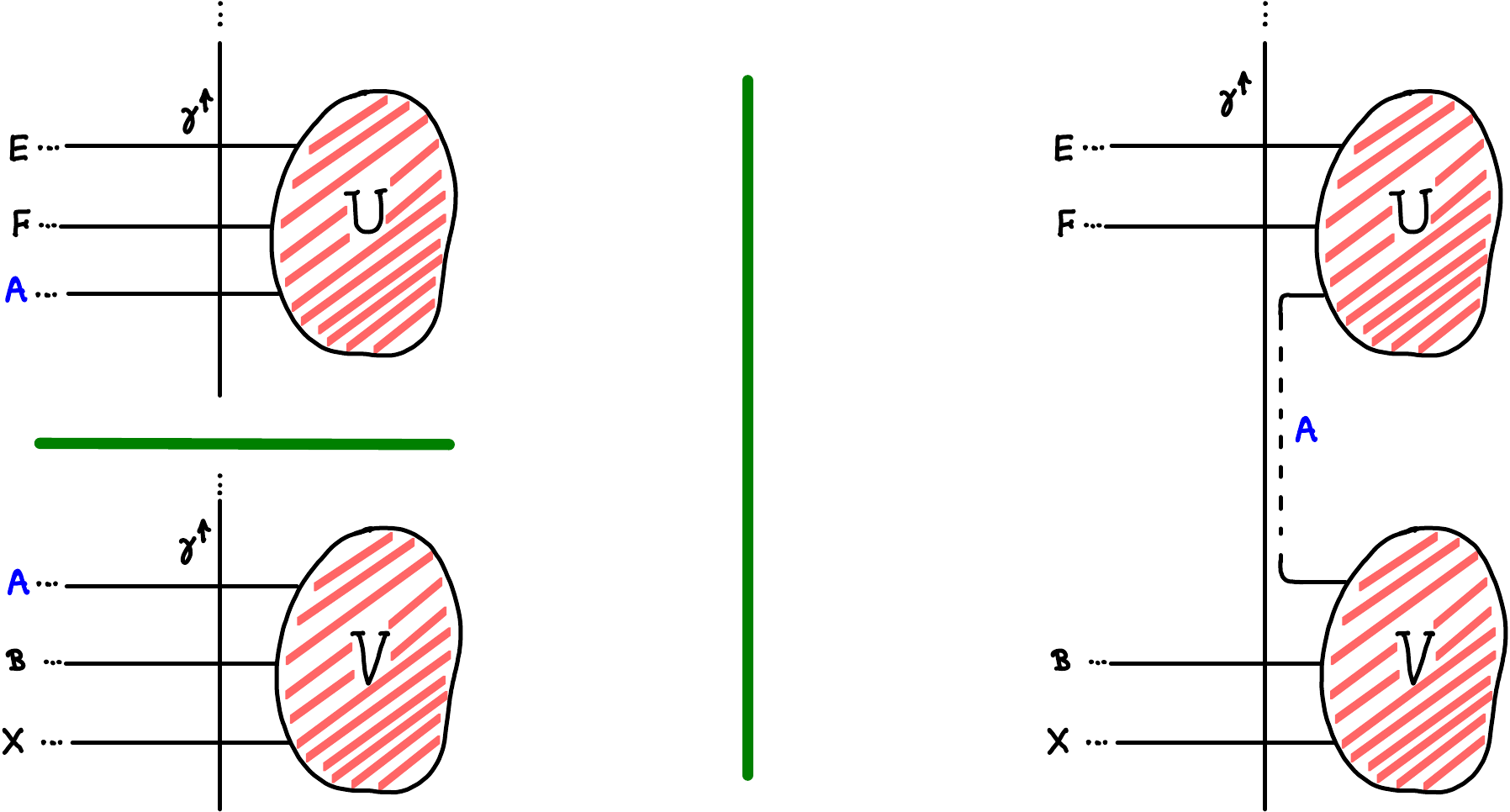}
   \end{center}
   \caption{Cone refinement via gluing cobordisms along two ends.}
   \label{f:glue-cob}
\end{figure}

\subsection{Work in progress (towards algebra 102)}
There are a few immediate questions and properties that we are currently working on  but 
have decided not to include in this paper.

\begin{itemize}
\item[(a)] We have not discussed idempotents and split completion in the triangulated persistence category setting. It turns out that the notion natural in this setting is a sort of refinement of the usual Karoubi completion. Given a TPC $\mathcal{C}$, it is obvious that the starting point in the construction is to
consider the split completion of the $0$-level, $\mathcal{C}_{0}$ in the usual triangulated category sense. However, it is immediate to see that this will not lead to an $\infty$-level that is split complete. One needs to also complete with respect to  pseudo-idempotents $e:\Sigma^{r} X\to X$ that are such that $e\circ \Sigma^{r}e=e\circ \eta_{r}$. This turns out to be possible but a bit more complicated compared to Karoubi completion.
\item[(b)] Given a TPC, $\C$, there are two natural associated Grothendieck groups: $K_{0}^{0}= K_{0}(\mathcal{C}_{0})$ and $K_{0}^{\infty}=K_{0}(\C_{\infty})$. Assume that $d^{\mathcal{F}}$ is 
one of the fragmentation pseudo-metrics on $\C$ as in Definition \ref{dfn-frag-met} (one could also use instead the pseudo-metrics such as $\overline{d}^{\mathcal{F}}$ on $\C_{\infty}$ as reviewed in \S\ref{subsec:rem-nondeg}) associated to a family $\mathcal{F}$ that is invariant with respect to shift (in the sense that if $F\in \mathcal{F}$, then $\Sigma^{r}F\in \mathcal{F}$ for all $r\in \R$). It is not difficult to see that such a pseudo-metric induces a pseudo-metric $d^{\mathcal{F}}$ on
$K_{0}^{0}$. This pseudo-metrics is translation invariant and thus characterized by 
a group-pseudo-norm defined by $\nu^{\mathcal{F}}(a)=d^{\mathcal{F}}(a,0)$. 
Moreover, the shift functor $\Sigma^{r}$ induces a flow on $K_{0}^{0}$ and $\nu^{\mathcal{F}}$ is constant along the orbits of this flow. In short, $K_{0}^{0}$ is a topological abelian group with an interesting
structure that can be used to study the action of various groups of automorphisms of the initial category.
For instance, in the symplectic examples, the Hamiltonian diffeomorphism group, $\mathrm{Ham}(M)$, of the underlying symplectic manifold $(M,\omega)$ admits a natural, continuous representation in $\mathrm{Aut}(K_{0}^{0})$ (where the relevant
TPC is one of the filtered Fukaya categories discussed above). Here $\mathrm{Aut}(K_{0}^{0})$ are the group
automorphisms endowed with a topology induced by the group pseudo-norm defined by $\overline{\nu}^{\mathcal{F}}(\phi)=\sup_{a} \nu^{\mathcal{F}}(\phi(a)-a)$. The topology on $\mathrm{Ham}(M)$
is the one induced by the Hofer norm. Given the properties of the  Hamiltonian diffeomorphism group (for instance, the fact that it is simple for closed manifolds), it seems to be an interesting question to study when this
representation is non-trivial. There are similar properties for the group
$K_{0}^{\infty}$, except that this group is much smaller.
\item[(c)] Another natural direction of investigation is localization and derived functors in the setting of TPCs.
There are two natural classes with respect to which one can localize: the $0$-isomomorphism and
the $r$-isomorphisms with  $r\in (0,\infty)$ variable. In both cases, one can construct filtered versions of the localization of $\mathcal{C}$
with respect to the respective class of morphisms and one can verify that both localizations are again TPCs.
It is also possible to consider persitence derived categories associated to persistence categories as well as 
persistence derived functors associated to persistence functors.
\item[(d)] In \cite{Bi-Co:LagPict} are introduced certain cobordism categories called {\em with surgery models} and it is shown that they give rise to triangulated categories with objects immersed Lagrangian submanifolds and with morphisms certain classes of immersed cobordisms relating them. It is expected that by taking 
into account the shadows of these cobordisms - in the sense in \S\ref{sbsb:cobs-shadow} -  one can extract a TPC refinement of that category.
\end{itemize}
Of course, there are also a variety of other algebraic and geometric questions, such as reformulating parts of this machinery in a Waldhausen type category setting (as mentioned in \S\ref{subsec:top-sp})  as well as symplectic applications deduced by pursuing the properties of the  TPCs relevant in that context.


%

\bibliographystyle{plain}
\bibliography{biblio_tpc}

\end{document}